\documentclass[11pt,reqno,a4paper]{amsart}

\usepackage[T1]{fontenc}
\usepackage[utf8]{inputenc}
\usepackage{lmodern}
\usepackage{microtype}
\usepackage{geometry}
\usepackage{amsmath,amssymb,amsthm,mathtools,mathrsfs}
\usepackage{enumitem}
\usepackage[numbers,sort&compress]{natbib}
\usepackage[colorlinks=true,linkcolor=blue,citecolor=blue,urlcolor=blue]{hyperref}
\usepackage{aliascnt}
\usepackage[nameinlink,noabbrev]{cleveref}

\allowdisplaybreaks
\setlist[itemize]{topsep=4pt,itemsep=2pt,parsep=1pt}
\setlist[enumerate]{topsep=4pt,itemsep=2pt,parsep=1pt}
\numberwithin{equation}{section}
\newtheorem{theorem}{Theorem}[section]
\newaliascnt{proposition}{theorem}
\newtheorem{proposition}[proposition]{Proposition}
\aliascntresetthe{proposition}
\newaliascnt{lemma}{theorem}
\newtheorem{lemma}[lemma]{Lemma}
\aliascntresetthe{lemma}
\newaliascnt{corollary}{theorem}
\newtheorem{corollary}[corollary]{Corollary}
\aliascntresetthe{corollary}
\theoremstyle{definition}
\newaliascnt{definition}{theorem}
\newtheorem{definition}[definition]{Definition}
\aliascntresetthe{definition}
\newaliascnt{assumption}{theorem}
\newtheorem{assumption}[assumption]{Assumption}
\aliascntresetthe{assumption}
\theoremstyle{remark}
\newaliascnt{remark}{theorem}
\newtheorem{remark}[remark]{Remark}
\aliascntresetthe{remark}

\Crefname{theorem}{theorem}{theorems}
\Crefname{proposition}{proposition}{propositions}
\Crefname{lemma}{lemma}{lemmas}
\Crefname{corollary}{corollary}{corollaries}
\Crefname{definition}{definition}{definitions}
\Crefname{assumption}{assumption}{assumptions}
\Crefname{remark}{remark}{remarks}
\Crefname{theorem}{Theorem}{Theorems}
\Crefname{proposition}{Proposition}{Propositions}
\Crefname{lemma}{Lemma}{Lemmas}
\Crefname{corollary}{Corollary}{Corollaries}
\Crefname{definition}{Definition}{Definitions}
\Crefname{assumption}{Assumption}{Assumptions}
\Crefname{remark}{Remark}{Remarks}

\newcommand{\R}{\mathbb R}
\newcommand{\T}{\mathbb T}
\newcommand{\E}{\mathbb E}
\newcommand{\PP}{\mathbb P}

\newcommand{\Ran}{\operatorname{Ran}}

\newcommand{\Ent}{\operatorname{Ent}}

\newcommand{\dd}{\,\mathrm d}

\newcommand{\ip}[2]{\left\langle #1,#2\right\rangle}

\title[Spectral gap for stochastic PE]{Spectral gap for the stochastic primitive equations under degenerate Brownian forcing}

\author{Quyuan Lin}
\address{	School of Mathematical and Statistical Sciences, 
Clemson University, 
Clemson, SC 29634, USA} \email{quyuanl@clemson.edu}

\author{Rongchang Liu}
\address{School of Mathematics, Sichuan University, Chengdu 610064, China}
\email{rcliu@scu.edu.cn}

\author{Kening Lu}
\address{School of Mathematics, Sichuan University, Chengdu 610064, China}
\email{keninglu@scu.edu.cn}

\date{}
\subjclass[2020]{Primary 60H15, 37A25; Secondary 35Q86, 60H07, 93B07}
\keywords{primitive equations, finite-rank Brownian forcing, quantitative quadratic saturation, weighted Wasserstein spectral gap, Malliavin observability estimate}
\thanks{This work was supported by the Fundamental Research Funds for the Central Universities, the National Natural Science Foundation of China (Grant Nos.~12090010 and 12090013) and the Simons Foundation (SFI-MPS-TSM-00013384).}

\begin{document}

\begin{abstract}
We prove a weighted Wasserstein spectral gap 
for the three-dimensional primitive equations on $\mathscr E=\{u\in H^1\cap L^8: \partial_zu\in L^4\}$ 
under finite-rank additive Brownian
forcing that is phase-complete finite Fourier and satisfies a
quantitative quadratic saturation condition. In particular, four
real forcing directions suffice and this number is optimal within the
phase-complete class. Consequently, the system is uniquely ergodic and exponentially mixing in the $H^1$ topology. 

The main difficulties stem from the lack of sufficiently strong moment bounds and the highly degenerate nature of the noise, placing the
problem outside the direct reach of standard methods.  We introduce a new quantitative stable--compact asymptotic coupling
criterion that captures the delicate balance between the available
control of the growth of initial perturbations and the accuracy--cost
of compensating them by perturbing the noise. For the primitive equations, this leads to a quantitative 
infinite-dimensional H\"ormander analysis requiring tracking the degree and coefficient cost of  
Lie-bracket polynomials with the Fourier frequency.
\end{abstract}

\maketitle
\enlargethispage{2pt}
\tableofcontents
\section{Introduction}
In this paper, we investigate the Markov dynamics of the three-dimensional stochastic
primitive equations. On $\T^3=(\R/2\pi\mathbb Z)^3$, the equation reads 
\begin{align}\label{eq:intro-pe}
\begin{split}
 \dd v
 +\bigl[(v\cdot\nabla_h)v+w\partial_zv
        -\nu\Delta v+\nabla_h p\bigr]\dd t
 &=\sum_{k=1}^m g_k\,\dd W_t^k,\\
 \partial_zp&=0,\\
 \nabla_h\cdot v+\partial_zw&=0.
\end{split}
\end{align}
Here $v=(v_1,v_2)$ is the horizontal velocity, $w$ is the vertical
velocity, $p$ is the hydrostatic pressure,
$\nabla_h=(\partial_{x_1},\partial_{x_2})$, and
$\Delta=\Delta_h+\partial_z^2$.  We impose the standard periodic
hydrostatic symmetry: $v$ is even and $w$ is odd in the vertical
variable. Thus $w$ can be recovered from $v$ through the incompressibility
condition, and \eqref{eq:intro-pe} can be reformulated as an evolution
equation for $v$; see \eqref{eq:pe}.
The viscosity $\nu>0$ is arbitrary and fixed,
$W^1,\ldots,W^m$ are independent real Brownian motions, and $g_k$ are
the noise coefficients. The primitive equations arise as the hydrostatic small-aspect-ratio
limit of the Navier--Stokes equations
\cite{azerad2001mathematical,li2019primitive} and are a fundamental model
for large-scale oceanic and atmospheric dynamics. Unlike the
three-dimensional Navier--Stokes equations, the three-dimensional viscous primitive equations are globally well-posed at the strong level \cite{cao-titi,GuoHuang2009,GHKVZ,DGTZ,HK},
although uniqueness of Leray--Hopf weak
solutions remains open
\cite{BinzIida2025}.

We are interested in statistical dynamics for the Markov system generated by strong solutions of \eqref{eq:intro-pe} in $H^1$, particularly  when the forcing acts only through a few low Fourier modes. This setting naturally 
reflects the scale separation underlying Kolmogorov's K41 theory
\cite{Kolmogorov1941a,Kolmogorov1941b} on turbulence, where energy
is injected at low frequencies, well separated from the dissipative
scales, allowing nonlinear interactions to sustain a nonequilibrium
stationary flux.  From a complementary dynamical systems perspective initiated by Kolmogorov \cite{Sinai1989} and further developed by Sinai, Ruelle and others  through the theory of Sinai--Ruelle--Bowen measures \cite{EckmannRuelle1985,Ruelle1978}, distinguished stationary measures
are natural candidates for describing the nonequilibrium
stationary flux under suitable scaling limits; see also \cite{BCPW3D,BCPW2D}. Establishing unique ergodicity and quantitative mixing for \eqref{eq:intro-pe} may therefore be viewed as a first step toward such a statistical description of hydrostatic turbulence.

The existence and regularity of invariant measures were established in
\cite{GHKVZ}. For bounded noise, uniqueness and exponential mixing were
proved in the nondegenerate case in
\cite{Chueshov2014,Boulvard2022}, and in a degenerate finite-dimensional
regime in \cite{BGN}. For nondegenerate white in time noise,
\cite{Agresti} established unique ergodicity, while the corresponding degenerate problem has remained open. Our main result resolves this problem for a natural class of finite-rank
Fourier forcing. Let $P_t$ denote the Markov semigroup generated by
\eqref{eq:intro-pe} on $H^1$.

\vskip0.05in
\noindent {\bf Main Theorem}. \label{mainthm}
{\it Assume that the forcing range is phase-complete finite Fourier and
satisfies the quantitative quadratic saturation condition of
\Cref{def:phase-complete,def:fourier-certificate}. Then the Markov
semigroup $P_t$ admits a spectral gap on the regular phase space $\mathscr E=\{u\in H^1\cap L^8: \partial_zu\in L^4\}$ in an explicit weighted
logarithmic Wasserstein distance built over $1\wedge\|\cdot\|_{H^1}$. Consequently, the primitive system \eqref{eq:intro-pe} has a unique invariant measure and is exponentially mixing in the bounded Wasserstein distance induced
by $1\wedge\|\cdot\|_{H^1}$.}

\vskip0.05in

See \Cref{thm:main} and \Cref{thm:exponential-mixing} for the precise statement,
including the explicit transportation cost and its Lyapunov weight. The coupling is constructed using the $L^2$ comparison distance and parabolic smoothing then transfers the resulting weighted
spectral gap to $H^1$. 
In particular, four real forcing directions suffice, and this number is
optimal within the phase-complete class; see
\Cref{rem:low-rank-example}. The main difficulties hinge on the lack of
sufficiently strong moment bounds and the highly degenerate nature of
the noise \cite{GHKVZ,BGN,Agresti}. For the primitive equations, we obtain an apparently new fractional logarithmic amplification moment: there is $\theta\in(0,1)$ such that for any two horizontal velocities $u_t,v_t$ solving \eqref{eq:intro-pe}, 
\begin{align}\label{eq:intro-frac-amplification}
 \sup_{t\in[0,T]}\|u_t-v_t\|_{L^2}
 \le e^{H_T}
 \|u_0-v_0\|_{L^2}, \text{ and } \E H_T^\theta\le C_TU(v_0)
\end{align}
where $U$ is the mixed Lyapunov function in \eqref{eq:mixing-lyapunov}.  This improves the logarithmic  moment bounds of
\cite{GHKVZ}, but in the degenerate noise setting it still falls
below the moment regime in which the standard spectral-gap mechanisms
can be applied directly.

From a general perspective, our method lies within the weak Harris theory developed in \cite{HM08,HMS}, rooted in the classical ideas of
Doeblin \cite{Doeblin1940} and Harris \cite{Harris1956}: local contraction controls
small scales, irreducibility makes recurrent Lyapunov sublevels
$d$-small, and the Lyapunov drift controls the large-weight regime. The distinction lies in the local input. Rather than assuming a
pre-existing contracting distance or deriving one from a gradient
estimate whose conversion into a uniform
Wasserstein contraction becomes challenging under the weak moment bounds
available for system \eqref{eq:intro-pe}, we introduce a quantitative stable--compact
asymptotic coupling criterion that captures the balance between the
available control of the growth of initial perturbations and the
accuracy--cost of compensating them by perturbing the noise. Realizing
this local mechanism for \eqref{eq:intro-pe} in turn leads to an
infinite-dimensional H\"ormander analysis whose Lie-bracket polynomials must be
tracked quantitatively with the Fourier resolution.

\subsection{Quantitative stable--compact compensation}
Let $J$
denote the derivative of the solution map with respect to the initial
state and let $A$ denote its Malliavin derivative. 
A useful common language for local contraction in several approaches to degenerate
stochastic dynamics is the decomposition
\begin{align}\label{eq:intro-JARS}
 J=AR+S,
\end{align}
where $R$ is the repair operator producing a perturbation of the noise
that compensates the initial perturbation up to a prescribed accuracy, and $S$ is the residual.  The
essential differences between the existing methods lie both in how
\eqref{eq:intro-JARS} is obtained and in how it is converted into a
finite-distance contraction.

In the hypoelliptic theory of 
\cite{HM06,HM}, a regularized inverse of the Malliavin covariance
compensates the finite unstable directions.  The remaining
directions are driven into the dissipative high frequencies, so that
the residual becomes small after propagation by the dynamics.
Approximate Malliavin integration by parts then converts this
linearized compensation into an asymptotic gradient estimate.  This
is the mechanism underlying asymptotic strong Feller regularization
and, together with the Wasserstein Harris theory developed in
\cite{HM08,HMS}, quantitative convergence to equilibrium.   The bounded noise approach of  \cite{KNS} realizes the same principle differently.
Qualitative controllability of the linearized noise map, together with
compactness on a large compact part of the bounded noise space, gives
a finite-dimensional approximate right inverse.  The target accuracy
is fixed sufficiently small, and the nonlinear remainder is then
absorbed by restricting the initial separation.  The resulting repair
has a fixed linear strength.

A related stable--compact viewpoint was developed in our recent works
\cite{LLLProjective,LLWave,LLSNLS}. For the projective Navier--Stokes
process, compactness of the state derivative and density of the
Malliavin range yield a finite-dimensional noise correction leaving a
residual with negative mean logarithmic growth. For the damped wave
equation, the same principle gives a finite-distance contraction and a
weighted Wasserstein spectral gap. In the stochastic NLS setting, only
weak logarithmic amplification moments are available and a fixed-strength
repair then leads to polynomial
mixing through a renewal--reset argument. 

The regime considered here requires a different quantitative
realization of \eqref{eq:intro-JARS}.  On a compact regular core we
prove, for every sufficiently small accuracy $\delta$, the quantitative Malliavin
observability inequality 
\begin{align}\label{eq:intro-observability}
 JJ^*\le C_\delta^2AA^*+\delta^2I,
 \qquad
 C_\delta\lesssim\delta^{-b}
\end{align}
for some $b>0$, which in turn yields  the stable--compact decomposition 
\begin{align}\label{eq:intro-stable-compact-decomposition}
 J=A R_\delta+S_\delta,
 \qquad
 \|S_\delta\|\lesssim\delta,
 \qquad
 \|R_\delta\|\lesssim\delta^{-b},
\end{align}
where $R_\delta$ takes values in a finite-dimensional subspace of the
Cameron--Martin space. 
Thus \eqref{eq:intro-observability} provides an explicit
accuracy--cost relation: an initial tangent perturbation can be
compensated to accuracy $\delta$ with Cameron--Martin cost of order $\delta^{-b}$.

The dependence of the accuracy on the incoming scale is essential. In stronger regimes \cite{HM06,HM,KNS,LLLProjective,LLWave}, where the amplification can be controlled in a
linear or power-type transportation gauge, a fixed repair
$r\mapsto cr$ for some $c\in(0,1)$ already yields a uniform local contraction.  Here, however, the
fractional moment bound \eqref{eq:intro-frac-amplification} forces a logarithmic gauge, in which
such a fixed repair becomes asymptotically invisible. Indeed, let $r=d(u,v)=1\wedge\|u-v\|_{L^2}$ be the incoming distance and suppose that a repair  only
gave local contraction $r\mapsto cr$. In the logarithmic gauge
\begin{align*}
 \ell(r)=[1+\log(1/r)]^{-q}
\end{align*}
with $q>0$, this implies 
\begin{align}\label{eq:intro-fixed-repair-failure}
 \ell(cr)/\ell(r)\longrightarrow1
 \qquad(r\downarrow0),
\end{align}
showing no uniform local contraction gain.  

Therefore repair strength must be measured relative to the gauge dictated by amplification tails. Here we take $\delta=r^a$ so the linear residual $(J-AR_{\delta})(u-v)$ in
\eqref{eq:intro-stable-compact-decomposition} is of size
$\delta r=r^{1+a}$, while the quadratic Taylor remainder is of size
$C_\delta^2r^2$.  For sufficiently small $a>0$ the latter is of the
same or smaller order.  The Gaussian change of law has a vanishing
error $O(r^\varkappa)$ for some $\varkappa>0$, and hence the exact coupling $(X',Y')$ satisfies, in the comparison metric $d$, 
\begin{align}\label{eq:intro-local-repair}
\begin{split}
 \PP\{d(X',Y')\le r^{1+a}\}&\ge p,\\
 \PP\{d(X',Y')>e^{C+H_T}r\}&\le Cr^\varkappa,
\end{split}
\end{align}
for constants $p,C>0$. 
The superlinear map $r\mapsto r^{1+a}$ multiplies
$\log(1/r)$ by $1+a$, and consequently we obtain the local contraction 
\begin{align*}
 \ell(r^{1+a})/\ell(r)
 \longrightarrow(1+a)^{-q}<1,
\end{align*}
which is precisely the gain that is missing in
\eqref{eq:intro-fixed-repair-failure}. Note that the exceptional event
$H_T\gtrsim\log(1/r)$  has probability of order at most 
$[\log(1/r)]^{-\theta}$ by \eqref{eq:intro-frac-amplification}.  Choosing $q<\theta$ makes this error smaller
than the logarithmic transportation scale.  This balance between
\eqref{eq:intro-local-repair} and
\eqref{eq:intro-frac-amplification} is one of the cores of the abstract
criterion in \Cref{sec:coupling}. Once the local logarithmic
contraction has been obtained, the globalization by Lyapunov
recurrence and a Wasserstein small-set argument follows the weak
Harris architecture of \cite{HM08,HMS}. For \eqref{eq:intro-pe}, the required Lyapunov structure follows from the same triangular system of estimates used in deriving \eqref{eq:intro-frac-amplification}, while the uniform irreducibility follows from the standard approximate controllability argument \cite{AgrachevSarychev2005,AgrachevSarychev2006,GHHM}.

There are also two important distinctions from the Malliavin-gradient
route of \cite{HM06,HM08,HM}.  First, \eqref{eq:intro-stable-compact-decomposition} is a
fixed-time pathwise operator estimate on a compact positive probability
core; the residual is made small by choosing the accuracy, rather than
by iterating the dynamics until the residual decays.  Second, rather than using the compensation only at the infinitesimal
level to derive a Malliavin integration-by-parts estimate, we realize
$R_\delta$ at finite displacement as an actual finite-dimensional
Cameron--Martin transformation of the Wiener path.  A Gaussian
change-of-variables estimate, followed by maximal coupling, turns this
transformation into an exact coupling of the two endpoint laws.  In
this sense the Malliavin-gradient mechanisms move derivatives between
the state and the noise, whereas the present argument moves the
trajectories themselves. 

We also mention that, although the logarithmic amplification for the
stochastic damped NLS in \cite{LLSNLS} admits polynomial moment bounds
of arbitrary order, a quantitative Malliavin observability estimate of
the form \eqref{eq:intro-observability} is not available by the present
argument, since the damped NLS lacks the parabolic high-frequency
dissipation used here to convert low mode observability into a polynomial
accuracy--cost estimate as in \Cref{sec:quantitative-Malliavin-observability}.

\subsection{A quantitative infinite-dimensional H\"ormander analysis}
For the primitive equations, proving
\eqref{eq:intro-observability} is itself a quantitative
H\"ormander problem in which the required bracket order grows with the
Fourier resolution.  If $p_t=J_{t,T}^*\xi$ is the terminal adjoint where $J_{t,T}$ is the solution operator of the linearization, then the adjoint of the Malliavin derivative $\mathcal A_T$ at time $T$ satisfies 
\begin{align*}
 (\mathcal A_T^*\xi)(t)=Q^*p_t,
\end{align*}
where $Q$ is the noise operator  as in \eqref{eq:noise-operator}.  Thus the desired operator inequality \eqref{eq:intro-observability} is equivalent to observing
$p_0$ through the finitely many scalar paths $Q^*p$.  Qualitative
density of the Malliavin range is not enough: in order to choose
$\delta=r^a$ in the coupling argument one must control explicitly the
cost of observing successively finer frequencies.

We therefore develop a quantitative observation argument with three components.  First,
the Fourier algebra of the primitive equations provides, for every
unit hydrostatic Fourier mode below frequency $N$, a representation by Lie words generated from the constant noise fields with Brownian parabolic degree $L_N$ and total coefficient cost $A_N$ bounded as 
\begin{align}\label{eq:intro-lie-complexity}
 L_N\le CN, \quad A_{N} \le e^{CN}. 
\end{align}
Second, the scalar
adjoint observables generated by these Lie words admit controlled
rough-path expansions.  Rather than propagating smallness recursively
through Wiener polynomials as in \cite{MattinglyPardoux,HM,FGHRT}, we use simultaneous nondegeneracy of
truncated Brownian signatures to recover all Lie coefficients from the
observed paths.  The resulting low frequency estimate is
\begin{align*}
 \|P_Np\|_{L^2(0,T;L^2)}
 \le
 e^{CN^2}\|Q^*p\|_{L^2(0,T;\mathbb R^m)}
 +e^{-BN^2}\|\xi\|_{L^2}.
\end{align*}
Third, the parabolic high-frequency
dissipation of the backward adjoint estimate then turns control of
$P_Np$ on a positive time interval into control of the full initial
adjoint $p_0$ and therefore 
\begin{align*}
 \|J_{0,T}^*\xi\|_{L^2}^2
 \le
 e^{CN^2}\|\mathcal A_T^*\xi\|_{L^2(0,T;\mathbb R^m)}^2
 +Ce^{-cN^2}\|\xi\|_{L^2}^2.
\end{align*}
Taking $N^2$ proportional to $|\log\delta|$ gives exactly the
polynomial accuracy--cost estimate
\eqref{eq:intro-observability}.  The Brownian-signature argument may
be viewed as a quantitative pathwise counterpart of the non-adapted
Norris mechanism in \cite{HM} and the pathwise Norris mechanism in \cite{GerasimovicsHairer}, adapted here to a
bracket depth that grows with the spatial resolution.

The Fourier representations entering
\eqref{eq:intro-lie-complexity} require a new geometric analysis of the projective holonomy generated by barotropic polarization transport in \Cref{sec:Quantitative-Fourier-generation}.
At a positive vertical frequency the hydrostatic polarization fibre is
two-dimensional, while a nonzero barotropic fibre is one-dimensional.
Interaction with a barotropic mode transports polarization along
horizontal lattice paths by an explicit family of invertible
$2\times2$ matrices, thus generating a genuine projective
holonomy group through closed paths.  Irreducibility of this holonomy creates the full polarization fibre except in a single two-direction geometry, where an invariant line
persists and is overcome by an additional quadratic interaction.   Once full polarization is
available, the Euclidean algorithm in the vertical frequencies and
admissible horizontal paths then generate every hydrostatic Fourier mode.
Keeping track of this construction gives \eqref{eq:intro-lie-complexity}. In particular, we obtain
the sharp minimal forcing counts as in \Cref{rem:low-rank-example}.

The quantitative H\"ormander mechanism above and the logarithmic
transportation geometry 
together reveal a different regime of asymptotic regularization for
infinite-dimensional stochastic systems. The hypoelliptic theory of \cite{HM06,HM08,HM} produces an asymptotically Lipschitz regularity,
while the stable--compact mechanism for the damped wave equation
\cite{LLWave} yields, on bounded Lyapunov sets, a H\"older type modulus
$r^{\beta_{\mathrm W}}$ for some $\beta_{\mathrm W}\in(0,1)$.  In the present weak moment regime the natural
modulus is instead logarithmic,
\begin{align*}
 r
 \ll r^{\beta_{\mathrm W}}
 \ll [1+\log(1/r)]^{-q/2},
 \qquad r\downarrow0.
\end{align*}
Thus the primitive equations operate at a strictly weaker local
regularity scale; see
\Cref{rem:asymptotic-regularization-comparison} for more details. We
emphasize, however, a difference in logical direction. In \cite{HM06,HM08,HM}, asymptotic gradient estimates provide the
regularization input used to derive a spectral gap. Here the logarithmic transportation geometry is constructed
directly from the quantitative stable--compact mechanism, while the corresponding asymptotic regularization
estimate  is a consequence of the resulting spectral gap  as in \cite{LLWave}.

\section{Setting and main results}\label{sec:settings}

In this section we specify necessary settings and then state the main result. The abstract spectral gap criterion with its proof is given in the next section.  Let $\T^3=(\R/2\pi\mathbb Z)^3$, with coordinates $(\boldsymbol x,z)$ where $\boldsymbol x=(x_1,x_2)$ denotes the horizontal variable. The horizontal velocity $v$ is even and the vertical velocity $w$ is odd in the vertical variable $z$, which is the symmetry preserved by the primitive system. The incompressibility condition allows us to recover the vertical velocity $w$ from the horizontal $v$, 
\begin{align}\label{eq:vertical-velocity}
 w(v)(\boldsymbol x,z)=-\int_0^z\nabla_h\cdot v(\boldsymbol x,\zeta)\dd\zeta,
\end{align}
where $w(v)(\boldsymbol x,0)=0$ by oddness. We will use the barotropic--baroclinic decomposition
\begin{align}\label{eq:barotropic--baroclinic}
 u(\boldsymbol x,z)=\bar u(\boldsymbol x)+\widetilde u(\boldsymbol x,z),\qquad \bar u(\boldsymbol x)=\frac{1}{2\pi}\int_{\mathbb T}u(\boldsymbol x,z)\,\dd z,
\end{align}
where $\bar u$ is the barotropic component and $\widetilde u=u-\bar u$ is the baroclinic component. 
Denoting $\mathsf P$ as the hydrostatic Helmholtz projection given by 
\begin{align*}
\mathsf P u =
 u-\nabla_h\Delta_h^{-1}
 \bigl(\nabla_h\cdot\bar u\bigr)
\end{align*}
and setting 
\begin{align}\label{eq:hydrostatic-nonlinearity}
 B(f,g)=\mathsf P\{f\cdot\nabla_h g+w(f)\partial_zg\}, \quad f,g\in C^{\infty}(\mathbb T^3, \mathbb R^2), 
\end{align}
the primitive system \eqref{eq:intro-pe} is reduced to the following abstract evolution system for the horizontal velocity, 
\begin{align}\label{eq:pe}
 \dd v_t=\{-\nu Av_t-B(v_t,v_t)\}\dd t
 +\sum_{k=1}^m b_k\,\dd W_t^k
\end{align}
where $A$ is the hydrostatic Stokes operator and $b_k=\mathsf P g_k$. The hydrostatic energy space is 
\begin{align*}
H = \left\{v\in L_{*}^2(\mathbb T^3;\mathbb R^2): \int_{\mathbb T^3}v(\boldsymbol x, z)\dd \boldsymbol x \dd z =0, \, \int_{\mathbb T}\nabla_h\cdot v(\cdot,\zeta)\dd \zeta=0\right\}, 
\end{align*}
where $L_{*}^2(\mathbb T^3;\mathbb R^2)$ denots the space of square-integrable velocity fields that are even in $z$. We also denote $H^s=H\cap W^{s,2}$, where $W^{s,2}$ is the usual Sobolev space of order $s$.  The $L^p$ norm will be denoted by $\|\cdot\|_p$. 

We assume $b_1,\ldots,b_m\in H$ are smooth forcing profiles and $W = (W^1,\ldots,W^m)$ the standard $\mathbb R^m$ valued Wiener process over the canonical Wiener space $(\Omega,\mathscr F, \mathscr F_t, \mathbb P)$ satisfying the usual conditions. For convenience we retain the associated linear noise operator
\begin{align}\label{eq:noise-operator}
 Q:\R^m\to H,\qquad Q\boldsymbol{e}_k=b_k
\end{align}
where $\{\boldsymbol{e}_k\}$ is the canonical basis of $\mathbb R^m$. Under this setting, the equation \eqref{eq:pe} is globally well-posed in $H^1$ and defines a Feller Markov process. We denote the solution map by  $\Phi_t(u,\omega)$, for initial data $u\in H^1$ and  driver $\omega\in C_0([0,t];\R^m)$. In particular, $\Phi_t(u,W)$ is the solution to \eqref{eq:pe} with initial data $u$ and 
\begin{align}\label{eq:solution-map}
 P_t\varphi(u)=\E\varphi(\Phi_t(u,W)), \quad \varphi\in C_b(H^1)
\end{align}
defines the Markov semigroup.  We use the right action $\mu P_t$ on probability measures and denote $\mu(f)$ as the integration of $f$ against the measure $\mu$.

\subsection{Quantitative quadratic saturation}
To specify the noise structure we denote
\begin{align}\label{eq:hydrostatic-frequencies}
 \mathbf F
 =\{(\boldsymbol k,n)\in\mathbb Z^2\times\mathbb N_0:
 (\boldsymbol k,n)\ne(0,0)\}
\end{align}
as the set of hydrostatic frequencies. In particular, frequencies corresponding to $n=0$ are barotropic and the rest are called baroclinic. The complexification
$H_{\mathbb C}=H\otimes_{\mathbb R}\mathbb C$ has the standard hydrostatic Fourier basis, up to normalization,
\begin{align}\label{eq:hydrostatic-fourier-basis}
 \left\{e^{\mathrm{i}\boldsymbol k\cdot\boldsymbol x}\cos(nz) \boldsymbol e_1,\,
 e^{\mathrm{i}\boldsymbol k\cdot\boldsymbol x}\cos(nz) \boldsymbol e_2\right\}_{\boldsymbol k\in\mathbb Z^2,n\ge1}, \text{ and }
 \left\{e^{\mathrm{i}\boldsymbol k\cdot\boldsymbol x}\boldsymbol k^\perp\right\}_{\boldsymbol k\in\mathbb Z^2\setminus\{0\}},
\end{align}
where $\boldsymbol k^\perp=(-k_2,k_1)$. The corresponding real sine--cosine
modes form the hydrostatic Fourier basis of $H$. 

We will consider phase-complete finite Fourier forcing in the following sense. 
\begin{definition}\label{def:phase-complete}
The forcing range $E_0=\Ran Q$ is called \emph{phase-complete finite Fourier} if for some finite index sets $J_0,J$, it has the form
\begin{align*}
 E_0=\operatorname{span}_{\R}\Bigl(\left\{\boldsymbol a_j(z)\cos(\boldsymbol k_j\cdot\boldsymbol x),\,
 \boldsymbol a_j(z)\sin(\boldsymbol k_j\cdot\boldsymbol x):j\in J\right\}\bigcup\left\{\boldsymbol a_\ell^0(z):\ell\in J_0\right\}\Bigr)\subset H,
\end{align*}
where $\boldsymbol k_j\in\mathbb Z^2\setminus\{0\}$ and the profiles
$\boldsymbol a_j,\boldsymbol a_\ell^0$ are real vector-valued even trigonometric
polynomials. 
\end{definition}

We next give a quantitative saturation ensuring that every unit hydrostatic Fourier mode has a constant Lie-word
representation in a controlled way.  Set the symmetric quadratic interaction
\begin{align}\label{eq:symmetric-quadratic-interaction}
\mathsf B(g,h)=B(g,h)+B(h,g),
\end{align}
and define iteratively 
\begin{align}\label{eq:quadratic-saturation}
 E_{n+1}=\operatorname{span}_{\R}\bigl(E_n\cup\{\mathsf B(g,h):g,h\in E_n\}\bigr), \quad E_0=\Ran Q. 
\end{align}
Since each $E_j$ is real,
\begin{align*}
 f\in E_j^{\mathbb{C}}:= E_j\otimes_{\mathbb R}\mathbb C
 \quad\Longleftrightarrow\quad
 \Re f,\Im f\in E_j.
\end{align*}
Thus the complex Fourier identities below are merely a compact notation
for pairs of exact real sine--cosine identities.

For $(\boldsymbol k,n)\in\mathbf F$, let
\begin{align*}
 \mathcal H_{\boldsymbol k,n}
 =
 \begin{cases}
  \left\{e^{\mathrm{i}\boldsymbol k\cdot\boldsymbol x}\cos(nz)\mathsf a:\mathsf a\in\mathbb C^2\right\},
  & n\ge1,\\
  \operatorname{span}_{\mathbb C}
  \left\{e^{\mathrm{i}\boldsymbol k\cdot\boldsymbol x}\boldsymbol k^\perp\right\},
  & n=0
 \end{cases}
\end{align*}
which denotes the complex Fourier fibre at the fixed hydrostatic frequency $(\boldsymbol k,n)$. 

\begin{definition}
\label{def:fourier-certificate}
We say that the forcing satisfies \emph{quantitative quadratic
saturation} if there exists a finite index $j_*$ such that
\begin{align}
 \operatorname{span}_{\mathbb Z}
 \bigl\{
 \boldsymbol k\in\mathbb Z^2\setminus\{0\}:
 E_{j_*}^{\mathbb C}\cap\mathcal H_{\boldsymbol k,0}\ne\{0\}
 \bigr\}
 &=\mathbb Z^2,\notag\\
 \gcd\bigl\{
 n\ge1:
 E_{j_*}^{\mathbb C}\cap\mathcal H_{\boldsymbol k,n}\ne\{0\}
 \text{ for some }\boldsymbol k\in\mathbb Z^2
 \bigr\}
 &=1.
\label{eq:finite-fourier-certificate}
\end{align}
\end{definition}
The two conditions say that the exactly generated barotropic frequencies span the full horizontal lattice and that the exactly generated positive vertical levels have greatest common divisor one. The fibre intersections require exact pure modes rather than mere Fourier support.
By \Cref{prop:geometric-generation}, quantitative quadratic saturation
implies exact generation of every hydrostatic Fourier mode and, in
particular, quadratic saturation
$\overline{\bigcup_{n\ge0}E_n}^{\,H}=H$. The case in which one of the generated positive levels is already
$n=1$ is the simplest instance.  The greatest-common-divisor
formulation also covers forcing ranges for which level one is obtained
only after finitely many quadratic interactions.

\begin{remark}\label{rem:low-rank-example}
Write $\boldsymbol e_1=(1,0)$ and $\boldsymbol e_2=(0,1)$. The four profiles
\begin{align*}
 f_1&=\boldsymbol e_2\cos x_1(1+\cos z),&
 f_2&=\boldsymbol e_2\sin x_1(1+\cos z),\\
 f_3&=\boldsymbol e_1\cos x_2,&
 f_4&=\boldsymbol e_1\sin x_2
\end{align*}
satisfy quantitative quadratic saturation. This rank-four example is
optimal in the phase-complete class; see \Cref{prop:four-noises}.

If each forcing profile is supported on a single vertical Fourier level,
the five directions
\begin{align*}
 b_1&=\boldsymbol e_2\cos x_1,& b_2&=\boldsymbol e_2\sin x_1,&
 b_3&=\boldsymbol e_1\cos x_2,& b_4&=\boldsymbol e_1\sin x_2,&
 b_5&=\boldsymbol e_1\cos z
\end{align*}
satisfy quantitative quadratic saturation, and five is optimal in this
single-level class by \Cref{prop:five-noises}.

Within the same single-level class, if horizontally constant profiles
are excluded, the optimal rank is six. One example is given by the first
four profiles above together with
\begin{align*}
 b_5=\boldsymbol e_1\cos(x_1+x_2)\cos z,\qquad
 b_6=\boldsymbol e_1\sin(x_1+x_2)\cos z;
\end{align*}
see \Cref{cor:six-noises}. 
\end{remark}

\subsection{Main results}

For a distance-like function $d$ (which need not satisfy a triangle inequality) on a Polish space $X$, write
\begin{align*}
 \mathcal W_d(\mu_1,\mu_2)=\inf_{\pi\in\mathsf C(\mu_1,\mu_2)}\int_{X\times X} d(u,v)\,\pi(\dd u,\dd v),
\end{align*}
for $\mu_1,\mu_2\in \mathcal{P}(X)$ the space of probability measures on $X$, where $\mathsf C(\mu_1,\mu_2)$ is the set of couplings between $\mu_1,\mu_2$. The distance-like cost $\mathcal W_d$ is the usual Wasserstein distance when $d$ is a metric.  To define the regular phase space and the weights in the theorem, fix $\alpha=\alpha(\nu)\ge1$ as in \Cref{lem:cascade-direct} and a
constant $C_\nu\ge1$ sufficiently large as in
\Cref{lem:admissibility}. For $\eta>0$, denote by  
\begin{align}\label{eq:mixing-lyapunov}
 U(u)=\left(1+\alpha\|u\|_8^8+\|\partial_zu\|_4^4+\|\nabla u\|_2^2\right)^\eta\exp\left(C_{\nu}\eta\|u\|_H^2\right),
\end{align}
and let 
\begin{align}\label{eq:regular-phase-space}
 \mathscr E=\{u\in H^1:U(u)<\infty\}=\{u\in H^1:u\in L^8,\ \partial_zu\in L^4\}.
\end{align}
The regular phase space $\mathscr E$ is equipped, when its stronger topology is needed, with
\begin{align}\label{eq:regular-phase-norm}
 \|u\|_{\mathscr E}=\|u\|_{H^1}+\|u\|_{L^8}+\|\partial_zu\|_{L^4}.
\end{align}
For $0<q<1$ set $V_U(u,v)=1+U(u)+U(v)$ and define on
$\mathscr E$ the distance-like function 
\begin{align}\label{eq:main-explicit-cost}
 d_q(u,v)=\min\left\{\sqrt{V_U(u,v)},\,
 \frac{V_U(u,v)}{[1+\log(1/d(u,v))]^{q/2}}\right\},\qquad u\ne v,
 \qquad d_q(u,u)=0, 
\end{align}
where 
\begin{align*}
 d(u,v)=1\wedge\|u-v\|_H,\qquad u,v\in H^1.
\end{align*}
Note that $d_q$ depends on $\eta$ from \eqref{eq:mixing-lyapunov}. 

\begin{theorem}\label{thm:main}
Assume that $\Ran Q$ is phase-complete finite Fourier in the sense of \Cref{def:phase-complete} and satisfies quantitative quadratic saturation in the sense of \Cref{def:fourier-certificate}. Then there is a unique invariant measure $\mu$ with full support in $H^1$ and $\mu(\mathscr E)=1$. Moreover, there is $\eta_0=\eta_0(\nu,Q)>0$ so that for every $\eta\in(0,\eta_0]$ and $q\in(0,\eta)$, there are $C,\gamma>0$ such that 
\begin{align}\label{eq:main-explicit-gap}
 \mathcal W_{d_q}(\mu_1 P_t,\mu_2 P_t)
 &\le Ce^{-\gamma t}\mathcal W_{ d_q}(\mu_1,\mu_2),\qquad t\ge0
\end{align}
for any $\mu_1,\mu_2\in\mathcal{P}(\mathscr E)$ making the right-hand side finite. 
\end{theorem}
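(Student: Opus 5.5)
The plan is to verify the hypotheses of the abstract quantitative stable--compact asymptotic coupling criterion (Section~\ref{sec:coupling}) for the primitive equations, and then obtain \eqref{eq:main-explicit-gap} through the weak Harris architecture of \cite{HM08,HMS}. The criterion needs three ingredients:
\begin{itemize}
\item a Lyapunov drift for $U$;
\item a local logarithmic contraction for $d_q$ on sublevel sets of $U$, coming from a coupling satisfying \eqref{eq:intro-local-repair} together with the fractional amplification bound \eqref{eq:intro-frac-amplification};
\item $d_q$-smallness of Lyapunov sublevel sets via irreducibility.
\end{itemize}
Once these are checked, the abstract theorem gives the contraction $\mathcal W_{d_q}(\mu_1P_T,\mu_2P_T)\le \tfrac12\mathcal W_{d_q}(\mu_1,\mu_2)$ for some fixed $T$. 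The semigroup property and a uniform-in-$t\le T$ bound $\mathcal W_{d_q}(\mu_1P_t,\mu_2P_t)\le C\mathcal W_{d_q}(\mu_1,\mu_2)$ then give \eqref{eq:main-explicit-gap}. The finite-time bound itself comes from the drift and the amplification estimate.

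The steps, in order, are as follows.

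First, I derive the triangular energy cascade: $\|u\|_H^2$, then $\|u\|_8^8$ with weight $\alpha$, then $\|\partial_zu\|_4^4$, then $\|\nabla u\|_2^2$, using \Cref{lem:cascade-direct}. Combined with the exponential martingale estimate, this gives $P_tU\le \kappa e^{-ct}U+K$ for $\eta\le\eta_0$. The constant $C_\nu$ of \Cref{lem:admissibility} is chosen so that the exponential factor absorbs the quadratic variation.

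Second, from the same cascade I bound the growth of the $L^2$ difference. Grönwall gives the factor $e^{H_T}$, where $H_T$ is controlled by integrals of $\|\nabla v\|_2^2$, $\|\partial_z v\|_4^4$-type quantities, and so on. Raising to a power $\theta<1$ lets Jensen and the drift bound give $\E H_T^\theta\le C_TU(v_0)$. The constraint $q<\eta$ will enter here: $\theta$ must be matched with $\eta$ so that the exceptional event $\{H_T\gtrsim \log(1/r)\}$ costs at most $[\log(1/r)]^{-\theta}U$, which is $o$ of the gauge $[\log(1/r)]^{-q/2}$ weighted by $V_U$.

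Third, I establish the quantitative observability \eqref{eq:intro-observability} on a compact regular core. The ingredients are:
\begin{itemize}
\item the Lie-word generation of \Cref{prop:geometric-generation} with complexity \eqref{eq:intro-lie-complexity};
\item the Brownian-signature low-mode estimate;
\item the parabolic backward smoothing;
\item the choice $N^2\sim|\log\delta|$.
\end{itemize}
From this I build a finite-dimensional Cameron--Martin shift $R_\delta$ with $\delta=r^a$ and control the quadratic Taylor remainder by $C_\delta^2r^2$. A Girsanov total-variation estimate and maximal coupling then yield \eqref{eq:intro-local-repair}. Because the core only has positive probability, the coupling is run from any pair of states in a sublevel set. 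Outside the core, the synchronous coupling combined with the amplification bound gives the second line of \eqref{eq:intro-local-repair}.

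Fourth, for irreducibility I use approximate controllability in the sense of Agrachev--Sarychev, via the saturation property $\overline{\bigcup E_n}=H$. This shows that from any sublevel set both chains enter a small $L^2$ ball with positive probability at a fixed time. Combined with the local contraction, this makes sublevel sets $d_q$-small.

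Uniqueness and $\mu(\mathscr E)=1$ follow from the gap together with Krylov--Bogoliubov applied with $U$. Full support in $H^1$ follows from controllability.

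The main obstacle I expect is the third step. It requires tracking uniformly in $N$ the constants in the signature nondegeneracy, so as to get $e^{CN^2}$ rather than something worse. It also requires making the nonlinear remainder small on the core with merely polynomial cost. The point at which that merely polynomial cost is actually proved is where I would concentrate effort: first deriving the low-mode estimate for fixed word length, then summing over words using $A_N\le e^{CN}$.
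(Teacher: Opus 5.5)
Your architecture is the paper's: verify \Cref{ass:gap-drift,ass:gap-amplification,ass:gap-local-repair,ass:gap-accessibility} and invoke \Cref{thm:structural-gap}. However, two steps fail as you describe them, and a third is missing its key mechanism.

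In Step 2, Jensen cannot produce $\E H_T^\theta\le C_TU(v_0)$. It would need $\E\int_0^TK(v_s)\dd s<\infty$, and the cascade does not provide that. It closes only for $e^{\kappa E}K^\eta$ with $\eta C_1\le\kappa\nu/2$ and $\kappa$ capped by the noise, so only small powers of $K$ are controlled. Concavity also runs the wrong way for replacing $\bigl(\int_0^TK\bigr)^\eta$ by $\int_0^TK^\eta$. The paper instead uses the pathwise bound $\bigl(\int_0^TK(v_s)\dd s\bigr)^\eta\le T^\eta\sup_{s\le T}U(v_s)$ together with the maximal inequality \eqref{eq:maximal-U}. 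That inequality needs BDG, the bracket bound $|G_U|^2\le CU^2(1+E)$, and the integrated dissipation bound in \eqref{eq:mixing-foster}; the fixed-time drift estimate alone does not give it.

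In Step 4, entering a small $L^2$ ball does not make sublevel sets small for $d_q$. The cost carries the weight $V_U$, and the local contraction acts only where $(c_0+U(u)+U(v))\ell(d(u,v))<1$. So both chains must land, with positive probability, where simultaneously $d\le\varepsilon$ and $U\le R_0$, with $R_0$ \emph{independent of} $\varepsilon$. $U$ is unbounded on $L^2$ balls. Recovering $U\le R_0$ from the drift via Markov forces $R_0$ to grow like an inverse power of the hitting probability, which may decay far faster than $\ell(\varepsilon)\sim(\log 1/\varepsilon)^{-q}$. This is why \Cref{prop:mixing-accessibility} targets neighbourhoods in the $\mathscr E$ topology, where $U$ is continuous, via controllability in $H^s$ (\Cref{lem:short-time-translation}). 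There phase-completeness is what supplies both signs of $B(e,e)$, through $B(e^s,e^s)=-B(e^c,e^c)$; your sketch never uses this hypothesis.

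Step 3 is missing the mechanism that brings a pair with $\min\{U(u),U(v)\}\le R$ onto the core. \Cref{prop:polynomial-source} and the regular-dependence hypotheses of \Cref{prop:compact-source-repair} hold only for initial states in a compact, uniformly analytic subset of $H^s$, whereas $\{U\le R\}$ is merely bounded in $\mathscr E$. The paper runs a synchronous prefix on $[0,1]$:
\begin{itemize}
\item on a compact driver set $\Gamma_0$ of positive measure, \Cref{lem:bounded-driver-regularization} sends the reference state (the one with smaller $U$) into a compact analytic set $\mathscr D_R$, with $r_1\le L_Rr$;
\item the repair is applied on the suffix $[1,2]$;
\item the clocks are concatenated, $H_2=C_\nu\int_0^2K(u_s)\dd s$, to obtain \eqref{eq:gap-repair-input-2}.
\end{itemize}
This is why the block kernel is $P_2$, and the same prefix gives the uniformity over $\{U\le R\}$ needed in Step 4.

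Also, the shift $R_\delta(v-u)$ depends on the whole path, so a Girsanov theorem for adapted drifts does not apply. The paper uses a finite-dimensional Gaussian change of variables, which requires the bound $\|D_zR_\delta\|\lesssim C_\delta^2/\delta$ and yields the total-variation error $r^{\varkappa}$ with $\varkappa=1-a(1+2b_*)$.

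Finally, Krylov--Bogoliubov with $U$ gives no tightness in $H^1$, since sublevel sets of $U$ are not compact there; existence is taken from \cite{GHKVZ}. Uniqueness among \emph{all} invariant measures on $H^1$ requires the coming-down bound \eqref{eq:mixing-coming-down}, to show each lives on $\mathscr E$ with finite $U$-moment, so that the gap on $\mathcal P(\mathscr E)$ applies.
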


Our framework uses the strong topology of the regular phase space $\mathscr E$ together with a weighted logarithmic cost based on the weaker comparison distance $1\wedge\|u-v\|_H$.
For the primitive equations, positive-time smoothing upgrades the spectral gap to the corresponding cost based on $1\wedge\|u-v\|_{H^1}$ and yields exponential mixing in the truncated $H^1$ Wasserstein distance.  Denote $d_1(u,v)=1\wedge\|u-v\|_{H^1}$ 
and set $d_{q,1}$ as the cost obtained from \eqref{eq:main-explicit-cost} by replacing the bounded $H$-distance $d$  by $d_1$. 

\begin{corollary}\label{thm:exponential-mixing}
Under the assumptions of \Cref{thm:main}, for every $\eta\in(0,\eta_0]$ and $q\in(0,\frac{\eta}{\eta+2})$, there are  $\widetilde{C}, \widetilde{\gamma}>0$ such that
\begin{align}\label{eq:main-explicit-ga-H1}
 \mathcal W_{d_{q,1}}(\mu_1 P_t,\mu_2 P_t)
 &\le \widetilde{C} e^{-\widetilde{\gamma} t}\mathcal W_{ d_{q,1}}(\mu_1,\mu_2),\qquad t\ge0
\end{align}
for all $\mu_1,\mu_2\in\mathcal P(\mathscr E)$ making the right-hand side finite. In particular, there are $C,c,\kappa>0$ such that 
\begin{align}\label{eq:exponential-mixing-wp}
 \mathcal W_{d_1}(P_t(u,\cdot),\mu)\le C\exp(\kappa\|u\|_H^2-ct),\qquad u\in H^1,\ t\ge0.
\end{align}
\end{corollary}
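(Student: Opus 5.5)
The plan is to deduce \Cref{thm:exponential-mixing} from \Cref{thm:main} by a one-step smoothing comparison between the $H$-based cost $d_q$ and the $H^1$-based cost $d_{q,1}$. Since $d\le d_1$, one direction is trivial: $\log(1/d)\ge\log(1/d_1)$, so $d_q\le d_{q,1}$ pointwise and $\mathcal W_{d_q}\le\mathcal W_{d_{q,1}}$. For the other direction we run the dynamics for a unit time and use parabolic smoothing. The goal is a comparison of the form
\begin{align*}
 \mathcal W_{d_{q',1}}(\mu_1P_1,\mu_2P_1)\le C\,\mathcal W_{d_{q}}(\mu_1,\mu_2),
\end{align*}
with a possibly different exponent $q'$ and a possibly modified weight. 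The loss in the exponent is what forces the restriction $q<\eta/(\eta+2)$. Granting this comparison, for $t\ge1$ we write $P_t=P_{t-1}P_1$ in the order smoothing-then-contraction. More precisely, we apply \Cref{thm:main} on $[0,t-1]$, then the smoothing step on the last unit of time, and finally the trivial bound $\mathcal W_{d_q}\le\mathcal W_{d_{q,1}}$ at time $0$. Small $t$ is handled by a Lyapunov bound $P_tU\le CU$.

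The key pathwise step is an $H\to H^1$ Lipschitz-type smoothing estimate for the synchronous coupling. For two solutions with the same noise, we want
\begin{align*}
 \|u_1-v_1\|_{H^1}\le e^{\widetilde H}\|u_0-v_0\|_H,
\end{align*}
where $\widetilde H$ is controlled, via energy estimates for the difference equation weighted by $t$, by the same triangular quantities as $H_T$ in \eqref{eq:intro-frac-amplification}. We expect a moment bound like $\E e^{\widetilde H\cdot(\text{small})}$, or $\E\widetilde H^\theta\le CU(u_0)+CU(v_0)$.

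We then take an optimal $d_q$-coupling $\pi$ of $(\mu_1,\mu_2)$ and push it forward by the synchronous coupling. For a pair $(u,v)$ with $r=d(u,v)$, we split according to whether $\widetilde H\le\frac12\log(1/r)$. On this good event, $\log(1/d_1(u_1,v_1))\ge\frac12\log(1/r)$, so the logarithmic denominator is lost only by a constant factor. On the bad event we bound the cost by $\sqrt{V_U(u_1,v_1)}$. We then use Hölder with exponents tuned to the moment bound $\E U(u_1)^p\lesssim U(u)^p$ for the weight. This is where the interplay between $\eta$ (integrability of the weight), $2$ (the square root), and $q$ produces the threshold $\eta/(\eta+2)$. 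Note that $\Pr(\widetilde H>\frac12\log(1/r))\lesssim[\log(1/r)]^{-\theta}V_U$, and this must beat $[\log(1/r)]^{-q/2}$ after Hölder.

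The main obstacle is this bad-event bookkeeping: making the exponents close so that the $H^1$ cost with exponent $q$ is bounded by the $H$ cost with some admissible exponent $q_0<\eta$ covered by \Cref{thm:main}.

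Finally, \eqref{eq:exponential-mixing-wp} follows by taking $\mu_1=\delta_u$ and $\mu_2=\mu$. We use $d_1\le d_{q,1}$, which holds since $V_U\ge1$. For $u\in H^1$ only, we first run one unit of time so that the state lies in $\mathscr E$, using the bound $\E U(\Phi_1(u))\le C\exp(\kappa\|u\|_H^2)$, which comes from the exponential-in-$\|u\|_H^2$ factor of $U$ and parabolic smoothing. We also use $\mu(U)<\infty$.
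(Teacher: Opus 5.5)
\textbf{There is a genuine gap: the structure is right, but the smoothing clock is missing and the threshold is mis-attributed.} Your skeleton is the paper's. You use $d_q\le d_{q,1}$ and a synchronous one-step bound $\E d_{q,1}(u_1,v_1)\le C\,d_q(u,v)$ obtained by splitting on a clock. You then compose $\mu P_t=(\mu P_{t-1})P_1$ with \Cref{thm:main}, and finish with $d_1\le d_{q,1}$ and $P_1U(u)\le Ce^{\mathsf k\|u\|_H^2}$. The gap is that the one genuinely analytic input, the $H\to H^1$ smoothing clock and its moment order, is left as an expectation, and both of your guesses for it are off.

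Test the difference equation $\partial_t\zeta+\nu A\zeta+B(a,\zeta)+B(\zeta,a)=0$ against $A\zeta$, where $\zeta=u_t-v_t$ and $a=(u_t+v_t)/2$. The transport term no longer cancels, and absorbing it costs $\exp\{C\int_0^1\|a_s\|_{H^{3/2}}^4\,ds\}$. This is not controlled by $\int_0^1K$, the quantity behind $H_T$. The paper bounds it by $\sup_{s\le1}\|u_s\|_{H^1}^2\int_0^1\|u_s\|_{H^2}^2\,ds\le CM_u^2R_u$, where $M_u=\sup_{s\le1}K(u_s)$ and $R_u=\int_0^1D(u_s)/K(u_s)\,ds$. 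The clock is therefore $G=C(M_u+M_v+M_u^2R_u+M_v^2R_v)$.

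This clock has no exponential moment. Only $\E M_u^\eta+\E R_u\le CU(u)$ is available, from \eqref{eq:maximal-U} and \eqref{eq:mixing-foster}. The weighted Young inequality $(M_u^2R_u)^\vartheta\le\frac{2}{2+\eta}M_u^\eta+\frac{\eta}{2+\eta}R_u$ then gives exactly $\E G^\vartheta\le CV_U(u,v)$ with $\vartheta=\eta/(2+\eta)$. That is where the threshold comes from: the ``$2$'' is the square of $M_u$, not the square root in $d_q$. If you took $\theta=\eta$ as for $H_T$, your own bookkeeping would claim the range $q<\eta$, which the argument does not support.

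\textbf{The remaining steps are routine once $G$ is identified, except for short times.} Put $s=\log(1/d(u,v))$. On $\{G\le s/4\}$ one has $d_1(u_1,v_1)\le e^{-s/2}$ for large $s$. On the complement, Cauchy--Schwarz with $\E\sup_{t\le1}V_U(u_t,v_t)\le CV_U(u,v)$ and Markov's inequality give a contribution $CV_U(u,v)\,s^{-\vartheta/2}$. This is dominated by $V_U(u,v)(1+s)^{-q/2}$ for $q<\vartheta$. No higher moments of $U$ and no Hölder tuning are needed.

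Nor is any change of exponent possible. Since $d_{q,1}$ is decreasing in $q$, obtaining \eqref{eq:main-explicit-ga-H1} with the same $q$ on both sides forces the comparison $d_q\to d_{q,1}$ with the same $q$. The restriction on $q$ is the moment order of $G$, not an exponent loss.

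Finally, the range $0\le t\le1$ is not handled by $P_tU\le CU$ alone. When $u,v$ are $H^1$-close, $d_{q,1}(u,v)$ is small, so you also need the short-time $H^1$ stability $\sup_{t\le1}\|u_t-v_t\|_{H^1}\le Ce^{G}\|u-v\|_{H^1}$. This uses the same clock and the same good/bad split.
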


The proofs for \Cref{thm:main} and \Cref{thm:exponential-mixing} are given in \Cref{sec:assembly}.

\begin{remark}\label{rem:asymptotic-regularization-comparison}
The asymptotic regularization induced by the logarithmic gauge above is weaker than the asymptotic regularization appearing in the literature.  The
comparison is most transparent at the level of moduli of continuity.
Schematically, the three regimes are
\begin{center}
\begin{tabular}{c|c|c}
 mechanism & representative asymptotic estimate
 & local modulus $\omega(r)$\\
\hline
gradient asymptotic strong Feller
&
$\displaystyle
 [P_t f]_{\mathrm{Lip}}
 \lesssim e^{-\gamma t}[f]_{\mathrm{Lip}}+\|f\|_\infty$
&
$\displaystyle r$
\\[3mm]
damped wave
&
$\displaystyle
 [P_tf]_{\bar d_{\mathrm W}}
 \lesssim e^{-\gamma t}[f]_{\bar d_{\mathrm W}}
     +\|f\|_\infty$
&
$\displaystyle r^{\beta_{\mathrm W}}$
\\[3mm]
primitive equations
&
$\displaystyle
 [P_tf]_{\bar d_q}
 \lesssim e^{-\gamma t}[f]_{\bar d_q}
     +\|f\|_\infty$
&
$\displaystyle
 [1+\log(1/r)]^{-q/2}$
\end{tabular}
\end{center}
Here the first line represents the asymptotic strong Feller gradient
estimate of \cite{HM06,HM08,HM}.  In contrast, the latter two estimates are
consequences of Wasserstein contraction rather than inputs to its
proof.  In the wave setting of \cite{LLWave}, the spectral-gap cost
$\bar d_{\mathrm W}$ is locally equivalent, on bounded Lyapunov sets,
to a fractional power $r^{\beta_{\mathrm W}}$ of the weak phase-space
distance.  Thus the resulting asymptotic regularization is of
H\"older type. For the present equation, put
\begin{align*}
 \bar d_q=1\wedge d_{q,1},\qquad
 [f]_{\bar d_q}
 =\sup_{u\ne v}\frac{|f(u)-f(v)|}{\bar d_q(u,v)}.
\end{align*}
The spectral gap \eqref{eq:main-explicit-ga-H1} immediately gives
\begin{align}\label{eq:pe-log-asymptotic-regularization}
 [P_tf]_{\bar d_q}
 \le Ce^{-\gamma t}[f]_{\bar d_q}+2\|f\|_\infty.
\end{align}
Moreover, on every fixed Lyapunov sublevel
$U(u)+U(v)\le R$, if $r=d_1(u,v)=1\wedge\|u-v\|_{H^1}\downarrow0$, then
\begin{align}\label{eq:pe-local-log-modulus}
 \bar d_q(u,v)
 \asymp_R
 [1+\log(1/r)]^{-q/2}.
\end{align}
Hence \eqref{eq:pe-log-asymptotic-regularization} is a fractional
logarithmic analogue of the H\"older asymptotic regularization in the
wave equation. In this sense the three mechanisms correspond, at the level of local
moduli, to
\begin{align*}
 r
 \quad\longrightarrow\quad
 r^{\beta_{\mathrm W}}
 \quad\longrightarrow\quad
 [1+\log(1/r)]^{-q/2}.
\end{align*}
Since
\begin{align*}
 r\ll r^{\beta_{\mathrm W}}
 \ll [1+\log(1/r)]^{-q/2}
 \qquad(r\downarrow0),
\end{align*}
these represent successively weaker regularity scales.

\end{remark}

\section{A quantitative stable--compact criterion}\label{sec:coupling}

In this section we prove an abstract spectral gap criterion based on a quantitative stable--compact coupling, tailored to systems such as the primitive equations where the free amplification has only weak logarithmic moments but the compact component admits quantitative compensation. Compared with \cite{LLWave,LLSNLS}, Lyapunov recurrence and irreducibility are unchanged, while the compensation must now improve with the incoming scale. The criterion is formulated solely in terms of a Markov semigroup, a weak comparison metric and an exact local coupling.

For Gaussian drivers, the scale-dependent compensation on a compact
regular core follows from quantitative control of the Malliavin
operator beyond mere density of its range; the corresponding
compact-core mechanism is given in \Cref{prop:compact-source-repair}.
In the primitive-equation application, positive-time regularization
first prepares such a core from the recurrent Lyapunov sublevel.

Let $\mathcal X$ be a Polish space and let $d:\mathcal X\times\mathcal X\to[0,1]$ be a bounded
continuous metric. The metric $d$ is used only for comparison and need
not generate the Polish topology of $\mathcal X$. Let $(P_t)_{t\ge0}$ be a Markov
semigroup on $\mathcal X$ with Borel transition kernels, and let
$U:\mathcal X\to[1,\infty)$ be continuous. 
We assume throughout that $(P_t)$ has at least one invariant probability. Fix $T>0$ and write $P=P_T$.

The first assumption provides recurrence and the short-time Lyapunov
control used to weight the local comparison and return the dynamics to a
recurrent sublevel.

\begin{assumption}\label{ass:gap-drift}
There are $0<\lambda<1$ and $b\ge1$ such that
\begin{align}\label{eq:gap-drift}
 PU\le\lambda U+b.
\end{align}
Moreover, for every $T_0<\infty$ there is $C_{T_0}<\infty$ such that
\begin{align}\label{eq:gap-shorttime-input}
 \sup_{0\le t\le T_0}P_tU(u)\le C_{T_0}U(u).
\end{align}
\end{assumption}

The second assumption controls the amplification of a small separation
under the uncorrected dynamics.  Its fractional logarithmic moment is the
weak quantitative feature that determines the comparison gauge used below.

\begin{assumption}
\label{ass:gap-amplification}
There is $0<\theta<1$ such that, for every $T_0<\infty$ and every
$u,v\in\mathcal X$, the two trajectories admit a jointly Borel common
realization $(X_t,Y_t)_{0\le t\le T_0}$ for which there is a
nonnegative random variable $H_{T_0}$ satisfying
\begin{align}\label{eq:structural-clock}
 d(X_t,Y_t)&\le e^{H_{T_0}}d(u,v),
 \qquad 0\le t\le T_0,\notag\\
 \E H_{T_0}^{\theta}
 &\le C_{T_0}\min\{U(u),U(v)\}.
\end{align}
We call $H_{T_0}$ the free amplification clock.
\end{assumption}

The third assumption records the local gain supplied by compensation.
On a recurrent sublevel it improves the incoming scale with uniformly
positive probability, while staying inside the free-amplification envelope
outside an event of algebraically small probability.

\begin{assumption}
\label{ass:gap-local-repair}
Fix $R>\frac{4b}{1-\lambda}$. There are
$p_*,a_*,\varkappa,r_*,C_*>0$ such that, whenever
\begin{align*}
 \min\{U(u),U(v)\}\le R,\qquad
 r=d(u,v)\in(0,r_*],
\end{align*}
there is a jointly Borel exact coupling $(X',Y')$ of
$P(u,\cdot)$ and $P(v,\cdot)$ such that
\begin{align}\label{eq:gap-repair-input}
 \PP\{d(X',Y')\le r^{1+a_*}\}&\ge p_*,\\\label{eq:gap-repair-input-2}
 \PP\{d(X',Y')>e^{C_*+H_T}r\}&\le C_*r^\varkappa,
\end{align}
where $H_T$ is the amplification clock satisfying
\Cref{ass:gap-amplification}.
\end{assumption}

The final assumption on uniform irreducibility globalizes the local mechanism.  It supplies a
common region that can be reached uniformly from every Lyapunov sublevel.

\begin{assumption}\label{ass:gap-accessibility}
There is $u_*\in\mathcal X$ such that, for every $R_1<\infty$ and every
neighborhood $\mathcal O$ of $u_*$ in the Polish topology of $\mathcal X$, there are
$n=n(R_1,\mathcal O)\ge1$ and $p=p(R_1,\mathcal O)>0$ satisfying
\begin{align}\label{eq:abstract-accessibility}
 \inf_{U(u)\le R_1}P^n(u,\mathcal O)\ge p.
\end{align}
\end{assumption}

For $0<q<\theta$ set $V_U(u,v)=1+U(u)+U(v)$ and define
\begin{align}\label{eq:abstract-explicit-cost}
 d_q(u,v)=\min\left\{\sqrt{V_U(u,v)},\,
 \frac{V_U(u,v)}
 {[1+\log(1/d(u,v))]^{q/2}}\right\},
 \qquad u\ne v,\qquad d_q(u,u)=0.
\end{align}
\begin{theorem}[Quantitative stable--compact spectral gap]
\label{thm:structural-gap}
Suppose that \Cref{ass:gap-drift,ass:gap-amplification,ass:gap-local-repair,ass:gap-accessibility} hold. Then the invariant probability $\mu$ is unique and for every $0<q<\theta$ there are $C,\gamma>0$ such that 
\begin{align}\label{eq:abstract-explicit-gap}
 \mathcal W_{d_q}(\mu_1 P_t,\mu_2 P_t)
 \le Ce^{-\gamma t}\mathcal W_{d_q}(\mu_1,\mu_2),\qquad t\ge0,
\end{align}
for $\mu_1,\mu_2\in\mathcal{P}(\mathcal X)$ making the right-hand side finite. 
\end{theorem}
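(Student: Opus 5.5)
We follow the weak Harris architecture of \cite{HM08,HMS}. The goal is the one-step contraction
\begin{align*}
 \mathcal W_{\tilde d}(P^n(u,\cdot),P^n(v,\cdot))\le \tfrac12\,\tilde d(u,v)
\end{align*}
for a suitable multiple $n$ of $T$, with $\tilde d$ a distance-like function equivalent to $d_q$. By the standard gluing argument, $\mathcal W_{\tilde d}$ of the initial laws is realized by a measurable near-optimal coupling, and the one-step bound is then iterated. Interpolation to intermediate times $t$ uses \eqref{eq:gap-shorttime-input} together with the amplification bound \eqref{eq:structural-clock}. Uniqueness of $\mu$ follows from the contraction applied to two invariant measures, which integrate $U$ by \eqref{eq:gap-drift}. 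We take $\tilde d=\sqrt{d_\ell\,(1+\beta U(u)+\beta U(v))}$-type products, or directly $d_q$, where $d_\ell(u,v)=\min\{1,\epsilon^{-1}\ell(d(u,v))\}$ and $\ell(r)=[1+\log(1/r)]^{-q}$. The proof splits into three regimes.

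\emph{Regime 1: large weight.} If $U(u)+U(v)>R$, the drift \eqref{eq:gap-drift} gives $P(U(u)+U(v))\le\lambda(U(u)+U(v))+2b$. Since $R>4b/(1-\lambda)$, the weight factor contracts by a factor strictly below one. The comparison factor, bounded by $1$, is controlled by Cauchy--Schwarz under the synchronous coupling.

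\emph{Regime 2: sublevel, close points.} Here $\min U\le R$ and $r=d(u,v)\le r_0$ is small, and we use \Cref{ass:gap-local-repair}. On the good event, of probability at least $p_*$, we have $\ell(d(X',Y'))\le\ell(r^{1+a_*})\le(1+a_*)^{-q}\ell(r)(1+o(1))$. On the complementary event we use $d(X',Y')\le e^{C_*+H_T}r$. Split further according to whether $H_T\le\frac12\log(1/r)$. If so, then $d(X',Y')\le e^{C_*}r^{1/2}$ and $\ell(d(X',Y'))\le 2^q\ell(r)(1+o(1))$. If not, Markov's inequality and \eqref{eq:structural-clock} give probability at most $C R[\log(1/r)]^{-\theta}$; since $q<\theta$, this is $o(\ell(r))$. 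The event where \eqref{eq:gap-repair-input-2} itself fails has probability $C_*r^\varkappa=o(\ell(r))$.

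Plain averaging would give $p_*(1+a_*)^{-q}+(1-p_*)2^q$, which need not be below one. This step, combining the amplification loss and the repair gain, is the main obstacle. We resolve it by iteration: apply the repair $k$ times along a sequence staying in the sublevel, using \eqref{eq:gap-shorttime-input} and a Markov-type bound to return to sublevel $R$ with high probability. Alternatively, we use the fact that the clock on $[0,T]$ has $\E H_T^\theta$ small when measured against $\log(1/r)$. The cleanest implementation replaces $2^q$ by $(1+\epsilon')$: one splits at $H_T\le\epsilon''\log(1/r)$, giving $d(X',Y')\le e^{C_*}r^{1-\epsilon''}$ and $\ell(d(X',Y'))\le(1-\epsilon'')^{-q}\ell(r)(1+o(1))$, while the exceptional event costs $C R(\epsilon''\log(1/r))^{-\theta}=o(\ell(r))$ because $q<\theta$. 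Choosing $\epsilon''$ small then yields the factor $p_*(1+a_*)^{-q}+(1-p_*)(1-\epsilon'')^{-q}<1$ for $r$ small. The weight factor of $d_q$ is handled by Cauchy--Schwarz and the drift: the square root in $d_q$, together with $q/2$, is exactly what permits $\sqrt{\ell}\cdot V_U$ bounds via H\"older.

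\emph{Regime 3: sublevel, separated points.} Here $\min U\le R$ and $d(u,v)>r_0$, so the goal is $d_\ell$-smallness of the sublevel. We use \Cref{ass:gap-accessibility}: both chains reach a neighbourhood $\mathcal O$ of $u_*$, of $d$-diameter at most $r_0/2$, with probability at least $p$. The drift \eqref{eq:gap-drift} ensures that $U(u)\le R$ together with the other point's large weight is handled by Regime 1. From $\mathcal O$ we apply Regime 2 repeatedly. This yields $\mathcal W_{d_\ell}(P^n(u,\cdot),P^n(v,\cdot))\le 1-p'$.

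\emph{Assembly.} Combining the three regimes as in \cite[Thm.~4.8]{HMS} gives contraction in $\sqrt{d_\ell(1+\beta U+\beta U)}$ for suitable $\epsilon,\beta$. This cost is comparable to $d_q$ up to constants, which proves \eqref{eq:abstract-explicit-gap}.
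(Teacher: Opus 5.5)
There is a genuine gap, and it lies in the choice of cost. You use the Hairer--Mattingly--Scheutzow form $\tilde d=\sqrt{d_\ell(1+\beta U(u)+\beta U(v))}$ with an unweighted gauge $d_\ell=\min\{1,\epsilon^{-1}\ell(d)\}$. That architecture needs $d_\ell$ to contract, or at least not expand beyond a factor the drift can absorb, on all pairs with $d_\ell<1$, uniformly in $U$. The assumptions here do not give this, for two reasons. The repair of \Cref{ass:gap-local-repair} is only available when $\min\{U(u),U(v)\}\le R$. The amplification tail in \Cref{ass:gap-amplification} is weighted by $\min\{U(u),U(v)\}$.

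Take a pair with $U(u),U(v)\approx S$ large and $s=\log(1/r)$ such that $d_\ell(u,v)\approx\epsilon^{-1}s^{-q}<1$. The synchronous coupling only yields
\[
\E d_\ell(X',Y')\lesssim(1-\epsilon'')^{-q}d_\ell(u,v)+CS(\epsilon''s)^{-\theta}.
\]
The second term divided by $d_\ell(u,v)$ is of order $\epsilon S s^{q-\theta}$, which is unbounded in the range $\epsilon^{-1/q}\lesssim s\ll S^{1/(\theta-q)}$. The drift only shrinks the weight by a fixed factor near $\lambda$, so your Regime~1 argument (``the comparison factor, bounded by $1$'') works only when $d_\ell(u,v)=1$. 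For the same reason the nonexpansion $\mathcal W_{d_\ell}(P(u,\cdot),P(v,\cdot))\le d_\ell(u,v)$ is unavailable, and you need it to carry the small-set bound of Regime~3 to later times.

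Finally, the closing identification is false. One has $\tilde d\asymp\min\{\sqrt{V_U},\sqrt{V_U\ell}\}$, whereas $d_q=\min\{\sqrt{V_U},V_U\sqrt{\ell}\}$. These differ by a factor $\sqrt{V_U}$ whenever $V_U\ell<1$, so even a contraction in $\tilde d$ would not give \eqref{eq:abstract-explicit-gap}.

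The missing idea is to put the Lyapunov weight, with a large additive constant, inside the truncated gauge:
\[
\delta(u,v)=1\wedge\bigl[(c_0+U(u)+U(v))\ell(d(u,v))\bigr],\qquad \ell(r)=\bigl(L/(L+\log(1/r))\bigr)^q .
\]
Write $F=c_0+U(u)+U(v)$. Then $\delta<1$ forces $s>L(F^{1/q}-1)$. Hence the tail $\min\{U(u),U(v)\}s^{-\theta}$ is at most $2^qL^{-\theta}\delta(u,v)$, uniformly in $U$; this is exactly where $q<\theta$ enters. Large-$S$ pairs with $\delta<1$ then contract through the drift alone under the synchronous coupling.

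On the sublevel, the repair event $G$ is correlated with the random weight $F(X',Y')$. The paper handles this using $F\ge c_0$: with $c_0=4b/(p_*h)$,
\[
\E[(1-h\mathbf 1_G)F(X',Y')]\le c_0+\lambda S+2b-hp_*c_0\le\rho_0F(u,v).
\]
Combined with your $(1-\epsilon'')^{-q}$ splitting, this gives $\mathcal W_\delta(P(u,\cdot),P(v,\cdot))\le\alpha_0\delta(u,v)$ on $\{\delta<1\}$ and nonexpansion everywhere. The three-regime assembly then runs for $\sqrt{\delta(1+\beta_0U(u)+\beta_0U(v))}$. This cost is equivalent to $d_q$ precisely because $d_q\asymp\sqrt{\min\{1,V_U\ell\}\,V_U}$.
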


\subsection{Proof of the spectral gap criterion}
This subsection is devoted to proving \Cref{thm:structural-gap}. We first deduce the small set consequence from the uniform irreducibility condition. 

\begin{lemma}
\label{lem:accessibility-small set}
Assume \Cref{ass:gap-accessibility}.  For every
$R_1<\infty$ there is $R_0<\infty$ such that, for every
$\varepsilon>0$, some $n_\varepsilon\ge1$ and $p_\varepsilon>0$ admit an exact
coupling of $P^{n_\varepsilon}(u,\cdot)$ and $P^{n_\varepsilon}(v,\cdot)$ with
\begin{align}\label{eq:gap-preparation}
 \PP\{d(X',Y')\le\varepsilon,\ U(X')+U(Y')\le R_0\}
 \ge p_\varepsilon
 \qquad\text{if }U(u)+U(v)\le R_1.
\end{align}
The constant $R_0$ is independent of $\varepsilon$.
\end{lemma}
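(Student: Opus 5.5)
The natural route is the independent coupling: run the two chains independently from $u$ and $v$ and show that both land in a small $d$-ball with controlled Lyapunov weight with uniformly positive probability. The catch is that \Cref{ass:gap-accessibility} only gives a neighborhood $\mathcal O$ of $u_*$ in the Polish topology, and such an $\mathcal O$ need not control $U$. The plan is therefore to first go near $u_*$ and then spend one extra step to bring $U$ down, using the drift.

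\textbf{Choice of $R_0$.} Since $U$ is continuous, there is a Polish neighborhood $\mathcal O_0$ of $u_*$ on which $U\le U(u_*)+1=:K$. Set
\begin{align*}
 R_0=4(\lambda K+b)/\pi_0 ,
\end{align*}
with $\pi_0\in(0,1)$ fixed, say $\pi_0=1/2$. Both $K$ and $R_0$ are independent of $\varepsilon$ (and even of $R_1$).

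\textbf{Choice of neighborhood.} Given $\varepsilon>0$, continuity of $d$ in the Polish topology gives a neighborhood $\mathcal O_\varepsilon\subset\mathcal O_0$ of $u_*$ with $d(x,u_*)\le\varepsilon/4$ on $\mathcal O_\varepsilon$.

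\textbf{Why one more step is needed.} Landing in $\mathcal O_\varepsilon$ gives $U\le K$ and $d(X,Y)\le\varepsilon/2$ for two independent copies. The difficulty is that we cannot take $n_\varepsilon$ equal to the accessibility time alone and still guarantee $U(X')+U(Y')\le R_0$ and $d(X',Y')\le\varepsilon$ simultaneously. On $\mathcal O_\varepsilon$ we already have $U\le K\le R_0/2$, so $R_0$ suffices. Thus \Cref{ass:gap-accessibility} applied with $R_1$ and $\mathcal O_\varepsilon$ yields $n$ and $p$ with $P^n(u,\mathcal O_\varepsilon)\ge p$ whenever $U(u)\le R_1$, provided $R_0\ge 2K$. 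So we take $R_0=2K+1$, and the drift step is unnecessary.

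\textbf{The coupling.} For $U(u)+U(v)\le R_1$ both $U(u)$ and $U(v)$ are at most $R_1$. Take $(X',Y')$ independent with laws $P^n(u,\cdot)$ and $P^n(v,\cdot)$; this is an exact coupling and is jointly Borel as a product measure. Then
\begin{align*}
 \PP\{X',Y'\in\mathcal O_\varepsilon\}\ge p^2=:p_\varepsilon ,
\end{align*}
and on this event $d(X',Y')\le d(X',u_*)+d(u_*,Y')\le\varepsilon/2$ by the triangle inequality for the metric $d$, together with $U(X')+U(Y')\le 2K<R_0$. This gives \eqref{eq:gap-preparation} with $n_\varepsilon=n(R_1,\mathcal O_\varepsilon)$.

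\textbf{Main obstacle.} The only delicate point is the uniformity of $R_0$ in $\varepsilon$. It is handled by nesting every $\mathcal O_\varepsilon$ inside the fixed neighborhood $\mathcal O_0$, on which continuity of $U$ bounds $U$, so that only $n_\varepsilon$ and $p_\varepsilon$ depend on $\varepsilon$.
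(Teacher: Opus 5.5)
Your argument is correct and is essentially the paper's proof: bound $U$ by continuity on a fixed neighborhood $\mathcal O_0$ of $u_*$, nest $\mathcal O_\varepsilon\subset\mathcal O_0$ with small $d$-diameter, and use the product coupling of $P^{n_\varepsilon}(u,\cdot)$ and $P^{n_\varepsilon}(v,\cdot)$ to get probability at least $p^2$, with $R_0$ of order $2\sup_{\mathcal O_0}U$. The only blemish is expository: your first, drift-based choice $R_0=4(\lambda K+b)/\pi_0$ and the ``one more step'' discussion are replaced by your final choice $R_0=2K+1$, so they should simply be deleted.
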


\begin{proof}
By continuity of $U$ at $u_*$, choose a neighborhood
$\mathcal O_0$ of $u_*$ and $M<\infty$ such that
$\sup_{\mathcal O_0}U\le M$. Since $d$ is continuous and
$d(u_*,u_*)=0$, for every $\varepsilon>0$ there is a neighborhood
$\mathcal O_\varepsilon\subset\mathcal O_0$ of $u_*$ such that
$\operatorname{diam}_d\mathcal O_\varepsilon\le\varepsilon$.
By \eqref{eq:abstract-accessibility}, there are $n_\varepsilon\ge1$ and
$p=p(\varepsilon)>0$ such that
\begin{align*}
 P^{n_\varepsilon}(u,\mathcal O_\varepsilon)\ge p,
 \quad \text{uniformly for } U(u)\le R_1.
\end{align*}
Under the product coupling of
$P^{n_\varepsilon}(u,\cdot)$ and $P^{n_\varepsilon}(v,\cdot)$, the event
$\{X',Y'\in\mathcal O_\varepsilon\}$ has probability at least $p^2$.
On this event,
\begin{align*}
 d(X',Y')\le\varepsilon,\qquad U(X')+U(Y')\le2M.
\end{align*}
Thus \eqref{eq:gap-preparation} holds with $R_0=2M$ and
$p_\varepsilon=p^2$.
\end{proof}

For parameters $c_0,L,\beta_0>0$ and $0<q<\theta$ to be chosen below,
define
\begin{align}\label{eq:guage}
\begin{split}
 \ell(0)&=0,\,
 \ell(r)=\left(\frac{L}{L+\log(1/r)}\right)^q\quad(0<r\le1),\\
 \delta(u,v)&=1\wedge
 \bigl[(c_0+U(u)+U(v))\ell(d(u,v))\bigr].
\end{split}
\end{align}

The next lemma provides the $d$-contraction and nonexpansion property. Let $\lambda, b$ be as in \Cref{ass:gap-drift} and $p_*,a_*,\varkappa,r_*,C_*$ be as in
\Cref{ass:gap-local-repair}. 
\begin{lemma}
\label{lem:gap-local-coupling}
Suppose that \Cref{ass:gap-drift,ass:gap-amplification,ass:gap-local-repair} hold.  Then for every $0<q<\theta$, there are
$c_0,L>0$ and $\alpha_0\in(0,1)$ such that
\begin{align}\label{eq:gap-delta-local}
 \mathcal W_\delta(P(u,\cdot),P(v,\cdot))&\le\alpha_0\delta(u,v)
 \quad\text{if }\delta(u,v)<1,\\
 \mathcal W_\delta(P(u,\cdot),P(v,\cdot))&\le\delta(u,v)
 \quad\text{for all }u,v\in\mathcal X.\label{eq:gap-delta-nonexpansion}
\end{align}
\end{lemma}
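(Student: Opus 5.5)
The plan is to reduce \eqref{eq:gap-delta-nonexpansion} to \eqref{eq:gap-delta-local}. Since $\delta\le1$, every coupling gives $\mathcal W_\delta(P(u,\cdot),P(v,\cdot))\le1$. So if $\delta(u,v)=1$ there is nothing to prove, and if $\delta(u,v)<1$ the bound follows from \eqref{eq:gap-delta-local} with $\alpha_0<1$. The work is therefore \eqref{eq:gap-delta-local}. Write $S=U(u)+U(v)\ge2$ and $r=d(u,v)$. The parameters will be chosen in the order
\begin{align*}
 c_0\gg b,\qquad S_0\gg c_0,\qquad \text{then } R=S_0 \text{ in \Cref{ass:gap-local-repair}},\qquad \text{then } L.
\end{align*}
This uses the reading that \Cref{ass:gap-local-repair} may be invoked for an arbitrarily large sublevel $R$, with constants $p_*,a_*,\varkappa,r_*,C_*$ depending on $R$. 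The obtained $r_*$ may then be shrunk further. Finally $L$ is taken large compared with $\log(1/r_*)$, so that $\ell(r)>1/2$ for $r\ge r_*$. Because $S\ge2$, the condition $\delta(u,v)<1$ then forces $r<r_*$ and $\ell(r)<1/S$. Throughout we use $\log(1/r)\ge\log(1/r_*)$, which is comparable to $L$ up to a fixed ratio $K$.

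\emph{Large weight, $S\ge S_0$.} Use the free common realization of \Cref{ass:gap-amplification} at time $T$, and let $G=\{H_T\le\varepsilon\log(1/r)\}$. On $G$ we have $d(X,Y)\le r^{1-\varepsilon}$, hence $\ell(d(X,Y))\le(1-\varepsilon)^{-q}\ell(r)$. The drift \eqref{eq:gap-drift} gives $\E[c_0+U(X)+U(Y)]\le c_0+\lambda S+2b$. On $G^c$ we bound $\delta\le1$, and Markov's inequality gives
\begin{align*}
 \PP(G^c)\le C\min\{U(u),U(v)\}\,[\varepsilon\log(1/r)]^{-\theta}.
\end{align*}
Since $q<\theta$, this is at most $\eta\, S\,\ell(r)$ once $\log(1/r)/L$ is large, which is arranged through $r_*$ with $L$ fixed relative to $\log(1/r_*)$. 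Summing the two contributions,
\begin{align*}
 \E\,\delta(X,Y)\le\bigl[(1-\varepsilon)^{-q}(c_0+\lambda S+2b)+\eta S\bigr]\ell(r).
\end{align*}
This is at most $\alpha_0(c_0+S)\ell(r)$ for some $\alpha_0<1$, because $S\ge S_0\gg c_0+b$ and $\varepsilon,\eta$ are small.

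\emph{Moderate weight, $S<S_0$.} Here $\min U\le R$, so the exact coupling $(X',Y')$ of \Cref{ass:gap-local-repair} is available. We split the sample space into three pieces.
\begin{itemize}
 \item On $A=\{d(X',Y')\le r^{1+a_*}\}$,
 \begin{align*}
  \ell(d(X',Y'))\le\Bigl(\tfrac{L+\log(1/r)}{L+(1+a_*)\log(1/r)}\Bigr)^q\ell(r)\le\rho\,\ell(r),
 \end{align*}
 with $\rho=\rho(a_*,K)<1$ uniform.
 \item On $A^c\cap\{d(X',Y')\le e^{C_*+H_T}r\}\cap G$, we get $\ell(d(X',Y'))\le(1-2\varepsilon)^{-q}\ell(r)$ once $C_*\le\varepsilon\log(1/r)$.
 \item On the remaining event, $\delta\le1$ and the probability is at most $C_*r^\varkappa+\PP(G^c)$. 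Both terms are $\ll\ell(r)$.
\end{itemize}
The weight $W=c_0+U(X')+U(Y')$ is correlated with $A$, so we do not factor it out. Instead, $c_0\gg S_0$ makes $W$ nearly constant relative to $c_0+S$: we have $\E W\le c_0+\lambda S+2b\le(1+\eta)(c_0+S)$ and $\E[W\mathbf 1_A]\ge c_0p_*\ge\tfrac12p_*(c_0+S)$. Combining the three pieces,
\begin{align*}
 \E\,\delta(X',Y')\le\Bigl[(1+\eta)(1-2\varepsilon)^{-q}-\tfrac12p_*\bigl((1-2\varepsilon)^{-q}-\rho\bigr)+\eta\Bigr](c_0+S)\ell(r).
\end{align*}
For $\varepsilon,\eta$ small this gives $\alpha_0<1$.

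The main obstacle is the bookkeeping of constants. The large-weight case wants $c_0$ small compared with $S$, while the moderate case wants $c_0$ large compared with $S$ so that the correlated weight can be handled without H\"older. This is resolved by the ordering $c_0\gg b$, $S_0\gg c_0$, applying the repair assumption at level $R=S_0$. A second delicate point is that $L$ must dominate $\log(1/r_*)$, so that $\delta<1$ forces $r<r_*$, while $\log(1/r)/L$ must still be bounded below, so that the superlinear gain $\rho$ and the tail comparison $[\log(1/r)]^{-\theta}\ll\ell(r)$ stay uniform. I would first check that a fixed ratio $K=L/\log(1/r_*)$ serves both purposes.
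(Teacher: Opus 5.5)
There is a genuine gap, and it sits in the constant bookkeeping that you yourself single out as the main obstacle. Your overall architecture matches the paper: the reduction of \eqref{eq:gap-delta-nonexpansion} to \eqref{eq:gap-delta-local}, the split into repair, free-amplification and exceptional events, the $(1-\varepsilon)^{-q}$ loss, and the tail comparison via $q<\theta$. The problem is that the ordering you propose does not close the argument.

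In the moderate-weight case, the second inequality in $\E[W\mathbf 1_A]\ge c_0p_*\ge\frac12p_*(c_0+S)$ is equivalent to $c_0\ge S$. You need it for every $S<S_0$, so it requires $c_0\ge S_0$; you even write $c_0\gg S_0$ at that point. Your ordering instead takes $S_0\gg c_0$. That is what you use in the large-weight case to absorb $(1-\varepsilon)^{-q}c_0$ into $\alpha_0(c_0+S)$, since $(1-\varepsilon)^{-q}>1>\alpha_0$. The two requirements are incompatible.

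There is also a circularity. Making your moderate-case coefficient less than one forces $\eta\lesssim p_*(1-\rho)$, hence $c_0\gtrsim b/(p_*(1-\rho))$. Here $p_*$ and $a_*$ (through $\rho$) are the repair constants at level $R=S_0$, which you fix only after $c_0$. So $S_0$ must dominate a quantity built from $p_*,a_*$ evaluated at $R=S_0$ itself. That need not be possible if $p_*$ degrades as $R$ grows; in the application $p_0=\gamma_1(\Gamma_0)\gamma_1(\Gamma_1)$ depends on the core $\mathscr D_R$. Moreover, \Cref{ass:gap-local-repair} is formulated at one fixed level $R$, and the lemma should not need more.

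Both problems come from the bound $\E W\le(1+\eta)(c_0+S)$. It discards the Lyapunov contraction in the repair regime, so the only gain left is the repair gain on the $c_0$ part of $W$. The paper keeps the drift. With $h=\min\{1-(1+a_*/2)^{-q},1-\lambda\}$ one has $\ell(d(X',Y'))\le(1-h)\ell(r)$ on $A$, and
\begin{align*}
 \E[(1-h\mathbf 1_A)W]\le c_0+\lambda S+2b-hp_*c_0\le\rho_0(c_0+S),
 \qquad c_0=\frac{4b}{p_*h},\quad\rho_0=1-\frac{p_*h}{2}\ge\lambda,
\end{align*}
for every $S\le2R$, with no restriction relating $S$ and $c_0$.

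In this version $p_*,a_*$ belong to the single given $R>4b/(1-\lambda)$ and are known before $c_0$ is chosen. The regime $S>2R$ then closes by the synchronous coupling and drift alone, because $(1-\rho_0)c_0=2b$ does not depend on $p_*$ and $(\rho_0-\lambda)S\ge(1-\lambda)S/2>4b$.

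If you prefer to keep your crude bound, the ordering must be reversed. Fix $S_0=2R$ first, then take $c_0\gg S_0$, and choose $\alpha_0$ so close to $1$ that $(1-\alpha_0)c_0\lesssim(1-\lambda)S_0$ in the large-weight case.

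The rest of your argument then goes through. Note also that $\delta(u,v)<1$ already gives $\log(1/r)>L\bigl((c_0+S)^{1/q}-1\bigr)\ge L$, so no fixed ratio between $L$ and $\log(1/r_*)$ is needed.
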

\begin{proof}
Since $\delta\le1$, once \eqref{eq:gap-delta-local} is proved, 
\eqref{eq:gap-delta-nonexpansion} follows immediately. Thus it remains to prove the strict contraction for
$\delta(u,v)<1$. Fix $0<q<\theta$ and set
\begin{align}\label{eq:gap-parameters}
 h=\min\{1-(1+a_*/2)^{-q},1-\lambda\},\qquad c_0=\frac{4b}{p_*h},\qquad
 \rho_0=1-\frac{p_*h}{2}.
\end{align}
Write $F(u,v)=c_0+U(u)+U(v)$ and $S=U(u)+U(v)$.

\emph{Step 1: Weighted contraction.}
Suppose $\delta(u,v)<1$, and put $r=d(u,v)$ and
$s=\log(1/r)$. Since the truncation in $\delta$ is inactive and $U\geq 1$,
\begin{align}\label{eq:gap-log-lower}
 s>L\bigl(F(u,v)^{1/q}-1\bigr)\ge L.
\end{align}
We increase $L$ if necessary to ensure $r<r_*$.  If $S\le2R$, then $\min\{U(u),U(v)\}\le R$ and by setting  
\begin{align*}
 G=\{d(X',Y')\le r^{1+a_*}\},\qquad
 E=\{d(X',Y')\le e^{C_*+H_T}r\},
\end{align*}
it follows from \Cref{ass:gap-local-repair} that $\PP(G)\ge p_*$ and $\PP(E^c)\le C_*r^\varkappa$. Since $F(X',Y')\ge c_0$, \eqref{eq:gap-parameters}, exactness
of the marginals and \Cref{ass:gap-drift} give
\begin{align}\label{eq:weighted-core-gain}
 \E[(1-h\mathbf1_G)F(X',Y')]
 &\le c_0+\lambda S+2b-hp_*c_0
 \le\rho_0F(u,v),
\end{align}
where we used  $(1-\rho_0)c_0=2b$ and $\lambda\le\rho_0$ from \eqref{eq:gap-parameters}. If $S>2R$, take $(X',Y')$ to be the synchronous coupling from
\Cref{ass:gap-amplification}, set $G=\varnothing$, and let
$E=\{d(X',Y')\le e^{C_*+H_T}r\}$. Then $\PP(E^c)=0$, while the
Lyapunov drift gives
\begin{align}\label{eq:weighted-drift-gain}
 \E F(X',Y')\le c_0+\lambda S+2b\le\rho_0F(u,v), 
\end{align}
where we used $\rho_0-\lambda\geq (1-\lambda)/2$ by \eqref{eq:gap-parameters} and   $4b-(1-\lambda)S/2<0$ as 
$S>2R$ and $R>4b/(1-\lambda)$ by \Cref{ass:gap-local-repair}. Thus in both regimes
\begin{align}\label{eq:weighted-unified-gain}
 \E[(1-h\mathbf1_G)F(X',Y')]\le\rho_0F(u,v),\qquad
 \PP(E^c)\le C_*r^\varkappa.
\end{align}

\emph{Step 2: Logarithmic gauge and amplification errors.}
On the repair event $G$,
\begin{align*}
 \frac{\ell(d(X',Y'))}{\ell(r)}
 \le\frac{\ell(r^{1+a_*})}{\ell(r)}
 =\left(\frac{L+s}{L+(1+a_*)s}\right)^q
 \le(1+a_*/2)^{-q}\le1-h
\end{align*}
by \eqref{eq:gap-parameters} and \eqref{eq:gap-log-lower}. Choose $\varepsilon\in(0,1)$ so that
$(1-\varepsilon)^{-q}\rho_0<1$, and increase $L$ so that
$C_*\le\varepsilon s/2$ whenever $s\ge L$. On
$E\cap\{H_T\le\varepsilon s/2\}$,
\begin{align*}
 d(X',Y')\le e^{\varepsilon s}r=r^{1-\varepsilon},\qquad
 \ell(d(X',Y'))\le \ell(r^{1-\varepsilon})\le(1-\varepsilon)^{-q}\ell(r).
\end{align*}
Consequently, on
$\mathbb G:=G\cup(E\cap\{H_T\le\varepsilon s/2\})$,
\begin{align*}
 \ell(d(X',Y'))
 \le(1-\varepsilon)^{-q}(1-h\mathbf1_G)\ell(r).
\end{align*}
Combining this estimate with \eqref{eq:weighted-unified-gain}, and using $\delta\le1$ on $\mathbb G^c$, Markov's inequality, and
\Cref{ass:gap-amplification}, we obtain
\begin{align*}
 \E\delta(X',Y')
 \le(1-\varepsilon)^{-q}\rho_0\delta(u,v)
 +C\min\{U(u),U(v)\}s^{-\theta}+Ce^{-\varkappa s}.
\end{align*}
The first coefficient is strictly below one. The remaining two terms
are negligible relative to the logarithmic cost. Indeed,
\begin{align*}
 \delta(u,v)=F(u,v)\left(\frac{L}{L+s}\right)^q,
\end{align*}
and \eqref{eq:gap-log-lower}, together with $q<\theta$, gives
\begin{align}\label{eq:amplification-comparison}
 \frac{\min\{U(u),U(v)\}s^{-\theta}}{\delta(u,v)}
 \le2^qL^{-\theta},\qquad
 \frac{e^{-\varkappa s}}{\delta(u,v)}
 \le Ce^{-\varkappa L/2}.
\end{align}
Choosing $L$ sufficiently large therefore gives some
$\alpha_0\in(0,1)$ such that
\begin{align*}
 \E\delta(X',Y')\le\alpha_0\delta(u,v).
\end{align*}
Taking the infimum over couplings proves
\eqref{eq:gap-delta-local}. 
\end{proof}

We now combine the previous two lemmas to deduce contraction for a skeleton kernel. Let $c_0,L,q,\alpha_0$ be as in
\Cref{lem:gap-local-coupling}, increasing $L$ if necessary so that
$r\le\sqrt{\ell(r)}$ for $0\le r\le1$. For $\beta_0>0$ define
\begin{align}\label{eq:gap-weighted-cost}
 \mathsf d(u,v)=\sqrt{\delta(u,v)
 (1+\beta_0U(u)+\beta_0U(v))}.
\end{align}
\begin{proposition}
\label{thm:weighted-spectral-gap}
Suppose that \Cref{ass:gap-drift,ass:gap-amplification,ass:gap-local-repair,ass:gap-accessibility} hold. Then there are
$\beta_0>0$, an integer $m\ge1$ and $\chi\in(0,1)$ such that
\begin{align}\label{eq:gap-skeleton-contraction}
 \mathcal W_{\mathsf d}(\mu_1 P^{mn},\mu_2 P^{mn})
 \le\chi^n\mathcal W_{\mathsf d}(\mu_1,\mu_2),\qquad n\ge0,
\end{align}
for probability measures making the right-hand side finite. Moreover,
\begin{align}\label{eq:gap-cost-bounds}
 d(u,v)\le\sqrt{\ell(d(u,v))}\le \mathsf d(u,v)
 \le\sqrt{1+\beta_0U(u)+\beta_0U(v)}.
\end{align}
Every invariant probability has $U$ moment at most
$\frac{b}{1-\lambda}$, and the invariant probability is unique.
\end{proposition}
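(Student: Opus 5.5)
The argument follows the weak Harris theorem of Hairer--Mattingly--Scheutzow \cite{HMS}. It is run with the distance-like function $\delta$ from \eqref{eq:guage} in place of a contracting metric, and its three inputs are already available. Lemma~\ref{lem:gap-local-coupling} shows that $\delta$ is contracting on $\{\delta<1\}$ and nonexpanding everywhere. Lemma~\ref{lem:accessibility-small set} shows that Lyapunov sublevels are $\delta$-small for some iterate. Assumption~\ref{ass:gap-drift} supplies the Lyapunov drift.

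\textbf{Step 1: the bounds \eqref{eq:gap-cost-bounds}.} With $L$ enlarged so that $r\le\sqrt{\ell(r)}$, the first inequality is immediate. For the second, $\delta\ge 1\wedge \ell(d)\ge\ell(d)$ because $c_0+U(u)+U(v)\ge 1$ and $\ell\le1$, and $1+\beta_0U(u)+\beta_0U(v)\ge1$. The third follows from $\delta\le1$.

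\textbf{Step 2: $\delta$-smallness of sublevels.} Fix $R_1$ large, to be tied below to $b/(1-\lambda)$ and $\beta_0$. Lemma~\ref{lem:accessibility-small set} gives $R_0$ independent of $\varepsilon$. Choose $\varepsilon$ so small that $(c_0+R_0)\ell(\varepsilon)<1$. On the good event we then have $\delta(X',Y')<1$, so Lemma~\ref{lem:gap-local-coupling} applies there. Iterating the nonexpansion \eqref{eq:gap-delta-nonexpansion} through Markov couplings after the meeting time (gluing lemma and measurable selection of near-optimal couplings) gives
\[
\mathcal W_\delta(P^{m}(u,\cdot),P^m(v,\cdot))\le 1-p_\varepsilon(1-\alpha_0)\quad\text{whenever } U(u)+U(v)\le R_1,
\]
with $m=n_\varepsilon+1$.

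\textbf{Step 3: contraction of $\mathsf d$ in three regimes.} Write $P^m$ for the skeleton. Iterating Lemma~\ref{lem:gap-local-coupling} shows that $\mathcal W_\delta$ is nonexpanding for $P^m$. Iterating \eqref{eq:gap-drift} gives $P^mU\le\lambda^mU+b/(1-\lambda)$. The three cases follow the standard argument of \cite[Thm.~4.8]{HMS}, using Cauchy--Schwarz on $\sqrt{\delta\,(1+\beta_0U+\beta_0U)}$:
\begin{itemize}
\item \emph{$\delta(u,v)<1$.} Iterated strict contraction gives $\mathcal W_\delta\le\alpha_0\delta$. Combined with the drift of the weight and a small $\beta_0$, this yields $\mathsf d$-contraction.
\item \emph{$\delta=1$ and $U(u)+U(v)\le R_1$.} Step~2 gives $\mathcal W_\delta\le 1-p_\varepsilon(1-\alpha_0)$. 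Because $\beta_0R_1$ is small, the weight factor is close to $1$.
\item \emph{$\delta=1$ and $U(u)+U(v)>R_1$.} The drift factor $\lambda^m+2b/((1-\lambda)R_1)$ controls the weight, and $\delta\le1$ holds trivially.
\end{itemize}
The parameters must be chosen in the right order: first $R_1>4b/(1-\lambda)$, then $\varepsilon$ and $m$, then $\beta_0$ small. One subtlety is that $m$ depends on $\varepsilon$ while $\alpha_0$ is fixed, so the order is consistent.

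\textbf{Step 4: passing to measures.} The pointwise bound $\mathcal W_{\mathsf d}(P^m(u,\cdot),P^m(v,\cdot))\le\chi\,\mathsf d(u,v)$ extends to measures by integrating against an optimal coupling and measurably selecting couplings. Iterating then gives \eqref{eq:gap-skeleton-contraction}. For any invariant $\mu$, apply the drift and truncate with $U\wedge K$ to get $\mu(U)\le\lambda\mu(U)+b$, that is $\mu(U)\le b/(1-\lambda)$. For two invariant $\mu_1,\mu_2$, the product coupling gives $\mathcal W_{\mathsf d}(\mu_1,\mu_2)<\infty$. The contraction forces $\mathcal W_{\mathsf d}=0$, hence $d=0$ a.s. by \eqref{eq:gap-cost-bounds}, so $\mu_1=\mu_2$.

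\textbf{Main obstacle.} The hardest point is that $\mathsf d$ and $\delta$ need not satisfy a triangle inequality. The gluing used in Steps~2--3 to chain one-step couplings must therefore go through the Markov property and measurable selection, not through metric arguments. The weighted square-root trick must also be checked by hand for this distance-like $\delta$.
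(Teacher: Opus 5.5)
Your proposal is correct and follows essentially the same route as the paper. It uses the same three regimes, namely local contraction on $\{\delta<1\}$, the $\delta$-small Lyapunov sublevel, and drift for large weight, combines them through Cauchy--Schwarz on $\mathsf d=\sqrt{\delta\,(1+\beta_0U(u)+\beta_0U(v))}$, and ends with the same moment and uniqueness argument. Two adjustments are needed, neither a gap: the extra local step in your Step~2 is unnecessary, since choosing $(c_0+R_0)\ell(\varepsilon)\le\tfrac12$ gives the small-set bound directly (as the paper does); and $m$ must be taken larger than $n_\varepsilon+1$ so that $\lambda^m+2b/((1-\lambda)R_1)<1$ in the third regime, which nonexpansion permits (the paper takes $\lambda^m\le 1/8$).
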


\begin{proof}
We first turn accessibility into a strict $\delta$-small set estimate.
Apply \Cref{lem:accessibility-small set} with
$R_1=4b/(1-\lambda)$ and choose $\varepsilon>0$ so small that
\begin{align*}
 (c_0+R_0)\ell(\varepsilon)\le\frac12.
\end{align*}
For the coupling in \eqref{eq:gap-preparation}, on the event there
specified one has $\delta(X',Y')\le1/2$, while always $\delta\le1$.
Hence
\begin{align}\label{eq:gap-intermediate-small set}
 \mathcal W_\delta
 \bigl(P^{n_\varepsilon}(u,\cdot),P^{n_\varepsilon}(v,\cdot)\bigr)
 \le1-\frac{p_\varepsilon}{2}
 \qquad\text{if }U(u)+U(v)\le\frac{4b}{1-\lambda}.
\end{align}
By \eqref{eq:gap-delta-nonexpansion}, subsequent applications of $P$
preserve this estimate.

Choose $m\ge n_\varepsilon$ so that $\lambda^m\le1/8$, put
$\widehat P=P^m$ and $\varepsilon_0=p_\varepsilon/2$. Iterating
\eqref{eq:gap-drift} gives
\begin{align}\label{eq:gap-block-drift}
 \widehat P U
 \le\lambda^mU+b\sum_{j=0}^{m-1}\lambda^j
 \le\frac18U+\frac{b}{1-\lambda}.
\end{align}
Together with \eqref{eq:gap-intermediate-small set},
\begin{align}\label{eq:gap-block-small set}
 \mathcal W_\delta
 \bigl(\widehat P(u,\cdot),\widehat P(v,\cdot)\bigr)
 \le1-\varepsilon_0
 \qquad\text{if }U(u)+U(v)\le\frac{4b}{1-\lambda}.
\end{align}
Moreover, if $\delta(u,v)<1$, the first estimate in
\eqref{eq:gap-delta-local}, followed by repeated use of
\eqref{eq:gap-delta-nonexpansion}, gives
\begin{align}\label{eq:gap-block-local}
 \mathcal W_\delta
 \bigl(\widehat P(u,\cdot),\widehat P(v,\cdot)\bigr)
 \le\alpha_0\delta(u,v).
\end{align}

We now combine the local, small set and Lyapunov mechanisms through
the weighted cost $\mathsf d$. Recall $S=U(u)+U(v)$.
Under any coupling of the two $\widehat P$-laws,
\eqref{eq:gap-block-drift} gives
\begin{align}\label{eq:gap-block-weight}
 \E\bigl[1+\beta_0U(X')+\beta_0U(Y')\bigr]
 \le1+\frac{\beta_0S}{8}
 +\frac{2\beta_0b}{1-\lambda}.
\end{align}
We distinguish the three standard regimes.

For $\delta(u,v)<1$, \eqref{eq:gap-block-local}, Cauchy--Schwarz and
$\mathsf d(u,v)^2=\delta(u,v)(1+\beta_0S)$ yield
\begin{align*}
 \frac{\mathcal W_{\mathsf d}
 (\widehat P(u,\cdot),\widehat P(v,\cdot))^2}{\mathsf d(u,v)^2}
 \le
 \alpha_0
 \frac{1+\beta_0S/8+2\beta_0b/(1-\lambda)}
      {1+\beta_0S}\le
 \alpha_0\left(1+\frac{2\beta_0b}{1-\lambda}\right)
 =:r_1.
\end{align*}

Next suppose $\delta(u,v)=1$ and
$S\le4b/(1-\lambda)$. Using
\eqref{eq:gap-block-small set}, \eqref{eq:gap-block-weight} and
Cauchy--Schwarz,
\begin{align*}
 \frac{\mathcal W_{\mathsf d}
 (\widehat P(u,\cdot),\widehat P(v,\cdot))^2}{\mathsf d(u,v)^2}
 \le
 (1-\varepsilon_0)
 \frac{1+\beta_0S/8+2\beta_0b/(1-\lambda)}
      {1+\beta_0S}\le
 (1-\varepsilon_0)
 \left(1+\frac{5\beta_0b}{2(1-\lambda)}\right)
 =:r_2.
\end{align*}

Finally suppose $\delta(u,v)=1$ and
$S>4b/(1-\lambda)$. Here the Lyapunov drift alone supplies the
contraction. Using $\delta\le1$, \eqref{eq:gap-block-weight} and
Cauchy--Schwarz,
\begin{align*}
 \frac{\mathcal W_{\mathsf d}
 (\widehat P(u,\cdot),\widehat P(v,\cdot))^2}{\mathsf d(u,v)^2}
 \le
 \frac{1+\beta_0S/8+2\beta_0b/(1-\lambda)}
      {1+\beta_0S}\le
 \frac{1+\frac{5\beta_0b}{2(1-\lambda)}}
      {1+\frac{4\beta_0b}{1-\lambda}}
 =:r_3,
\end{align*}
where the last bound follows since the preceding ratio is decreasing
in $S$.

Since $\alpha_0<1$ and $\varepsilon_0>0$, we have $r_1,r_2<1$ for
all sufficiently small $\beta_0>0$, while $r_3<1$ for every
$\beta_0>0$. Hence we may choose $\beta_0>0$ so that
\begin{align*}
 \chi^2:=\max\{r_1,r_2,r_3\}<1,
\end{align*}
which in turn shows
\begin{align}\label{eq:gap-cost-contraction}
 \mathcal W_{\mathsf d}(\widehat P(u,\cdot),\widehat P(v,\cdot))
 \le\chi \mathsf d(u,v).
\end{align}
Integrating against an initial coupling and iteration  proves \eqref{eq:gap-skeleton-contraction}.

The bounds in \eqref{eq:gap-cost-bounds} follow directly from the
definitions: $\delta(u,v)\ge\ell(d(u,v))$, while $\delta\le1$, and
$L$ was chosen so that $r\le\sqrt{\ell(r)}$. Finally, if $\nu P=\nu$,
bounded concave truncations in $PU\le\lambda U+b$ give
\begin{align*}
 \nu(U)\le\frac{b}{1-\lambda}.
\end{align*}
Thus any two invariant probabilities have finite $\mathcal W_{\mathsf d}$ cost 
and \eqref{eq:gap-skeleton-contraction} gives uniqueness. 
\end{proof}

We are now ready to prove the main result of this subsection. 
\begin{proof}[Proof of \Cref{thm:structural-gap}]

Fix the contraction time $mT$ in \Cref{thm:weighted-spectral-gap}. We first extend the skeleton contraction to continuous time. For
$0\le \tau\le mT$, use the coupling in
\Cref{ass:gap-amplification}. If $\delta(u,v)<1$ in \eqref{eq:guage}, put
$r=d(u,v)$ and $s=\log(1/r)\ge L$. On
$\{H_{mT}\le s/2\}$,
\begin{align*}
 \ell(d(X_\tau,Y_\tau))\le \ell(r^{1/2})\le2^q\ell(r).
\end{align*}
Therefore
\begin{align*}
 \E\delta(X_\tau,Y_\tau)
 \le2^q\ell(r)\E[c_0+U(X_\tau)+U(Y_\tau)]
 +\PP(H_{mT}>s/2)\le C\delta(u,v),
\end{align*}
where we used \Cref{ass:gap-amplification},
\eqref{eq:gap-shorttime-input}, and the same comparison \eqref{eq:amplification-comparison} of the
fractional logarithmic tail with $\delta(u,v)$ as in
\Cref{lem:gap-local-coupling}. At $\delta(u,v)=1$ the same conclusion
is immediate. Cauchy--Schwarz, \eqref{eq:gap-weighted-cost} and
\eqref{eq:gap-shorttime-input} now give
\begin{align}\label{eq:gap-shorttime-cost}
 \mathcal W_{\mathsf d}(P_\tau(u,\cdot),P_\tau(v,\cdot))
 \le C\mathsf d(u,v),
 \qquad 0\le \tau\le mT, 
\end{align}
and integration against an initial coupling gives the same bound for probability measures.
Write $t=nmT+\tau$, with $0\le \tau<mT$. Applying
\eqref{eq:gap-skeleton-contraction} for the $n$ complete blocks and
then \eqref{eq:gap-shorttime-cost} for the remaining time gives
\begin{align}\label{eq:gap-continuous}
 \mathcal W_{\mathsf d}(\mu_1P_t,\mu_2P_t)
 \le Ce^{-\gamma t}\mathcal W_{\mathsf d}(\mu_1,\mu_2),
 \qquad t\ge0,
\end{align}
for some $C,\gamma>0$ and all probability measures making the right-hand side finite.

It remains to identify the resulting cost. For $u\ne v$, put
$s=\log(1/d(u,v))$ and $V_U=1+U(u)+U(v)$. Since
$c_0+U(u)+U(v)\asymp V_U$, 
$1+\beta_0U(u)+\beta_0U(v)\asymp V_U$ and $\ell(d(u,v))^{1/2}\asymp[1+s]^{-q/2}$,
\begin{align}\label{eq:explicit-equivalence}
 \mathsf d(u,v)
 &\asymp
 \min\left\{\sqrt{V_U(u,v)},\,
 \frac{V_U(u,v)}{[1+s]^{q/2}}\right\}
 =d_q(u,v).
\end{align}
The pointwise equivalence \eqref{eq:explicit-equivalence} and
\eqref{eq:gap-continuous} give
\eqref{eq:abstract-explicit-gap}. The uniqueness and $U$-moment
assertions are already contained in \Cref{thm:weighted-spectral-gap}.

\end{proof}

\subsection{Quantitative Gaussian compensation on compact cores}
\label{subsec:hilbertian-repair}

The criterion above is independent of the mechanism used to verify
\Cref{ass:gap-local-repair}. For the primitive equations, the local
repair is obtained in two stages: positive-time regularization first
maps the relevant Lyapunov sublevel into a compact regular core with
uniformly positive probability, and the following Hilbertian Gaussian
argument then produces the scale-dependent compensation on that core.

Suppose in addition that the Polish state space $\mathcal X$ is a vector
subspace of a separable Hilbert space $H$ for which the inclusion
$\mathcal X\hookrightarrow H$ is continuous, and take the comparison metric  as $d(u,v)=1\wedge\|u-v\|_H$. For $T>0$ let $\Omega_T=C_0([0,T];\R^m)$ be the canonical Wiener space with Wiener measure $\gamma_T$. We identify its Cameron--Martin space with the rate
space $\mathsf H_T=L^2(0,T;\R^m)$ through the continuous injection
\begin{align}\label{eq:iota}
 \iota:\mathsf H_T\to\Omega_T,\qquad
 (\iota h)(t)=\int_0^t h(s)\,\dd s.
\end{align}
Let $\Phi_t(u,\omega)$, $0\le t\le T$, be a jointly Borel pathwise
realization of the semigroup $P_t$ whose synchronous coupling satisfies
\Cref{ass:gap-amplification}, and put
\begin{align*}
 \Psi(u,\omega)=\Phi_T(u,\omega).
\end{align*}
For a Cameron--Martin direction $h\in\mathsf H_T$, write
\begin{align*}
 \mathcal D_h\Psi(u,\omega)
 =
 \left.\frac{d}{d\varepsilon}\right|_{\varepsilon=0}
 \Psi(u,\omega+\varepsilon\iota h)
\end{align*}
as the Malliavin derivative whenever it exists.

\begin{proposition}
\label{prop:compact-source-repair} Let $\mathscr D\times\Gamma\subset \mathcal X\times \Omega_T$ be compact with
$\gamma_T(\Gamma)>0$. Assume that there exists $\rho_0>0$ such
that the following properties hold on $\mathcal N:=\mathscr D\times\mathscr N_{\rho_0}(\Gamma)$ where $\mathscr N_{\rho_0}(\Gamma)$ denotes the $\rho_0$-neighborhood
of $\Gamma$ in $\Omega_T$. 
\begin{itemize}
\item  \emph{Regular dependence.} The initial-state derivative of $\Psi$ extends to a bounded operator
\begin{align*}
 J=D_u\Psi:H\to H,
\end{align*}
and the Malliavin derivative defines a bounded operator
\begin{align}\label{eq:abstract-endpoint-operators}
 A:\mathsf H_T\to H,\qquad
 Ah=\mathcal D_h\Psi(u,\omega).
\end{align}
The maps $\Psi$, $J$, and $A$ are jointly Borel and continuous on
$\mathcal N$. In addition, regarding $\omega$ as a parameter in
$C_0([0,T];\R^m)$, the Fr\'echet derivatives $D_\omega\Psi,D_\omega^2\Psi, D_\omega J$
exist and are uniformly bounded on $\mathcal N$. 

Moreover, there are $\varepsilon_*>0$ and $L<\infty$
such that, uniformly for $(u,\omega)\in\mathcal N$, every
$v\in\mathcal X$ and $h\in \mathsf H_T$ with
$\|v-u\|_H+\|h\|_{\mathsf H_T}\le\varepsilon_*$ satisfy
\begin{align}\label{eq:repair-remainder}
 \|\Psi(v,\omega+\iota h)-\Psi(u,\omega)
 -J(u,\omega)(v-u)-A(u,\omega)h\|_H\le
 L\bigl(\|v-u\|_H+\|h\|_{\mathsf H_T}\bigr)^2.
\end{align}
\item \emph{Quantitative compensation.}
There are $\delta_0, b_*>0$ and $C_0<\infty$ such that, for every
$\delta\in(0,\delta_0)$, there is an accuracy cost $C_\delta\ge1$
for which, uniformly on $\mathscr D\times\Gamma$,
\begin{align}\label{eq:application-source-interface}
JJ^*\le C_\delta^2AA^*+\delta^2I,
\qquad
C_\delta\le C_0\delta^{-b_*}.
\end{align}
\end{itemize}
Then, for every
\begin{align}\label{eq:akappa}
 0<a<(1+2b_*)^{-1},
 \qquad
 \varkappa=1-a(1+2b_*)>0,
\end{align}
there are $C,r_*>0$ such that, whenever $u\in\mathscr D$ and
$r=\|v-u\|_H\in(0,r_*]$, one can construct a jointly Borel exact
coupling $(X',Y')$ of $P_T(u,\cdot)$ and $P_T(v,\cdot)$ satisfying
\begin{align}\label{eq:compact-core-repair}
 \PP\{d(X',Y')\le Cr^{1+a}\}
 &\ge\gamma_T(\Gamma)-Cr^\varkappa,\notag\\
 \PP\{d(X',Y')>e^{C+H_T}r\}
 &\le Cr^\varkappa,
\end{align}
where $H_T$ is the synchronous amplification clock from
\Cref{ass:gap-amplification}.
\end{proposition}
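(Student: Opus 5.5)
The plan is to build the coupling by a finite-dimensional Cameron--Martin shift of the driver for the $v$-trajectory, followed by a maximal coupling of the shifted and unshifted Wiener laws. Fix $u\in\mathscr D$, $r=\|v-u\|_H\le r_*$ and $\delta=r^a$.

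\emph{Step 1: the repair operator.} From \eqref{eq:application-source-interface}, Douglas' lemma gives $J=AR_0+S_0$ with $\|R_0\|\le C_\delta$ and $\|S_0\|\le\delta$; concretely one may take $R_\delta=A^*(AA^*+\delta^2C_\delta^{-2})^{-1}J$ (up to a constant). Then $\|R_\delta\|\lesssim C_\delta$ and $\|J-AR_\delta\|\lesssim\delta$, which follows from the operator inequality through the spectral calculus of $AA^*$. To make $R_\delta$ finite rank and measurable in $(u,\omega)$, project onto a finite-dimensional subspace $\mathsf H^{(K)}\subset\mathsf H_T$ spanned by smooth rates. Compactness of $\mathscr D\times\Gamma$ and continuity of $J$ and $A$ make this uniform, and the extra error can be absorbed into $\delta$. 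It is convenient to freeze $R_\delta$ at a point of $\Gamma$, or to work on a finite $\rho_0/2$-net of $\mathscr D\times\Gamma$, so that the shift $h=-R_\delta(v-u)$, or more precisely the map $\omega\mapsto\omega+\iota h(\omega)$, is a smooth finite-dimensional perturbation with $\|h\|\lesssim C_\delta r\lesssim r^{1-ab_*}$.

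\emph{Step 2: pathwise estimate on the good event.} For $\omega\in\Gamma$ set $Y''=\Psi(v,\omega+\iota h)$ and $X=\Psi(u,\omega)$. By \eqref{eq:repair-remainder},
\begin{align*}
\|Y''-X\|_H\le\|(J-AR_\delta)(v-u)\|_H+L(r+C_\delta r)^2\lesssim r^{1+a}+r^{2-2ab_*}.
\end{align*}
Since $a(1+2b_*)<1$ we have $2-2ab_*>1+a$, so $\|Y''-X\|_H\lesssim r^{1+a}$. This requires $\omega+\iota h\in\mathscr N_{\rho_0}(\Gamma)$, which holds for small $r$.

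\emph{Step 3: change of measure and the bad event.} Let $\mathsf T(\omega)=\omega+\iota h(\omega)$. By Girsanov, or the finite-dimensional Cameron--Martin/Ramer formula, one gets $\|\mathsf T_\#\gamma_T-\gamma_T\|_{TV}\lesssim\|h\|_{\mathsf H_T}$ together with derivative terms controlled by the bounded $D_\omega J$ and $D_\omega A$. This yields a bound $\lesssim C_\delta r\lesssim r^{1-ab_*}$, which is $\le r^\varkappa$ because $\varkappa=1-a-2ab_*\le 1-ab_*$. Maximal coupling then produces $(\omega,\omega')$ with $\omega'\sim\gamma_T$ and $\omega'=\mathsf T(\omega)$ outside an event of probability $\lesssim r^\varkappa$. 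Off $\Gamma$, or for the remaining mass, I would glue in the synchronous coupling of \Cref{ass:gap-amplification}. Set $X'=\Psi(u,\omega)$ and $Y'=\Psi(v,\omega')$. The first bound in \eqref{eq:compact-core-repair} then follows from Step 2. For the second, on $\{\omega\in\Gamma,\ \omega'=\mathsf T\omega\}$ we have $d(X',Y')\le Cr^{1+a}\le e^{C}r$. On the synchronous part, the amplification clock gives $d\le e^{H_T}r$. The remaining exceptional mass is $\lesssim r^\varkappa$. Joint Borel measurability comes from a measurable version of the maximal coupling, for instance via regular conditional probabilities.

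The main obstacle is Step 3 combined with the gluing. The total-variation bound for a state-dependent shift requires care: the shift depends on $\omega$ through $R_\delta(u,\omega)$, so the Jacobian involves $D_\omega R_\delta$, and its size may scale like a power of $C_\delta$. I would try first to freeze $R_\delta$ locally on a finite net, so that the shift is deterministic on each cell. One must then check that the cellwise errors still sum to $O(r^\varkappa)$.
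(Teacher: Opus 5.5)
Your overall architecture is the right one: a Tikhonov-type inverse, a finite-dimensional Cameron--Martin shift, a maximal coupling, and the synchronous coupling wherever no shift is applied. But Step~3 has a genuine gap, and it is exactly the step that produces the exponent $\varkappa$.

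For a shift depending on $\omega$, the Gaussian change-of-variables estimate bounds $\Ent(\mathsf T_\#\gamma_T\mid\gamma_T)$ by $\E(\|h\|_{\mathsf H_T}^2+\|D_zh\|_{\mathrm{HS}}^2)$. The Jacobian term is not $O(C_\delta r)$. Write $R_\delta=A_M^*G_\delta J$ with $G_\delta=(A_MA_M^*+\lambda_\delta I)^{-1}$ and $\lambda_\delta=\delta^2/C_\delta^2$. Differentiating in the Gaussian coordinates produces terms such as $A_M^*G_\delta(\partial A_M)R_\delta$ and $(\partial A_M)^*G_\delta J$. Since $\|A_M^*G_\delta\|\lesssim C_\delta/\delta$ and $\|G_\delta J\|\lesssim C_\delta^2/\delta$, the best available bound is $\|D_zR_\delta\|_{\mathrm{HS}}\lesssim C_\delta^2/\delta$. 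This bound is uniform in the retained dimension only because one uses a time basis with $\sum_{k,j}\|\iota e_{k,j}\|_{L^\infty}^2=mT$, together with the bounds on $D_\omega J$ and $D_\omega^2\Psi$. Hence $\|D_zh\|_{\mathrm{HS}}\lesssim rC_\delta^2/\delta\lesssim r^{1-a(1+2b_*)}=r^\varkappa$. This, not $\|h\|\lesssim r^{1-ab_*}$, is the binding constraint, and it is why the statement requires $a<(1+2b_*)^{-1}$. Your remark that $r^{1-ab_*}\le r^\varkappa$ treats as slack what is in fact the limiting estimate of the construction.

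The second, related gap is the localization and gluing. Observability is assumed only on $\mathscr D\times\Gamma$, and $\Gamma$, being compact in the infinite-dimensional space $\Omega_T$, has empty interior. Two of your options are discontinuous in $\omega$: a shift applied only for $\omega\in\Gamma$ with the synchronous coupling glued in elsewhere, and a shift frozen to a constant on each cell of a finite net. The total-variation error then contains boundary-layer terms of the type $\gamma_T(\Gamma_i\triangle(\Gamma_i+\iota h_i))$, for which no rate is available for general compact cells. So the cellwise check you defer to the end is the unproved heart of Step~3, not a routine verification.

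What is needed is a shift that is $C^1$ in the retained Gaussian coordinates and still exact on $\Gamma$:
\begin{enumerate}
\item Use the bounds on $D_\omega J$ and $D_\omega^2\Psi$ to transfer the observability to $\mathscr D\times\mathscr N_{\rho_\delta}(\Gamma)$ with $\rho_\delta=\delta/(1+C_\delta)$, in the form $JJ^*\le CC_\delta^2AA^*+C\delta^2I$.
\item Take a cutoff $\chi_\delta$ equal to $1$ on $\Gamma$, supported in $\mathscr N_{\rho_\delta}(\Gamma)$, with $\|\nabla_z\chi_\delta\|\lesssim\rho_\delta^{-1}$.
\item Shift by $h=-\chi_\delta R_\delta(v-u)$.
\end{enumerate}
The cutoff contributes $\rho_\delta^{-1}\|R_\delta(v-u)\|\lesssim rC_\delta^2/\delta$ to $\|D_zh\|_{\mathrm{HS}}$, the same order as before. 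So the relative entropy is $\lesssim r^{2\varkappa}$, and Pinsker gives total variation $\lesssim r^\varkappa$.

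The price is an intermediate region $\operatorname{supp}\chi_\delta\setminus\Gamma$ where only the linear bound $d(X',Y')\le Cr$ holds, from boundedness of $J$ and the remainder estimate. This is enough for the second inequality. On $\Gamma$ your Step~2 gives the superlinear bound, and where $\chi_\delta=0$ the coupling is synchronous, so the amplification clock applies.
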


\begin{proof}
Fix $u\in\mathscr D$ and $v\in\mathcal X$ with $r=\|v-u\|_H\in(0,r_*]$, where $r_*>0$ will be chosen below. Set $\delta=r^a$
and decrease $r_*$ so that $\delta<\delta_0$. Since enlarging $C_\delta$ preserves \eqref{eq:application-source-interface}, we may take $C_\delta=C_0\delta^{-b_*}$. 
Constants below may depend on the fixed data of the proposition but not on $r$ or $\delta$. The proof is divided into three steps. 

\emph{Step 1:  Finite-dimensional Gaussian compensation.}
For $1\le k\le m$ and $j\ge1$, let
\begin{align*}
e_{k,j}(t)=\sqrt{2/T}\cos\bigl((j-\tfrac12)\pi t/T\bigr)\boldsymbol e_k,
\end{align*}
where $\{\boldsymbol e_k\}$ is the standard basis of $\R^m$. Then $\{e_{k,j}\}$ is an orthonormal basis of $\mathsf H_T$, and
\begin{align}\label{eq:abstract-time-factor}
\sum_{k,j}\|\iota e_{k,j}\|_{L^\infty}^2=mT,\qquad
\|\iota(\operatorname{Id}-P_M)\|_{\mathsf H_T\to\Omega_T}\le C\sqrt{mT/M},
\end{align}
where $P_M$ is the projection onto the first $M$ time modes in each noise coordinate $k$. Since $A=D_\omega\Psi\circ\iota$ on $\mathcal N$ and $D_\omega\Psi$ is uniformly bounded on $\mathcal N$, setting $A_M=AP_M$ gives
\begin{align}\label{eq:abstract-factor-tail}
\|A(I-P_M)\|\le C\sqrt{mT/M}.
\end{align}
We first localize the observability estimate \eqref{eq:application-source-interface} to a small driver neighborhood of the core. Set $\rho_\delta=\frac{\delta}{1+C_\delta}$. After decreasing $r_*$ if necessary, we may assume that
$\rho_\delta\le\rho_0$ and hence $\mathscr D\times\mathscr N_{\rho_\delta}(\Gamma)
\subset\mathcal N$. For $(u,\omega)\in \mathscr D\times \mathscr N_{\rho_{\delta}}(\Gamma)$, choose $\bar\omega\in\Gamma$ with $\|\omega-\bar\omega\|_{\Omega_T}\le \rho_\delta$. The bounds on $D_\omega J$ and $D_\omega^2\Psi$ give
\begin{align*}
\|J(u,\omega)-J(u,\bar\omega)\|+\|A(u,\omega)-A(u,\bar\omega)\|\le C\rho_\delta.
\end{align*}
Hence \eqref{eq:application-source-interface} implies, upon changing fixed constants, 
\begin{align}\label{eq:localized-source}
JJ^*\le C C_\delta^2AA^*+C\delta^2I
\end{align}
throughout $\mathscr D\times \mathscr N_{\rho_{\delta}}(\Gamma)$  as  $(1+C_\delta^2)\rho_\delta^2\le C\delta^2$. Since
\begin{align*}
AA^*=A_MA_M^*+A(\operatorname{Id}-P_M)\bigl(A(\operatorname{Id}-P_M)\bigr)^*,
\end{align*}
choosing $M=\left\lceil C C_\delta^2\delta^{-2}\right\rceil$ 
and using \eqref{eq:abstract-factor-tail} in \eqref{eq:localized-source} give
\begin{align}\label{eq:finite-source-reduction}
JJ^*\le C C_\delta^2A_MA_M^*+C\delta^2I.
\end{align}
Thus the retained Gaussian factor $\mathcal{G}_{mM}$ corresponding to $P_M$ has dimension $mM\le C C_\delta^2\delta^{-2}$. Set
\begin{align*}
\lambda_\delta=\delta^2/C_\delta^2,\qquad
G_\delta=(A_MA_M^*+\lambda_\delta I)^{-1},\qquad
R_\delta=A_M^*G_\delta J.
\end{align*}
Since $R_\delta$ takes values in $P_M\mathsf H_T$, this produces a
finite-dimensional compensation 
\begin{align}\label{eq:j=AR+lambldaGJ}
J=A_MR_\delta+\lambda_\delta G_\delta J. 
\end{align} 
Functional calculus gives
\begin{align}\label{eq:abstract-regularized-inverse}
\|R_\delta\|\le C C_\delta,\qquad
\|J-A_MR_\delta\|\le C\delta,
\end{align}
and
\begin{align}\label{eq:tikhonov-auxiliary-bounds}
\|A_M^*G_\delta\|\le C C_\delta/\delta,\qquad
\|G_\delta J\|\le C C_\delta^2/\delta.
\end{align}

We now estimate the Jacobian of $R_{\delta}$. Let $z=(z_{k,j})$ be the standard Gaussian coordinates on $\mathcal{G}_{mM}$, so that
differentiation in $z_{k,j}$ corresponds to differentiation of the driver
in the direction $\iota e_{k,j}$. In particular,
\begin{align*}
\partial_{z_{k,j}}J
=D_\omega J[\iota e_{k,j}],\quad 
\partial_{z_{k,j}}A_Mh
=D_\omega^2\Psi[\iota e_{k,j},\iota P_Mh].
\end{align*}
Hence by the uniform bounds on $D_\omega^2\Psi$ and $D_\omega J$,
the uniform boundedness of $\iota P_M:\mathsf H_T\to\Omega_T$, and
\eqref{eq:abstract-time-factor},
\begin{align}\label{eq:abstract-factor-derivatives}
\sum_{k=1}^m\sum_{j=1}^M
\bigl(\|\partial_{z_{k,j}}A_M\|^2+\|\partial_{z_{k,j}}J\|^2\bigr)\le C,
\end{align}
uniformly in $M$. For each Gaussian coordinate $z_{k,j}$, differentiate
$R_\delta=A_M^*G_\delta J$. Since
\begin{align*}
\partial_{z_{k,j}}G_\delta
=
-G_\delta\bigl[
(\partial_{z_{k,j}}A_M)A_M^*
+A_M(\partial_{z_{k,j}}A_M)^*
\bigr]G_\delta,
\end{align*}
regrouping gives
\begin{align*}
\partial_{z_{k,j}} R_\delta
=(I-A_M^*G_\delta A_M)
(\partial_{z_{k,j}}A_M)^*G_\delta J 
-A_M^*G_\delta(\partial_{z_{k,j}}A_M)R_\delta
+A_M^*G_\delta(\partial_{z_{k,j}}J).
\end{align*}
Therefore \eqref{eq:abstract-regularized-inverse}--\eqref{eq:abstract-factor-derivatives} yield
\begin{align}\label{eq:abstract-inverse-derivative}
\left(\sum_{k=1}^m\sum_{j=1}^M\|\partial_{z_{k,j}}R_\delta\|^2\right)^{1/2}
\le C C_\delta^2/\delta.
\end{align}

\emph{Step 2: Localization and Cameron--Martin shift.}
Choose a jointly Borel cutoff $\chi_\delta:\Omega_T\to[0,1]$, fibrewise $C^1$ in the coordinates $(z_{k,j})$ of $\mathcal G_{mM}$, equal to one on $\Gamma$, supported in its $\rho_\delta$-neighborhood $\mathscr N_{\rho_{\delta}}(\Gamma)$ and satisfying
\begin{align}\label{eq:abstract-cutoff-derivative}
\|\nabla_z\chi_\delta\|\le C\rho_\delta^{-1}\le C(1+C_\delta)/\delta.
\end{align}
Define the Cameron--Martin shift 
\begin{align*}
h=-\chi_\delta R_\delta(v-u)\in P_M\mathsf H_T.
\end{align*}
Then \eqref{eq:application-source-interface}, \eqref{eq:akappa}, \eqref{eq:abstract-regularized-inverse}, \eqref{eq:abstract-inverse-derivative}, \eqref{eq:abstract-cutoff-derivative} and $\delta=r^a$ give
\begin{align}\label{eq:abstract-shift-bounds}
\|h\|_{\mathsf H_T}\le CrC_\delta\le Cr^{1-ab_*},\qquad
\|D_zh\|_{\mathrm{HS}}\le Cr\frac{C_\delta^2}{\delta}
\le Cr^{1-a(1+2b_*)}=Cr^\varkappa.
\end{align}
Since $Cr^{1-ab_*}\to0$, after decreasing $r_*$ one has $r_*+\|h\|_{\mathsf H_T}\le\varepsilon_*$ and $\|D_zh\|\le \|D_zh\|_{\mathrm{HS}}\le1/2$.

If $\omega\in\Gamma$, then $\chi_\delta=1$ and, since $h\in P_M\mathsf H_T$,
\begin{align*}
J(v-u)+Ah=(J-A_MR_\delta)(v-u).
\end{align*}
Thus \eqref{eq:repair-remainder}, \eqref{eq:j=AR+lambldaGJ}, \eqref{eq:abstract-regularized-inverse}, and $C_\delta=C_0\delta^{-b_*}$ imply
\begin{align}\label{eq:abstract-superlinear-repair}
\|\Psi(v,\omega+\iota h)-\Psi(u,\omega)\|_H
\le C\delta r+C C_\delta^2r^2\le Cr^{1+a}+Cr^{2-2ab_*}
\le Cr^{1+a},
\end{align}
because $a(1+2b_*)<1$. More generally, on the support of $\chi_\delta$,
\begin{align*}
J(v-u)+Ah=(1-\chi_\delta)J(v-u)
+\chi_\delta(J-A_MR_\delta)(v-u),
\end{align*}
so boundedness of $J$ and \eqref{eq:repair-remainder} give
\begin{align}\label{eq:abstract-linear-repair}
\|\Psi(v,\omega+\iota h)-\Psi(u,\omega)\|_H\le Cr,
\end{align}
after decreasing $r_*$ once more. Outside the support of $\chi_\delta$, one has $h=0$.

\emph{Step 3: Change of law and exact coupling.}
Write the Wiener path as $(z,z^{\dagger})$, where $z\in\R^{mM}$ is the retained standard Gaussian factor and $z^{\dagger}$ its independent complement, and define
\begin{align*}
\mathsf T(z,z^{\dagger})=(z+h(z,z^{\dagger}),z^{\dagger}),
\end{align*}
equivalently $\mathsf T\omega=\omega+\iota h(\omega)$. Since $\|D_zh\|\le1/2$, the finite-dimensional Gaussian change-of-variables estimate, applied conditionally on $z^{\dagger}$, gives the relative entropy estimate 
\begin{align*}
\Ent(\mathsf T_\#\gamma_T\mid\gamma_T)
\le C\E\bigl(\|h\|_{\mathsf H_T}^2+\|D_zh\|_{\mathrm{HS}}^2\bigr)\le C\left(r^2C_\delta^2+r^2C_\delta^4\delta^{-2}\right)
\le Cr^{2\varkappa};
\end{align*}
see for example \cite[Lemma~B.2]{LLWave}. Pinsker's inequality therefore yields
\begin{align}\label{eq:abstract-tv-repair}
\|\mathsf T_\#\gamma_T-\gamma_T\|_{\mathrm{TV}}\le Cr^\varkappa.
\end{align}
Let $W\sim\gamma_T$ and $\widetilde W=\mathsf T(W)$. Couple $\widetilde W$ maximally with a Wiener path $W'\sim\gamma_T$, retaining the relation $\widetilde W=\mathsf T(W)$. Then
\begin{align}\label{eq:abstract-acceptance}
\PP\{W'\ne\widetilde W\}\le \|\mathsf T_\#\gamma_T-\gamma_T\|_{\mathrm{TV}}\le  Cr^\varkappa.
\end{align}
Set $X'=\Psi(u,W),\, Y'=\Psi(v,W').$
Then $(X',Y')$ is an exact coupling of $P_T(u,\cdot)$ and $P_T(v,\cdot)$. On $\{W'=\widetilde W\}\cap\{W\in\Gamma\}$, \eqref{eq:abstract-superlinear-repair} gives $d(X',Y')\le Cr^{1+a}$, and hence
\begin{align*}
\PP\{d(X',Y')\le Cr^{1+a}\}
\ge\gamma_T(\Gamma)-Cr^\varkappa.
\end{align*}

On $\{W'=\widetilde W\}$, \eqref{eq:abstract-linear-repair} gives $d(X',Y')\le Cr$ whenever $\chi_\delta(W)\ne0$. If $\chi_\delta(W)=0$, then $\widetilde W=W'=W$, so the coupling is synchronous and \Cref{ass:gap-amplification} gives $d(X',Y')\le e^{H_T}r$. Therefore, after increasing $C$,
\begin{align*}
d(X',Y')\le e^{C+H_T}r
\end{align*}
throughout $\{W'=\widetilde W\}$, and \eqref{eq:abstract-acceptance} yields
\begin{align*}
\PP\{d(X',Y')>e^{C+H_T}r\}\le Cr^\varkappa.
\end{align*}

Finally, $\delta=r^a$, $M=\lceil C C_\delta^2\delta^{-2}\rceil$, the finite Gaussian coordinates, the regularized inverse, the cutoff, and the Gaussian density are jointly Borel in the parameters. A standard measurable maximal coupling therefore yields a jointly Borel exact endpoint coupling.
\end{proof}

\section{Spectral gap for the primitive equation}\label{sec:pe-gap}

We now verify the four structural assumptions of \Cref{thm:structural-gap} for the primitive equations. We first establish the Lyapunov and fractional-amplification estimates through a retained-dissipation cascade. We then prove the quantitative Malliavin observability estimate, with the Fourier synthesis and growing-Lie-word analysis. The local repair \Cref{ass:gap-local-repair} is obtained in two steps: positive-time regularization maps recurrent Lyapunov sublevels into compact analytic cores, and the polynomial accuracy--cost observability estimate yields, through \Cref{prop:compact-source-repair}, the required scale-dependent compensation on those cores. Finally, uniform irreducibility is proved independently by phase-complete scaling and saturation.

\subsection{The Lyapunov structure and fractional amplification moment}\label{sec:stationary-moments}

Throughout this subsection let $E(v)=\|v\|_H^2$ and
\begin{align}\label{eq:K}
 K(v)=1+\alpha\|v\|_8^8+\|\partial_zv\|_4^4+\|\nabla v\|_2^2,
\end{align}
where $\alpha\ge1$ will be fixed in \Cref{lem:cascade-direct}.  The quantity $K$ also controls the growth of synchronous differences. Recall the barotropic--baroclinic decomposition
\begin{align*}
 u(\boldsymbol x,z)=\bar u(\boldsymbol x)+\widetilde u(\boldsymbol x,z),\qquad \bar u(\boldsymbol x)=\frac{1}{2\pi}\int_{\mathbb T}u(\boldsymbol x,z)\,\dd z.  
\end{align*}
as in \eqref{eq:barotropic--baroclinic}. 

\begin{lemma}\label{lem:one-sided-growth}
For every $\varepsilon>0$ and smooth velocities $u,v$,
\begin{align}\label{eq:one-sided-K}
 |\ip{B(v,u)}v|\le\varepsilon\|\nabla v\|_2^2+C_\varepsilon\bigl(\|\nabla u\|_2^2+\|\partial_zu\|_4^4\bigr)\|v\|_2^2\le\varepsilon\|\nabla v\|_2^2+C_\varepsilon K(u)\|v\|_2^2.
\end{align}
Consequently, any two strong solutions $v,v^*$ with the same noise satisfy for some $C_{\nu}>0$ 
\begin{align}\label{eq:global-difference}
 \sup_{s\le T}\|v_s-v_s^*\|_2\le L_T(v)\|v_0-v_0^*\|_2,\qquad L_T(v)=\exp\left(C_\nu\int_0^T K(v_s)\,\dd s\right).
\end{align}
\end{lemma}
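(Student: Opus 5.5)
The plan is to prove the trilinear bound \eqref{eq:one-sided-K} by splitting $B(v,u)=\mathsf P\{v\cdot\nabla_h u+w(v)\partial_z u\}$ into its horizontal and vertical advection parts. The projection $\mathsf P$ is self-adjoint on $H$ and $v\in H$, so it can be dropped: $\ip{B(v,u)}v=\int v\cdot\nabla_h u\cdot v+\int w(v)\partial_z u\cdot v$.

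For the horizontal part, Hölder and Ladyzhenskaya-type interpolation in three dimensions give
\begin{align*}
\Bigl|\int (v\cdot\nabla_h u)\cdot v\Bigr|\le\|\nabla u\|_2\|v\|_4^2\le C\|\nabla u\|_2\|v\|_2^{1/2}\|\nabla v\|_2^{3/2}.
\end{align*}
Young's inequality with exponents $4/3$ and $4$ then yields $\varepsilon\|\nabla v\|_2^2+C_\varepsilon\|\nabla u\|_2^4\|v\|_2^2$. This produces the wrong power of $\|\nabla u\|_2$, so I would instead integrate by parts. Writing $\int v_i\partial_i u_j v_j=-\int u_j\partial_i(v_iv_j)$ moves the derivative onto $v$ and puts $u$ in $L^8$, giving
\begin{align*}
\|u\|_8\|\nabla v\|_2\|v\|_{8/3}\le\|u\|_8\|\nabla v\|_2^{1+3/8}\|v\|_2^{5/8}.
\end{align*}
Young's inequality then gives $C_\varepsilon\|u\|_8^{16/5}\|v\|_2^2$, and $\|u\|_8^{16/5}\lesssim1+\|u\|_8^8\le K(u)$. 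The second inequality in \eqref{eq:one-sided-K} only requires control by $K(u)$, so this route suffices for the consequence \eqref{eq:global-difference}. For the first, sharper inequality I would look for a bound by $\|\nabla u\|_2^2$ through an anisotropic estimate on the horizontal term. If that fails, I would accept the weaker $K(u)$ version, which is what is used downstream.

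The vertical term is the main obstacle, since $w(v)=-\int_0^z\nabla_h\cdot v$ loses a derivative. I would use the standard anisotropic trick. First bound $|w(v)(\boldsymbol x,z)|\le\int_{\mathbb T}|\nabla_h v|(\boldsymbol x,\zeta)\dd\zeta$, which is independent of $z$. Then
\begin{align*}
\Bigl|\int w(v)\partial_z u\cdot v\Bigr|\le\int_{\mathbb T^2}\Bigl(\int_{\mathbb T}|\nabla_h v|\dd z\Bigr)\Bigl(\int_{\mathbb T}|\partial_z u||v|\dd z\Bigr)\dd\boldsymbol x.
\end{align*}
By Cauchy--Schwarz in $z$ the first factor is $\le C\|\nabla_h v\|_{L^2_z}$ and the second is $\le\|\partial_z u\|_{L^4_z}\|v\|_{L^{4/3}_z}\lesssim\|\partial_zu\|_{L^4_z}\|v\|_{L^2_z}$. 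Applying Hölder in $\boldsymbol x$ with exponents $(2,4,4)$ gives
\begin{align*}
\|\nabla v\|_2\|\partial_zu\|_4\bigl\|\|v\|_{L^2_z}\bigr\|_{L^4_{\boldsymbol x}}.
\end{align*}
The two-dimensional Ladyzhenskaya inequality for $\|v\|_{L^2_z}$ together with Minkowski gives $\bigl\|\|v\|_{L^2_z}\bigr\|_{L^4_{\boldsymbol x}}\lesssim\|v\|_2^{1/2}\|\nabla v\|_2^{1/2}$. The term is therefore $\le C\|\partial_zu\|_4\|v\|_2^{1/2}\|\nabla v\|_2^{3/2}$, and Young with exponents $4/3$ and $4$ gives $\varepsilon\|\nabla v\|_2^2+C_\varepsilon\|\partial_zu\|_4^4\|v\|_2^2$, exactly the stated form. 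All manipulations are first done for smooth fields.

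For \eqref{eq:global-difference}, set $\rho=v-v^*$. The noise cancels, so $\rho$ solves the random PDE $\partial_t\rho+\nu A\rho+B(v,v)-B(v^*,v^*)=0$. Write $B(v,v)-B(v^*,v^*)=B(v^*,\rho)+B(\rho,v)$. The cancellation $\ip{B(v^*,\rho)}\rho=0$ follows from incompressibility and the boundary/parity structure. The energy identity then gives
\begin{align*}
\tfrac12\tfrac{d}{dt}\|\rho\|_2^2+\nu\|\nabla\rho\|_2^2=-\ip{B(\rho,v)}\rho.
\end{align*}
Applying \eqref{eq:one-sided-K} with $\varepsilon=\nu$ yields $\tfrac{d}{dt}\|\rho\|_2^2\le 2C_\nu K(v)\|\rho\|_2^2$. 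Gronwall then gives the bound with $L_T(v)$ after taking square roots and absorbing constants into $C_\nu$. Justifying the energy identity for strong solutions, whose regularity suffices because $K(v_s)$ is integrable, is done by Galerkin or mollification in the usual way.
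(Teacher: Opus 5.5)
Your vertical-term estimate, the horizontal integration by parts, and the Gronwall step are all correct. The integration by parts gives $\|u\|_8\|\nabla v\|_2\|v\|_{8/3}\le C\|u\|_8\|\nabla v\|_2^{11/8}\|v\|_2^{5/8}$, so you do prove $|\ip{B(v,u)}{v}|\le\varepsilon\|\nabla v\|_2^2+C_\varepsilon(\|u\|_8^{16/5}+\|\partial_zu\|_4^4)\|v\|_2^2\le\varepsilon\|\nabla v\|_2^2+C_\varepsilon K(u)\|v\|_2^2$, and hence \eqref{eq:global-difference}. However, the first inequality in \eqref{eq:one-sided-K}, with $\|\nabla u\|_2^2+\|\partial_zu\|_4^4$, is left unproved; you flag this yourself. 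So the statement as written is not established.

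The missing idea is the barotropic--baroclinic splitting $u=\bar u+\widetilde u$ from \eqref{eq:barotropic--baroclinic}. The $z$-independent part $\bar u$ lets you use the two-dimensional Ladyzhenskaya inequality instead of the three-dimensional one. Integrate $|v|^2$ in $z$ first, as you already did for the vertical term, and use Poincar\'e for $v\in H$:
\begin{align*}
\Bigl|\int_{\T^3}(v\cdot\nabla_h)\bar u\cdot v\Bigr|\le\int_{\T^2}|\nabla_h\bar u|\,\|v\|_{L^2_z}^2\,\dd\boldsymbol x\le\|\nabla_h\bar u\|_{L^2(\T^2)}\bigl\|\|v\|_{L^2_z}\bigr\|_{L^4(\T^2)}^2\le C\|\nabla u\|_2\|v\|_2\|\nabla v\|_2 .
\end{align*}
Only one power of $\|\nabla v\|_2$ appears here, instead of the $3/2$ that forced $\|\nabla u\|_2^4$ in your first attempt. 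Young's inequality then gives exactly $\varepsilon\|\nabla v\|_2^2+C_\varepsilon\|\nabla u\|_2^2\|v\|_2^2$.

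For $\widetilde u$, integrate by parts horizontally to move $\nabla_h$ onto $v$. Since $\widetilde u$ has zero vertical mean, $\|\widetilde u\|_{L^\infty_z}\le C\|\partial_zu\|_{L^2_z}$. This reduces the term to $\int_{\T^2}\|\partial_zu\|_{L^2_z}\|v\|_{L^2_z}\|\nabla_hv\|_{L^2_z}\,\dd\boldsymbol x$, which has the same anisotropic structure as your vertical term. Your H\"older $(4,4,2)$ plus two-dimensional Ladyzhenskaya argument bounds it by $C\|\partial_zu\|_4\|v\|_2^{1/2}\|\nabla v\|_2^{3/2}$, and hence by $\varepsilon\|\nabla v\|_2^2+C_\varepsilon\|\partial_zu\|_4^4\|v\|_2^2$.

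This splitting is the paper's argument. Your $L^8$ route is valid only as a shortcut to the weaker $K(u)$ form. That weaker form happens to be all that is used downstream.
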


\begin{proof}
Using $u=\bar u+\widetilde u$, H\"older's, interpolation and Young's inequality, 
\begin{align*}
 \left|\int_{\T^3}(v\cdot\nabla_h)\bar u\cdot v\,\dd\boldsymbol x\,\dd z\right|
 &\le\int_{\T^2}\|v\|_{L_z^2}^2|\nabla_h\bar u|\,\dd\boldsymbol x
 \le C\|\nabla u\|_2\|v\|_2\|\nabla v\|_2\\
 &\le\varepsilon\|\nabla v\|_2^2+C_\varepsilon\|\nabla u\|_2^2\|v\|_2^2.
\end{align*}
By Poincar\'e's inequality, $\|w(v)\|_{L_z^\infty}\le C\|\nabla_hv\|_{L_z^2}$ and $\|\widetilde u\|_{L_z^\infty}\le C\|\partial_zu\|_{L_z^2}$. After periodic integration by parts in the horizontal variables,
\begin{align*}
 \left|\int_{\T^3}\bigl((v\cdot\nabla_h)\widetilde u+w(v)\partial_zu\bigr)\cdot v\,\dd\boldsymbol x\,\dd z\right|
 &\le C\int_{\T^2}\|\partial_zu\|_{L_z^2}\|v\|_{L_z^2}\|\nabla_hv\|_{L_z^2}\,\dd\boldsymbol x\\
 &\le C\|v\|_2^{1/2}\|\partial_zu\|_4\|\nabla v\|_2^{3/2}
 \le\varepsilon\|\nabla v\|_2^2+C_\varepsilon\|\partial_zu\|_4^4\|v\|_2^2.
\end{align*}
Combining the two estimates proves \eqref{eq:one-sided-K}.

For two strong solutions $v,v^*$ driven by the same noise, set $u=v^*-v$. Then $B(v^*,v^*)-B(v,v)=B(v^*,u)+B(u,v)$. Since $\ip{B(v^*,u)}u=0$, the difference equation and \eqref{eq:one-sided-K} yield
\begin{align*}
 \frac12\frac{d}{dt}\|u\|_2^2+\nu\|\nabla u\|_2^2
 \le\varepsilon\|\nabla u\|_2^2+C_\varepsilon K(v)\|u\|_2^2.
\end{align*}
Choosing $\varepsilon<\nu$ and applying Gronwall's inequality gives \eqref{eq:global-difference}.
\end{proof}

Thus, to obtain the fractional amplification estimate, it remains to control the time integral of $K$. Denote its associated dissipation by
\begin{align}\label{eq:D}
 D(v)=\int_{\T^3}|v|^6|\nabla v|^2\,\dd x
 +\int_{\T^3}|\partial_zv|^2|\nabla\partial_zv|^2\,\dd x+\|\Delta v\|_2^2.
\end{align}
The full weighted gradient dissipation $\int|v|^6|\nabla v|^2$ is essential for closing the estimate of $\|\partial_zv\|_4$. For a pair $(\kappa,\eta)$, set
$\mathcal V=e^{\kappa E}K^\eta$. We call $(\kappa,\eta)$ admissible if, along every smooth solution,
\begin{align}\label{eq:admissible}
 \dd\mathcal V
 \le\mathcal V\left(C-b_1\|\nabla v\|_2^2-b_2\frac{D}{K}\right)\dd t
 +\mathcal V\left(\kappa H+\eta\frac{m}{K}\right)\dd W
\end{align}
for some $b_1,b_2>0$, where $H$ and $m$ are the martingale coefficients of $E$ and $K$ introduced below. The next three lemmas establish the estimate for $K$, verify admissibility, and provide the coercivity needed for the Lyapunov and moment bounds.

\begin{lemma}\label{lem:cascade-direct}
There exist $\alpha\ge1$, $c>0$ and $C_1<\infty$, depending only on the viscosity and geometry, such that along every strong solution $v$,
\begin{align}\label{eq:cascade-drift}
 \dd K+cD\,\dd t\le(C_0+C_1\|\nabla v\|_2^2)K\,\dd t+m\,\dd W,\qquad |m|^2\le C_QK^{7/4}.
\end{align}
Here $C_0$ is finite for every prescribed smooth finite-rank $Q$.
\end{lemma}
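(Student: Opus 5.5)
We apply It\^o's formula separately to the three components of $K$, namely $\|\nabla v\|_2^2$, $\alpha\|v\|_8^8$ and $\|\partial_zv\|_4^4$, and arrange the estimates in a triangular cascade. Each nonlinear term is absorbed into the dissipation $D$ or bounded by $(C_0+C_1\|\nabla v\|_2^2)K$. The order is: first $L^8$, then $\partial_z v$ in $L^4$ (which uses the weighted dissipation from the $L^8$ step), then $\nabla v$ in $L^2$ (which uses both previous quantities). The constant $\alpha$ is chosen at the end, large enough that the $L^8$ dissipation $\alpha\int|v|^6|\nabla v|^2$ absorbs the bad term produced in the $\partial_zv$ estimate. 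The It\^o corrections are $\tfrac12\sum_k$ of second derivatives of the functionals in the smooth directions $b_k$. They are bounded by $C_Q(1+\|v\|_8^6+\|\partial_zv\|_4^2+1)\le C_QK$, which accounts for $C_0$. The martingale coefficient is $m=\sum_k\bigl(8\alpha\ip{|v|^6v}{b_k}+4\ip{|\partial_zv|^2\partial_zv}{\partial_zb_k}+2\ip{\nabla v}{\nabla b_k}\bigr)$. Hence $|m|^2\lesssim_Q \|v\|_8^{14}+\|\partial_zv\|_4^6+\|\nabla v\|_2^2\lesssim K^{7/4}$, using $\|v\|_8^{14}=(\|v\|_8^8)^{7/4}$ and $K\ge1$.

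\emph{$L^8$ step.} The transport term $\ip{(v\cdot\nabla_h)v+w\partial_zv}{|v|^6v}$ vanishes by incompressibility. The only contribution is the pressure term $\int\nabla_h p\cdot|v|^6v$, where $p=p(\boldsymbol x)$ solves a 2D elliptic problem with source $\nabla_h\cdot\nabla_h\cdot\overline{v\otimes v}$. We bound this by the standard Cao--Titi argument. Integrate over $z$ first, use $\|\nabla_hp\|_{L^q(\T^2)}\lesssim\|\,|v||\nabla_hv|\,\|_{L^q_{\boldsymbol x}L^1_z}$ and 2D Ladyzhenskaya/Gagliardo--Nirenberg, and then apply Young's inequality. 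This gives $\varepsilon\int|v|^6|\nabla v|^2+C\|\nabla v\|_2^2\|v\|_8^8+C$, which is of the form $\|\nabla v\|_2^2K$.

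\emph{$\partial_zv$ step.} Differentiate the equation in $z$. The identity $\partial_z[(v\cdot\nabla_h)v+w\partial_zv]=(\partial_zv\cdot\nabla_h)v-(\nabla_h\cdot v)\partial_zv+(v\cdot\nabla_h)\partial_zv+w\partial_z^2v$ holds, and the pressure disappears because $\partial_zp=0$. Testing against $|\partial_zv|^2\partial_zv$, the last two terms cancel. The remaining terms are bounded by $\int|v||\nabla v||\partial_zv|^3$ after integrating by parts in $\boldsymbol x$. By H\"older's inequality this is at most $\varepsilon\int|\partial_zv|^2|\nabla\partial_zv|^2+C\int|v|^2|\partial_zv|^4$. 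The term $\int|v|^2|\partial_zv|^4$ is handled by interpolating $|\partial_zv|^4$ between the weighted dissipation of this step and $\int|v|^6|\nabla v|^2$, with powers of $\|v\|_8$ and $\|\partial_zv\|_4$ absorbed into $K$. I expect this to be the main obstacle: getting exponents that close, so that the leftover is at most $C(1+\|\nabla v\|_2^2)K$ plus a multiple of $\int|v|^6|\nabla v|^2$ that can be absorbed after increasing $\alpha$.

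\emph{$H^1$ step.} Test with $-\Delta v$. The nonlinear term is bounded, using anisotropic estimates as in \Cref{lem:one-sided-growth}, by $\varepsilon\|\Delta v\|_2^2+C(\|v\|_8^8+\|\partial_zv\|_4^4)\|\nabla v\|_2^2$. For example, $\int|v||\nabla v||\Delta v|\le\|v\|_8\|\nabla v\|_{8/3}\|\Delta v\|_2$, and $w\partial_zv$ is treated with $\|w\|_{L^\infty_z}\le\|\nabla_hv\|_{L^2_z}$ together with $\partial_zv\in L^4$. Both pieces are at most $\varepsilon\|\Delta v\|_2^2+C\|\nabla v\|_2^2K$. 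Adding the three steps, with $\varepsilon$ small and $\alpha$ large, yields \eqref{eq:cascade-drift} with $c>0$ depending only on $\nu$ and the geometry.
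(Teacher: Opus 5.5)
\textbf{Verdict: genuine gap.} Most of your outline matches the paper's proof. Both use It\^o's formula on the three pieces of $K$, the cascade $L^8\to\partial_zv\in L^4\to H^1$ with $\alpha$ fixed at the end, the same It\^o-correction and martingale bookkeeping, and the same pressure bound. Your $H^1$ step via $\|v\|_8\|\nabla v\|_{8/3}\|\Delta v\|_2$ is a valid variant of the paper's route through $\int|v|^2|\nabla v|^2\le\delta\int|v|^6|\nabla v|^2+C_\delta\|\nabla v\|_2^2$. In the $\partial_zv$ step, the integrated-by-parts term should read $\int|v||\partial_zv|^3|\nabla_h\partial_zv|$, not $|\nabla v|$; your next H\"older step only makes sense with that.

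The gap is the step you flag yourself. You need
\begin{align*}
 \int|v|^2|\partial_zv|^4\le\delta\int|\partial_zv|^2|\nabla\partial_zv|^2+C_\delta\int|v|^6|\nabla v|^2,
\end{align*}
up to terms of size $C(1+\|\nabla v\|_2^2)K$, with $C_\delta$ independent of the solution. Two things rule out anything weaker. A $K$-dependent factor in front of $\int|v|^6|\nabla v|^2$ cannot be absorbed by a fixed $\alpha$. A leftover that is superlinear in $K$ and carries no factor $\|\nabla v\|_2^2$ does not fit the right-hand side of \eqref{eq:cascade-drift}.

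The interpolation you propose does not deliver this. For instance, start from $\int|v|^2|\partial_zv|^4\le\|v\|_8^2\,\||\partial_zv|^2\|_{8/3}^2$, interpolate $|\partial_zv|^2$ between $L^2$ and $H^1$, and apply Young. This leaves $C\|v\|_8^{16/5}\|\partial_zv\|_4^4\sim K^{7/5}$. Neither $\|v\|_8$ nor $\|\partial_zv\|_4$ is controlled by $\|\nabla v\|_2$. Splittings that treat $|v|^4$ and $|\partial_zv|^2$ as unrelated weights behave the same way; you must use that both come from the same field.

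The missing idea is one vertical integration by parts. Write $|\partial_zv|^4=|\partial_zv|^2\,\partial_zv\cdot\partial_zv$ and move one $\partial_z$ off the last factor. The term in which the derivative lands on $|v|^2$ equals $-2\int(v\cdot\partial_zv)^2|\partial_zv|^2\le0$ and can be dropped. Hence
\begin{align*}
 \int|v|^2|\partial_zv|^4\le3\int|v|^3|\partial_zv|^2|\partial_z^2v|
 \le\delta\int|\partial_zv|^2|\nabla\partial_zv|^2+C_\delta\int|v|^6|\partial_zv|^2,
\end{align*}
and the last integral is at most $\int|v|^6|\nabla v|^2$. This bound is linear in the two dissipations, with no $K$ weight. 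So the $\partial_zv$ step produces only $C\int|v|^6|\nabla v|^2$ with an absolute constant, which the $L^8$ dissipation $\alpha\int|v|^6|\nabla v|^2$ absorbs once $\alpha=\alpha(\nu)$ is large. This is exactly the paper's Step 2. With it inserted, the rest of your outline goes through.
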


\begin{proof}
The estimates of $\|v\|_8$, $\|\partial_zv\|_4$, and $\|\nabla v\|_2$ form a triangular system: the weighted $L^8$ dissipation controls the source in the $\|\partial_zv\|_4$ equation, and these two estimates then control the $\|\nabla v\|_2$ equation. Denote by $D_8=\int_{\T^3}|v|^6|\nabla v|^2\,\dd x$, $D_{z,4}=\int_{\T^3}|\partial_zv|^2|\nabla\partial_zv|^2\,\dd x$, and $D_4=\int_{\T^3}|v|^2|\nabla v|^2\,\dd x$.

\emph{Step 1: Estimate of $\|v\|_8$.}
It\^o's formula gives
\begin{align}\label{eq:L8-Ito-identity}
 \dd\|v\|_8^8+8\nu D_8\,\dd t
 &+48\nu\int_{\T^3}|v|^4|v\cdot\nabla v|^2\,\dd x\,\dd t
 =-8\int_{\T^3}\nabla_hp\cdot|v|^6v\,\dd x\,\dd t\notag\\
 &\quad+\sum_{j=1}^m\int_{\T^3}\left[4|v|^6|b_j|^2+24|v|^4(v\cdot b_j)^2\right]\dd x\,\dd t
 +m_8\,\dd W_t.
\end{align}
Here $(m_8)_j=8\int_{\T^3}|v|^6v\cdot b_j\,\dd x$, so $|m_8|^2\le C_Q\|v\|_8^{14}$. The transport contribution vanishes by incompressibility. Vertical averaging of the momentum equation and horizontal integration by parts give
\begin{align*}
 -2\pi\Delta_hp&=\nabla_h\cdot\int_{\mathbb T}\bigl((v\cdot\nabla_h)v+w(v)\partial_zv\bigr)\,\dd z,\\
 \int_{\mathbb T}\bigl((v\cdot\nabla_h)v+w(v)\partial_zv\bigr)\,\dd z
 &=\nabla_h\cdot\int_{\mathbb T}v\otimes v\,\dd z.
\end{align*}
Hence Calder\'on--Zygmund and H\"older inequalities yield
\begin{align}\label{eq:pressure-L8}
 \|\nabla_hp\|_{L_h^{8/5}}
 \le C\int_{\mathbb T}\|v(\cdot,z)\|_{L_h^8}\|\nabla_hv(\cdot,z)\|_{L_h^2}\,\dd z
 \le C\|v\|_8\|\nabla v\|_2.
\end{align}
The embedding $W^{1,8/7}(\T^2)\hookrightarrow L^{8/3}(\T^2)$ gives
\begin{align}\label{eq:pressure-L8-2}
 \left\|\int_{\mathbb T}|v|^6v\,\dd z\right\|_{L_h^{8/3}}
 \le C\bigl(\|v\|_8^7+\|v\|_8^3D_8^{1/2}\bigr).
\end{align}
Indeed, $|\nabla_h(|v|^6v)|\le7|v|^6|\nabla_hv|=7|v|^3(|v|^3|\nabla_hv|)$, and horizontal H\"older followed by H\"older in $z$ gives the second term. Thus
\begin{align*}
 \left|\int_{\T^3}\nabla_hp\cdot|v|^6v\,\dd x\right|
 \le\varepsilon D_8+C_\varepsilon(1+\|\nabla v\|_2^2)\|v\|_8^8.
\end{align*}
Choosing $\varepsilon$ sufficiently small gives
\begin{align}\label{eq:velocity-cascade}
 \dd\|v\|_8^8+c_8D_8\,\dd t
 \le C(1+\|\nabla v\|_2^2)\|v\|_8^8\,\dd t+C_Q(1+\|v\|_8^6)\,\dd t+m_8\,\dd W.
\end{align}

\emph{Step 2: Estimate of $\|\partial_zv\|_4$.}
It\^o's formula gives
\begin{align*}
 \dd\|\partial_zv\|_4^4+4\nu D_{z,4}\,\dd t
 &+8\nu\sum_{k=1}^3\int_{\T^3}(\partial_zv\cdot\partial_k\partial_zv)^2\,\dd x\,\dd t\\
 &=-4\int_{\T^3}|\partial_zv|^2\partial_zv\cdot((\partial_zv\cdot\nabla_h)v)\,\dd x\,\dd t
 +4\int_{\T^3}(\nabla_h\cdot v)|\partial_zv|^4\,\dd x\,\dd t\\
 &\quad+\sum_{j=1}^m\int_{\T^3}\left[2|\partial_zv|^2|\partial_zb_j|^2
 +4(\partial_zv\cdot\partial_zb_j)^2\right]\dd x\,\dd t+m_a\,\dd W,
\end{align*}
where $(m_a)_j=4\int_{\T^3}|\partial_zv|^2\partial_zv\cdot\partial_zb_j\,\dd x$ and $|m_a|^2\le C_Q\|\partial_zv\|_4^6$. Horizontal integration by parts gives
\begin{align*}
 &\left|-4\int_{\T^3}|\partial_zv|^2\partial_zv\cdot((\partial_zv\cdot\nabla_h)v)\,\dd x
 +4\int_{\T^3}(\nabla_h\cdot v)|\partial_zv|^4\,\dd x\right|\\
 &\qquad\le C\int_{\T^3}|v||\partial_zv|^3|\nabla_h\partial_zv|\,\dd x
 \le\frac{\varepsilon}{2}D_{z,4}+C_\varepsilon\int_{\T^3}|v|^2|\partial_zv|^4\,\dd x.
\end{align*}
Integration by parts in $z$ yields
\begin{align*}
 \int_{\T^3}|v|^2|\partial_zv|^4\,\dd x
 \le3\int_{\T^3}|v|^3|\partial_zv|^2|\partial_z^2v|\,\dd x
 \le\delta D_{z,4}+C_\delta D_8.
\end{align*}
Choosing $\delta$ and then $\varepsilon$ sufficiently small gives
\begin{align}\label{eq:cascade-A4}
 \dd\|\partial_zv\|_4^4+c_aD_{z,4}\,\dd t
 \le CD_8\,\dd t+C_Q(1+\|\partial_zv\|_4^2)\,\dd t+m_a\,\dd W.
\end{align}

\emph{Step 3: Estimate of $\|\nabla v\|_2$.}
The evolution of $\|\nabla v\|_2^2$ is
\begin{align}\label{eq:H1-exact}
 \dd\|\nabla v\|_2^2+2\nu\|\Delta v\|_2^2\,\dd t
 &=2\int_{\T^3}(v\cdot\nabla_h)v\cdot\Delta_hv\,\dd x\,\dd t
 +2\int_{\T^3}w(v)\partial_zv\cdot\Delta_hv\,\dd x\,\dd t\notag\\
 &\quad-2\int_{\T^3}((\partial_zv\cdot\nabla_h)v)\cdot\partial_zv\,\dd x\,\dd t
 +2\int_{\T^3}(\nabla_h\cdot v)|\partial_zv|^2\,\dd x\,\dd t\notag\\
 &\quad+\sum_{j=1}^m\|\nabla b_j\|_2^2\,\dd t+m_v\,\dd W.
\end{align}
Here $(m_v)_j=2\langle\nabla v,\nabla b_j\rangle$, so $|m_v|^2\le C_Q\|\nabla v\|_2^2$. The horizontal convection term satisfies
\begin{align*}
 \left|2\int_{\T^3}(v\cdot\nabla_h)v\cdot\Delta_hv\,\dd x\right|
 \le CD_4^{1/2}\|\nabla\nabla_hv\|_2
 \le\varepsilon\|\nabla\nabla_hv\|_2^2+C_\varepsilon D_4.
\end{align*}
The horizontal Ladyzhenskaya inequality gives
$\|w(v)\partial_zv\|_2\le C\|\nabla_hv\|_2^{1/2}(\|\nabla_hv\|_2+\|\nabla\nabla_hv\|_2)^{1/2}\|\partial_zv\|_4$, and hence
\begin{align*}
 \left|2\int_{\T^3}w(v)\partial_zv\cdot\Delta_hv\,\dd x\right|
 \le\varepsilon\|\nabla\nabla_hv\|_2^2
 +C_\varepsilon\|\nabla v\|_2^2(1+\|\partial_zv\|_4^4).
\end{align*}
Finally,
\begin{align*}
 &\left|-2\int_{\T^3}((\partial_zv\cdot\nabla_h)v)\cdot\partial_zv\,\dd x
 +2\int_{\T^3}(\nabla_h\cdot v)|\partial_zv|^2\,\dd x\right|\\
 &\qquad\le C\int_{\T^3}|v||\partial_zv||\nabla_h\partial_zv|\,\dd x
 \le\varepsilon\|\nabla\partial_zv\|_2^2+C_\varepsilon D_4.
\end{align*}
Choosing $\varepsilon$ sufficiently small gives
\begin{align}\label{eq:cascade-V}
 \dd\|\nabla v\|_2^2+c_v\|\Delta v\|_2^2\,\dd t
 \le CD_4\,\dd t+C\|\nabla v\|_2^2(1+\|\partial_zv\|_4^4)\,\dd t+C_Q\,\dd t+m_v\,\dd W.
\end{align}

\emph{Step 4: Conclusion.}
By Young's inequality, $D_4\le\delta D_8+C_\delta\|\nabla v\|_2^2$. Choose $\alpha$ sufficiently large so that $\alpha c_8D_8$ from \eqref{eq:velocity-cascade} absorbs the $CD_8$ term in \eqref{eq:cascade-A4}, and then choose $\delta>0$ sufficiently small to absorb the corresponding $D_4$ contribution in \eqref{eq:cascade-V}. All remaining drift terms are bounded by $(C_0+C_1\|\nabla v\|_2^2)K$. Hence
\begin{align}\label{eq:combined-cascade}
 \dd K+c_DD\,\dd t\le(C_0+C_1\|\nabla v\|_2^2)K\,\dd t+m\,\dd W,
\end{align}
where $m=\alpha m_8+m_a+m_v$. Moreover,
\begin{align*}
 |m|^2
 &\le C(\alpha^2|m_8|^2+|m_a|^2+|m_v|^2)\\
 &\le C_Q\bigl(\|v\|_8^{14}+\|\partial_zv\|_4^6+\|\nabla v\|_2^2\bigr)
 \le C_QK^{7/4}.
\end{align*}
This proves \eqref{eq:cascade-drift}.
\end{proof}

The remaining growth term $C_1\|\nabla v\|_2^2K$ is controlled by coupling \eqref{eq:cascade-drift} with the energy dissipation.

\begin{lemma}\label{lem:admissibility}
Every smooth finite-rank $Q$ admits an admissible pair $(\kappa,\eta)$. One sufficient choice is
\begin{align}\label{eq:cascade-choice}
 \kappa>0,\qquad \frac{4\kappa\|Q\|_{\mathcal L(\R^m,H)}^2}{\lambda_1}\le\frac{\nu}{2},
 \qquad 0<\eta\le1,\qquad \eta C_1\le\frac{\kappa\nu}{2}
\end{align}
where $\lambda_1$ is the smallest eigenvalue of $A$ on $H$. 
\end{lemma}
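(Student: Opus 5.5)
The plan is to apply It\^o's formula to $\mathcal V=e^{\kappa E}K^\eta$, treating it as the product of $e^{\kappa E}$ and $K^\eta$, and then collect the drift and martingale parts. For the energy $E=\|v\|_H^2$ we have
\begin{align*}
 \dd E=\bigl(-2\nu\|\nabla v\|_2^2+\|Q\|_{\mathrm{HS}}^2\bigr)\dd t+H\,\dd W,\qquad H_j=2\ip{v}{b_j},
\end{align*}
because $\ip{B(v,v)}v=0$. Hence $|H|^2\le4\|Q\|^2\|v\|_2^2\le\frac{4\|Q\|^2}{\lambda_1}\|\nabla v\|_2^2$ by Poincar\'e. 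It\^o's formula then gives
\begin{align*}
 \dd e^{\kappa E}=e^{\kappa E}\Bigl(-2\kappa\nu\|\nabla v\|_2^2+\kappa\|Q\|_{\mathrm{HS}}^2+\tfrac{\kappa^2}{2}|H|^2\Bigr)\dd t+\kappa e^{\kappa E}H\,\dd W.
\end{align*}
Under \eqref{eq:cascade-choice}, $\frac{\kappa^2}{2}|H|^2\le\frac{\kappa\nu}{4}\|\nabla v\|_2^2$, so this drift is at most $e^{\kappa E}\bigl(C-\frac{7}{4}\kappa\nu\|\nabla v\|_2^2\bigr)$.

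For $K^\eta$ with $0<\eta\le1$, concavity makes the It\^o correction $\frac{\eta(\eta-1)}{2}K^{\eta-2}|m|^2$ nonpositive, so it can be dropped. \Cref{lem:cascade-direct} then gives
\begin{align*}
 \dd K^\eta\le K^\eta\Bigl(\eta C_0+\eta C_1\|\nabla v\|_2^2-\eta c\frac{D}{K}\Bigr)\dd t+\eta K^{\eta-1}m\,\dd W.
\end{align*}
The condition $\eta C_1\le\kappa\nu/2$ lets the energy dissipation absorb the growth term $\eta C_1\|\nabla v\|_2^2$.

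The main obstacle is the cross variation $\kappa\eta e^{\kappa E}K^{\eta-1}\,H\cdot m\,\dd t$. We bound it by Cauchy--Schwarz and Young:
\begin{align*}
 \kappa\eta\frac{|H||m|}{K}\le\kappa\eta\,\frac{C_Q^{1/2}K^{7/8}|H|}{K}\le\frac{\kappa\nu}{4}\|\nabla v\|_2^2+C\kappa\eta^2K^{-1/4}\|Q\|^2.
\end{align*}
This uses $|m|^2\le C_QK^{7/4}$ from \eqref{eq:cascade-drift} together with the bound on $|H|$ above. Because $K\ge1$, the last term is bounded by a constant.

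Summing the three contributions, the drift of $\mathcal V$ is at most
\begin{align*}
 \mathcal V\Bigl(C-\kappa\nu\|\nabla v\|_2^2-\eta c\frac{D}{K}\Bigr).
\end{align*}
The martingale part is $\mathcal V(\kappa H+\eta m/K)\,\dd W$. This is \eqref{eq:admissible} with $b_1=\kappa\nu$ and $b_2=\eta c$.

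The It\^o computation is first done formally for smooth solutions, which is all that \eqref{eq:admissible} requires. If one wants a statement for strong solutions, it follows by Galerkin approximation or by stopping-time localization.
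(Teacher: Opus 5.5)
Your proposal is correct and follows the paper's proof essentially step by step: It\^o's formula for $\mathcal V=e^{\kappa E}K^\eta$, dropping the nonpositive concavity correction because $\eta\le1$, Poincar\'e on $|H|^2$, and absorbing all $\|\nabla v\|_2^2$ growth via \eqref{eq:cascade-choice} to obtain $b_1=\kappa\nu$ and $b_2=\eta c$. The only cosmetic difference is the cross-variation term: the paper bounds it by $\frac{\kappa^2}{2}|H|^2+\frac{\eta^2}{2}|m|^2/K^2$ (using $|m|^2/K^2\le C_Q$) and absorbs the resulting $\kappa^2|H|^2$, whereas you apply Young's inequality directly against $\|\nabla v\|_2^2$; both bookkeepings give the same net coefficient $-\kappa\nu$.
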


\begin{proof}
Along a strong solution $v_t$, the energy satisfies
\begin{align}\label{eq:energy-balance}
 \dd E=(-2\nu\|\nabla v\|_2^2+\|Q\|_{\mathrm{HS}}^2)\,\dd t+H\,\dd W,
\end{align}
where $H_j=2\langle v,b_j\rangle$ and $|H|^2\le4\|Q\|_{\mathcal L(\R^m,H)}^2E$. Applying It\^o's formula to $\mathcal V$ and using \eqref{eq:cascade-drift} and \eqref{eq:energy-balance} gives
\begin{align*}
 \dd\mathcal V
 &\le\mathcal V\left[
 (-2\kappa\nu+\eta C_1)\|\nabla v\|_2^2-\eta c\frac{D}{K}
 +\kappa\|Q\|_{\mathrm{HS}}^2+\eta C_0\right.\\
 &\hspace{28mm}\left.
 +\frac{\kappa^2}{2}|H|^2+\kappa\eta\frac{H\cdot m}{K}
 +\frac{\eta(\eta-1)}{2}\frac{|m|^2}{K^2}\right]\dd t
 +\mathcal V\left(\kappa H+\eta\frac{m}{K}\right)\dd W.
\end{align*}
Since $0<\eta\le1$, the last term inside the drift is nonpositive. Moreover, $|m|^2/K^2\le C_Q$, and Young's inequality gives
$\kappa\eta H\cdot m/K\le\frac{\kappa^2}{2}|H|^2+\frac{\eta^2}{2}|m|^2/K^2\le\frac{\kappa^2}{2}|H|^2+C$.
Using Poincar\'e's inequality and the bound on $H$ therefore yields
\begin{align*}
 \dd\mathcal V
 \le\mathcal V\left[
 \left(-2\kappa\nu+\eta C_1+\frac{4\kappa^2\|Q\|_{\mathcal L(\R^m,H)}^2}{\lambda_1}\right)\|\nabla v\|_2^2
 -\eta c\frac{D}{K}+C\right]\dd t+\mathcal V\left(\kappa H+\eta\frac{m}{K}\right)\dd W.
\end{align*}
Under \eqref{eq:cascade-choice}, the coefficient of $\|\nabla v\|_2^2$ is at most $-\kappa\nu$. Hence \eqref{eq:admissible} holds with $b_1=\kappa\nu$ and $b_2=\eta c$. 
\end{proof}

Enlarging $C_\nu$ if necessary so that $C_\nu>2C_1/\nu$, there exists $\eta_0=\eta_0(\nu,Q)>0$ such that, for every $0<\eta\le\eta_0$, the pair $(4C_\nu\eta,4\eta)$ satisfies the strict versions of \eqref{eq:cascade-choice}, with $4\eta<1/3$. After decreasing $\eta_0$ if necessary, there is also $\mathsf k>4C_\nu\eta$ such that $\mathsf k\|Q\|_{\mathcal L(\R^m,H)}^2<\nu\lambda_1/2$. From now on fix $0<\eta\le\eta_0$ and set $\kappa=C_\nu\eta$. Then
$U=\mathcal V=e^{\kappa E}K^\eta$.

Set $\mathcal B(v)=\|\nabla v\|_2^2+D(v)/K(v)$. The next lemma provides the coercivity needed below.

\begin{lemma}\label{lem:cascade-coercivity}
There exists $C>0$ such that
\begin{align}\label{eq:cascade-coercivity}
 E\le C\mathcal B,\qquad \frac{K^{2/9}}{(1+E)^{8/9}}\le C(1+\mathcal B),\qquad K^{2/17}\le C(1+\mathcal B).
\end{align}
\end{lemma}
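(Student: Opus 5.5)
The plan is to combine Poincar\'e and Gagliardo--Nirenberg inequalities with the algebraic relation $D\le \mathcal B K$, which holds by definition of $\mathcal B$. The first inequality is immediate. Elements of $H$ have zero mean, so Poincar\'e gives $E=\|v\|_2^2\le\lambda_1^{-1}\|\nabla v\|_2^2\le\lambda_1^{-1}\mathcal B$.

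For the second inequality, I bound each of the three pieces of $K-1$ by a fractional power of $D$ times lower-order norms.
\begin{itemize}
\item For the $L^8$ piece, set $g=|v|^4$. Then $|\nabla g|\le4|v|^3|\nabla v|$, so $\|\nabla g\|_2^2\le16D_8\le16D$, and $\|v\|_8^8=\|g\|_2^2$. The three-dimensional Nash/Gagliardo--Nirenberg inequality $\|g\|_2\le C\|\nabla g\|_2^{3/5}\|g\|_1^{2/5}+C\|g\|_1$ applies. Combined with $\|g\|_1=\|v\|_4^4\le C\|v\|_2\|\nabla v\|_2^3$, it gives
\[
\|v\|_8^8\le C D^{3/5}\bigl(E^{1/2}\|\nabla v\|_2^3\bigr)^{4/5}+C\bigl(E^{1/2}\|\nabla v\|_2^3\bigr)^2 .
\]
\item For the $\partial_z$ piece, set $f=|\partial_zv|^2$. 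Then $\|\nabla f\|_2^2\le4D_{z,4}$, $\|f\|_2^2=\|\partial_zv\|_4^4$ and $\|f\|_1\le\|\nabla v\|_2^2$. Hence
\[
\|\partial_zv\|_4^4\le CD^{3/5}\|\nabla v\|_2^{8/5}+C\|\nabla v\|_2^4 .
\]
\item The gradient piece satisfies $\|\nabla v\|_2^2\le\mathcal B$ trivially.
\end{itemize}

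Next, insert $D\le\mathcal B K$ and $\|\nabla v\|_2^2\le\mathcal B$. This yields an inequality of the form
\[
K\le C\,(\mathcal BK)^{3/5}\,\Pi_1(E,\mathcal B)+C\,\Pi_2(E,\mathcal B),
\]
where $\Pi_1,\Pi_2$ are explicit monomials. Young's inequality absorbs $K^{3/5}$ into the left-hand side, giving $K\le C(1+\mathcal B)^{p}(1+E)^{s}$ for explicit exponents $p,s$. Raising to a small power produces $K^{2/9}(1+E)^{-8/9}\le C(1+\mathcal B)$. The third inequality then follows by using $E\le C\mathcal B$ to eliminate $E$, which degrades the exponent to $2/17$.

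The main obstacle is the exponent bookkeeping, in particular matching exactly $2/9$ and $8/9$. The GN powers are sensitive to how $\|v\|_4^4$ is interpolated; one might instead use $\|v\|_4^4\le\|v\|_2^{1/2}\|v\|_8^{7/2}$ and absorb $\|v\|_8$ back into $K$. I would try both interpolations and choose whichever produces the stated powers. If the paper's exponents are not exactly reproduced, any positive powers of this form would suffice for the subsequent moment bounds. Only homogeneity matters, so constants depending on $\alpha$ and the torus geometry are harmless.
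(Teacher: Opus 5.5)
Your proposal is correct, and the exponent bookkeeping you left open closes with room to spare. Substituting $D\le\mathcal BK$ and $\|\nabla v\|_2^2\le\mathcal B$ into your two Nash bounds gives
\[
 K\le 1+CK^{3/5}\bigl(E^{2/5}\mathcal B^{9/5}+\mathcal B^{7/5}\bigr)+C\bigl(E\mathcal B^{3}+\mathcal B^{2}+\mathcal B\bigr).
\]
Young's inequality with exponents $5/3,5/2$ then yields $K\le C(1+E)(1+\mathcal B)^{9/2}$. Hence $K^{2/9}\le C(1+E)^{2/9}(1+\mathcal B)$. This implies the stated middle inequality, since $(1+E)^{2/9}\le(1+E)^{8/9}$. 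Combined with $E\le C\mathcal B$ it even gives $K^{2/11}\le C(1+\mathcal B)$, and thus $K^{2/17}\le C(1+\mathcal B)$ because $K\ge1$. So no fallback is needed. The alternative interpolation you float is also misstated: H\"older gives $\|v\|_4^4\le\|v\|_2^{4/3}\|v\|_8^{8/3}$.

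The paper uses the same skeleton as you: bound each piece of $K$ by fractional powers of $D$, insert $D\le\mathcal BK$, absorb by Young, take a root. It differs in the interpolation inputs.
\begin{itemize}
\item For the vertical piece it integrates by parts in $z$ and applies H\"older twice, $\|\partial_zv\|_4^4\le3D_4^{1/2}D_{z,4}^{1/2}\le3D_8^{1/6}\|\nabla v\|_2^{2/3}D_{z,4}^{1/2}\le CK^{2/3}\mathcal B$. This couples the vertical dissipation to the weighted $L^8$ dissipation.
\item For the $L^8$ piece it uses $\|v\|_{24}^8\le C(D_8+\|v\|_8^8)$ and $L^2$--$L^{24}$ interpolation, giving $(\|v\|_8^8)^{11/9}\le CE^{8/9}K(1+\mathcal B)$. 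After Young this produces $K\le C[1+E^4(1+\mathcal B)^{9/2}+\mathcal B^3+\mathcal B]$, and hence exactly the factor $(1+E)^{8/9}$.
\end{itemize}
Your route treats both pieces uniformly with one $L^1$--$\dot H^1$ inequality applied to $|v|^4$ and $|\partial_zv|^2$. You control the $L^1$ norms by $\|v\|_4^4\le CE^{1/2}\|\nabla v\|_2^3$ (using zero mean) and $\|\partial_zv\|_2^2\le\|\nabla v\|_2^2$. This avoids the vertical integration by parts and gives a better power of $E$. The paper's weaker form is simply what is matched to the weight $(1+E)^{4p\eta}$ in the coming-down step of \Cref{lem:mixing-foster}, which your stronger bound also covers.
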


\begin{proof}
Poincar\'e's inequality gives $E\le C\|\nabla v\|_2^2\le C\mathcal B$. Integration by parts in $z$ and H\"older's inequality give
\begin{align*}
 \|\partial_zv\|_4^4
 &\le3\left(\int|v|^2|\nabla v|^2\right)^{1/2}
 \left(\int|\partial_zv|^2|\nabla\partial_zv|^2\right)^{1/2}\\
 &\le3\left(\int|v|^6|\nabla v|^2\right)^{1/6}\|\nabla v\|_2^{2/3}
 \left(\int|\partial_zv|^2|\nabla\partial_zv|^2\right)^{1/2}\le CD^{2/3}\mathcal B^{1/3}\le CK^{2/3}\mathcal B.
\end{align*}
The Sobolev inequality gives $\|v\|_{24}^8\le C(D_8+\|v\|_8^8)$. Interpolation between $L^2$ and $L^{24}$ then yields
\begin{align*}
 (\|v\|_8^8)^{11/9}\le E^{8/9}\|v\|_{24}^8
 \le CE^{8/9}K(1+\mathcal B).
\end{align*}
Hence Young's inequality gives
\begin{align*}
 K
 &\le1+CE^{8/11}K^{9/11}(1+\mathcal B)^{9/11}+CK^{2/3}\mathcal B+\mathcal B\\
 &\le1+\frac12K+CE^4(1+\mathcal B)^{9/2}+C\mathcal B^3+\mathcal B.
\end{align*}
Moving $\frac12K$ to the left and raising to the power $2/9$ gives
$K^{2/9}\le C[1+\mathcal B^{2/3}+E^{8/9}(1+\mathcal B)]
\le C(1+E)^{8/9}(1+\mathcal B)$, proving the second inequality in \eqref{eq:cascade-coercivity}. Combining it with $E\le C\mathcal B$ gives $K^{2/9}\le C(1+\mathcal B)^{17/9}$, and raising to the power $9/17$ proves the third.
\end{proof}

\begin{lemma}\label{lem:mixing-foster}
For $p\in\{1,2,4\}$ there are $c_p,C_p>0$ such that, whenever $U(v_0)<\infty$,
\begin{align}\label{eq:mixing-foster}
\begin{split}
 &P_tU^p(v_0)=\mathbb E_{v_0}U^p(v_t)\le e^{-c_pt}U(v_0)^p+C_p,\\
 \text{ and }\quad  &\mathbb E_{v_0}\int_0^tU(v_s)^p\left(1+\|\nabla v_s\|_2^2+\frac{D(v_s)}{K(v_s)}\right)\dd s
 \le C_p\bigl(U(v_0)^p+t\bigr),
\end{split}
\end{align}
where $v_t=\Phi_t(v_0,W)$. Moreover, for every $v_0\in H^1$ and $0<t\le1$,
\begin{align}\label{eq:mixing-coming-down}
 P_tU^p(v_0)\le C_pe^{\mathsf k\|v_0\|_H^2}\bigl(1+t^{-9p\eta/2}\bigr).
\end{align}
In particular $P_1U(v_0)\le Ce^{\mathsf k\|v_0\|_H^2}$ for every $v_0\in H^1$.
\end{lemma}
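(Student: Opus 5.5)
The plan is to work from It\^o's formula for $U^p=e^{p\kappa E}K^{p\eta}$, and first reduce the case $p\in\{2,4\}$ to the admissibility of \Cref{lem:admissibility}. Since $U^p=e^{(p\kappa)E}K^{p\eta}$ with $\kappa=C_\nu\eta$, the pair $(p\kappa,p\eta)=(pC_\nu\eta,p\eta)$ with $p\le4$ satisfies the strict version of \eqref{eq:cascade-choice}; this is exactly why $(4C_\nu\eta,4\eta)$ was required to be admissible with $4\eta<1/3\le1$. Hence \eqref{eq:admissible} holds for $\mathcal V_p=U^p$, giving
\begin{align*}
 \dd U^p\le U^p\bigl(C-b_1\|\nabla v\|_2^2-b_2 D/K\bigr)\dd t+\dd M_t\le U^p\bigl(C-b\,\mathcal B\bigr)\dd t+\dd M_t .
\end{align*}

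The main step is to turn $U^p(C-b\mathcal B)$ into $-c_pU^p+C_p$, which requires coercivity of $\mathcal B$ wherever $U$ is large. I would split the phase space into two regions.
\begin{itemize}
\item If $E$ is large, then $\mathcal B\ge E/C$ by \Cref{lem:cascade-coercivity}, so $C-b\mathcal B\le -1$.
\item If $E$ is bounded but $K$ is large, then $\mathcal B\ge cK^{2/17}-C$ by the third bound in \eqref{eq:cascade-coercivity}, which is again large.
\end{itemize}
Hence $U^p(C-b\mathcal B)\le -U^p+C'$ on the complement of the set $\{E\le R,\ K\le R\}$, while $U^p$ is bounded on that set. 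This yields
\begin{align*}
 \dd U^p\le(-c_pU^p+C_p)\dd t-\tfrac b2U^p\mathcal B\,\dd t+\dd M_t .
\end{align*}
I would localize with stopping times $\tau_n=\inf\{t:U(v_t)>n\}$ (or work with Galerkin and smooth approximations, as the admissibility was stated along smooth solutions) and take expectations. Gronwall then gives the first bound in \eqref{eq:mixing-foster}. Integrating in time gives the second bound, since $U^p(1+\mathcal B)\le C(U^p\mathcal B+1)$ by the same coercivity (because $U^p\le C$ wherever $\mathcal B\le C$). Fatou's lemma removes the localization. The case $p=1$ is identical.

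For the coming-down estimate \eqref{eq:mixing-coming-down}, where $v_0$ is only in $H^1$ and $U(v_0)$ may be infinite, I would use a time weight. Set $Z_t=t^{\beta}U^p(v_t)$ with $\beta=9p\eta/2$ and $\kappa$ replaced by the larger exponent $\mathsf k$ as needed. It\^o's formula then adds the term $\beta t^{\beta-1}U^p$, which has to be absorbed by $-b\,t^\beta U^p\mathcal B$. By the second coercivity bound, $K^{p\eta}\le C(1+E)^{4p\eta}(1+\mathcal B)^{9p\eta/2}$. Young's inequality then gives
\begin{align*}
 t^{\beta-1}U^p\le \varepsilon t^{\beta}U^p\mathcal B+C e^{p\kappa E}(1+E)^{C}\,t^{\beta-1-\beta\cdot(\text{excess})},
\end{align*}
where the exponent $9p\eta/2$ is chosen precisely so that the leftover is $e^{\mathsf k E}$ times an integrable power of $t$. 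Finally $e^{\mathsf k E}$ is a supermartingale-type quantity whose exponential moment $\E e^{\mathsf k\sup_{s\le1}E_s}\le Ce^{\mathsf k\|v_0\|_H^2}$ holds because $\mathsf k\|Q\|^2<\nu\lambda_1/2$ (by the exponential martingale inequality for \eqref{eq:energy-balance}).

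I expect this last Young-inequality bookkeeping to be the main obstacle: one has to match the power of $t$ against the fractional coercivity $K^{2/9}/(1+E)^{8/9}$ while controlling the energy factor uniformly in time. The statement for $P_1U$ is the case $p=1$, $t=1$.
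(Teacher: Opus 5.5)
Your Foster/dissipation argument is the paper's Step 1. Admissibility of $(p\kappa,p\eta)$ is combined with the fact (equivalent to your two-region split) that $U^p$ is bounded on sublevels of $\mathcal B$ by \Cref{lem:cascade-coercivity}, followed by localization and expectation.

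For the coming-down estimate you take a genuinely different route. Set $\gamma:=9p\eta/2$ and $A:=e^{p\kappa E}(1+E)^{4p\eta}$. The paper works with $g(t)=\E U^p(v_t)$. H\"older's inequality with weight $A$ gives $g^{1+1/\gamma}\le M^{1/\gamma}\E[U^pK^{2/9}(1+E)^{-8/9}]$, and the Bernoulli-type inequality $g'\le C-cM^{-1/\gamma}g^{1+1/\gamma}$ then comes down from infinity independently of $g(0)$. You instead absorb the derivative of a time weight pathwise by Young's inequality and take a single expectation. This avoids the nonlinear ODE comparison. Both routes use the same superlinear dissipation $(U^p)^{1+1/\gamma}A^{-1/\gamma}\le C\,U^p(1+\mathcal B)$, which follows from the second bound in \eqref{eq:cascade-coercivity}.

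As written, however, your exponent is the borderline one, and the step you flagged fails. With weight $t^\beta$, Young's inequality gives exactly
\begin{align*}
 \beta t^{\beta-1}U^p\le \tfrac{c}{2}\,t^{\beta}(U^p)^{1+1/\gamma}A^{-1/\gamma}+C_\beta\,t^{\beta-\gamma-1}A .
\end{align*}
The power of $t$ is forced by scaling, since the maximizing configuration is $U^p\sim t^{-\gamma}A$. So for $\beta=\gamma=9p\eta/2$ the leftover is $t^{-1}A$, which is not integrable at $0$. Take any $\beta>\gamma$, for example $\beta=\gamma+1$. Then, with $\E A_s\le M=Ce^{\mathsf k\|v_0\|_H^2}$,
\begin{align*}
 t^\beta\E U^p(v_t)\le Ct^{\beta+1}+C_\beta M\int_0^t s^{\beta-\gamma-1}\,\dd s .
\end{align*}
Dividing by $t^\beta$ gives precisely $CM(1+t^{-9p\eta/2})$ on $(0,1]$.

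There is also a point you did not address. For $v_0\in H^1$ with $U(v_0)=\infty$ you cannot start It\^o's formula for $t^\beta U^p$ at $t=0$. Run the argument for Fourier truncations $v_0^n$, which satisfy $U(v_0^n)<\infty$ and $\|v_0^n\|_H\le\|v_0\|_H$; the bound depends only on $\|v_0^n\|_H$. Then pass to the limit by common-driver stability, lower semicontinuity of $K$ and Fatou, as in the paper's Step 4.

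Your stronger claim $\E e^{\mathsf k\sup_{s\le1}E_s}\le Ce^{\mathsf k\|v_0\|_H^2}$ is true by the exponential martingale inequality, but only $\sup_{s\le1}\E e^{\mathsf kE_s}$ is needed.
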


\begin{proof}
All It\^o inequalities below are first justified under the standard strong-norm localization and then passed to the limit by Fatou's lemma.

\emph{Step 1: Foster and dissipation bounds.}
Since $U^p=e^{p\kappa E}K^{p\eta}$ and $(p\kappa,p\eta)$ is admissible for $p\in\{1,2,4\}$, \Cref{lem:admissibility} gives
\begin{align*}
 \dd U^p\le U^p(C_p-c_p\mathcal B)\,\dd t
 +pU^p\left(\kappa H+\eta\frac{m}{K}\right)\dd W.
\end{align*}
By \Cref{lem:cascade-coercivity}, $U^p$ is bounded on every sublevel of $\mathcal B$. After adjusting $c_p,C_p$,
\begin{align}\label{ito-inequality-lemma4.6}
 \dd U^p+c_pU^p(1+\mathcal B)\,\dd t
 \le C_p\,\dd t+pU^p\left(\kappa H+\eta\frac{m}{K}\right)\dd W.
\end{align}
Integrating in time and taking expectations gives
\begin{align*}
 \mathbb E_{v_0}U^p(v_t)+c_p\mathbb E_{v_0}\int_0^tU^p(v_s)(1+\mathcal B(v_s))\,\dd s
 \le U(v_0)^p+C_pt,
\end{align*}
which gives the second estimate in \eqref{eq:mixing-foster}. Multiplying \eqref{ito-inequality-lemma4.6} by $e^{c_pt}$ and discarding the remaining nonnegative dissipation yields the first estimate.

\emph{Step 2: Exponential energy bound.}
By \eqref{eq:energy-balance},
\begin{align*}
 \dd(e^{\mathsf kE})
 &\le e^{\mathsf kE}\left[-2\mathsf k\nu\|\nabla v\|_2^2
 +\mathsf k\|Q\|_{\mathrm{HS}}^2
 +2\mathsf k^2\|Q\|_{\mathcal L(\R^m,H)}^2E\right]\dd t
 +\mathsf ke^{\mathsf kE}H\,\dd W.
\end{align*}
Since $\mathsf k\|Q\|_{\mathcal L(\R^m,H)}^2<\nu\lambda_1/2$, Poincar\'e's inequality gives
\begin{align*}
 \dd(e^{\mathsf kE})\le(-ce^{\mathsf kE}+C)\,\dd t+\mathsf ke^{\mathsf kE}H\,\dd W.
\end{align*}
Hence, for $0\le t\le1$,
\begin{align*}
 \mathbb E_{v_0}e^{\mathsf kE(v_t)}
 &\le e^{-ct}e^{\mathsf kE(v_0)}+\frac{C}{c}(1-e^{-ct}),\\
 \sup_{0\le t\le1}\mathbb E_{v_0}e^{\mathsf kE(v_t)}
 &\le Ce^{\mathsf kE(v_0)}=:M.
\end{align*}

\emph{Step 3: Coming down from the energy space.}
Fix $p\in\{1,2,4\}$ and set $\beta=2/(9p\eta)$. Let $g(t)=\mathbb E_{v_0}U^p(v_t)$. Since $p\kappa<\mathsf k$, one has $e^{p\kappa E}(1+E)^{4p\eta}\le Ce^{\mathsf kE}$ and 
\begin{align*}
 \sup_{0\le t\le1}\mathbb E_{v_0}\left[e^{p\kappa E(v_t)}(1+E(v_t))^{4p\eta}\right]\le M.
\end{align*}
Since $p\eta\beta=2/9$,
\begin{align*}
 \frac{(U^p)^{1+\beta}}{\bigl(e^{p\kappa E}(1+E)^{4p\eta}\bigr)^\beta}
 =U^p\frac{K^{2/9}}{(1+E)^{8/9}}.
\end{align*}
H\"older's inequality therefore gives
\begin{align}\label{lemma4.6-holder}
 g(t)^{1+\beta}
 \le M^\beta\mathbb E_{v_0}\left[
 U^p(v_t)\frac{K(v_t)^{2/9}}{(1+E(v_t))^{8/9}}\right].
\end{align}
By \eqref{eq:cascade-coercivity} and \eqref{ito-inequality-lemma4.6},
\begin{align*}
 \dd U^p
 \le\left(C-cU^p\frac{K^{2/9}}{(1+E)^{8/9}}\right)\dd t
 +pU^p\left(\kappa H+\eta\frac{m}{K}\right)\dd W.
\end{align*}
Integrating on $[r,t]\subset(0,1]$, taking expectations and using \eqref{lemma4.6-holder} gives
\begin{align*}
 g(t)+cM^{-\beta}\int_r^tg(s)^{1+\beta}\,\dd s
 \le g(r)+C(t-r).
\end{align*}
Thus $\frac{d}{dt}g(t)\le C-cM^{-\beta}g(t)^{1+\beta}$ for almost every $t>0$. Whenever $g(t)^{1+\beta}\ge(2C/c)M^\beta$,
\begin{align*}
 \frac{d}{dt}g(t)^{-\beta}\ge\frac{\beta c}{2}M^{-\beta}.
\end{align*}
It follows, independently of $g(0)$, that
\begin{align*}
 g(t)
 \le CM^{\beta/(1+\beta)}+CMt^{-1/\beta}
 \le CM(1+t^{-1/\beta})\le C_pe^{\mathsf kE(v_0)}\bigl(1+t^{-9p\eta/2}\bigr),
 \qquad 0<t\le1.
\end{align*}

\emph{Step 4: Approximation.}
The preceding calculations are first valid for smooth initial states. If $U(v_0)<\infty$, approximate $v_0$ by $e^{-\varepsilon A}v_0$. Then $e^{-\varepsilon A}v_0\to v_0$ in the norms entering $U$, so $U(e^{-\varepsilon A}v_0)\to U(v_0)$. Common-driver stability and Fatou's lemma give \eqref{eq:mixing-foster}.

For arbitrary $v_0\in H^1$, let $v_0^n$ be Fourier truncations. Then $v_0^n\to v_0$ in $H^1$, $\|v_0^n\|_H\le\|v_0\|_H$, and $U(v_0^n)<\infty$. For each fixed $t>0$, common-driver stability gives $\Phi_t(v_0^n,W)\to\Phi_t(v_0,W)$ in $H^1$, while the $L^8$ and vertical $L^4$ terms in $K$ are lower semicontinuous. Hence
\begin{align*}
 P_tU^p(v_0)\le\liminf_{n\to\infty}P_tU^p(v_0^n)\le C_pe^{\mathsf k\|v_0\|_H^2}\bigl(1+t^{-9p\eta/2}\bigr),
 \qquad 0<t\le1.
\end{align*}
This proves \eqref{eq:mixing-coming-down}. Taking $p=1$ and $t=1$ gives the final assertion.
\end{proof}

We now return to the amplification estimate in \Cref{lem:one-sided-growth}. Since $\log L_T(v)$ is proportional to $\int_0^TK(v_s)\,\dd s$, it remains to control a fractional moment of this integral.

\begin{proposition}\label{lem:mixing-conditional-clock}
For every fixed $T<\infty$, there exists $C_T<\infty$ such that, for $v_t=\Phi_t(v_0,W)$,
\begin{align}\label{eq:mixing-conditional-clock}
 \mathbb E_{v_0}\left(\int_0^TK(v_s)\,\dd s\right)^\eta
 \le C_TU(v_0),\qquad U(v_0)<\infty.
\end{align}
\end{proposition}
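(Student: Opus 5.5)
The plan is to bound the time integral crudely by its supremum and then prove a maximal inequality for the Lyapunov function $U$ itself. Since $E\ge0$ and $\kappa>0$, we have $K^\eta\le e^{\kappa E}K^\eta=U$. Hence
\begin{align*}
 \Bigl(\int_0^TK(v_s)\,\dd s\Bigr)^\eta\le T^\eta\sup_{0\le s\le T}K(v_s)^\eta\le T^\eta\sup_{0\le s\le T}U(v_s),
\end{align*}
and it suffices to show $\mathbb E_{v_0}\sup_{s\le T}U(v_s)\le C_T\,U(v_0)$. Because $U\ge1$, any bound of the form $C(U(v_0)+T)$ is enough.

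For the maximal bound I will use the It\^o inequality \eqref{ito-inequality-lemma4.6} with $p=1$, from the proof of \Cref{lem:mixing-foster}:
\begin{align*}
 U(v_t)+c_1\int_0^tU(1+\mathcal B)\,\dd s\le U(v_0)+C_1t+M_t,\qquad
 M_t=\int_0^tU\Bigl(\kappa H+\eta\frac{m}{K}\Bigr)\dd W.
\end{align*}
The quadratic variation of $M$ is controlled by the dissipation. From $|H|^2\le4\|Q\|^2E$ and $E\le C\mathcal B$ (\Cref{lem:cascade-coercivity}) we get $|H|^2\le C\mathcal B$. From $|m|^2\le C_QK^{7/4}$ and $K\ge1$ (\Cref{lem:cascade-direct}) we get $|m|^2/K^2\le C$. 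Therefore
\begin{align*}
 \dd\langle M\rangle_t\le CU^2(1+\mathcal B)\,\dd t.
\end{align*}
By the Burkholder--Davis--Gundy inequality, followed by Young's inequality,
\begin{align*}
 \mathbb E\sup_{t\le T}|M_t|\le C\,\mathbb E\Bigl(\sup_{t\le T}U\int_0^TU(1+\mathcal B)\,\dd s\Bigr)^{1/2}
 \le\tfrac12\mathbb E\sup_{t\le T}U+C\,\mathbb E\int_0^TU(1+\mathcal B)\,\dd s.
\end{align*}
The last expectation is at most $C(U(v_0)+T)$ by the second estimate in \eqref{eq:mixing-foster}. Taking the supremum in the It\^o inequality and absorbing the $\frac12\mathbb E\sup U$ term then gives $\mathbb E\sup_{t\le T}U(v_t)\le C(U(v_0)+T)\le C_TU(v_0)$, which proves \eqref{eq:mixing-conditional-clock}.

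The main technical point is justifying the absorption, which needs $\mathbb E\sup_{t\le T}U<\infty$ a priori. I will handle it by working up to the stopping times $\tau_N=\inf\{t:U(v_t)\ge N\}\wedge T$, for smooth (or Galerkin/strong-norm localized) solutions as in \Cref{lem:mixing-foster}. The BDG and Young step is applied on $[0,\tau_N]$, where everything is finite, giving a bound uniform in $N$. Letting $N\to\infty$ and using monotone convergence gives the estimate for smooth data. For general $v_0$ with $U(v_0)<\infty$, I will approximate by $e^{-\varepsilon A}v_0$ and pass to the limit with common-driver stability and Fatou's lemma, exactly as in Step 4 of \Cref{lem:mixing-foster}; the lower semicontinuity of $K$ makes the left-hand side pass to the limit.
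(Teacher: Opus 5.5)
Your proposal is correct and follows the paper's proof essentially step by step. Both arguments use the reduction $(\int_0^T K)^\eta\le T^\eta\sup_{s\le T}U(v_s)$, the bracket bound $|G_U|^2\le CU^2(1+\mathcal B)$ (the paper writes $1+E$ and converts via $E\le C\mathcal B$ afterwards), BDG plus Young to absorb half of $\mathbb E\sup U$, and the $p=1$ dissipation estimate in \eqref{eq:mixing-foster}. Your stopping-time localization and the $e^{-\varepsilon A}v_0$ approximation just spell out the justification the paper leaves to the strong-norm localization remark in \Cref{lem:mixing-foster}.
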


\begin{proof}
The martingale coefficient of $U=e^{\kappa E}K^\eta$ is $G_U=U(\kappa H+\eta m/K)$. Since $|H|^2\le C_QE$, $|m|^2\le C_QK^{7/4}$ and $K\ge1$,
\begin{align}\label{eq:U-bracket-bound}
 |G_U|^2\le CU^2(1+E).
\end{align}
By \Cref{lem:admissibility,lem:cascade-coercivity}, after adjusting constants,
\begin{align*}
 \dd U+cU(1+\mathcal B)\,\dd t\le C\,\dd t+G_U\,\dd W.
\end{align*}
Dropping the nonnegative dissipation, taking the supremum over $[0,T]$, and applying the Burkholder--Davis--Gundy and Young inequalities together with \eqref{eq:U-bracket-bound} gives
\begin{align*}
 \mathbb E_{v_0}\sup_{s\le T}U(v_s)
 \le U(v_0)+C_T+\frac12\mathbb E_{v_0}\sup_{s\le T}U(v_s)+C\mathbb E_{v_0}\int_0^TU(v_s)(1+E(v_s))\,\dd s.
\end{align*}
Since $E\le C\mathcal B$, the last term is bounded by $C_TU(v_0)$ by \eqref{eq:mixing-foster} with $p=1$. Hence
\begin{align}\label{eq:maximal-U}
 \mathbb E_{v_0}\sup_{0\le s\le T}U(v_s)\le C_TU(v_0).
\end{align}
Finally, since $0<\eta<1$ and $K^\eta\le U$,
\begin{align*}
 \left(\int_0^TK(v_s)\,\dd s\right)^\eta
 \le T^\eta\sup_{0\le s\le T}K(v_s)^\eta
 \le T^\eta\sup_{0\le s\le T}U(v_s).
\end{align*}
Taking expectations and applying \eqref{eq:maximal-U} proves \eqref{eq:mixing-conditional-clock}.
\end{proof}

Combining \eqref{eq:mixing-conditional-clock} with \Cref{lem:one-sided-growth}, consider two solutions $v,v^*$ driven by the same noise and, after interchanging them if necessary, assume $U(v_0)\le U(v_0^*)$. Set $H_T=C_\nu\int_0^TK(v_s)\,\dd s$. Then, for $0\le t\le T$, $\|v_t-v_t^*\|_H\le e^{H_T}\|v_0-v_0^*\|_H$, while
\begin{align}\label{eq:eht}
 \mathbb EH_T^\eta\le C_TU(v_0)=C_T\min\{U(v_0),U(v_0^*)\}.
\end{align}
Hence \Cref{ass:gap-amplification} holds with $\theta=\eta$.

\subsection{A quantitative Malliavin observability estimate}\label{sec:quantitative-Malliavin-observability}
For a reference pair $(u,\omega)$, write $v_t=\Phi_t(u,\omega)$ and
let $J_{s,t}$ be the tangent flow along $v$. The endpoint
noise-control operator is
\begin{align}\label{eq:control-operator}
 \mathcal A_T h=\int_0^T J_{s,T}Qh(s)\dd s,\qquad h\in L^2(0,T;\R^m)
\end{align}
which coincides with the Malliavin derivative of $\Phi_T(u,\omega)$ in the direction $h$. 
 For $r\ge0$ and $a\in\R$, let
\begin{align}\label{eq:analytic-space}
\mathbb H_{r,a}=\{v\in H:\|v\|_{r,a}:=\|e^{r\Lambda}v\|_{H^a}<\infty\}, \quad \text{ with }\Lambda=(-\Delta)^{1/2}.   
\end{align}
The main result we shall prove in this section is the following: 
\begin{proposition}\label{prop:polynomial-source}
Assume the forcing hypotheses of \Cref{thm:main}. Fix $T>0$ and an
integer $s\ge6$. Let $\mathscr D\subset H^s$ be compact and suppose that,
for some $\rho_{\mathscr D}>0$ and $R_{\mathscr D}<\infty$,
\begin{align*}
 \sup_{u\in\mathscr D}\|u\|_{\rho_{\mathscr D},s}\le R_{\mathscr D}.
\end{align*}
There are a compact set $\Gamma\subset C_0([0,T];\mathbb R^m)$ of
strictly positive Wiener measure and constants $C,b,\delta_0>0$
such that, for every $(u,\omega)\in \mathscr D\times\Gamma$ and
$0<\delta<\delta_0$,
\begin{align}\label{eq:target-source}
 J_{0,T}J_{0,T}^*
 \le C_\delta^2\mathcal A_T\mathcal A_T^*+\delta^2I,\qquad
 C_\delta\le C\delta^{-b}.
\end{align}
Moreover, for some $\rho>0$ and $R<\infty$, $\mathscr D\times\Gamma\subset\mathcal C_{\rho,s}(R, T)$
where $\mathcal C_{\rho,s}(R, T)$ is the analytic reference class
introduced in \eqref{eq:analytic-reference-core} and  the regular-dependence
hypotheses of \Cref{prop:compact-source-repair} hold uniformly on $\mathscr D\times\mathscr N_{\rho}(\Gamma)$. 
\end{proposition}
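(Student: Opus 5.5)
The plan follows the three-component scheme sketched in the introduction, carried out along the adjoint. Fix $\xi\in H$ and write $p_t=J_{t,T}^*\xi$, so that $p_0=J_{0,T}^*\xi$ and $(\mathcal A_T^*\xi)(t)=Q^*p_t$. The inequality \eqref{eq:target-source} is then equivalent to
\begin{align*}
 \|p_0\|_H^2\le C_\delta^2\|Q^*p\|_{L^2(0,T;\R^m)}^2+\delta^2\|\xi\|_H^2 .
\end{align*}
I would obtain it from a frequency-dependent estimate of the form
\begin{align*}
 \|p_0\|_H^2\le e^{CN^2}\|Q^*p\|_{L^2(0,T;\R^m)}^2+Ce^{-cN^2}\|\xi\|_H^2,
\end{align*}
valid uniformly on $\mathscr D\times\Gamma$. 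One then takes $N^2=c^{-1}\log(C/\delta^2)$, which yields $C_\delta=e^{CN^2/2}\le C\delta^{-b}$ with $b=C/c$.

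\emph{Step 1: the reference class and the choice of $\Gamma$.} First, I would show that for $(u,\omega)$ with $u\in\mathscr D$ and $\omega$ in a suitable bounded set of H\"older paths, the solution $v_t$ stays in $\mathbb H_{\rho,s}$ with $\|v_t\|_{\rho,s}\le R$ on $[0,T]$. The mechanism is a standard Gevrey/Foias--Temam energy estimate for the primitive equations, using the analytic radius $\rho_{\mathscr D}$ at $t=0$ and letting the radius shrink in time. This defines $\mathcal C_{\rho,s}(R,T)$. I would take $\Gamma$ to be a compact set of paths, of positive Wiener measure, with the following properties:
\begin{itemize}
 \item bounded $\alpha$-H\"older norms of $W$ and of its iterated integrals up to a fixed level (so that rough-path expansions are controlled);
 \item a quantitative nondegeneracy of truncated signatures on every subinterval, in a form made precise in Step~3.
\end{itemize}
The regular-dependence hypotheses of \Cref{prop:compact-source-repair} follow on the neighbourhood $\mathscr N_\rho(\Gamma)$. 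Indeed, for additive noise the map $\omega\mapsto\Phi(u,\omega)$ factors through $\omega\mapsto v-Q\omega$, which solves a random PDE. Smoothness in $\omega\in C_0$, the second-order Taylor bound \eqref{eq:repair-remainder}, and the bounds on $D_\omega J$ and $D^2_\omega\Psi$ then come from the $H^1$/analytic a priori bounds together with \Cref{lem:one-sided-growth}.

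\emph{Step 2: Lie-word representation.} By \Cref{prop:geometric-generation} (the projective-holonomy analysis of the later section), every unit hydrostatic Fourier mode $e$ with frequency at most $N$ is a finite linear combination of iterated brackets of the constant fields $b_k$ with $-\nu A-B$. The word length is $L_N\le CN$ and the total coefficient size is $A_N\le e^{CN}$. Since the $b_k$ are constant vector fields, such brackets reduce to iterated $\mathsf B$-interactions, possibly linearized along $v_t$.

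\emph{Step 3: low-frequency observation via signatures.} For each such word, I would expand the scalar observable $t\mapsto\ip{p_t}{b_k}$ on short intervals $[t_0,t_0+\tau]$ as a rough-path Taylor series in the Brownian iterated integrals. The coefficients are $\ip{p_{t_0}}{[\text{Lie words}](v_{t_0})}$, and the remainder is controlled by the analytic bounds of Step~1 at order $L_N$. The nondegeneracy built into $\Gamma$ is the key input: the truncated signatures up to level $L_N$ are simultaneously $\tau^{L_N}e^{-CL_N^2}$-linearly independent in $L^2(t_0,t_0+\tau)$. This lets one invert the expansion and recover every coefficient, hence $\ip{p_{t_0}}{e}$ for every $|e|\le N$. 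Multiplying by the coefficient cost $A_N$ and choosing $\tau$ suitably gives
\begin{align*}
 \|P_Np\|_{L^2(0,T;H)}\le e^{CN^2}\|Q^*p\|_{L^2}+e^{-BN^2}\|\xi\|_H .
\end{align*}

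\emph{Step 4: parabolic dissipation of the high modes.} The backward adjoint equation is $-\partial_tp=-\nu Ap+(\text{linearized }B)^*p$. Its coefficients are analytic of size $R$, so the high modes satisfy an energy estimate with decay $e^{-\nu N^2 t}$ up to low-high coupling errors. Averaging over $t\in[0,T/2]$ and combining with Step~3 then gives the displayed bound for $\|p_0\|$.

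\emph{Main obstacle.} I expect the hardest point to be Step~3. There are three coupled difficulties:
\begin{itemize}
 \item proving a quantitative, uniform-in-$N$ signature nondegeneracy with constant only $e^{-CN^2}$, holding on a set $\Gamma$ whose Wiener measure stays positive independently of $N$ (this requires a single $\Gamma$ working for all levels simultaneously, e.g.\ via a Borel--Cantelli/union argument over levels);
 \item showing that the rough-path remainder at depth $L_N\sim N$ is beaten by that nondegeneracy;
 \item using the analytic norms so that bracket coefficients grow only like $C^{L_N}L_N!$, rather than faster.
\end{itemize}
My first approach would be to use Chebyshev/Remez-type inequalities for polynomials in the signature on short intervals, combined with the factorial control coming from analyticity.
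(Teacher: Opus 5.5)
Your architecture matches the paper's. It has the adjoint reformulation, a compact core $\Gamma$ inside a bounded-driver, rough-path and observation event, and the Fourier--Lie synthesis of \Cref{prop:geometric-generation}. It extracts Lie coefficients from signatures on intervals of length $e^{-aL_N}$, uses backward dissipation, and takes $N^2\sim|\log\delta|$. Step~4 also works as sketched; the paper avoids low--high coupling by applying the energy inequality to all of $p$, together with $\|\nabla p\|_2^2\ge c_0N^2(\|p\|_H^2-\|P_Np\|_H^2)$.

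The gap is Step~3, which carries the whole low-mode bound and, as you formulate it, does not close.

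(a) Truncating at level $L_N$ and recovering all coefficients up to $L_N$ fails. Since $\mathsf p<1/2$, the remainder is $\tau^{\mathsf p(L_N+1)}$. After rescaling, a top-degree coefficient carries $\tau^{L_N/2}$, so its recovery error is at least of order $\tau^{\mathsf p(L_N+1)-L_N/2}$, which grows as $\tau\to0$. The paper oversamples instead. It expands $f_{b_j}$ to degree $4L_N$ and extracts only degrees $\le 2L_N$, paying $\tau^{-L_N}$ against $\tau^{\mathsf p(4L_N+1)}$. Because $\mathsf p>1/3$, the net factor is $\le\tau^{\mathsf p(L_N+1)}=e^{-a\mathsf pL_N^2}$.

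(b) Coefficients at the left endpoints do not yet control $\|P_Np\|_{L^2(I;H)}$ to accuracy $e^{-BN^2}$. Time-continuity of $p$ in $H^{-2}$ over length $e^{-aL_N}$ only gives $N^2e^{-aL_N}=e^{-O(N)}$. An $e^{-cN}$ error forces $N\sim|\log\delta|$, hence $C_\delta=\exp\{C\log^2(1/\delta)\}$, which is not polynomial. Taking $\tau=e^{-aL_N^2}$ instead would cost $\tau^{-L_N}=e^{aL_N^3}$ in the extraction. The paper re-expands each $f_g$ to degree $L_N$ on the interval. Its coefficients are Lie observables of degree $\le 2L_N$, which is why that range is extracted, and the remainder is again $\tau^{\mathsf p(L_N+1)}$.

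These gains beat the frame loss only if the frame constant is $e^{-C_*L_N^2-C_aL_N}$ with $C_*$ independent of $a$. It must hold at once for all coefficient vectors, which are anticipative (built from $p$), in dimension about $(m+2)^{4L_N}$, on all $e^{aL_N}$ intervals, and for all $N\ge N_0$. A net argument then needs, for each fixed vector, a failure probability that is double-exponentially small in $L_N$. A single-interval Remez or Carbery--Wright anti-concentration bound gives at best $e^{-O(L_N)}$, and a union over levels cannot repair that. This is the missing idea.

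In \Cref{prop:pathwise-frame,prop:interval-frames} each interval is cut into $M=e^{bL_N}$ independent cells. The mean frame comes from chaos orthogonality (\Cref{lem:mean-frame}). Chen's relation, with a bounded inverse signature, moves the coefficients to each cell. Hypercontractivity and Paley--Zygmund give a per-cell success probability $p_K=3^{-2(K+1)}$ with $K=4L_N$, and Chernoff gives failure $e^{-Mp_K/8}$. The factor $M^{-K}$ is what produces $e^{-C_*L_N^2}$, with $b=b(m)$. The parameter $a$ enters only linearly, through the signature bounds on the $e^{aL_N}$ intervals.

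A smaller point concerns Step~1. A Gevrey energy estimate alone gives a Riccati-type inequality, hence only local-in-time analyticity. The uniform bound on $[0,T]$ needs the global cascade bounds for the shifted equation along bounded drivers, as in \Cref{lem:bounded-driver-regularization,lem:analytic-core}.
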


For $\xi\in H$ set $p_t=J_{t,T}^*\xi$. Since
\begin{align*}
 p_0=J_{0,T}^*\xi,\qquad (\mathcal A_T^*\xi)(s)=Q^*p_s,
\end{align*}
\eqref{eq:target-source} is equivalent to
\begin{align}\label{eq:adjoint-source-target}
 \|p_0\|_H^2
 \le C_\delta^2\|Q^*p\|_{L^2(0,T;\R^m)}^2+\delta^2\|\xi\|_H^2.
\end{align}
We prove \eqref{eq:adjoint-source-target} in three stages. Repeated
application of the observable hierarchy first converts the directly
observed quantity $Q^*p$ into Lie coefficients. These coefficients are
then assembled into the low-frequency component $P_Np$ using the
quantitative Fourier--Lie synthesis proved in
\Cref{subsec:fourier-lie-generation}. Finally, parabolic dissipation
controls the complementary high frequencies and recovers $p_0$.

We use the bracket convention $[U,V]=DV\,U-DU\,V$ and
$\operatorname{ad}_U V=[U,V]$. Let $F(u)=-\nu Au-B(u,u)$. For a word
$\mathtt{w}=(a_1,\ldots,a_r)$ with
$a_\ell\in\{F,b_1,\ldots,b_m\}$, write the iterated Lie bracket
\begin{align}\label{eq:iteratedLiebracket}
 \operatorname{ad}_{\mathtt{w}} b_j
 =\operatorname{ad}_{a_1}\cdots\operatorname{ad}_{a_r}b_j
\end{align}
whose bracket depth is $r$. For empty word $\varnothing$ write $\operatorname{ad}_{\varnothing} b_j=b_j$. We assign Brownian parabolic degree one to
each noise field $b_i$ and degree two to the drift $F$, and set the degree of a word $\mathtt{w}$ as 
\begin{align}\label{eq:brownian-parabolic-degree}
 |\mathtt{w}|_{\rm deg}
 =\sum_{\ell=1}^r\operatorname{deg}(a_\ell),
 \qquad
 \operatorname{deg}(b_i)=1,\qquad
 \operatorname{deg}(F)=2.
\end{align}
When passing to iterated integrals, $F$ corresponds to the time letter
and $b_i$ to the $i$th Brownian letter. Thus on an interval of length
$\tau$ the mixed Stratonovich integral associated with $\mathtt{w}$ scales as
$\tau^{|\mathtt{w}|_{\rm deg}/2}$. This is the classical Brownian grading in
stochastic Taylor and cubature expansions
\cite{BenArous1989,Kusuoka2001,LyonsVictoir,BayerTeichmann}.  The terminal noise field
in $\operatorname{ad}_{\mathtt{w}}b_j$ contributes one additional unit of total
Lie-word weight, which is harmless in the integer cutoffs below.

For a chosen representation
$g=\sum_{j,\mathtt{w}}a_{g,j,\mathtt{w}}\operatorname{ad}_{\mathtt{w}}b_j$, we call
$a_{g,j,\mathtt{w}}$ the bracket coefficients and measure their cost by
$\sum_{j,\mathtt{w}}|a_{g,j,\mathtt{w}}|$. Bracket depth, parabolic degree and
coefficient cost play different roles: depth records algebraic
complexity, parabolic degree matches the stochastic scaling, and the
coefficient cost measures the amplification incurred when generated
directions are assembled. We enhance Brownian motion to its geometric
$\mathsf p$-H\"older rough path $\mathbf W=(W,\mathbb W)$,
$\mathsf p\in(1/3,1/2)$, and use controlled rough path calculus
\cite{FH,GerasimovicsHairer} to justify the resulting pathwise
observable hierarchy, including for the anticipative terminal adjoint. Denote the homogeneous rough path H\"older norm as 
\begin{align*}
 \|\mathbf W\|_{\mathsf p}
 =
 [W]_{C^{\mathsf p}([0,T])}
 +
 [\mathbb W]_{C^{2\mathsf p}([0,T])}^{1/2}.
\end{align*}
For any path $X$, we use the increment convention
\begin{align*}
 X_{s,t}:=X_t-X_s,
\end{align*}
which should not be confused with two time subscripts such as
$J_{s,T}$.

\subsubsection{Analytic control of growing Lie words}
\label{subsec:growing-lie-words}
For $f_{g}(t)=\langle p_t,g(v_t)\rangle_H$, the forward--adjoint dynamics
generate Lie observables whose degree grows with the Fourier cutoff.
The next lemma controls this growth by trading differential complexity
against analytic radius, yielding bounds of order
$\exp\{Cn\log(n+2)\}$.

\begin{lemma}
\label{lem:fixed-analytic-scale}
Let $n\ge1$ be an integer and $s>3/2$. For every iterated Lie bracket
\begin{align*}
 g=\operatorname{ad}_{\mathtt{w}} b_k
 =\operatorname{ad}_{a_1}\cdots\operatorname{ad}_{a_r}b_k,
 \qquad
 a_\ell\in\{F,b_1,\ldots,b_m\},
 \qquad
 |\mathtt{w}|_{\rm deg}\le n,
\end{align*}
one has
\begin{align*}
 \sup_{\|u\|_{3\rho/4,s}\le R}
 \|g(u)\|_{\rho/4,s+2}
 \le \exp\{Cn\log(n+2)\},
\end{align*}
and for $1\le j\le3$,
\begin{align}\label{eq:word-derivative-norms}
 \sup_{\|u\|_{3\rho/4,s}\le R}
 \|D^jg(u)\|_{\mathcal L^j(
 \mathbb H_{3\rho/4,s};\mathbb H_{\rho/4,s+2})}
 \le \exp\{Cn\log(n+2)\},
\end{align}
where $C$ is independent of $n$, $k$, and the word $\mathtt{w}$.
\end{lemma}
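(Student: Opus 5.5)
We prove the statement by induction on the bracket depth $r$, tracking simultaneously $g$ and its first few derivatives, with a chain of analytic radii shrinking from $3\rho/4$ toward $\rho/4$. The needed input is a product estimate in the analytic scale: for $s>3/2$ and $0\le\rho'<\rho''$,
\begin{align*}
 \|B(f,h)\|_{\rho',s}\le C\|f\|_{\rho',s+1}\|h\|_{\rho',s+1},\qquad
 \|f\|_{\rho',s+k}\le \frac{C^k k!}{(\rho''-\rho')^k}\|f\|_{\rho'',s}.
\end{align*}
The first bound holds because $\mathbb H_{\rho',s}$ is an algebra (Fourier weights $e^{\rho'|k|}$ are submultiplicative), and $w(f)$ and $\mathsf P$ cost at most one derivative. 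The second is a Cauchy estimate. Since $F(u)=-\nu Au-B(u,u)$ is a polynomial of degree two, and every $b_i$ is a finite trigonometric polynomial, we have $D^jF=0$ for $j\ge3$ and $Db_i=0$.

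\emph{Structure of the words.} Each bracket $\operatorname{ad}_{b_i}g=Dg\,b_i$ (since $Db_i=0$) is a directional derivative in a fixed smooth direction. It costs no derivatives and lowers the polynomial degree of $g$ in $u$. Each bracket $\operatorname{ad}_F g=Dg\,F-DF\,g$ costs at most two derivatives (from $A$ and from $B$) and raises the polynomial degree by at most one. By induction, $g=\operatorname{ad}_{\mathtt w}b_k$ is a polynomial map in $u$ of degree at most $r_F+1\le n+1$, where $r_F\le n/2$ is the number of $F$-letters. It is also a sum of at most $C^{n}\,n!$-type many multilinear terms, each a composition of at most $n$ factors $A$, $B$, $b_i$ applied with at most $2r_F\le n$ derivatives in total. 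Because the word has degree at most $n$, the total derivative count is at most $n$, plus $2$ for the target norm $s+2$.

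\emph{Estimate.} Rather than losing radius at every step, which would produce factors $(\rho/(2n))^{-n}$, we expand $g$ fully into its multilinear terms. For each term we place all its derivatives at once on the inputs $u\in\mathbb H_{3\rho/4,s}$ or on the $b_i$. Using the algebra property at radius $\rho/4$ and a single Cauchy estimate with gap $\rho/2$ for a total of at most $n+2$ derivatives, we get a bound
\begin{align*}
 (C/\rho)^{n+2}(n+2)!\,\max(R,\|b\|)^{n+1}.
\end{align*}
Multiplying by the number of terms, at most $C^n n!$, gives $\exp\{Cn\log(n+2)\}$. The derivative bounds \eqref{eq:word-derivative-norms} for $j\le3$ follow the same way. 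Differentiating a polynomial of degree at most $n+1$ $j$ times produces at most $(n+1)^3$ times as many multilinear terms, with the same derivative count and with the $u$-slots replaced by test vectors in $\mathbb H_{3\rho/4,s}$.

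\emph{Main obstacle.} The main obstacle is the combinatorial bookkeeping. We must show that the number of multilinear terms generated by $r$ brackets is at most $C^n n!$ rather than growing worse. Leibniz expansion of $Dg\,F$ multiplies the number of terms by the current polynomial degree, which is at most $n+1$; the $b_i$ brackets contribute the same kind of factor. We must also show that derivatives really are not counted more than once. Here the key point is that in the fully expanded form each $A$ or $B$ appears explicitly, so the derivative count is additive. Once this is in place, the single-gap Cauchy estimate closes the bound with the stated $n\log n$ exponent.
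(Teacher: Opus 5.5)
Your proposal is correct. Its skeleton is the same as the paper's: expand $g$ into $\exp\{Cn\log(n+2)\}$ multilinear monomials with $O(n)$ factors and $O(n)$ derivative operations, bound each monomial in the analytic scale, and handle $D^jg$, $j\le3$, by Leibniz with the $u$-slots replaced by test vectors.

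The difference is in how you spend the analytic radius. The paper never moves derivatives. It subdivides the gap from $3\rho/4$ to $\rho/2$ into $Cn$ equal pieces and estimates each monomial node by node: every spatial derivative or application of $w$ costs a factor $C_\rho n$, while products stay at the current radius. It then spends the fixed gap $\rho/2\to\rho/4$ on the two extra derivatives for $H^{s+2}$.

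You instead push all derivatives, including those two, onto the leaves and use a single Cauchy estimate over the whole gap $\rho/2$, which gives $(n+2)!\,(C/\rho)^{n+2}$. To make \emph{placing all derivatives on the inputs} rigorous, you need one of two arguments:
\begin{itemize}
\item a Leibniz redistribution through the tree, using that $\mathsf P$ and the vertical primitive inside $w$ are Fourier multipliers of size at most one commuting with horizontal derivatives, and that $\partial_z w(f)=-\nabla_h\cdot f$; this costs a harmless extra factor $(n+1)^{n+2}$ in the term count;
\item or the Fourier-majorant observation that every derivative at an internal node carries a factor $|\xi|\le\sum_i|\xi_i|$ of leaf frequencies.
\end{itemize}
Either works.

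Your stated reason for avoiding stepwise radius loss is mistaken, though. The factor $(2n/\rho)^n=\exp\{n\log(2n/\rho)\}$ that it produces is already of the form $\exp\{Cn\log(n+2)\}$, so the two routes give the same bound. The paper's version is slightly cheaper because it needs no commutation of derivatives through the nonlocal operators $\mathsf P$ and $w$. Yours makes the factorial origin of the exponent more transparent.
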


\begin{proof}
Since $s>3/2$, for $0<r'<r$, submultiplicativity of the exponential
Fourier weight and the Sobolev algebra estimate give
\begin{align}\label{eq:analytic-radius-estimates}
 \|uv\|_{r,s}&\le C_s\|u\|_{r,s}\|v\|_{r,s},\notag\\
 \|\nabla u\|_{r',s}+\|w(u)\|_{r',s}
 &\le \frac{C}{r-r'}\|u\|_{r,s},\qquad
 \|u\|_{r',s+2}\le\frac{C}{(r-r')^2}\|u\|_{r,s}.
\end{align}
Here we use the fact that $u\mapsto w(u)$ is a Fourier multiplier of
order one by \eqref{eq:vertical-velocity}.

Since the noise coefficients $\{b_j\}$ are constant and the
nonlinearity \eqref{eq:hydrostatic-nonlinearity} is quadratic, an
induction on the parabolic weight \eqref{eq:brownian-parabolic-degree}
gives an expansion
\begin{align}\label{eq:word-monomial-expansion}
 g(u)=\sum_{\ell=1}^{N_n}c_\ell M_\ell(u),
\end{align}
where each $M_\ell$ is obtained from $u,b_1,\ldots,b_m$ by products,
hydrostatic projections, spatial derivatives, and the diagnostic
operator \eqref{eq:vertical-velocity}. Moreover,
\begin{align}\label{eq:word-complexity}
 N_n+\sum_{\ell=1}^{N_n}|c_\ell|
 \le (Cn)^{Cn},
\end{align}
while the number of factors and spatial derivative operations in each
$M_\ell$ is bounded by $Cn$.

Divide the radius gap from $3\rho/4$ to $\rho/2$ into $Cn$ equal
parts. Applying \eqref{eq:analytic-radius-estimates} successively to
\eqref{eq:word-monomial-expansion}, every spatial derivative or
diagnostic operation costs at most $C_\rho n$, whereas products
preserve the current radius. Hence
\begin{align*}
 \sup_{\|u\|_{3\rho/4,s}\le R}
 \|g(u)\|_{\rho/2,s}
 \le (C_\rho n)^{Cn}(1+R)^{Cn}.
\end{align*}
Using the fixed remaining radius gap from $\rho/2$ to $\rho/4$ in the
last estimate of \eqref{eq:analytic-radius-estimates}, we obtain
\begin{align*}
 \sup_{\|u\|_{3\rho/4,s}\le R}
 \|g(u)\|_{\rho/4,s+2}
 \le \exp\{Cn\log(n+2)\}.
\end{align*}

Finally, differentiating \eqref{eq:word-monomial-expansion} $j$ times
with respect to the state, $1\le j\le3$, and applying the Leibniz rule
produces at most $(Cn)^j$ terms from each $M_\ell$, without increasing
the spatial derivative count. The same radius allocation therefore
yields
\begin{align*}
 \sup_{\|u\|_{3\rho/4,s}\le R}
 \|D^jg(u)\|_{\mathcal L^j(
 \mathbb H_{3\rho/4,s};\mathbb H_{\rho/4,s+2})}
 \le \exp\{Cn\log(n+2)\},
\end{align*}
which completes the proof.
\end{proof}

A scalar path $X$ on $[0,T]$ is controlled by $W$ if there is
$X'\in C^{\mathsf p}([0,T];\mathbb R^m)$, called its Gubinelli derivative
\cite{Gubinelli}, such that
\begin{align*}
 X_t-X_r=X'_rW_{r,t}+R^X_{r,t},
 \qquad
 \sup_{0\le r<t\le T}\frac{|R^X_{r,t}|}{|t-r|^{2\mathsf p}}<\infty.
\end{align*}
The same definition is used for Banach-valued paths. Write the controlled path norm 
\begin{align}\label{eq:controlled-norm}
 \|X\|_{\mathscr C^{\mathsf p}([0,T])}
 =
 \|X\|_{L^\infty([0,T])}
 +\|X'\|_{C^{\mathsf p}([0,T];\mathbb R^m)}
 +\sup_{0\le r<t\le T}\frac{|R^X_{r,t}|}{|t-r|^{2\mathsf p}}.
\end{align}

Fix $s\ge6$, $\rho>0$, and $R\ge1$, and work with reference
pairs $(u,\omega)$ in the analytic class
$\mathcal C_{\rho,s}(R, T)$ of \eqref{eq:analytic-reference-core}.
Writing $v_t=\Phi_t(u,\omega)$, the defining bound is
\begin{align}\label{eq:reference-analytic-bound}
 \sup_{t\in[0,T]}\|v_t\|_{\rho,s}
 +\sum_{i=1}^m\|b_i\|_{\rho,s}
 \le R.
\end{align}
By \Cref{lem:analytic-core}, every compact analytic initial family
under a bounded driver set is contained, after decreasing $\rho$ once,
in such a class; compact positive-Wiener-mass subsets give the analytic
reference cores used below. We use this value of $\rho$ throughout the
rest of the subsection. Recall that
$f_g(t)=\langle p_t,g(v_t)\rangle_H$ for an iterated Lie bracket $g$.

\begin{lemma}
\label{lem:controlled-norms}
Fix $\mathsf p\in(1/3,1/2)$ and a reference pair
$(u,\omega)\in\mathcal C_{\rho,s}(R, T)$. Suppose that the enhanced
Brownian driver satisfies $\|\mathbf W\|_{\mathsf p}\le R_W$ on
$[0,T]$. Then, simultaneously for every terminal direction $\xi$ and
every iterated Lie bracket $g$ with word $\mathtt w$ of degree at most $n\ge1$,
\begin{align}
 df_{g}
 &=f_{[F,g]}\,dt
   +\sum_{i=1}^m f_{[b_i,g]}\circ dW^i,
 \qquad
 f_{g}'=(f_{[b_1,g]},\ldots,f_{[b_m,g]}),
 \label{eq:observable-expansion}\\
 \|f_{g}\|_{\mathscr C^{\mathsf p}([0,T])}
 &\le
 \exp\{Cn\log(n+2)\}\|\xi\|_H.
 \label{eq:observable-remainder}
\end{align}
Here $f_{g}'$ is the Gubinelli derivative with respect to $W$, and the
differential identity in \eqref{eq:observable-expansion} is understood
pathwise in the geometric rough path sense. The constant may also
depend on $T,\mathsf p$, and $R_W$, but is independent of $n$, $g$, and
$\xi$.
\end{lemma}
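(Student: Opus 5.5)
The plan is to derive the observable identity \eqref{eq:observable-expansion} first for smooth (piecewise linear or mollified) drivers, where everything is classical calculus, and then pass to the geometric rough path limit. For a smooth driver $\omega^\epsilon$, the forward equation is $\dot v=F(v)+Q\dot\omega^\epsilon$. The adjoint satisfies $\dot p_t=-DF(v_t)^*p_t$, because the noise is additive and constant, so it does not enter the linearization. Differentiating $f_g(t)=\langle p_t,g(v_t)\rangle$ gives
\begin{align*}
 \dot f_g=\langle p_t,Dg(v_t)F(v_t)-DF(v_t)g(v_t)\rangle+\sum_i\langle p_t,Dg(v_t)b_i\rangle\dot\omega^{\epsilon,i}.
\end{align*}
With the convention $[U,V]=DV\,U-DU\,V$ and $Db_i=0$, this is exactly $f_{[F,g]}+\sum_i f_{[b_i,g]}\dot\omega^{\epsilon,i}$. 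The computation requires $g(v_t)$ and $Dg(v_t)$ to lie in a space on which $p_t$ acts. This is guaranteed by \Cref{lem:fixed-analytic-scale}: the brackets map the analytic ball $\|u\|_{3\rho/4,s}\le R$ into $\mathbb H_{\rho/4,s+2}$, and \eqref{eq:reference-analytic-bound} keeps $v_t$ inside that ball. Since $p_t=J_{t,T}^*\xi$ is bounded in $H$ uniformly in $t$ by the backward energy estimate, the pairing is well defined.

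Next I would pass to the limit. Along the class $\mathcal C_{\rho,s}(R,T)$, the solution map is continuous in the rough path topology, and so is the backward adjoint. Indeed, the tangent flow $J_{t,T}$ does not involve $W$ directly; it depends on $\mathbf W$ only through $v$, which is itself continuous in $\mathbf W$ by rough path (Wong--Zakai) stability for additive noise. As a result, each $f_g$ and each bracket observable converges uniformly. Iterating the identity once more shows that $f_{[b_i,g]}$ satisfies the same type of equation. This gives the controlled structure
\begin{align*}
 f_g(t)-f_g(r)=\sum_i f_{[b_i,g]}(r)W^i_{r,t}+R_{r,t},
\end{align*}
where $R_{r,t}$ collects $\int_r^t f_{[F,g]}\,\dd\tau$, the second-order terms $\sum_{i,k}f_{[b_k,[b_i,g]]}(r)\mathbb W^{k,i}_{r,t}$, and a third-order rough remainder. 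The remainder is bounded by $|t-r|^{3\mathsf p}$ times the sup norms of the degree-$(n+3)$ observables and their Gubinelli derivatives. For the smooth approximations this is the standard Davie expansion, and the uniform bounds pass to the limit. This argument simultaneously identifies the Stratonovich/geometric integral and the Gubinelli derivative $f_g'=(f_{[b_i,g]})_i$.

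For the quantitative bound \eqref{eq:observable-remainder}, each term in the controlled norm is a sup of $|f_{g'}|\le\|p\|_{L^\infty H}\|g'(v)\|_H$ over brackets $g'$ of degree at most $n+3$. Such a term is therefore at most $C\|\xi\|_H\exp\{C(n+3)\log(n+5)\}\le\exp\{Cn\log(n+2)\}\|\xi\|_H$. The remaining factors of $\|\mathbf W\|_{\mathsf p}\le R_W$ and powers of $T$ are absorbed into $C$. The Hölder seminorm of $f_g'$ is handled by applying the same expansion to $f_{[b_i,g]}$. I expect the main obstacle to be making the limit uniform across all words and all $\xi$ simultaneously, together with the anticipative nature of $p_t$. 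To handle this, I would avoid any Itô argument: everything is deterministic given $\mathbf W$, and the only input is the quantitative analytic bound of \Cref{lem:fixed-analytic-scale} with a fixed loss of three degrees. That bound makes the constants independent of the particular word and linear in $\|\xi\|_H$.
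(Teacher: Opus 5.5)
There is a genuine gap. Your smoothing-and-limit skeleton is acceptable, but the step that carries the content of the lemma, the uniform bound on the controlled remainder of $f_g$, is only asserted, and the justification you give does not work. The hierarchy \eqref{eq:observable-expansion} is not a closed rough differential equation with $\mathrm{Lip}^\gamma$ vector fields, so there is no standard Davie estimate to quote.

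If you produce the expansion by iterating the smooth-driver identity, the third-order remainder is, up to drift cross terms,
\begin{align*}
 \sum_{i,k,l}\int_{r<\theta<\sigma<\tau<t}
 f_{[b_l,[b_k,[b_i,g]]]}(\theta)\,
 d\omega^{\epsilon,l}_\theta\,d\omega^{\epsilon,k}_\sigma\,d\omega^{\epsilon,i}_\tau .
\end{align*}
Sup norms of the integrand alone do not make this $O(|t-r|^{3\mathsf p})$ uniformly along smooth approximations, because their variation is not uniformly bounded. If you instead estimate it by rough integration, you need the controlled norm of $f_{[b_l,[b_k,[b_i,g]]]}$. That is the very statement you are proving, now at degree $n+3$, so the argument regresses to ever higher degrees instead of closing.

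The missing idea is to use the product structure $f_g(t)=\langle p_t,g(v_t)\rangle_H$ rather than the hierarchy.

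First, the adjoint. Since the noise is additive, $p$ solves $\partial_t p=-DF(v_t)^*p$ with no stochastic term, and \Cref{lem:tangent-adjoint-core} gives $\|\partial_t p\|_{L^\infty(0,T;H^{-2})}\le C\|\xi\|_H$. This time regularity of the adjoint is an essential input that your proposal never uses, so \Cref{lem:fixed-analytic-scale} is not the only ingredient.

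Second, the state. One has $v_t-v_r=\sum_i b_iW^i_{r,t}+O(|t-r|)$ in $\mathbb H_{3\rho/4,s}$. A first-order Taylor expansion with the $D^2g$ bound of \Cref{lem:fixed-analytic-scale}, together with $Dg\,b_i=[b_i,g]$, shows that $g(v)$ is controlled in $\mathbb H_{\rho/4,s+2}\hookrightarrow H^2$. Its Gubinelli derivative is $([b_i,g](v))_i$ and its remainder satisfies $R^{g(v)}_{r,t}=O(|t-r|+|t-r|^{2\mathsf p})$. Hence, directly for the Brownian rough path itself,
\begin{align*}
 R^{f_g}_{r,t}
 =\langle p_t-p_r,g(v_t)\rangle_{H^{-2},H^2}
 +\langle p_r,R^{g(v)}_{r,t}\rangle_H,
 \qquad
 |R^{f_g}_{r,t}|
 \le e^{Cn\log(n+2)}\|\xi\|_H\bigl(|t-r|+|t-r|^{2\mathsf p}\bigr).
\end{align*}
No smoothing, no $\mathbb W$-terms and no third-order remainder are needed. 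The same estimate at degree $n+1$ gives the H\"older bound on $f_g'$. The identity \eqref{eq:observable-expansion} then follows from the rough chain rule and the product rule for the $H^{-2}\times H^2$ pairing.

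If you still prefer smoothing, this direct bound is exactly what supplies the uniform controlled estimates along the approximations. You would then also have to check that the approximating trajectories stay in a common analytic reference class, so that \Cref{lem:fixed-analytic-scale} applies uniformly.
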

\begin{proof}
The radius gap from $\rho$ to $3\rho/4$ absorbs the two spatial
derivatives in the drift $F$. Hence by \eqref{eq:pe} and
\eqref{eq:reference-analytic-bound},
\begin{align}\label{eq:state-increments}
 v_t-v_r
 =\sum_{i=1}^m b_iW^i_{r,t}+R^v_{r,t},
 \qquad
 \|R^v_{r,t}\|_{3\rho/4,s}\le C|t-r|.
\end{align}
Since $2\mathsf p<1$ and $\|\mathbf W\|_{\mathsf p}\le R_W$,
\eqref{eq:state-increments} shows that $v$ is controlled by $W$ with
Gubinelli derivative $(b_1,\ldots,b_m)$ and
$\|v_t-v_r\|_{3\rho/4,s}\le C|t-r|^{\mathsf p}$. Taylor expansion, \eqref{eq:state-increments}, and
\Cref{lem:fixed-analytic-scale} then give
\begin{align*}
 g(v_t)-g(v_r)
 =\sum_{i=1}^m[b_i,g](v_r)W^i_{r,t}+R^{g(v)}_{r,t},
\end{align*}
where
\begin{align*}
 \|R^{g(v)}_{r,t}\|_{\rho/4,s+2}
 &\le \exp\{Cn\log(n+2)\}
 \bigl(|t-r|+|t-r|^{2\mathsf p}\bigr),\\
 \|[b_i,g](v_t)-[b_i,g](v_r)\|_{\rho/4,s+2}
 &\le \exp\{Cn\log(n+2)\}|t-r|^{\mathsf p}.
\end{align*}
Thus $g(v)$ is controlled by $W$ with Gubinelli derivative
$\bigl([b_1,g](v),\ldots,[b_m,g](v)\bigr)$.

For $f_{g}(t)=\langle p_t,g(v_t)\rangle_H$, define
\begin{align*}
 R^{f_{g}}_{r,t}
 =f_{g}(t)-f_{g}(r)-\sum_{i=1}^m f_{[b_i,g]}(r)W^i_{r,t}.
\end{align*}
The preceding expansion gives
\begin{align*}
 R^{f_{g}}_{r,t}
 =
 \langle p_t-p_r,g(v_t)\rangle_{H^{-2},H^2}
 +\langle p_r,R^{g(v)}_{r,t}\rangle_H.
\end{align*}
By \Cref{lem:tangent-adjoint-core}, $p\in C([0,T];H)\cap W^{1,\infty}([0,T];H^{-2})$ and 
\begin{align}\label{eq:p-supnorm}
 \|p\|_{L^\infty([0,T];H)}
 +\|\partial_t p\|_{L^\infty([0,T];H^{-2})}
 \le C\|\xi\|_H.
\end{align}
Since $g(v_t)\in\mathbb H_{\rho/4,s+2}\hookrightarrow H^2$, it follows again from \Cref{lem:fixed-analytic-scale} that
\begin{align*}
 |R^{f_{g}}_{r,t}|
 &\le
 \exp\{Cn\log(n+2)\}\|\xi\|_H
 \bigl(|t-r|+|t-r|^{2\mathsf p}\bigr),\\
 |f_{[b_i,g]}(t)-f_{[b_i,g]}(r)|
 &\le
 \exp\{Cn\log(n+2)\}\|\xi\|_H|t-r|^{\mathsf p},
\end{align*}
where in the second estimate we also use that $[b_i,g]$ has degree at
most $n+1$. Together with the corresponding $L^\infty$ bounds of $f_{g}$ and
$f_{[b_i,g]}$ which follow from \eqref{eq:p-supnorm} and
\Cref{lem:fixed-analytic-scale}, this
proves \eqref{eq:observable-remainder} and gives
\begin{align*}
 f_{g}'=\bigl(f_{[b_1,g]},\ldots,f_{[b_m,g]}\bigr).
\end{align*}

Finally, $g(v)$ is controlled with values in
$\mathbb H_{\rho/4,s+2}\hookrightarrow H^2$, while $p$ has finite
variation in $H^{-2}$. The rough chain rule and the product rule for
the pairing $H^{-2}\times H^2$ yield
\begin{align*}
 df_{g}
 &=
 \langle p,Dg(v)F(v)-DF(v)g(v)\rangle_H\,dt
 +\sum_{i=1}^m\langle p,Dg(v)b_i\rangle_H\circ dW^i\\
 &=
 f_{[F,g]}\,dt+\sum_{i=1}^m f_{[b_i,g]}\circ dW^i,
\end{align*}
which is \eqref{eq:observable-expansion}.
\end{proof}

Recall the iterated Lie bracket notation \eqref{eq:iteratedLiebracket}. 
Then \eqref{eq:observable-expansion} gives
\begin{align}\label{eq:observable-hierarchy}
 d f_{\operatorname{ad}_{\mathtt w}g}
 =f_{\operatorname{ad}_{0\mathtt w}g}\,dt
 +\sum_{i=1}^m f_{\operatorname{ad}_{i\mathtt w}g}\circ dW^i, \text{ with }
 f_{\operatorname{ad}_{\mathtt w}g}'=\bigl(f_{\operatorname{ad}_{1\mathtt w}g},\ldots,
 f_{\operatorname{ad}_{m\mathtt w}g}\bigr),
\end{align}
where $i\mathtt w$ denotes the word obtained by prepending $i\in\{0,1,\ldots,m\}$ to $\mathtt w$, and $f_{\operatorname{ad}_{\mathtt w}g}'$ denotes the Gubinelli derivative. Thus each differentiation produces the next generation of Lie observables. For $\mathtt w=(i_1,\ldots,i_k)$, denote
\begin{align*}
 S_{\mathtt w}(s,t)
 =\int_{s<r_1<\cdots<r_k<t}1\,
 \circ dZ_{r_1}^{i_1}\cdots\circ dZ_{r_k}^{i_k},
 \qquad S_{\varnothing}(s,t)=1,
\end{align*}
where $Z_t^0=t$ and $Z_t^i=W_t^i$ for $1\le i\le m$. Thus $0$ denotes the time letter of parabolic degree two, while $1,\ldots,m$ denote Brownian letters of degree one, consistently with \eqref{eq:brownian-parabolic-degree}. With this convention, iterating \eqref{eq:observable-hierarchy} pairs $f_{\operatorname{ad}_{\mathtt w}g}(s)$ with $S_{\mathtt w}(s,t)$.

The following lemma gives a quantitative estimate for the remainder in the stochastic Taylor expansion of $f_g$.

\begin{lemma}
\label{lem:weighted-taylor}
Under the assumptions of \Cref{lem:controlled-norms}, let $g$ be an iterated Lie bracket of parabolic degree at most $n$, $n\ge1$. Then, for every interval $I=[s,s+\tau]\subset[0,T]$ with $0<\tau\le1$,
\begin{align}
 f_g(t)
 &=\sum_{|\mathtt w|_{\rm deg}\le n}
 f_{\operatorname{ad}_{\mathtt w}g}(s)S_{\mathtt w}(s,t)
 +\mathcal R_{g,n}(s,t),\notag\\
 \sup_{s\le t\le s+\tau}|\mathcal R_{g,n}(s,t)|
 &\le \exp\{Cn\log(n+2)\}\tau^{\mathsf p(n+1)}\|\xi\|_H.
 \label{eq:weighted-taylor}
\end{align}
Here $C=C(m,T,\mathsf p,R_W)$ is independent of $n$, $g$, and $\xi$.
\end{lemma}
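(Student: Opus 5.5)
The plan is to iterate the observable hierarchy \eqref{eq:observable-hierarchy} up to parabolic degree $n$ and then bound the leftover terms with the controlled-path estimate \eqref{eq:observable-remainder}. Take $t\in[s,s+\tau]$. By \eqref{eq:observable-expansion}, $f_{\operatorname{ad}_{\mathtt w}g}(r)=f_{\operatorname{ad}_{\mathtt w}g}(s)+\int_s^r f_{\operatorname{ad}_{0\mathtt w}g}\,\dd u+\sum_i\int_s^r f_{\operatorname{ad}_{i\mathtt w}g}\circ\dd W^i$. Substituting this into itself for every word with $|\mathtt w|_{\rm deg}\le n$ produces the truncated signature sum. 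What remains is a finite sum of iterated rough integrals $\int_{s<r_1<\dots<r_k<t} f_{\operatorname{ad}_{\mathtt w}g}(r_1)\circ\dd Z^{i_1}_{r_1}\cdots$. Each such word is a \emph{boundary} word, meaning $|\mathtt w|_{\rm deg}>n$ while the word obtained by deleting its first letter has degree $\le n$. In particular every boundary word has degree $n+1$ or $n+2$, and its length is at most $n+1$.

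The next step bounds a single boundary term. The integrand $f_{\operatorname{ad}_{\mathtt w}g}$ is a controlled path. Its Lie word has degree at most $n+2+\deg g\le 2n+2$, so \eqref{eq:observable-remainder} gives $\|f_{\operatorname{ad}_{\mathtt w}g}\|_{\mathscr C^{\mathsf p}}\le\exp\{Cn\log(n+2)\}\|\xi\|_H$, with the index adjusted. The integral is then estimated against the iterated integrals of $\mathbf W$ through the standard rough integration bound. The innermost integral $\int_s^{r}Y\circ\dd Z^{i}$ of a controlled path $Y$ equals $Y_sZ^i_{s,r}+Y'_s\mathbb Z^{\cdot i}_{s,r}$ plus a sewing remainder of order $|r-s|^{3\mathsf p}$. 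Each Brownian letter contributes $\tau^{\mathsf p}$ and each time letter contributes $\tau\le\tau^{2\mathsf p}$, because $\tau\le1$. Using $Z^0_{s,r}=r-s$ together with $\|\mathbf W\|_{\mathsf p}\le R_W$, the outer iterated integrals of controlled paths against $\mathbf Z$ have size at most $C^k\tau^{\mathsf p\,|\mathtt w|_{\rm deg}}/\Gamma$-type factorial decay. No factorial decay is actually needed, since $(Cn)^{Cn}$ is allowed. The total is then at most $C^{n}\exp\{Cn\log(n+2)\}\tau^{\mathsf p(n+1)}\|\xi\|_H$.

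To make the iterated rough integrals rigorous, I would treat the $\mathbb R^{m+1}$-valued path $Z=(t,W)$ lifted to its geometric signature (Lyons' extension) up to level $n+2$. Each level-$k$ iterated integral is then bounded by $C^k\|\mathbf W\|_{\mathsf p}^{k}\tau^{k\mathsf p}/(k\mathsf p)!$. The weighted version is needed: any time letter carries the extra factor $\tau^{1-\mathsf p}$, which yields the parabolic scaling. The last step is counting. There are at most $(m+1)^{n+2}$ boundary words, so their number is $e^{Cn}$ and is absorbed into the constant.

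I expect the main obstacle to be controlling the rough integral of a controlled integrand inside a deep iterated integral uniformly in depth. Concretely, one must show that the sewing-lemma constants grow at most geometrically, like $C^k$, with the depth $k\le n+2$, and not worse than $(Cn)^{Cn}$. My first attempt would be an induction. I would define $Y^{(k)}$ as the running iterated integral and show it is controlled with norm bounded by $C^k\tau^{\mathsf p\cdot(\text{degree so far})}$ times the integrand's controlled norm, using that the Gubinelli derivative of $\int Y\,\dd Z^i$ is $Y e_i$.
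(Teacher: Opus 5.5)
Your proposal is essentially the paper's proof. You iterate \eqref{eq:observable-hierarchy} up to degree $n$, observe that boundary words have degree $n+1$ or $n+2$ and that the corresponding observables have degree at most $2n+2$, so that \Cref{lem:controlled-norms} bounds them by $\exp\{Cn\log(n+2)\}\|\xi\|_H$. You then gain $\tau^{\mathsf p}$ per Brownian letter and $\tau\le\tau^{2\mathsf p}$ per time letter through the nested integrals, and absorb the $e^{Cn}$ count of boundary words. The one point to fix in the induction you flag is the norm. It must be a $\tau$-scaled controlled norm such as the paper's $N_I(f,f')=\|f\|_{L^\infty(I)}+\tau^{\mathsf p}\|f'\|_{L^\infty(I)}+\tau^{2\mathsf p}[f']_{C^{\mathsf p}(I)}+\tau^{2\mathsf p}\sup_{r<t}|R^f_{r,t}|/|t-r|^{2\mathsf p}$, for which one rough integration gives $N_I(Y,Y')\le C\tau^{\mathsf p}N_I(f,f')$ with $C$ independent of depth. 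In the unscaled controlled norm the claimed gain already fails at depth one, because the Gubinelli derivative of $\int_s^{\cdot}f\circ\mathrm{d}W^i$ is $fe_i$.
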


\begin{proof}
For a scalar controlled path $(f,f')$ on $I$, set the scaled controlled norm 
\begin{align*}
 N_I(f,f')
 =\|f\|_{L^\infty(I)}
 +\tau^{\mathsf p}\|f'\|_{L^\infty(I;\mathbb R^m)}
 +\tau^{2\mathsf p}[f']_{C^{\mathsf p}(I;\mathbb R^m)}
 +\tau^{2\mathsf p}\sup_{r<t}\frac{|R^f_{r,t}|}{|t-r|^{2\mathsf p}}.
\end{align*}
The controlled-path identity $f_t-f_r=f'_rW_{r,t}+R^f_{r,t}$ and $\|\mathbf W\|_{\mathsf p}\le R_W$ imply
\begin{align}\label{eq:scaled-controlled-holder}
 \tau^{\mathsf p}[f]_{C^{\mathsf p}(I)}
 \le C_{R_W}N_I(f,f').
\end{align}
For $Y_t=\int_s^t f_r\circ dW_r^i$ with Gubinelli derivative $Y'_t=f_te_i$, the rough integral expansion \cite{FH} gives
\begin{align*}
 Y_{r,t}
 =f_rW_{r,t}^i+\sum_{j=1}^m(f'_r)^j\mathbb W_{r,t}^{j,i}+E_{r,t},
\end{align*}
with
\begin{align}\label{eq:rough-integration-remainder}
 |E_{r,t}|
 \le C_{\mathsf p,R_W}
 \left([f']_{C^{\mathsf p}(I;\mathbb R^m)}
 +\sup_{a<b}\frac{|R^f_{a,b}|}{|b-a|^{2\mathsf p}}\right)|t-r|^{3\mathsf p}.
\end{align}
Since
\begin{align*}
 R^Y_{r,t}
 =Y_{r,t}-Y'_rW_{r,t}
 =\sum_{j=1}^m(f'_r)^j\mathbb W_{r,t}^{j,i}+E_{r,t},
\end{align*}
\eqref{eq:scaled-controlled-holder} yields
\begin{align}\label{eq:brownian-integration-gain}
 N_I(Y,Y')
 \le C_{\mathsf p,R_W}\tau^{\mathsf p}N_I(f,f').
\end{align}
For $Y_t=\int_s^t f_r\,dr$, one has $Y'=0$ and
\begin{align}\label{eq:time-integration-gain}
 N_I(Y,0)
 \le C\tau N_I(f,f')
 \le C\tau^{2\mathsf p}N_I(f,f').
\end{align}
Hence, if
\begin{align*}
 \mathcal S_{\mathtt w}f(t)
 =\int_{s<r_1<\cdots<r_\ell<t}
 f_{r_1}\,\circ dZ_{r_1}^{i_1}\cdots\circ dZ_{r_\ell}^{i_\ell}
\end{align*}
and $\mathtt w=(i_1,\ldots,i_\ell)$ has parabolic degree $k$, then iterating \eqref{eq:brownian-integration-gain} and \eqref{eq:time-integration-gain} gives 
\begin{align}\label{eq:nested-integral-gain}
 N_I\bigl(\mathcal S_{\mathtt w}f,(\mathcal S_{\mathtt w}f)'\bigr)
 \le C_{m,\mathsf p,R_W}^k\tau^{\mathsf pk}N_I(f,f').
\end{align}

Define the boundary set 
\begin{align*}
 \partial\mathcal W_n
 =\bigl\{\mathtt w=i\mathtt v:
 |\mathtt v|_{\rm deg}\le n<|\mathtt w|_{\rm deg}\bigr\}.
\end{align*}
Repeated integration of \eqref{eq:observable-hierarchy} gives
\begin{align*}
 f_g(t)
 =
 \sum_{|\mathtt w|_{\rm deg}\le n}
 f_{\operatorname{ad}_{\mathtt w}g}(s)S_{\mathtt w}(s,t)+ 
 \sum_{\substack{\mathtt w=(i_1,\ldots,i_\ell)\in\partial\mathcal W_n}}
 \int_{s<r_1<\cdots<r_\ell<t}
 f_{\operatorname{ad}_{\mathtt w}g}(r_1)\,
 \circ dZ_{r_1}^{i_1}\cdots\circ dZ_{r_\ell}^{i_\ell}.
\end{align*}
The second sum is $\mathcal R_{g,n}(s,t)$. Since every letter has parabolic degree one or two,
\begin{align*}
 n+1\le|\mathtt w|_{\rm deg}\le n+2,
 \qquad \mathtt w\in\partial\mathcal W_n,
\end{align*}
and $\#\partial\mathcal W_n\le(m+2)^{n+2}$. Since $g$ has degree at most $n$, every $\operatorname{ad}_{\mathtt w}g$ in the boundary remainder has degree at most $2n+2$. Thus \Cref{lem:controlled-norms}, applied at degree $2n+2$, gives
\begin{align*}
 \|f_{\operatorname{ad}_{\mathtt w}g}\|_{\mathscr C^{\mathsf p}([0,T])}
 \le \exp\{Cn\log(n+2)\}\|\xi\|_H.
\end{align*}
Combining this with \eqref{eq:nested-integral-gain} and absorbing
$C_{m,\mathsf p,R_W}^{n+2}$ and $\#\partial\mathcal W_n$ into the exponential constant gives \eqref{eq:weighted-taylor}.
\end{proof}

\subsubsection{Lie-coefficient extraction and low-mode observability}
By \eqref{eq:observable-expansion} and \Cref{lem:weighted-taylor}, on
every interval $[s,s+\tau]$ one has
\begin{align*}
 f_{b_j}(t)
 =
 \sum_{|\mathtt w|_{\rm deg}\le n}
 f_{\operatorname{ad}_{\mathtt w}b_j}(s)
 S_{\mathtt w}(s,t)
 +\mathcal R_{b_j}(s,t),
\end{align*}
where the remainder is controlled by \eqref{eq:weighted-taylor}.
The simultaneous frame of \Cref{prop:interval-frames} then gives
quantitative control of the Lie coefficients
$f_{\operatorname{ad}_{\mathtt w}b_j}(s)$ from the observed paths
$f_{b_j}$. To convert these coefficients into a bound for $P_Np_s$,
we need to represent every Fourier basis vector $g\in P_NH$ in the form
\begin{align*}
 g
 =
 \sum_{j,\mathtt w}
 a_{g,j,\mathtt w}\operatorname{ad}_{\mathtt w}b_j
\end{align*}
with word degree of order $N$ and exponentially controlled coefficient
cost. The required quantitative Lie bracket representation of Fourier modes is given in
\Cref{thm:fourier-lie-generation}; using it together with the temporal
frame yields the following low mode observation estimate. Keep the parameters $s\ge6$, $\rho>0$, $R\ge1$ from \eqref{eq:reference-analytic-bound}. 

\begin{proposition}
\label{prop:source-occupation} Assume the forcing hypotheses of \Cref{thm:main} and fix a nontrivial interval
$I=[0,T_0]\subset[0,T]$ and $R_W<\infty$.
For every $B>0$ and $\varepsilon>0$, there exist
$C_B<\infty$, $N_0\ge1$, and a Brownian event
$\Omega_{\rm obs}$ with
\begin{align*}
 \mathbb P(\Omega_{\rm obs})\ge1-\varepsilon
\end{align*}
such that the following holds. For every $N\ge N_0$, $\xi\in H$, and every reference pair
$(u,\omega)\in\mathcal C_{\rho,s}(R,T)$ satisfying
$\|\mathbf W(\omega)\|_{\mathsf p}\le R_W$ and
$\omega\in\Omega_{\rm obs}$,
\begin{align}\label{eq:low-observation}
 \|P_Np\|_{L^2(I;H)}
 \le
 e^{C_BN^2}\|Q^*p\|_{L^2(I;\mathbb R^m)}
 +e^{-BN^2}\|\xi\|_H.
\end{align}
The constants are independent of the reference pair and the terminal
direction $\xi$.
\end{proposition}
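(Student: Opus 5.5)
We combine three ingredients: the stochastic Taylor expansion of \Cref{lem:weighted-taylor}, the simultaneous signature frame of \Cref{prop:interval-frames}, and the Fourier--Lie synthesis of \Cref{thm:fourier-lie-generation}. Fix $B,\varepsilon$. By \Cref{thm:fourier-lie-generation}, every unit Fourier basis vector $g\in P_NH$ can be written as $g=\sum_{j,\mathtt w}a_{g,j,\mathtt w}\operatorname{ad}_{\mathtt w}b_j$ with $|\mathtt w|_{\rm deg}\le L_N\le C_1N$ and $\sum|a_{g,j,\mathtt w}|\le e^{C_1N}$. Pairing with $p_t$ gives
\begin{align*}
 \langle p_t,g\rangle_H=\sum_{j,\mathtt w}a_{g,j,\mathtt w}f_{\operatorname{ad}_{\mathtt w}b_j}(t).
\end{align*}
Since $\dim P_NH\le CN^3$, it suffices to bound each Lie observable $f_{\operatorname{ad}_{\mathtt w}b_j}$ in $L^2(I)$ by $e^{CN^2}\|Q^*p\|_{L^2(I)}+e^{-B'N^2}\|\xi\|_H$, with $B'$ slightly larger than $B$.

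To obtain these bounds, we expand each observed path $f_{b_j}=(Q^*p)_j$ to degree $n=\lceil\beta N\rceil$, where $n\ge L_N$ and $\beta$ is large. The expansion is taken on intervals $[s,s+\tau]\subset I$ with $\tau=e^{-\gamma N}$ for a large $\gamma$. By \Cref{lem:weighted-taylor}, the remainder is at most $\exp\{Cn\log n\}\tau^{\mathsf p(n+1)}\|\xi\|_H\le e^{-\mathsf p\beta\gamma N^2/2}\|\xi\|_H$. Choosing $\beta\gamma$ large then makes the remainder $e^{-B'N^2}\|\xi\|_H$. The coefficients $f_{\operatorname{ad}_{\mathtt w}b_j}(s)$ multiply the signature entries $S_{\mathtt w}(s,\cdot)$ on $[s,s+\tau]$.

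\Cref{prop:interval-frames} supplies the event $\Omega_{\rm obs}$, of probability at least $1-\varepsilon$, on which the truncated signatures $\{S_{\mathtt w}(s,\cdot)\}_{|\mathtt w|_{\rm deg}\le n}$ form a quantitative frame in $L^2(s,s+\tau)$ for all relevant $s$. On this event, any coefficient vector $c$ satisfies
\begin{align*}
 |c|\le \kappa_{n,\tau}^{-1}\Bigl\|\sum_{\mathtt w} c_{\mathtt w}S_{\mathtt w}\Bigr\|_{L^2(s,s+\tau)}.
\end{align*}
We expect the lower constant to be $\kappa_{n,\tau}\ge e^{-CN^2}$: the scaling $\tau^{|\mathtt w|_{\rm deg}/2}$ contributes $\tau^{n}=e^{-\gamma\beta N^2}$, and the remaining factors are of order $e^{Cn\log n}$. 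Inverting the frame yields
\begin{align*}
 |f_{\operatorname{ad}_{\mathtt w}b_j}(s)|\le e^{CN^2}\bigl(\|Q^*p\|_{L^2(s,s+\tau)}+e^{-B'' N^2}\|\xi\|_H\bigr).
\end{align*}
For this bound to be useful, we must tune $\gamma\beta$ so that the remainder decay $e^{-\mathsf p\beta\gamma N^2}$ dominates the frame cost $\tau^{-n}$. This is the reason for extracting Lie coefficients at different parabolic degrees with weights $\tau^{|\mathtt w|_{\rm deg}/2}$. Because $\mathsf p\approx 1/2$ is close to the Brownian scaling exponent, we will first try scaled coordinates in which the frame constant depends only on $n$ and not on $\tau$.

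Squaring the pointwise bound at $s$ and integrating over $s\in I$, with $s+\tau\le T_0$ handled by a reflected interval, gives the $L^2(I)$ bound on each $f_{\operatorname{ad}_{\mathtt w}b_j}$. Summing with the coefficient cost $e^{C_1N}$ and the polynomial mode count then yields \eqref{eq:low-observation}. All constants come from \Cref{lem:controlled-norms}, \Cref{lem:weighted-taylor} and the frame, so they are uniform over the analytic class $\mathcal C_{\rho,s}(R,T)$ and over $\|\mathbf W\|_{\mathsf p}\le R_W$.

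The main obstacle is the frame step. We must show that, on a single event of probability at least $1-\varepsilon$ that is uniform in $N\ge N_0$ and in the base point $s$, the truncated Brownian signatures up to degree $\sim N$ are nondegenerate with lower constant only $e^{-CN^2}$. This requires a quantitative Norris-type or small-ball estimate for Wiener polynomials that is summable in $N$, combined with a union bound over a grid of base points $s$ and continuity in $s$.
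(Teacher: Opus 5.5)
There is a genuine gap in the frame-inversion step, and the tuning you propose cannot close it. Suppose you expand $f_{b_j}$ to degree $n$ and invert the frame for all coefficients with $|\mathtt w|_{\rm deg}\le n$. Rescaling does not help: the unknown $f_{\operatorname{ad}_{\mathtt w}b_j}(s)$ always enters multiplied by $\tau^{|\mathtt w|_{\rm deg}/2}$, so recovering it costs $\tau^{-n/2}$. The remainder from \Cref{lem:weighted-taylor} is only $\tau^{\mathsf p(n+1)}$. Since $\mathsf p<1/2$, the inverted remainder behaves like $\tau^{(\mathsf p-1/2)n+\mathsf p}$ and blows up. No choice of $\gamma,\beta$ fixes this, because both exponents are proportional to $\gamma\beta$; your comparison of $e^{-\mathsf p\beta\gamma N^2}$ with $\tau^{-n}=e^{\gamma\beta N^2}$ fails for the same reason. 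In addition, the rescaled frame constant actually available from \Cref{prop:interval-frames} is $e^{-C_*L^2-C_aL}$, not $e^{-Cn\log n}$. The argument only closes if the $L^2$-coefficient there is independent of the cell-length parameter; your $e^{-CN^2}$, which absorbs $\gamma\beta$, hides exactly this.

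The missing idea is to decouple the expansion degree from the extraction degree. The paper expands to degree $4L$ but reads off only the coefficients with $|\mathtt w|_{\rm deg}\le 2L$, for which $\tau^{|\mathtt w|_{\rm deg}}\ge\tau^{2L}$. This gives
\begin{align*}
 \mathscr F(s)\le e^{C_1L^2+C_{2,a}L}\bigl(\tau^{-L-1/2}\|Q^*p\|_{L^2([s,s+\tau];\mathbb R^m)}+\tau^{\mathsf p(4L+1)-L}\|\xi\|_H\bigr),
\end{align*}
where $\mathsf p(4L+1)-L\ge\mathsf p(L+1)$ because $\mathsf p>1/3$. The constant $C_1$ does not depend on $a$, where $\tau\asymp e^{-aL}$. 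So one fixes $C_1$ first and then takes $a$ large, say $a\mathsf p>C_1+2B+2$, which makes the remainder term $e^{C_1L^2}\tau^{\mathsf p(L+1)}$ smaller than $e^{-2BN^2}$. The order of these parameter choices is essential.

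The second gap is the passage to $\|P_Np\|_{L^2(I;H)}$. Your plan bounds coefficients only at base points and then integrates over a continuum of $s\in I$. That requires the frame on every cell $[s,s+\tau]$, but \Cref{prop:interval-frames} provides it only on the fixed partition $I_{N,j}$. Its double-exponentially small failure probability is what makes the union bound over about $e^{aL_N}$ cells and all $N\ge N_0$ summable. So the nondegeneracy you list as the main obstacle is already available; your grid-plus-continuity extension to all $s$ is left unproved and is not needed.

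The paper works cell by cell instead. With all coefficients of degree $\le 2L$ known at the partition point $s$, it re-expands $f_g$ for each unit Fourier mode $g$ over the whole cell:
\begin{align*}
 f_g(t)=\sum_{|\mathtt u|_{\rm deg}\le L}c_{g,\mathtt u}(s)S_{\mathtt u}(s,t)+\mathcal R_g(s,t),\qquad c_{g,\mathtt u}(s)=\sum_{j,\mathtt v}a_{g,j,\mathtt v}f_{\operatorname{ad}_{\mathtt u}\operatorname{ad}_{\mathtt v}b_j}(s).
\end{align*}
The words $\mathtt u\mathtt v$ here have degree $\le 2L$. This bounds $\sup_{t\in[s,s+\tau]}\|P_Np_t\|_H$ by $\mathscr F(s)$ plus a remainder of size $e^{CL\log(L+2)}\tau^{\mathsf p(L+1)}\|\xi\|_H$. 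Squaring, multiplying by $\tau$ and summing over the partition then gives \eqref{eq:low-observation}. This re-expansion is why the extraction degree must be $2L$ rather than $L$, and hence why the expansion has to reach $4L$.
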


\begin{proof}
By \Cref{thm:fourier-lie-generation}, every unit Fourier basis vector
$g\in P_NH$ admits
\begin{align}\label{eq:fourier-lie-input}
 g
 =
 \sum_{j=1}^m
 \sum_{|\mathtt w|_{\rm deg}\le L_N}
 a_{g,j,\mathtt w}\operatorname{ad}_{\mathtt w}b_j, \text{ with }
 N\le L_N\le c_0N \text{ and }
 \sum_{j,\mathtt w}|a_{g,j,\mathtt w}|
 \le e^{CL_N}.
\end{align}
Set $L=L_N$. By
\Cref{lem:fixed-analytic-scale,lem:controlled-norms}, all Lie
observables of degree $O(L)$ occurring below have controlled norms
bounded by
\begin{align}\label{eq:observation-controlled-cost}
 \exp\{CL\log(L+2)\}\|\xi\|_H,
\end{align}
where the factor $e^{CL}$ in \eqref{eq:fourier-lie-input} is absorbed
into the same bound. Let $a>0$ to be fixed below and apply \Cref{prop:interval-frames} with $n_0\ge c_0$ for $c_0$ in \eqref{eq:fourier-lie-input}, and with $I$ partitioned into intervals of equal length $\tau=\tau_N\asymp e^{-aL}.$ 
On one interval
$[s,s+\tau]$, \Cref{lem:weighted-taylor}, applied to $f_{b_j}$ through
degree $4L$, gives
\begin{align*}
 f_{b_j}(t)
 =
 \sum_{|\mathtt w|_{\rm deg}\le4L}
 f_{\operatorname{ad}_{\mathtt w}b_j}(s)
 S_{\mathtt w}(s,t)
 +\mathcal R_{b_j}(s,t),
\end{align*}
with
\begin{align}\label{eq:first-taylor-remainder}
 \|\mathcal R_{b_j}\|_{L^\infty([s,s+\tau])}
 \le
 e^{CL\log(L+2)}
 \tau^{\mathsf p(4L+1)}\|\xi\|_H.
\end{align}
Under Brownian rescaling through $t=s+\tau r$, 
\begin{align*}
 f_{b_j}(s+\tau r) - \mathcal R_{b_j}(s,s+\tau r)
=\sum_{|\mathtt w|_{\rm deg}\le4L}
 f_{\operatorname{ad}_{\mathtt w}b_j}(s)\tau^{\frac{|\mathtt w|_{\rm deg}}{2}}
 S_{\mathtt w}^{[s,s+\tau]}(0,r)
\end{align*}
where $S_{\mathtt w}^{[s,s+\tau]}$ denotes the rescaled Stratonovich
signature of the Brownian increment on $[s,s+\tau]$. Therefore, combining this with \eqref{eq:first-taylor-remainder}, \eqref{eq:cluster-frame} and $Q^* p=\left(f_{b_1}, \ldots, f_{b_m}\right)$ yields 
\begin{align}\label{eq:coefficient-extraction}
\begin{split}
 \mathscr F(s):=&\left(
 \sum_{j=1}^m
 \sum_{|\mathtt w|_{\rm deg}\le2L}
 |f_{\operatorname{ad}_{\mathtt w}b_j}(s)|^2
 \right)^{1/2}
 \le \left(
 \sum_{j=1}^m
 \tau^{-2L}\sum_{|\mathtt w|_{\rm deg}\le4L}
 |f_{\operatorname{ad}_{\mathtt w}b_j}(s)|^2\tau^{|\mathtt w|_{\rm deg}}
 \right)^{1/2}\\
 &\qquad \le \tau^{-L}\left(
 \sum_{j=1}^m
 e^{C_* L^2+C_a L}\left\|f_{b_j}(s+\tau \cdot)-\mathcal{R}_{b_j}(s, s+\tau \cdot)\right\|_{L^2(0,1)}^2
 \right)^{1/2}\\
 &\qquad \le
 e^{C_1L^2+C_{2,a}L}
 \tau^{-L-1/2}
 \|Q^*p\|_{L^2([s,s+\tau];\mathbb R^m)}+
 e^{C_1L^2+C_{2,a}L}
 \tau^{\mathsf p(4L+1)-L}\|\xi\|_H,
\end{split}
\end{align}
where $C_1$ is independent of $a$ and we used the fact that $C$ in \eqref{eq:observation-controlled-cost} does not depend on $a$. 

Now fix a unit Fourier vector $g\in P_NH$. Applying
\Cref{lem:weighted-taylor} through degree $L$ to each iterated Lie bracket
appearing in the representation \eqref{eq:fourier-lie-input}, and
combining the resulting expansions linearly yields 
\begin{align*}
f_g(t)=\sum_{|\mathtt u|_{\operatorname{deg}} \leq L} c_{g, \mathtt u}(s) S_{\mathtt u}(s, t)+\mathcal{R}_g(s, t), \text{ with } c_{g, \mathtt u}(s)=\sum_{j, |\mathtt v|_{\rm deg}\leq L} a_{g, j, \mathtt v} f_{\operatorname{ad}_{\mathtt u} \operatorname{ad}_{\mathtt v} b_j}(s).
\end{align*}
Since $\mathsf p>1/3$, it follows from \eqref{eq:weighted-taylor}, \eqref{eq:nested-integral-gain}, \eqref{eq:fourier-lie-input} and \eqref{eq:coefficient-extraction} that 
\begin{align}\notag
\sup _{s \leq t \leq s+\tau}\left|f_g(t)\right|&\leq \sum_{|\mathtt u|_{\operatorname{deg}} \leq L}\sum_{j, |\mathtt v|_{\rm deg}\leq L}|a_{g, j, \mathtt v}|C_{R_W}^L\mathscr F(s)+ \sum_{j, |\mathtt v|_{\rm deg}\leq L}|a_{g, j, \mathtt v}| e^{C L \log (L+2)} \tau^{\mathsf p(L+1)}\|\xi\|_H\\
&\leq (m+2)^{L+1}C_{R_W}^L e^{C L} \mathscr F(s) + e^{C L \log (L+2)} \tau^{\mathsf p(L+1)}\|\xi\|_H. \label{eq:supfgt}
\end{align}
Let $\left\{g_1, \ldots, g_{d_N}\right\}$ be an ONB of $P_NH$, where $d_N=O\left(N^3\right)$. As $\log\tau^{-1}=aL+O(1)$, $N\le L\le c_0N$ and $C_1$ is independent of $a$, \eqref{eq:supfgt} and \eqref{eq:coefficient-extraction} give 
\begin{align*}
\begin{split}
 \sup_{s\le t\le s+\tau}\|P_Np_t\|_H&=\sup_{s\le t\le s+\tau}\left(\sum_{k=1}^{d_N} \left|f_{g_k}(t)\right|^2\right)^{1/2}\\
 &\le
 e^{C_1L^2+C_{2,a}L}
 \left(\tau^{-L-1/2}
 \|Q^*p\|_{L^2([s,s+\tau];\mathbb R^m)} +
 \tau^{\mathsf p(L+1)}\|\xi\|_H\right)\\
 &\leq  e^{C_BN^2}
 \|Q^*p\|_{L^2([s,s+\tau];\mathbb R^m)}
 +
 \frac{e^{-2BN^2}}{1+T_0}\|\xi\|_H 
\end{split}
\end{align*}
by first taking $a$ sufficiently large such that $a \mathsf p>C_1+2 B+2$ and then choosing $N_0$ large.  Finally, square the preceding estimate, multiply by $\tau$,
and sum over the disjoint intervals of the partition. The observation
terms sum to $\|Q^*p\|_{L^2(I;\mathbb R^m)}^2$, while the remainder
contributes at most a fixed multiple of
$e^{-4BN^2}\|\xi\|_H^2$. After increasing $C_B$ and $N_0$ and taking
square roots, we obtain \eqref{eq:low-observation}.
\end{proof}

\subsubsection{Proof of \Cref{prop:polynomial-source}}

We now select one compact Brownian core on which the low-mode observation
holds simultaneously at every sufficiently large resolution and complete
the proof of \Cref{prop:polynomial-source}.

\begin{proof}
Recall that  $\mathsf p\in(1/3,1/2)$. 
Choose $M,R_W<\infty$ so that
\begin{align}\label{eq:source-reference-event}
 \Omega_{\rm ref}
 =
 \left\{
 \|\omega\|_{L^\infty([0,T];\mathbb R^m)}\le M,\ 
 \|\mathbf W\|_{\mathsf p}\le R_W
 \right\}
\end{align}
has Wiener measure $p_0>0$. Since $\sup_{u\in\mathscr D}\|u\|_{\rho_{\mathscr D},s}\le R_{\mathscr D}$ from the assumption, 
\Cref{lem:analytic-core} applied with the driver bound $M$ gives $\rho>0$ and $R<\infty$ such that
\begin{align}\label{eq:uniform-source-analytic-core}
 \mathscr D\times
 \left\{\omega:\|\omega\|_{L^\infty([0,T];\mathbb R^m)}\le M\right\}
 \subset\mathcal C_{\rho,s}(R,T).
\end{align}
Hence every reference pair in $\mathscr D\times\Omega_{\rm ref}$ satisfies
\eqref{eq:reference-analytic-bound}. 
Set $T_0=T/2$,
$\varepsilon\in(0,p_0/2)$ and  $B>0$. By
\Cref{prop:source-occupation}, there are $N_0\ge1$, $C_B<\infty$, and
a Brownian event $\Omega_{\rm obs}$ with
\begin{align*}
 \mathbb P(\Omega_{\rm obs})\ge1-\varepsilon
\end{align*}
such that, simultaneously for every $N\ge N_0$, $u\in\mathscr D$, and
$\xi\in H$,
\begin{align}\label{eq:source-core-occupation}
 \|P_Np\|_{L^2([0,T/2];H)}
 \le
 e^{C_BN^2}\|Q^*p\|_{L^2([0,T/2];\mathbb R^m)}
 +e^{-BN^2}\|\xi\|_H
\end{align}
whenever $\omega\in\Omega_{\rm ref}\cap\Omega_{\rm obs}$. Note $\mathbb P(\Omega_{\rm ref}\cap\Omega_{\rm obs})
\ge p_0-\varepsilon>0$ as $\varepsilon<p_0/2$. 
By inner regularity of the Wiener measure, 
there is a compact set $\Gamma\subset\Omega_{\rm ref}\cap\Omega_{\rm obs}$ with  $\mathbb P(\Gamma)>0$, and   
$\mathscr D\times\Gamma\subset\mathcal C_{\rho,s}(R,T)$ by \eqref{eq:uniform-source-analytic-core}. By compactness of $\mathscr D\times\Gamma$ and
\Cref{lem:driver-chart}, after decreasing $\rho>0$ if necessary,
the driver and mixed-derivative bounds required in
\Cref{prop:compact-source-repair} hold uniformly on $\mathscr D\times\mathscr N_{\rho}(\Gamma)$. 
Together with \Cref{lem:endpoint-remainder}, this verifies the
regular-dependence hypotheses of that proposition.

Fix $(u,\omega)\in \mathscr D\times\Gamma, \xi\in H$, and recall 
$p_t=J_{t,T}^*\xi$. On $\mathscr D\times\Gamma$,
\Cref{lem:tangent-adjoint-core} gives
\begin{align}\label{eq:source-core-adjoint}
 \frac{d}{dt}\|p_t\|_H^2
 \ge
 \nu\|\nabla p_t\|_2^2-C_0\|p_t\|_H^2,
 \qquad
 \sup_{t\le T}\|p_t\|_H\le C_0\|\xi\|_H.
\end{align}
For the Fourier projector $P_N$, the spectral inequality gives $\|\nabla p_t\|_2^2
\ge c_0N^2\bigl(\|p_t\|_H^2-\|P_Np_t\|_H^2\bigr)$ for some embedding constant $c_0>0$. 
Thus, 
\begin{align*}
 \frac{d}{dt}\|p_t\|_H^2\ge\lambda_N \|p_t\|_H^2-c\nu N^2\|P_Np_t\|_H^2, \quad \lambda_N=c\nu N^2-C_0>0,
\end{align*}
for large $N$.  Multiplying by $e^{-\lambda_Nt}$ and integrating over $[0,T/2]$ yields
\begin{align}\label{eq:source-energy-transfer}
 \|J_{0,T}^*\xi\|_H^2=\|p_0\|_H^2\le
 e^{-\lambda_NT/2}\|p_{T/2}\|_H^2
 +c\nu N^2\int_0^{T/2}e^{-\lambda_Nt}\|P_Np_t\|_H^2\,dt.
\end{align}
Since $(\mathcal A_T^*\xi)(t)=Q^*p_t$ for $0\le t\le T$ by \eqref{eq:control-operator}, 
squaring \eqref{eq:source-core-occupation} gives
\begin{align*}
 \int_0^{T/2}\|P_Np_t\|_H^2\,dt
 \le
 2e^{2C_BN^2}\|\mathcal A_T^*\xi\|_{L^2([0,T];\mathbb R^m)}^2
 +2e^{-2BN^2}\|\xi\|_H^2.
\end{align*}
Combining this with \eqref{eq:source-energy-transfer} and
\eqref{eq:source-core-adjoint} yields $c_1,C_1>0$, uniformly on
$\mathscr D\times\Gamma$, such that
\begin{align*}
 \|J_{0,T}^*\xi\|_H^2
 \le
 e^{C_1N^2}\|\mathcal A_T^*\xi\|_{L^2([0,T];\mathbb R^m)}^2
 +C_1e^{-c_1N^2}\|\xi\|_H^2,
 \qquad N\ge N_0.
\end{align*}
Since this holds for every $\xi\in H$,
\begin{align}\label{eq:source-core-operator}
 J_{0,T}J_{0,T}^*
 \le
 e^{C_1N^2}\mathcal A_T\mathcal A_T^*
 +C_1e^{-c_1N^2}I.
\end{align}

For sufficiently small $\delta>0$, choose the smallest $N\ge N_0$ such
that $C_1e^{-c_1N^2}\le\delta^2$. 
By minimality of $N$, 
$
 N^2\le C(1+|\log\delta|),
$
and \eqref{eq:source-core-operator} gives
\begin{align*}
 J_{0,T}J_{0,T}^*
 \le C_\delta^2\mathcal A_T\mathcal A_T^*+\delta^2I,
 \qquad
 C_\delta=e^{C_1N^2/2}\le C\delta^{-b}
\end{align*}
for some fixed $C,b<\infty$. Enlarging $C$ to cover the remaining
$\delta$ in a fixed compact subinterval of $(0,1)$ completes the proof.
\end{proof}

\subsection{Quantitative Fourier generation and sharp rank counts}\label{sec:Quantitative-Fourier-generation}
In this section, we prove that each hydrostatic Fourier mode admits a Lie-polynomial representation in a quantitative way, as required in \Cref{prop:source-occupation}. We also provide an equivalent characterization of quantitative saturation as in \Cref{def:fourier-certificate} and minimal rank counts as mentioned in \Cref{rem:low-rank-example}.

\subsubsection{Quantitative Fourier--Lie generation}
\label{subsec:fourier-lie-generation}
The Malliavin argument in \Cref{sec:quantitative-Malliavin-observability} uses the following  Fourier resolution-dependent
consequence of the forcing geometry, which we shall prove in this subsection. 

\begin{theorem}\label{thm:fourier-lie-generation}
Under the forcing hypotheses of \Cref{thm:main}, every unit real
hydrostatic Fourier basis vector $g\in P_NH$ can be represented as
\begin{align}\label{eq:fourier-lie-generation}
 g = \sum_{j=1}^m \sum_{|\mathtt w|_{\rm deg}\le L_N} a_{g,j,\mathtt w}\operatorname{ad}_{\mathtt w}b_j, \qquad N\le L_N\le c_0N, \qquad \sum_{j,\mathtt w}|a_{g,j,\mathtt w}| \le e^{CL_N}.
\end{align}
The constants $c_0,C$ depend on the fixed forcing family, but not on
$N$ or $g$.
\end{theorem}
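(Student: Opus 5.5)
We reduce the statement to a constant-coefficient bracket calculus. Since the $b_j$ are constant fields and $F(u)=-\nu Au-B(u,u)$, one has $[b_i,F]=DF\,b_i$ up to sign, hence $[b_j,[b_i,F]]=\pm\mathsf B(b_i,b_j)$, a constant field of Brownian parabolic degree $1+1+2=4$ (counting the terminal field). More generally, if $g,h$ are constant fields already written as linear combinations of words of degree at most $\ell$, then $[g,[h,F]]=\pm\mathsf B(g,h)$ is written with degree at most $2\ell+2$ and coefficient cost equal to the product of the two costs. Iterating this as a doubling recursion would give degree exponential in the number of steps. So we must generate every mode at frequency $\le N$ by interacting \emph{one} new mode with a fixed, finitely generated library of low modes. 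Concretely, fix $j_*$ from \Cref{def:fourier-certificate}. The finite-dimensional space $E_{j_*}$ is spanned by constant words of degree $\le D_*$ with cost $\le C_*$. Every step $g\mapsto\mathsf B(e,g)$, with $e$ a library element, raises the degree by at most $D_*+2$ and multiplies the cost by a fixed constant. The target \eqref{eq:fourier-lie-generation} therefore follows once every unit mode at frequency $\le N$ is obtained from the library by $O(N)$ such linear steps. Each step should also have bounded linear coefficients, including the inversions needed to isolate a pure mode, so that the total cost is at most $e^{CN}$.

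The first step is the Fourier algebra. For pure modes, $\mathsf B(e^{\mathrm i\boldsymbol k\cdot\boldsymbol x}\cos(nz)\mathsf a,\,e^{\mathrm i\boldsymbol l\cdot\boldsymbol x}\cos(n'z)\mathsf b)$ has support on the frequencies $(\boldsymbol k+\boldsymbol l,n\pm n')$. We compute the explicit polarization maps. Interacting a barotropic mode $e^{\mathrm i\boldsymbol l\cdot\boldsymbol x}\boldsymbol l^\perp$ with a baroclinic fibre element at $(\boldsymbol k,n)$ lands in $\mathcal H_{\boldsymbol k+\boldsymbol l,n}$ through a $2\times2$ matrix $M_{\boldsymbol k,\boldsymbol l}$, which is linear in $\boldsymbol k,\boldsymbol l$ and invertible away from a degenerate set. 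The vertical interaction between levels $n$ and $n'$ produces $n+n'$ and $|n-n'|$, and the two can be separated by combining with phase-shifted partners, which phase-completeness provides. Using the lattice condition, we express any horizontal $\boldsymbol k$ with $|\boldsymbol k|\le N$ as a path of $O(N)$ steps through the generating barotropic frequencies, after first generating the full barotropic family along such paths. Using the gcd condition, the Euclidean algorithm reaches level $n=1$ in $O(1)$ steps, and then any $n\le N$ in $O(N)$ further steps by repeatedly adding level one. The barotropic case uses the same lattice paths within $n=0$.

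The second step, and the main obstacle, is getting the \emph{full} two-dimensional polarization fibre at positive levels. The lattice paths only transport a single polarization line by the products of the matrices $M$. We would take closed lattice loops and consider the projective holonomy group they generate in $PGL_2$. Two loops with noncommuting, non-simultaneously-triangularizable holonomy give an irreducible action, so some combination of transported vectors spans $\mathbb C^2$; we would check this from the explicit entries of $M$. The exceptional geometry is when the generating barotropic directions are just two, say $\boldsymbol e_1,\boldsymbol e_2$. There an invariant line may persist, and we would break it with one additional quadratic interaction, for example through the $w\,\partial_z$ term at level pairs, producing the missing polarization at some fixed frequency. Once the full fibre is available at one frequency, transport preserves fullness. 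Inverting the transport matrices costs at most $(CN)^{C}$ per step, because the entries grow only linearly with frequency. Over $O(N)$ steps this gives $e^{CN\log N}$. To reach $e^{CN}$ we would instead normalize by the leading entries, which are of order $|\boldsymbol k|$, and observe that the products telescope, or else choose paths through bounded-frequency intermediates. This is where we expect most of the care to go.

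Finally, we collect the two steps. Each unit real basis vector (real and imaginary parts of the complex identities) arises after $O(N)$ library interactions, with degree $L_N\le c_0N$ and cost $\le e^{CN}\le e^{CL_N}$. Padding with trivial words ensures $L_N\ge N$.
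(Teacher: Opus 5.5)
Your architecture matches the paper's: a fixed finite library of low modes interacting with one growing mode at a time (so degrees grow additively), lattice paths, the Euclidean algorithm, and projective holonomy with an exceptional two-line geometry. But the proposal does not establish the cost bound $e^{CL_N}$, which is the quantitative content of the statement, and two of the mechanisms you invoke fail as written.

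For horizontal transport, the premise that inverting $M_{\boldsymbol s}(\boldsymbol k)=(\boldsymbol s\wedge\boldsymbol k)I+\boldsymbol s^\perp\otimes\boldsymbol s$ costs $(CN)^C$ per step is wrong, and you leave the resulting $e^{CN\log N}$ unresolved. That bound would be absorbed downstream in \Cref{prop:source-occupation}, but it does not give \eqref{eq:fourier-lie-generation}. The missing observation is that $\boldsymbol s^\perp\otimes\boldsymbol s$ is nilpotent and $\boldsymbol s\wedge\boldsymbol k$ is a nonzero integer on admissible edges. Hence the explicit inverse \eqref{eq:uniform-shift-inverse} has norm at most $1+|\boldsymbol s|^2$, uniformly in $\boldsymbol k$. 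One transports the target polarization backwards to the base vertex $\boldsymbol h$, expands it in the full fibre available there, and then applies the forward brackets with unit coefficients. Transport thus contributes only a factor $(1+\max_{\mathcal S}|\boldsymbol s|^2)^{O(1+|\boldsymbol k|)}$. Your ``normalize by the leading entry'' is exactly this once integrality is noted. No telescoping or special paths are needed beyond avoiding $\boldsymbol s\wedge\boldsymbol k=0$ and the origin, as in \Cref{lem:barotropic-paths}.

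For vertical growth, separating the channels $n+n'$ and $|n-n'|$ by ``phase-shifted partners'' cannot work. Both channels sit at the same horizontal frequency $\boldsymbol k+\boldsymbol l$, so horizontal translations or phase choices multiply them by the same character. There is also no vertical phase freedom, since all profiles are even in $z$. What separates them is the polarization dependence: with full fibres at $(\boldsymbol k,n)$ and $(\boldsymbol l,m)$, $n\ne m$, and $\boldsymbol k\wedge\boldsymbol l\ne0$, the map $\Sigma_{q,r}$ is invertible by \eqref{eq:two-channel-determinant}. The $O(N)$-step induction $(\boldsymbol h,n)+(\boldsymbol t,1)\to(\boldsymbol h+\boldsymbol t,n+1)$, followed by the shift $-\boldsymbol t$, also needs the uniform bound \eqref{eq:uniform-channel-inverse}. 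That bound is what gives $A_{n+1}\le CA_n$ rather than a factorial. So full fibres at finitely many seed levels must be secured first; the vertical induction then produces full fibres at every new level directly.

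Likewise, generating barotropic targets by lattice paths inside $n=0$ fails whenever the available barotropic frequencies all have equal length, e.g.\ $\mathcal S=\{\pm\boldsymbol e_1,\pm\boldsymbol e_2\}$ as in the paper's rank-four and rank-five examples. There the interaction \eqref{eq:barotropic-third-direction} carries the factor $|\boldsymbol t|^2-|\boldsymbol s|^2$, so $\mathsf B$ vanishes identically on the span of the barotropic seeds and your paths cannot start. The paper instead uses $\mathsf B(\phi_{\boldsymbol q,1,\mathsf a},\phi_{\boldsymbol k-\boldsymbol q,1,\mathsf a})=\mathrm i|\boldsymbol k|\phi_{\boldsymbol k,0,\boldsymbol k^\perp/|\boldsymbol k|}$, with $\mathsf a=(\boldsymbol k+\boldsymbol k^\perp)/|\boldsymbol k|$ and $\boldsymbol q\in\{\boldsymbol h,\boldsymbol t\}$; equal polarizations kill the level-two channel.

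Finally, two further points are missing. The pure vertical modes $(\boldsymbol 0,n)$ need their own construction, since the edge into the origin is degenerate. You should also say how the nested brackets $[e,[F,g]]$ become right-nested words $\operatorname{ad}_{\mathtt w}b_j$. Either expand $\operatorname{ad}_e$ for bounded-length library elements, at a fixed factor per step, or straighten at the end as in \Cref{lem:straightening}, at cost $2^{L_N}$.
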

The underlying quantitative generation statement is more naturally
formulated in terms of the individual Fourier frequency
$(\boldsymbol k,n)$ than in terms of the auxiliary resolution cutoff
$N$. Recall the hydrostatic frequency set
\begin{align*}
 \mathbf F = \{(\boldsymbol k,n)\in\mathbb Z^2\times\mathbb N_0: (\boldsymbol k,n)\ne(0,0)\}
\end{align*}
from \eqref{eq:hydrostatic-frequencies}, and the quadratic extensions
$E_j$ from \eqref{eq:quadratic-saturation}. 
Denote the complexified full extension as 
\begin{align}\label{eq:complexified-full-extension}
\mathcal E_\infty = \bigcup_{j\ge0}E_j\otimes_{\mathbb R}\mathbb C. 
\end{align}
In the Fourier calculations below, real lattice vectors are written in
bold and complex polarization vectors in sans serif. For
$\boldsymbol k\in\mathbb Z^2$, $n\in\mathbb N_0$ and
$\mathsf a\in\mathbb C^2$, write
\begin{align}\label{eq:fourier-modes}
 \phi_{\boldsymbol k,n,\mathsf a}(\boldsymbol x,z) = e^{\mathrm{i}\boldsymbol k\cdot\boldsymbol x}\cos(nz)\mathsf a.
\end{align}
For $(\boldsymbol k,n)\in\mathbf F$, define
\begin{align}\label{eq:hydrostatic-fibre}
 \mathfrak P_{(\boldsymbol k,n)} = \begin{cases} \mathbb C^2,&n\ge1,\\
 \{\mathsf a\in\mathbb C^2:\boldsymbol k\cdot\mathsf a=0\},&n=0. \end{cases}
\end{align}
\Cref{thm:fourier-lie-generation} is then a direct consequence of the following proposition. 

\begin{proposition}\label{prop:geometric-generation}
Let $E_0=\mathrm{Ran} \, Q$. The
following conditions are equivalent:
\begin{enumerate}[label=\textup{(\roman*)},ref=\roman*,leftmargin=*]
\item $E_0$ satisfies quantitative quadratic saturation in the sense of
\Cref{def:fourier-certificate};
\item every hydrostatic Fourier mode is generated algebraically:
\begin{align*}
 \phi_{\boldsymbol k,n,\mathsf a}\in\mathcal E_\infty \qquad \bigl((\boldsymbol k,n)\in\mathbf F,\ \mathsf a\in\mathfrak P_{(\boldsymbol k,n)}\bigr).
\end{align*}
\end{enumerate}
Whenever these conditions hold, for every
$(\boldsymbol k,n)\in\mathbf F$ and every
$\mathsf a\in\mathfrak P_{(\boldsymbol k,n)}$ with $|\mathsf a|=1$,
there exist finitely many coefficients $a_{j,\mathtt w}\in\mathbb C$
such that
\begin{align}\label{eq:geometric-quantitative-bounds}
 \phi_{\boldsymbol k,n,\mathsf a}
 &=
 \sum_{j=1}^m\sum_{\mathtt w}
 a_{j,\mathtt w}\operatorname{ad}_{\mathtt w}b_j,\notag\\
 \max_{a_{j,\mathtt w}\ne0}
 \bigl(|\mathtt w|_{\rm deg}+1\bigr)
 &\le C(1+|\boldsymbol k|+n),\qquad
 \sum_{j,\mathtt w}|a_{j,\mathtt w}|
 \le \exp\{C(1+|\boldsymbol k|+n)\}.
\end{align}
\end{proposition}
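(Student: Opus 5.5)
The implication (ii)$\Rightarrow$(i) is immediate. Since $\mathcal E_\infty=\bigcup_j E_j^{\mathbb C}$ is an increasing union, the finitely many pure modes $\phi_{\boldsymbol e_1,0,\boldsymbol e_1^\perp}$, $\phi_{\boldsymbol e_2,0,\boldsymbol e_2^\perp}$ and $\phi_{0,1,\boldsymbol e_1}$ all lie in a single $E_{j_*}^{\mathbb C}$. These give barotropic frequencies spanning $\mathbb Z^2$ and a positive level equal to $1$, so \eqref{eq:finite-fourier-certificate} holds. The content lies in (i)$\Rightarrow$(ii) together with the bounds \eqref{eq:geometric-quantitative-bounds}, and I would prove both simultaneously by an explicit constructive induction on frequencies.

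\textbf{Step 1: bracket calculus.} Because the $b_j$ are constant fields and $B$ is bilinear,
\[
[b_i,[b_j,F]]=-\mathsf B(b_i,b_j)
\]
up to sign, and this is again a constant field. More generally, if $g,h\in\mathcal E_\infty$ are constant fields realized by Lie words of degrees $d_g,d_h$ and costs $A_g,A_h$, then $\mathsf B(g,h)$ is realized by a word of degree $d_g+d_h+2$ with cost at most $A_gA_h$. For this I would use $[h,[g,F]]$, where the outer field need not be a single $b_j$; I expand by bilinearity of the bracket. The depth count works because the hierarchy in \eqref{eq:iteratedLiebracket} only ever uses brackets against a constant field and $F$. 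One check is needed: the linear part $-\nu A$ contributes $\nu A g$ in $[g,F]$, and this term is killed by the second bracket with a constant field. Consequently, if every mode is reached by a \emph{bounded number} of $\mathsf B$-operations per unit increase of $|\boldsymbol k|+n$, and each operation uses a partner of bounded degree, then the degree grows linearly and the cost grows exponentially, with the cost also picking up a factor $|\boldsymbol k|^{-1}$-type inverse from each Fourier coefficient. This is exactly \eqref{eq:geometric-quantitative-bounds}.

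\textbf{Step 2: Fourier interaction formulas.} I would compute $\mathsf B(\phi_{\boldsymbol k,n,\mathsf a},\phi_{\boldsymbol l,0,\mathsf b})$ with a barotropic partner. Its projection onto frequency $(\boldsymbol k+\boldsymbol l,n)$ equals $\phi_{\boldsymbol k+\boldsymbol l,n,M(\boldsymbol k,\boldsymbol l,\mathsf b)\mathsf a}$, plus a $(\boldsymbol k-\boldsymbol l)$ component that is removed by combining the $\cos$ and $\sin$ phases; phase-completeness is what makes this possible. Here $M$ is an explicit $2\times2$ matrix with $\mathsf i$-linear entries.
\begin{itemize}
\item From (i), the base barotropic frequencies span $\mathbb Z^2$. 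The barotropic--barotropic interactions, i.e.\ 2D Euler, then produce all barotropic modes along lattice paths, by the standard Hairer--Mattingly / E--Mattingly argument. Each step costs a bounded factor, since the non-degeneracy $\boldsymbol k^\perp\cdot\boldsymbol l\ne0$ can be arranged by detours of bounded length.
\item For baroclinic levels, $\mathsf B(\phi_{\boldsymbol k,n},\phi_{\boldsymbol l,n'})$ produces the levels $n\pm n'$. Running the Euclidean algorithm from the gcd condition therefore reaches level $1$, and then every level $n$ in $O(n)$ steps.
\end{itemize}

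\textbf{Step 3: full polarization (main obstacle).} A generated baroclinic mode is a priori a single vector $\mathsf a$, whereas the fibre $\mathbb C^2$ is two-dimensional. Transport along closed horizontal lattice loops multiplies $\mathsf a$ by products of the matrices $M$, generating a projective holonomy group in $PGL_2(\mathbb C)$. I would show that this group has no common invariant line except in one degenerate geometry, in which all generating barotropic directions lie on two coordinate axes, as in \Cref{rem:low-rank-example}. In that exceptional case I would break the invariant line with a direct baroclinic--baroclinic quadratic interaction. Irreducibility gives two independent vectors after a bounded number of loops, which yields the full fibre at one frequency. Transport along paths of length $O(|\boldsymbol k|)$ then reaches every $(\boldsymbol k,n)$.

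For the quantitative form, the inverse of the resulting $2\times2$ matrix has norm bounded by $e^{C(1+|\boldsymbol k|+n)}$, which keeps the cost exponential. Verifying uniform non-degeneracy of $M$ along the chosen paths, and the exact loop computation in the exceptional geometry, is the step I expect to require the most care.
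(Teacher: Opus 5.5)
Your (ii)$\Rightarrow$(i) argument is correct and coincides with the paper's. The gap is in the first bullet of Step 2. Barotropic--barotropic interactions obey
\[
 \mathsf B(\phi_{\boldsymbol s,0,\boldsymbol s^\perp},\phi_{\boldsymbol t,0,\boldsymbol t^\perp})
 =\mathrm{i}\,\frac{(\boldsymbol s\wedge\boldsymbol t)(|\boldsymbol t|^2-|\boldsymbol s|^2)}{|\boldsymbol s+\boldsymbol t|^2}\,
 \phi_{\boldsymbol s+\boldsymbol t,0,(\boldsymbol s+\boldsymbol t)^\perp}.
\]
So the Hairer--Mattingly/E--Mattingly argument needs non-parallel seeds of \emph{different lengths}, and hypothesis (i) does not supply these.

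Take $\mathcal S=\{\pm\boldsymbol e_1,\pm\boldsymbol e_2\}$, which is exactly the geometry of the rank-four and rank-five examples. Every seed pair is then parallel or of equal length, so $\mathsf B$ vanishes on the barotropic span of the seeds and 2D Euler produces no new barotropic mode. In that geometry barotropic modes can only be reached through the level-zero channel of two baroclinic modes at a common level. For example, $\mathsf B(\phi_{\boldsymbol k,n,\mathsf a},\phi_{\boldsymbol l,n,\mathsf a})=\mathrm{i}|\boldsymbol q|^2\phi_{\boldsymbol q,0,\boldsymbol q^\perp}$ with $\boldsymbol q=\boldsymbol k+\boldsymbol l$ and $\mathsf a=\boldsymbol q+\boldsymbol q^\perp$ chosen to kill the level-$2n$ channel. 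This requires the \emph{full} polarization fibre at some positive level.

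Transport along only two lattice lines has reducible holonomy, with invariant line field $\mathbb C(x\boldsymbol s-y\boldsymbol t)^\perp$ at $x\boldsymbol s+y\boldsymbol t$. The given polarization at level $n_r$ may therefore be trapped on that line field. A full fibre must then first be manufactured at level $2n_r$ from same-level interactions of trapped polarizations, subtracting the barotropic by-products, which lie in the seed span. Only after this are all barotropic modes available; then $\mathcal S$ can be enlarged and the holonomy becomes irreducible at every selected level. You identified the exceptional geometry in Step 3, but missed that barotropic generation also fails there. The logical order of your Steps 2 and 3 therefore has to be reversed.

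Your vertical Euclidean step has the same dependence. $\mathsf B(\phi_{\boldsymbol k,n,\mathsf a},\phi_{\boldsymbol l,m,\mathsf b})$ is a single element carrying both levels $n+m$ and $n-m$. To ``subtract'' levels you must cancel the upper channel, which requires the symbol map $\mathsf a\otimes\mathsf b\mapsto(\Gamma^+_{q,r}(\mathsf a,\mathsf b),\Gamma^-_{q,r}(\mathsf a,\mathsf b))$ to be invertible. Its determinant is a nonzero multiple of $(m^2-n^2)(m^2+n^2)(\boldsymbol k\wedge\boldsymbol l)^2$. You therefore need full fibres at both inputs and non-parallel horizontal frequencies, which is Step 3 again.

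For the cost bound $e^{C(1+|\boldsymbol k|+n)}$, a bounded number of operations per unit frequency is not enough by itself. The inverse cost per step must also be bounded uniformly: $\sup_{n\ge2}\|\Sigma^{-1}_{(\boldsymbol h,n),(\boldsymbol t,1)}\|<\infty$ in the induction $n\to n+1$, and $\|M_{\boldsymbol s}(\boldsymbol k)^{-1}\|\le1+|\boldsymbol s|^2$ (from $|\boldsymbol s\wedge\boldsymbol k|\ge1$) for horizontal steps.

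Two smaller points remain. First, the pure vertical modes $(\boldsymbol 0,n)$ lie outside the transport graph on $\mathbb Z^2\setminus\{0\}$ and need a separate step. One option is the rank-one transport $M_{-\boldsymbol k}(\boldsymbol k)=\boldsymbol k^\perp\otimes\boldsymbol k$ applied from two non-parallel $\boldsymbol k$; another is the crossed polarizations used in the paper. Second, $[h,[g,F]]$ with a composite field $h$ is not of the form $\operatorname{ad}_{\mathtt w}b_j$, whose letters must be $F$ or $b_i$. A straightening step (Jacobi or Dynkin--Specht--Wever, costing a factor $2^L$) is needed to reach the stated right-nested representation.
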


\begin{proof}[Proof of \Cref{thm:fourier-lie-generation}]
By \Cref{prop:geometric-generation}, every unit hydrostatic Fourier mode
at frequency $(\boldsymbol k,n)$ has a right-nested Lie-word
representation as in \eqref{eq:geometric-quantitative-bounds}.
For $g\in P_NH$, the defining Fourier cutoff gives
$1+|\boldsymbol k|+n\le C_0N$. Hence, after choosing a sufficiently
large integer $C_1\ge1$, all words occurring in these representations
satisfy $|\mathtt w|_{\rm deg}\le C_1N=:L_N$. 
Extending the coefficients by zero up to degree $L_N$ leaves both the
representation and its coefficient sum unchanged. Thus
$N\le L_N\le c_0N$ and, after increasing $C$ if necessary,
\begin{align*}
 \sum_{j,\mathtt w}|a_{g,j,\mathtt w}|\le e^{CL_N}
\end{align*}
uniformly over all unit Fourier basis vectors $g\in P_NH$.
Taking real and imaginary parts changes only fixed normalization
constants, which gives \eqref{eq:fourier-lie-generation}.
\end{proof}

The rest of this subsection proves \Cref{prop:geometric-generation}.
Assume first that quantitative quadratic saturation holds and fix $j_*$
as in \Cref{def:fourier-certificate}. Set
\begin{align}\label{eq:mathcal-S}
 \mathcal S
 =\{\boldsymbol k:E_{j_*}^{\mathbb C}\cap
 \mathcal H_{\boldsymbol k,0}\ne\{0\}\}.
\end{align}
Then $\mathcal S$ is finite and symmetric and
$\operatorname{span}_{\mathbb Z}\mathcal S=\mathbb Z^2$. Choose positive
levels $n_1,\ldots,n_{\ell_*}$ from the second set in
\eqref{eq:finite-fourier-certificate} such that
$\gcd(n_1,\ldots,n_{\ell_*})=1$. For each $n_r$, fix
$\boldsymbol k_r$ and $0\ne\mathsf a_r\in\mathbb C^2$ such that
\begin{align}\label{eq:select-kna}
\phi_{\boldsymbol k_r,n_r,\mathsf a_r}\in E_{j_*}^{\mathbb C}.
\end{align}
Since every nonzero barotropic fibre is one-dimensional, after
rescaling we may also fix $\phi_{\boldsymbol s,0,\boldsymbol s^\perp}\in E_{j_*}^{\mathbb C}$ for $\boldsymbol s\in\mathcal S$. 

The quantitative saturation condition supplies barotropic frequencies
generating the horizontal lattice and finitely many positive vertical
levels with greatest common divisor one.  At each such positive level,
however, only one nonzero polarization may initially be available,
whereas the corresponding fibre is two-dimensional over $\mathbb C$ in \eqref{eq:hydrostatic-fibre}.  We shall complete these polarization
fibres by horizontal transport, then use quadratic interactions to
generate all vertical levels, and finally quantify the resulting
generation procedure.

We first keep the vertical level fixed and use the available barotropic
modes to transport a polarization through the horizontal lattice. For a
nonzero barotropic frequency $\boldsymbol s$, write
$\boldsymbol s^\perp=(-s_2,s_1)$. Direct substitution in the quadratic interaction \eqref{eq:symmetric-quadratic-interaction} and \eqref{eq:hydrostatic-nonlinearity} gives, for
$n\ge1$, 
\begin{align}\label{eq:barotropic-gcd-shift}
 \mathsf B(\phi_{\boldsymbol k,n,\mathsf a},
 \phi_{\boldsymbol s,0,\boldsymbol s^\perp})
 =\mathrm{i}\phi_{\boldsymbol k+\boldsymbol s,n,
 M_{\boldsymbol s}(\boldsymbol k)\mathsf a}, \quad \text{ where }
 M_{\boldsymbol s}(\boldsymbol k)
 =(\boldsymbol s\wedge\boldsymbol k)I
 +\boldsymbol s^\perp\otimes\boldsymbol s.
\end{align}
Thus the interaction implements the horizontal move
$(\boldsymbol k,n)\mapsto(\boldsymbol k+\boldsymbol s,n)$ while
transporting the polarization by $M_{\boldsymbol s}(\boldsymbol k)$.
Moreover, $\det M_{\boldsymbol s}(\boldsymbol k)
=(\boldsymbol s\wedge\boldsymbol k)^2$ and 
\begin{align}\label{eq:barotropic-transport-inverse}
 M_{-\boldsymbol s}(\boldsymbol k+\boldsymbol s)
 M_{\boldsymbol s}(\boldsymbol k)
 =- (\boldsymbol s\wedge\boldsymbol k)^2I.
\end{align}
Hence this move is reversible whenever
$\boldsymbol s\wedge\boldsymbol k\ne0$. The next lemma shows that this
restriction does not obstruct horizontal generation: every nonzero
frequency remains reachable from a fixed nonzero vertex, with path
length and intermediate frequencies growing at most linearly in the
target frequency.

\begin{lemma}\label{lem:barotropic-paths}
On $\mathbb Z^2\setminus\{0\}$ join $\boldsymbol k$ to
$\boldsymbol k+\boldsymbol s$ whenever $\boldsymbol s\in\mathcal S$ and
$\boldsymbol s\wedge\boldsymbol k\ne0$. Then the resulting graph $\mathscr G$  is connected. For each
fixed starting vertex $\boldsymbol h\ne0$, the path to any
$\boldsymbol k\ne0$ can be chosen with at most
$C_{\mathcal S,\boldsymbol h}(1+|\boldsymbol k|)$ edges and with all its
vertices of magnitude at most
$C_{\mathcal S,\boldsymbol h}(1+|\boldsymbol k|)$.
\end{lemma}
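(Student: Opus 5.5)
The plan is to exploit the fact that $\mathcal S$ is finite, symmetric and generates $\mathbb Z^2$ over $\mathbb Z$. The only forbidden moves are those with $\boldsymbol s\wedge\boldsymbol k=0$, that is, moves along the line through the current vertex in the direction of $\boldsymbol s$. First we note that $\mathcal S$ contains two linearly independent vectors $\boldsymbol s_1,\boldsymbol s_2$, since otherwise its $\mathbb Z$-span would lie on a line. Every $\boldsymbol s\in\mathcal S$ is therefore non-parallel to at least one of $\boldsymbol s_1,\boldsymbol s_2$. Since $\operatorname{span}_{\mathbb Z}\mathcal S=\mathbb Z^2$, every $\boldsymbol k\in\mathbb Z^2$ can be written as $\boldsymbol k-\boldsymbol h=\sum_{\boldsymbol s\in\mathcal S}c_{\boldsymbol s}\boldsymbol s$ with $c_{\boldsymbol s}\in\mathbb N_0$ (using symmetry to absorb signs). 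Choosing a fixed integer basis representation, we get $\sum|c_{\boldsymbol s}|\le C_{\mathcal S}(|\boldsymbol k|+|\boldsymbol h|)$. This gives an unconstrained walk of linear length whose vertices stay within a linear ball.

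The main step is to repair this walk so that it never uses a degenerate edge and never passes through $0$. For a single step $\boldsymbol k\to\boldsymbol k+\boldsymbol s$ with $\boldsymbol s\wedge\boldsymbol k=0$, we pick $\boldsymbol t\in\{\pm\boldsymbol s_1,\pm\boldsymbol s_2\}$ not parallel to $\boldsymbol s$. We then replace the step by the detour $\boldsymbol k\to\boldsymbol k+\boldsymbol t\to\boldsymbol k+\boldsymbol t+\boldsymbol s\to\boldsymbol k+\boldsymbol s$. The first edge is admissible because $\boldsymbol t\wedge\boldsymbol k=0$ would force $\boldsymbol k\parallel\boldsymbol t$; if also $\boldsymbol k\parallel\boldsymbol s$ and $\boldsymbol k\neq0$, this would make $\boldsymbol t\parallel\boldsymbol s$. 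The second edge is admissible since $\boldsymbol s\wedge(\boldsymbol k+\boldsymbol t)=\boldsymbol s\wedge\boldsymbol t\ne0$. The third edge uses $-\boldsymbol t$ from $\boldsymbol k+\boldsymbol t+\boldsymbol s$, and $(-\boldsymbol t)\wedge(\boldsymbol k+\boldsymbol s)\neq0$ because $\boldsymbol k+\boldsymbol s$ is parallel to $\boldsymbol s$ and is nonzero unless the target is $0$.

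The main obstacle is avoiding the origin and the degenerate intermediate vertices: vertices such as $\boldsymbol k+\boldsymbol t$ or $\boldsymbol k+\boldsymbol t+\boldsymbol s$ might be $0$. I would handle this by ordering the increments so that the walk first moves far from the origin: the path starts by adding a large fixed multiple of a generic direction $\boldsymbol s_1$ along a non-parallel line. An alternative is to choose between $\pm\boldsymbol t$ and between $\boldsymbol s_1,\boldsymbol s_2$; each detour only forbids finitely many bad configurations, and there are four choices available. I expect a short case analysis to show that at least one choice always avoids $0$ whenever $|\boldsymbol k|$ exceeds a constant $C_{\mathcal S}$. Vertices of small norm, of which there are finitely many, are handled by a finite connectivity check. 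That check rests on the fact that every nonzero $\boldsymbol k$ has some admissible neighbour: $\boldsymbol k$ cannot be parallel to both $\boldsymbol s_1$ and $\boldsymbol s_2$. Connectivity of this finite region inside the big graph then follows by routing through large vertices, using the repaired linear walk.

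Each repair multiplies the length by at most $3$ and moves the vertices by at most $2\max_{\mathcal S}|\boldsymbol s|$. The resulting path from $\boldsymbol h$ to $\boldsymbol k$ therefore has at most $C_{\mathcal S,\boldsymbol h}(1+|\boldsymbol k|)$ edges, all inside the ball of radius $C_{\mathcal S,\boldsymbol h}(1+|\boldsymbol k|)$. Connectivity of $\mathscr G$ follows immediately, since every nonzero vertex is joined to the fixed vertex $\boldsymbol h$.
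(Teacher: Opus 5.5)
Your unconstrained walk and your detour for a degenerate step $\boldsymbol k\to\boldsymbol k+\boldsymbol s$ are exactly what the paper uses. Your choice of $\boldsymbol t$ not parallel to $\boldsymbol s$ is the same as the paper's choice of $\boldsymbol t$ not parallel to $\boldsymbol k$, since $\boldsymbol k\parallel\boldsymbol s$. However, there is a genuine gap at the point you yourself flag as the main obstacle, and that obstacle is partly misdiagnosed.

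The intermediate vertices of your detour can never vanish. If $\boldsymbol k+\boldsymbol t=0$, then $\boldsymbol t\parallel\boldsymbol k\parallel\boldsymbol s$. If $\boldsymbol k+\boldsymbol s+\boldsymbol t=0$, then $\boldsymbol t\parallel\boldsymbol k+\boldsymbol s\parallel\boldsymbol s$. Both contradict the choice of $\boldsymbol t$.

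The real problem is that the unconstrained walk may itself pass through $0$, which is not a vertex of $\mathscr G$. The step entering $0$ is a degenerate step whose target is $0$. Your detour ends on the target, so it cannot remove such a visit. For this case you give only sketches ("I expect a short case analysis to show\ldots"), so origin avoidance is left unproved.

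The fallback for small vertices also does not work as stated. That every nonzero $\boldsymbol k$ has an admissible neighbour gives degree at least one, not a path to the far region. This part could be fixed with the ray $\boldsymbol k+j\boldsymbol t$, $j\ge0$, where $\boldsymbol t\in\mathcal S$ is not parallel to $\boldsymbol k$: every edge of this ray is admissible and it never meets $0$. You would still need origin avoidance between far vertices, which is exactly what remains open.

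The missing idea is a bounded local surgery at the origin, performed before the edge repairs. Any visit of the walk to $0$ has the form $\boldsymbol a\to0\to\boldsymbol b$ with $\boldsymbol a,\boldsymbol b\in\mathcal S$, using symmetry of $\mathcal S$ for $\boldsymbol a$.
\begin{itemize}
\item If $\boldsymbol a\wedge\boldsymbol b\ne0$, replace the segment by $\boldsymbol a\to\boldsymbol a+\boldsymbol b\to\boldsymbol b$.
\item If $\boldsymbol a\wedge\boldsymbol b=0$, pick $\boldsymbol t\in\mathcal S$ not parallel to $\boldsymbol a$, hence not parallel to $\boldsymbol b$, and replace it by $\boldsymbol a\to\boldsymbol a+\boldsymbol t\to\boldsymbol t\to\boldsymbol t+\boldsymbol b\to\boldsymbol b$.
\end{itemize}
In both cases each new edge uses a step whose wedge with the current vertex is $\pm\boldsymbol a\wedge\boldsymbol b$, $\pm\boldsymbol a\wedge\boldsymbol t$ or $\pm\boldsymbol t\wedge\boldsymbol b$, all nonzero. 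Every new vertex is nonzero because it is a sum of two non-parallel vectors or lies in $\mathcal S$.

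After this surgery every vertex is nonzero, so every remaining degenerate edge has nonzero target, and your detour applies verbatim. Each modification has bounded size, so the linear bounds on the number of edges and on vertex magnitudes survive.
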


\begin{proof}
Since $\operatorname{span}_{\mathbb Z}\mathcal S=\mathbb Z^2$, fix
finite sequences of elements of $\mathcal S$ whose sums are
$\boldsymbol e_1$ and $\boldsymbol e_2$, together with the reversed
sequences giving $-\boldsymbol e_1$ and $-\boldsymbol e_2$. Repeating
these fixed sequences according to the coordinates of
$\boldsymbol k-\boldsymbol h$ gives a path from $\boldsymbol h$ to
$\boldsymbol k$ whose steps belong to $\mathcal S$, with
$O_{\mathcal S,\boldsymbol h}(1+|\boldsymbol k|)$ edges and with all
vertices of magnitude $O_{\mathcal S,\boldsymbol h}(1+|\boldsymbol k|)$.
This path need not belong to $\mathscr G$; we make it admissible by
bounded local modifications.

First remove every visit to the origin. Such a visit contains a segment $\boldsymbol a\to0\to\boldsymbol b$ where $\boldsymbol a,\boldsymbol b\in\mathcal S$ by symmetry of $\mathcal S$. If
$\boldsymbol a\wedge\boldsymbol b\ne0$, replace this segment by $\boldsymbol a\to \boldsymbol a+\boldsymbol b\to \boldsymbol b.$
Both new edges belong to $\mathscr G$. If
$\boldsymbol a\wedge\boldsymbol b=0$, choose
$\boldsymbol t\in\mathcal S$ not parallel to $\boldsymbol a$; such a
choice exists because $\mathcal S$ spans $\mathbb Z^2$. Since
$\boldsymbol a$ and $\boldsymbol b$ are parallel, the same
$\boldsymbol t$ is not parallel to $\boldsymbol b$. Replace the segment
by
\begin{align*}
 \boldsymbol a\to \boldsymbol a+\boldsymbol t\to \boldsymbol t\to\boldsymbol t+\boldsymbol b\to \boldsymbol b.
\end{align*}
All four edges belong to $\mathscr G$, and none of the new vertices is
zero. Repeating this operation removes every visit to the origin.

It remains to remove inadmissible edges. Consider an edge $\boldsymbol k\to\boldsymbol k+\boldsymbol s$
with both endpoints nonzero and
$\boldsymbol k\wedge\boldsymbol s=0$. Choose
$\boldsymbol t\in\mathcal S$ not parallel to $\boldsymbol k$ and replace
the edge by
\begin{align*}
 \boldsymbol k\to  \boldsymbol k+\boldsymbol t\to 
 \boldsymbol k+\boldsymbol s+\boldsymbol t\to 
 \boldsymbol k+\boldsymbol s.
\end{align*}
The three steps are admissible and the two new
intermediate vertices are nonzero.

Thus every forbidden portion of the original path is replaced by a
uniformly bounded number of edges of $\mathscr G$. Since $\mathcal S$
is finite, each modification changes the intermediate vertices only by
a uniformly bounded amount. Consequently the resulting path in
$\mathscr G$ still has at most
$C_{\mathcal S,\boldsymbol h}(1+|\boldsymbol k|)$ edges, and all its
vertices have magnitude at most
$C_{\mathcal S,\boldsymbol h}(1+|\boldsymbol k|)$. In particular,
$\mathscr G$ is connected.
\end{proof}

An admissible path in $\mathscr G$ transports polarization by the product of the
matrices in \eqref{eq:barotropic-gcd-shift}. By \eqref{eq:barotropic-transport-inverse}, the reverse path gives its
projective inverse. We define the following group of projective classes of the closed-path transport
maps, which is independent of the positive vertical level.

\begin{definition}
For a base vertex $\boldsymbol k\ne0$, define the holonomy group 
\begin{align*}
 \operatorname{Hol}_{\mathcal S}(\boldsymbol k)
 =\{[M_{\boldsymbol s_\ell}(\boldsymbol k_{\ell-1})\cdots
 M_{\boldsymbol s_1}(\boldsymbol k_0)]:
 \boldsymbol k_0=\boldsymbol k_\ell=\boldsymbol k,\
 \boldsymbol k_0\to\ldots\to\boldsymbol k_\ell
 \in\mathscr G\}
 \subset\operatorname{PGL}(2,\mathbb C).
\end{align*}
We call it
\emph{irreducible} if its representatives in $\operatorname{GL}(2,\mathbb C)$
have no common invariant line.  
\end{definition}

By \Cref{lem:barotropic-paths}, any two nonzero vertices
$\boldsymbol k_0,\boldsymbol k_1$ are joined by an admissible path.
If $T$ is its transport matrix, then conjugating a closed path at
$\boldsymbol k_0$ by this path and its reverse gives $\operatorname{Hol}_{\mathcal S}(\boldsymbol k_1)
=[T]\operatorname{Hol}_{\mathcal S}(\boldsymbol k_0)[T]^{-1}.$
Hence irreducibility is independent of the base vertex. The following lemma supplies a characterization  in terms of the geometry of $\mathcal S$. 

\begin{lemma}
\label{lem:lattice-holonomy} 
The group 
$\operatorname{Hol}_{\mathcal S}(\boldsymbol k)$ is irreducible if and
only if $\mathcal S$ occupies at least three unoriented lines. 
\end{lemma}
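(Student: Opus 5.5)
The approach I would take is to first record the projective structure of a single transport step and then treat the two implications separately. By \eqref{eq:barotropic-gcd-shift},
\begin{align*}
 M_{\boldsymbol s}(\boldsymbol k)=(\boldsymbol s\wedge\boldsymbol k)\bigl(I+(\boldsymbol s\wedge\boldsymbol k)^{-1}N_{\boldsymbol s}\bigr),\qquad N_{\boldsymbol s}=\boldsymbol s^\perp\otimes\boldsymbol s .
\end{align*}
Since $\boldsymbol s\cdot\boldsymbol s^\perp=0$, the operator $N_{\boldsymbol s}$ is nilpotent, with kernel and image both equal to $\mathbb C\boldsymbol s^\perp$. Moreover $N_{-\boldsymbol s}=N_{\boldsymbol s}$. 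Hence every edge of $\mathscr G$ acts on $\mathbb P^1(\mathbb C)$ as a parabolic map whose unique fixed point is $[\boldsymbol s^\perp]$; this point depends only on the unoriented line of $\boldsymbol s$. The translation parameter is $(\boldsymbol s\wedge\boldsymbol k)^{-1}$, and it is constant along $\boldsymbol s$-moves, because $\boldsymbol s\wedge(\boldsymbol k+\boldsymbol s)=\boldsymbol s\wedge\boldsymbol k$. Closed paths therefore yield products of unipotent elements in $\operatorname{PGL}(2,\mathbb C)$, and irreducibility amounts to the absence of a common fixed point of these products on $\mathbb P^1$. By the conjugation remark preceding the lemma, I may choose the base vertex freely.

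For the \emph{if} direction, let $\boldsymbol s,\boldsymbol t,\boldsymbol u\in\mathcal S$ span three distinct lines. I would form explicit small loops, namely parallelograms $\boldsymbol k\to\boldsymbol k+\boldsymbol s\to\boldsymbol k+\boldsymbol s+\boldsymbol t\to\boldsymbol k+\boldsymbol t\to\boldsymbol k$, chosen with all wedge products nonzero (possible by \Cref{lem:barotropic-paths}). I would also use loops that go out along a path $\gamma$, traverse a back-and-forth excursion in direction $\boldsymbol s$ through rows with different values of $\boldsymbol s\wedge\cdot$, and return along $\gamma^{-1}$. Such an excursion gives a nontrivial unipotent
\begin{align*}
 I+\Bigl(\sum\pm(\boldsymbol s\wedge\boldsymbol k_i)^{-1}\Bigr)N_{\boldsymbol s}
\end{align*}
conjugated by the transport of $\gamma$. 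This produces a nontrivial parabolic element of $\operatorname{Hol}_{\mathcal S}$ with fixed point $T_\gamma[\boldsymbol s^\perp]$, and doing the same for $\boldsymbol t$ and $\boldsymbol u$ gives three parabolic elements. A parabolic element has exactly one invariant line. So a common invariant line must coincide with each of these fixed points, and it suffices to show that two of them are distinct. With three directions I would arrange $\gamma$ trivial for $\boldsymbol s$ and for $\boldsymbol t$ by basing the excursions at the same vertex; the fixed points are then $[\boldsymbol s^\perp]\neq[\boldsymbol t^\perp]$, which already gives irreducibility. The role of the third direction is to make the excursion sums nonzero: a pure back-and-forth on one line cancels, so the rows must be changed using a transverse direction. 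I would verify that with three lines one can return with a nonzero net parameter, while with two lines the parameters telescope.

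For the \emph{only if} direction, note that $\operatorname{span}_{\mathbb Z}\mathcal S=\mathbb Z^2$ forces at least two lines, so suppose exactly two, with directions $\boldsymbol s,\boldsymbol t$. I would look for a line field $\ell(\boldsymbol k)\in\mathbb P^1$ on $\mathbb Z^2\setminus\{0\}$ satisfying
\begin{align*}
 M_{\pm\boldsymbol s}(\boldsymbol k)\,\ell(\boldsymbol k)=\ell(\boldsymbol k\pm\boldsymbol s),\qquad M_{\pm\boldsymbol t}(\boldsymbol k)\,\ell(\boldsymbol k)=\ell(\boldsymbol k\pm\boldsymbol t)
\end{align*}
on every admissible edge. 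Then $\ell(\boldsymbol k)$ is invariant under $\operatorname{Hol}_{\mathcal S}(\boldsymbol k)$. In coordinates $\boldsymbol k=x\boldsymbol s+y\boldsymbol t$, each step is a shear by an amount depending only on one coordinate. I would make the ansatz $\ell(\boldsymbol k)=[\alpha(x,y)\boldsymbol s^\perp+\beta(x,y)\boldsymbol t^\perp]$ and solve the two resulting first-order difference equations for the ratio $\alpha/\beta$. I would then check that their compatibility condition (a discrete zero-curvature condition around unit parallelograms) holds identically, or alternatively compute the parallelogram holonomy in closed form and exhibit its invariant line directly.

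I expect this last step to be the main obstacle. It requires either guessing the correct explicit line field or carrying out the parallelogram product exactly, and it must handle the excluded edges where $\boldsymbol s\wedge\boldsymbol k=0$, so that the compatibility holds on the whole graph $\mathscr G$.
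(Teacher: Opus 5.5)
\textbf{The converse is sound.} Your \emph{only if} plan works and is easier than you expect. With $\ell=[\alpha\boldsymbol s^\perp+\beta\boldsymbol t^\perp]$ and $z=\alpha/\beta$, a step $p\boldsymbol s$ acts by $z\mapsto z-p/y$ and a step $q\boldsymbol t$ acts by $z^{-1}\mapsto z^{-1}-q/x$. Hence $z=-x/y$, i.e.\ $\ell(x\boldsymbol s+y\boldsymbol t)=\mathbb C(x\boldsymbol s-y\boldsymbol t)^\perp$, is preserved by every edge of $\mathscr G$. This is exactly the paper's line field, no curvature condition remains to check, and non-admissible edges are simply absent from $\mathscr G$.

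\textbf{The gap is in the \emph{if} direction.} That direction rests on loops at a common vertex whose holonomies are nontrivial parabolics fixing $[\boldsymbol s^\perp]$ and $[\boldsymbol t^\perp]$, and nothing you describe produces them.
\begin{itemize}
\item Steps by multiples of $\boldsymbol s$ never leave a row $\{\boldsymbol s\wedge\cdot=\mathrm{const}\}$. On a row the factors $I+p_i(\boldsymbol s\wedge\boldsymbol k)^{-1}N_{\boldsymbol s}$ commute, and closing the path forces $\sum_ip_i=0$, so the holonomy is trivial.
\item Once the loop changes rows it contains transverse factors $M_{\boldsymbol t}(\cdot)$ or $M_{\boldsymbol u}(\cdot)$, and the product is no longer of the form $I+cN_{\boldsymbol s}$. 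Your formula $I+(\sum\pm(\boldsymbol s\wedge\boldsymbol k_i)^{-1})N_{\boldsymbol s}$ just drops these factors.
\item Conjugating by $\gamma$ presupposes a closed loop at the endpoint of $\gamma$ with $\boldsymbol s$-unipotent holonomy, which is circular.
\end{itemize}

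\textbf{Your proposed dichotomy is also false.} With only two lines the holonomy is not trivial. Take $\boldsymbol k=x\boldsymbol s+y\boldsymbol t$ with $xy(x+1)(y+1)\ne0$. The parallelogram loop through $\boldsymbol k+\boldsymbol s$, $\boldsymbol k+\boldsymbol s+\boldsymbol t$, $\boldsymbol k+\boldsymbol t$ has holonomy $[Q^{-1}P]$, with $P,Q$ as in the paper's proof. Since $\det(P\mathsf a,Q\mathsf a)=-(yu+xv)^2$ is not identically zero but has a double root, this is a nontrivial parabolic. Its only fixed line is $(x\boldsymbol s-y\boldsymbol t)^\perp$, which is neither $[\boldsymbol s^\perp]$ nor $[\boldsymbol t^\perp]$. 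So two and three lines differ not in whether loops act nontrivially, but in whether all holonomy elements share a fixed line. Your proposal never computes a single fixed line.

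\textbf{How to close the gap.} That fixed-line computation is the missing idea.
\begin{itemize}
\item The paper argues by contradiction. A common invariant line propagates to a path-independent line field. Comparing the two paths from $\boldsymbol k$ to $\boldsymbol k+\boldsymbol s+\boldsymbol t$ forces $\ell(\boldsymbol k)=\mathbb C(x\boldsymbol s-y\boldsymbol t)^\perp$. A step along a third direction $\boldsymbol r=p\boldsymbol s+q\boldsymbol t$ then fails to carry $\ell(\boldsymbol k)$ to $\ell(\boldsymbol k+\boldsymbol r)$ at $\boldsymbol k=M\boldsymbol s+M^2\boldsymbol t$ for $M$ large.
\item Alternatively, your parabolic viewpoint can be repaired with the right targets. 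At such a $\boldsymbol k$, the $(\boldsymbol s,\boldsymbol t)$-parallelogram holonomy fixes only $(x\boldsymbol s-y\boldsymbol t)^\perp$. The $(\boldsymbol s,\boldsymbol r)$-parallelogram holonomy, computed in coordinates $\boldsymbol k=x'\boldsymbol s+y'\boldsymbol r$, fixes only $(x'\boldsymbol s-y'\boldsymbol r)^\perp=((x-2py/q)\boldsymbol s-y\boldsymbol t)^\perp$. Both parallelograms are admissible for $M$ large, and the two lines differ because $py\ne0$, so no common invariant line exists.
\end{itemize}
Either way, the decisive input is an explicit fixed-line or path-independence computation on small loops, not a count of shear parameters.
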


\begin{proof}
Suppose that the holonomy group has an
invariant line at one vertex. Transporting this line along admissible
paths defines lines $\ell(\boldsymbol k)\subset\mathbb C^2$ at every
nonzero vertex, which is path independent since two such paths differ
projectively by a closed-loop transport. Hence
\begin{align}\label{eq:holonomy-line-field}
 M_{\boldsymbol s}(\boldsymbol k)\ell(\boldsymbol k)
 =\ell(\boldsymbol k+\boldsymbol s),
 \qquad \boldsymbol s\wedge\boldsymbol k\ne0.
\end{align}
We first show that two directions from $\mathcal S$ already force such a line field $\ell$ to a specific form.  For nonparallel $\boldsymbol s,\boldsymbol t\in\mathcal S$, write
$\boldsymbol k=x\boldsymbol s+y\boldsymbol t$ with $x,y\in\mathbb R$ and
$\mathsf a=u\boldsymbol s^\perp+v\boldsymbol t^\perp$  with
$u,v\in\mathbb C$, and set
$d=\boldsymbol s\wedge\boldsymbol t$. In these polarization coordinates, by \eqref{eq:barotropic-gcd-shift}, 
the matrices $M_{\boldsymbol s}(\boldsymbol k)$ and $M_{\boldsymbol t}(\boldsymbol k)$ are
\begin{align*}
 d\begin{pmatrix}y&-1\\0&y\end{pmatrix},
 \qquad
 d\begin{pmatrix}-x&0\\1&-x\end{pmatrix}.
\end{align*}
If $x,y,x+1,y+1$ are nonzero, after removing the common scalar $-d^2$, the two admissible paths from
$\boldsymbol k$ to $\boldsymbol k+\boldsymbol s+\boldsymbol t$ via $\boldsymbol k+\boldsymbol s$ or $\boldsymbol k+\boldsymbol t$ have transport matrices
\begin{align*}
 P(x,y)&=\begin{pmatrix}y(x+1)&-(x+1)\\-y&y(x+1)+1\end{pmatrix},&
 Q(x,y)&=\begin{pmatrix}x(y+1)+1&-x\\-(y+1)&x(y+1)\end{pmatrix}.
\end{align*}
By path independence, both transports must send $\ell(\boldsymbol k)$ to the same line, yielding
\begin{align*}
 0=\det\left(P(x,y)\binom uv,Q(x,y)\binom uv\right)
 =-(yu+xv)^2,
\end{align*}
which forces $(u, v)=c(x,-y)$ for some  $c \in \mathbb{C}$. Therefore
\begin{align}\label{eq:holonomy-forced-line}
 \ell(\boldsymbol k)=\mathbb C(x\boldsymbol s-y\boldsymbol t)^\perp,
 \qquad xy(x+1)(y+1)\ne0.
\end{align}

Suppose now that a third direction
$\boldsymbol r=p\boldsymbol s+q\boldsymbol t\in\mathcal S$ is present.
Then $pq\ne0$, and, after removing the scalar $d$, the transport matrix for $M_{\boldsymbol r}(\boldsymbol k)$ is
\begin{align*}
 T_{\boldsymbol r}(\boldsymbol k)=(py-qx)I+\binom pq\begin{pmatrix}q&-p\end{pmatrix}.
\end{align*}
Using \eqref{eq:holonomy-forced-line}, 
\begin{align}\label{eq:holonomy-third-direction}
 \det\left(T_{\boldsymbol r}(\boldsymbol k)\binom{x}{-y},
 \binom{x+p}{-y-q}\right)
 =-2pq(2xy+py+qx).
\end{align}
Taking $\boldsymbol k=M\boldsymbol s+M^2\boldsymbol t$ with $M$
sufficiently large ensures that \eqref{eq:holonomy-forced-line}
applies at both $\boldsymbol k$ and $\boldsymbol k+\boldsymbol r$, that
the $\boldsymbol r$-shift is admissible, and that the right-hand side
of \eqref{eq:holonomy-third-direction} is nonzero. This contradicts
\eqref{eq:holonomy-line-field}.

Conversely, suppose that $\mathcal S$ occupies only two lines, and let
$\boldsymbol s,\boldsymbol t$ be their primitive lattice generators.
Then
\begin{align}\label{eq:invariant-line}
 \ell(x\boldsymbol s+y\boldsymbol t)
 =\mathbb C(x\boldsymbol s-y\boldsymbol t)^\perp
\end{align}
defines a line field on $\mathbb Z^2\setminus\{0\}$. A step
$p\boldsymbol s$ sends $(x,-y)$ to a nonzero multiple of
$(x+p,-y)$, while a step $q\boldsymbol t$ sends it to a multiple of
$(x,-y-q)$. Hence every admissible edge preserves this line field, so
the holonomy is reducible. The lattice condition $\operatorname{span}_{\mathbb Z}\mathcal S=\mathbb Z^2$ excludes a single
direction.
\end{proof}
In particular, if a nonzero polarization is available at a positive-level
vertex and the transport holonomy is irreducible, some closed admissible
path transports it to a linearly independent polarization; hence the full
polarization fibre is generated at that vertex.

We now use the preceding transport to fill the polarization fibres at
the selected vertical levels $n_r$ from \eqref{eq:select-kna}. Recall the complexified full extension $\mathcal E_{\infty}$ from \eqref{eq:complexified-full-extension}. 

\begin{lemma}\label{lem:selected-level-full-fibres}
For every vertical level $n_r$ selected in \eqref{eq:select-kna}, every polarization $\mathsf a\in\mathbb C^2$ and $\boldsymbol k\ne0$, we have $\phi_{\boldsymbol k,n_r,\mathsf a}\in\mathcal E_\infty$, i.e., the full polarization fibre is
generated at every nonzero horizontal frequency. 

Moreover, for every $\boldsymbol k\ne0$ and $\mathsf a\in\mathfrak P_{(\boldsymbol k,0)}$, we have $\phi_{\boldsymbol k,0,\mathsf a}\in\mathcal E_\infty$, i.e., every barotropic hydrostatic Fourier mode is generated. 
\end{lemma}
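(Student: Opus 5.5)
The plan is to treat the barotropic statement first, since it is needed to run the holonomy argument on its own terms. For the barotropic modes I use the fixed generators $\phi_{\boldsymbol s,0,\boldsymbol s^\perp}\in E_{j_*}^{\mathbb C}$, $\boldsymbol s\in\mathcal S$. I compute $\mathsf B(\phi_{\boldsymbol k,0,\boldsymbol k^\perp},\phi_{\boldsymbol s,0,\boldsymbol s^\perp})$ directly. This is the two-dimensional Euler interaction, which after the Helmholtz projection $\mathsf P$ is a multiple of $\phi_{\boldsymbol k+\boldsymbol s,0,(\boldsymbol k+\boldsymbol s)^\perp}$. The coefficient is proportional to
\begin{align*}
(\boldsymbol s\wedge\boldsymbol k)\,\bigl(|\boldsymbol k|^2-|\boldsymbol s|^2\bigr)/|\boldsymbol k+\boldsymbol s|^2 .
\end{align*}
The obstruction therefore comes not only from parallel steps but also from equal-norm steps. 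I modify the paths of \Cref{lem:barotropic-paths} to avoid edges with $|\boldsymbol k|=|\boldsymbol s|$. This is possible because $\mathcal S$ is finite: only boundedly many vertices $\boldsymbol k$ have $|\boldsymbol k|\in\{|\boldsymbol s|:\boldsymbol s\in\mathcal S\}$, and the starting vertex can be taken to be some $\boldsymbol s_0\in\mathcal S$. Each such bad edge is replaced by a detour of bounded length through a nonparallel $\boldsymbol t\in\mathcal S$, exactly as in the proof of that lemma. Moving far out first and then coming back removes the finitely many remaining bad vertices, and the path-length bounds stay linear. Since the barotropic fibre is one-dimensional, this already gives every $\phi_{\boldsymbol k,0,\mathsf a}$ with $\mathsf a\in\mathfrak P_{(\boldsymbol k,0)}$. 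That proves the second assertion.

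For the baroclinic levels, I fix $r$ and start from $\phi_{\boldsymbol k_r,n_r,\mathsf a_r}\in E_{j_*}^{\mathbb C}$, with $\boldsymbol k_r\ne0$. If $\boldsymbol k_r=0$, I first apply one barotropic step \eqref{eq:barotropic-gcd-shift}. In that case $\boldsymbol s\wedge 0=0$ and $M_{\boldsymbol s}(0)=\boldsymbol s^\perp\otimes\boldsymbol s$, so I choose $\boldsymbol s\in\mathcal S$ with $\boldsymbol s\cdot\mathsf a_r\ne0$; such an $\boldsymbol s$ exists since $\mathcal S$ spans. This produces a nonzero polarization at $\boldsymbol s$. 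Iterating \eqref{eq:barotropic-gcd-shift} along admissible paths then shows the set $V(\boldsymbol k)=\{\mathsf a:\phi_{\boldsymbol k,n_r,\mathsf a}\in\mathcal E_\infty\}$ has two properties. It is a nonzero linear subspace at every vertex $\boldsymbol k\ne0$, by connectivity of $\mathscr G$ and invertibility of the $M_{\boldsymbol s}$. It is also invariant under every closed-path holonomy. Now I split into cases. If $\mathcal S$ occupies at least three lines, \Cref{lem:lattice-holonomy} gives irreducibility. Then $V(\boldsymbol k)$ cannot be a line, so $V(\boldsymbol k)=\mathbb C^2$.

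The main obstacle is the two-line case, where $\mathcal S\subset\mathbb Z\boldsymbol s\cup\mathbb Z\boldsymbol t$ and $V(\boldsymbol k)$ might be the invariant line $\ell(\boldsymbol k)$ of \eqref{eq:invariant-line}. Here I would use one extra quadratic interaction not of barotropic-transport type. The first option is to combine two level-$n_r$ modes $\phi_{\boldsymbol k,n_r,\mathsf a}$ and $\phi_{\boldsymbol k',n_r,\mathsf a'}$ through $\mathsf B$. This produces levels $0$ and $2n_r$, and the level-$0$ component gives barotropic modes at new frequencies $\boldsymbol k+\boldsymbol k'$ off the two lines. I must check that this component is nonzero by an explicit computation with $\mathsf a\in\ell(\boldsymbol k)$, $\mathsf a'\in\ell(\boldsymbol k')$. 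The frequencies are chosen as generic points $\boldsymbol k=M\boldsymbol s+M^2\boldsymbol t$ so that the resulting polynomial in $M$ does not vanish. The new barotropic frequency adds a third line to $\mathcal S$ after enlarging $j_*$. \Cref{lem:lattice-holonomy} then applies, and the previous paragraph finishes the argument. If that coefficient vanished identically, the fallback is a second option: use the level-$2n_r$ output together with level $n_r$ to return to level $n_r$ with a polarization off $\ell$.

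Quantitative bounds are not needed for this lemma, which asserts only membership in $\mathcal E_\infty$. Still, I would record that all detours are of bounded length, for later use in \eqref{eq:geometric-quantitative-bounds}.
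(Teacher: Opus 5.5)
Your three-line holonomy step and the preliminary shift when $\boldsymbol k_r=0$ agree with the paper. However, the proposal has two genuine gaps.

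\textbf{First gap: the barotropic-first argument.} By \eqref{eq:barotropic-third-direction}, a barotropic step $\boldsymbol k\to\boldsymbol k+\boldsymbol s$ vanishes when $\boldsymbol s\wedge\boldsymbol k=0$ or $|\boldsymbol k|=|\boldsymbol s|$. Suppose all vectors of $\mathcal S$ have a common length. Then every step out of every starting vertex $\boldsymbol s_0\in\mathcal S$ is of one of these two kinds. Hence $\operatorname{span}\{\phi_{\boldsymbol s,0,\boldsymbol s^\perp}:\boldsymbol s\in\mathcal S\}$ is closed under barotropic interactions, and you can never move out. Finiteness of the bad set does not help when the start and all its neighbours are bad.

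This case is not marginal. It is $\mathcal S=\{\pm\boldsymbol e_1,\pm\boldsymbol e_2\}$, the configuration of the rank-five example in \Cref{rem:low-rank-example}. It also occurs with more than two lines: the eight lattice vectors of length $\sqrt5$ span $\mathbb Z^2$. So the second assertion cannot be proved by pure barotropic transport, and the order of your argument must be reversed.

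The missing idea is the paper's final step. Once some positive level $n$ has a full fibre at every nonzero frequency, write $\boldsymbol q=\boldsymbol k+\boldsymbol l$ with $\boldsymbol k,\boldsymbol l\ne0$. Use the \emph{same} polarization $\mathsf a=\boldsymbol q+\boldsymbol q^\perp$ in both factors. The level-$2n$ channel of \eqref{eq:equal-level-noise-generation} then cancels, and $\mathsf B(\phi_{\boldsymbol k,n,\mathsf a},\phi_{\boldsymbol l,n,\mathsf a})=\mathrm i|\boldsymbol q|^2\phi_{\boldsymbol q,0,\boldsymbol q^\perp}$. This needs full fibres, so it can only come after the holonomy step.

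\textbf{Second gap: the two-line case.} If the two lines carry vectors of different lengths, \eqref{eq:barotropic-third-direction} already gives a third direction. The real obstacle is $\mathcal S=\{\pm\boldsymbol e_1,\pm\boldsymbol e_2\}$, with invariant line $\ell(\boldsymbol k)=\mathbb C R\boldsymbol k$ and $R(k_1,k_2)=(k_2,k_1)$.

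There, $\mathsf B(\phi_{\boldsymbol k,n_r,\mathsf a},\phi_{\boldsymbol k',n_r,\mathsf a'})$ is a sum of a level-$2n_r$ mode and a barotropic mode at the same new frequency $\boldsymbol q=\boldsymbol k+\boldsymbol k'$. Neither summand is yet known to lie in $\mathcal E_\infty$. So a nonzero barotropic coefficient proves nothing. Nor can the mixed element serve as a transport step: its level-$2n_r$ part feeds back into level $n_r$ through the lower channel of \eqref{eq:two-channel-interaction}.

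Isolating the barotropic part is not a formality. Take $\mathsf a=R\boldsymbol k$, $\mathsf a'=R\boldsymbol k'$ and $q_1=\pm q_2$. A direct computation from \eqref{eq:equal-level-noise-generation} shows that the coefficient of $\phi_{\boldsymbol q,0,\boldsymbol q^\perp}$ in every such output equals $-|\boldsymbol q|^{-2}\,\boldsymbol q\wedge\mathsf d$, where $\mathsf d$ is its level-$2n_r$ polarization. So no linear combination isolates the barotropic mode there. At generic targets such as $\boldsymbol q=(2,1)$, three decompositions do span $\mathbb C^2\oplus\mathbb C$, but your proposal neither states nor checks this.

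The paper sidesteps the issue by aiming at $\boldsymbol e_1\in\mathcal S$. There the barotropic parts are multiples of the known mode $\phi_{\boldsymbol e_1,0,\boldsymbol e_2}$ and can be subtracted. This leaves pure level-$2n_r$ modes with polarizations $(2,\pm1)$, i.e.\ a full fibre at level $2n_r$. The step from the first gap then yields all barotropic modes, hence a third direction, and holonomy finishes. Your fallback through level $2n_r$ is left unspecified and faces the same mixing problem.
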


\begin{proof}
We first place a nonzero polarization at a nonzero horizontal
frequency. If the witness
$\phi_{\boldsymbol k_r,n_r,\mathsf a_r}$ has
$\boldsymbol k_r=0$, choose $\boldsymbol s\in\mathcal S$ with
$\boldsymbol s\cdot\mathsf a_r\ne0$; such a choice exists since
$\mathcal S$ spans $\mathbb Z^2$. Then \eqref{eq:barotropic-gcd-shift} gives 
\begin{align*}
 \mathsf B(\phi_{0,n_r,\mathsf a_r},
 \phi_{\boldsymbol s,0,\boldsymbol s^\perp})
 =\mathrm{i}(\boldsymbol s\cdot\mathsf a_r)
 \phi_{\boldsymbol s,n_r,\boldsymbol s^\perp}\ne0.
\end{align*}
Thus every selected level $n_r$ contains a nonzero polarization at a nonzero
horizontal vertex, and \Cref{lem:barotropic-paths} transports such a
polarization to every nonzero horizontal frequency.

If $\mathcal S$ occupies at least three unoriented lines, then
\Cref{lem:lattice-holonomy} gives irreducible holonomy. Hence, at each
selected level, some closed admissible path transports any given
nonzero polarization to a linearly independent one. The fibre is
therefore full at one vertex, and invertible admissible transport makes
it full at every nonzero horizontal frequency.

Suppose next that $\mathcal S$ occupies only two unoriented lines. For
nonparallel $\boldsymbol s,\boldsymbol t\in\mathcal S$, the quadratic interaction \eqref{eq:symmetric-quadratic-interaction} yields
\begin{align}\label{eq:barotropic-third-direction}
 \mathsf B(\phi_{\boldsymbol s,0,\boldsymbol s^\perp},
 \phi_{\boldsymbol t,0,\boldsymbol t^\perp})
 =\mathrm{i}\frac{(\boldsymbol s\wedge\boldsymbol t)
 (|\boldsymbol t|^2-|\boldsymbol s|^2)}
 {|\boldsymbol s+\boldsymbol t|^2}
 \phi_{\boldsymbol s+\boldsymbol t,0,
 (\boldsymbol s+\boldsymbol t)^\perp}.
\end{align}
Hence, if $|\boldsymbol s|\ne|\boldsymbol t|$, the interaction
\eqref{eq:barotropic-third-direction} generates a third barotropic
direction. Enlarging $\mathcal S$ symmetrically by this direction,
\Cref{lem:lattice-holonomy} reduces the argument to the irreducible
holonomy case above.

It remains to consider the case in which every pair belonging to the
two different lines has equal length. Then $\mathcal S=\{\pm\boldsymbol e_1,\pm\boldsymbol e_2\}$  since it generates
$\mathbb Z^2$.  We now escape the invariant line of this exceptional geometry, based on the observation that fibres of level $n_r$ could be trapped in the invariant line, but quadratic interaction can produce full fibre at level $2n_r$. For $n\geq 1$, a direct computation through \eqref{eq:fourier-modes} and \eqref{eq:symmetric-quadratic-interaction} gives for $\boldsymbol q=\boldsymbol k+\boldsymbol l\ne0$, 
\begin{align}\label{eq:equal-level-noise-generation}
\begin{split}
 \mathsf B\bigl(
 \phi_{\boldsymbol k,n,\mathsf a},
 \phi_{\boldsymbol l,n,\mathsf b}
 \bigr)
 =
 \phi_{\boldsymbol q,2n,\,
 \frac{\mathrm{i}}2\left\{
 (\mathsf a\cdot(\boldsymbol l-\boldsymbol k))\mathsf b
 -(\mathsf b\cdot(\boldsymbol l-\boldsymbol k))\mathsf a
 \right\}}+
 \phi_{\boldsymbol q,0,\,
 \frac{\mathrm{i}}{2|\boldsymbol q|^2}
 \left\{
 (\mathsf a\cdot\boldsymbol q)(\boldsymbol q\wedge\mathsf b)
 +(\mathsf b\cdot\boldsymbol q)(\boldsymbol q\wedge\mathsf a)
 \right\}\boldsymbol q^\perp}.
\end{split}
\end{align}
If $\boldsymbol k+\boldsymbol l=0$, the barotropic component vanishes
identically.
Fix a selected level $n_r$ and transport a nonzero polarization
$\mathsf a=(a_1,a_2)$ to $(1,1)$ by an admissible path in $\mathscr G$ in \Cref{lem:barotropic-paths}. Besides, the two admissible paths from
$(1,1)$ to $(2,2)$ via $(2,1)$ and $(1,2)$ give, up to a nonzero factor,
\begin{align*}
 \begin{pmatrix}2&1\\0&2\end{pmatrix}
 \begin{pmatrix}1&0\\1&1\end{pmatrix}\mathsf a
 =\begin{pmatrix}3&1\\2&2\end{pmatrix}\mathsf a, \quad \text{ and } \begin{pmatrix}2&0\\1&2\end{pmatrix}
 \begin{pmatrix}1&1\\0&1\end{pmatrix}\mathsf a
 =\begin{pmatrix}2&2\\1&3\end{pmatrix}\mathsf a,
\end{align*}
whose determinant is $-(a_1-a_2)^2$. If $a_1\ne a_2$, level $n_r$
already has a full fibre at $(2,2)$, and hence at every nonzero
horizontal frequency.

Assume therefore that $a_1=a_2$, and rescale
$\mathsf a=(1,1)$. The invariant
line field from \Cref{lem:lattice-holonomy} is $\ell(k_1,k_2)=\mathbb C(k_2,k_1)$. 
Put $R(k_1,k_2)=(k_2,k_1)$, so that
$\ell(\boldsymbol k)=\mathbb C R\boldsymbol k$. From
\eqref{eq:barotropic-gcd-shift}, one has 
\begin{align*}
 \mathsf B(\phi_{\boldsymbol k,n_r,R\boldsymbol k},
 \phi_{\pm\boldsymbol e_1,0,\boldsymbol e_2})
 =\mathrm{i}k_2\phi_{\boldsymbol k\pm\boldsymbol e_1,n_r,
 R(\boldsymbol k\pm\boldsymbol e_1)},\, \text{ and }
 \mathsf B(\phi_{\boldsymbol k,n_r,R\boldsymbol k},
 \phi_{\pm\boldsymbol e_2,0,\boldsymbol e_1})
 =\mathrm{i}k_1\phi_{\boldsymbol k\pm\boldsymbol e_2,n_r,
 R(\boldsymbol k\pm\boldsymbol e_2)}
\end{align*}
showing that the invariant
polarization $R\boldsymbol k$ is available at every
$\boldsymbol k\ne0$. Apply
\eqref{eq:equal-level-noise-generation} with
\begin{align*}
 (\boldsymbol k,\boldsymbol l)
 =((0,1),(1,-1)),\qquad
 (\boldsymbol k,\boldsymbol l)
 =((1,1),(0,-1)),
 \qquad
 \mathsf a=R\boldsymbol k,\quad \mathsf b=R\boldsymbol l.
\end{align*}
The corresponding level-$2n_r$ outputs at horizontal frequency
$\boldsymbol e_1$ have polarizations $(2,1)$ and $(2,-1)$, while their barotropic components
are multiples of the already generated mode
$\phi_{\boldsymbol e_1,0,\boldsymbol e_2}$. Subtracting these
barotropic components therefore yields $\phi_{\boldsymbol e_1,2n_r,(2,\pm 1)}
\in\mathcal E_\infty.$
Since these polarizations are linearly independent, level $2n_r$ has
a full fibre at $\boldsymbol e_1$. By \Cref{lem:barotropic-paths} and
the invertibility of admissible transport, it therefore has a full
fibre at every nonzero horizontal frequency.

We have therefore obtained in every case a positive level $n$ with a
full polarization fibre at every nonzero horizontal frequency. Such
a level generates every barotropic mode. Indeed, for
$\boldsymbol q\ne0$, choose nonzero $\boldsymbol k, \boldsymbol l$ with 
$\boldsymbol k+\boldsymbol l=\boldsymbol q$ and set
$\mathsf a=\boldsymbol q+\boldsymbol q^\perp$. The level-$2n$
component in \eqref{eq:equal-level-noise-generation} vanishes, whereas
\begin{align*}
 \mathsf B(\phi_{\boldsymbol k,n,\mathsf a},
 \phi_{\boldsymbol l,n,\mathsf a})
 =\mathrm{i}|\boldsymbol q|^2
 \phi_{\boldsymbol q,0,\boldsymbol q^\perp}.
\end{align*}
Hence every barotropic hydrostatic Fourier mode is generated. In the
exceptional two-direction case this produces, in particular, a third
barotropic direction. Enlarging $\mathcal S$ and applying \Cref{lem:lattice-holonomy}  makes the holonomy irreducible.
Since every selected level already contains a nonzero polarization at
a nonzero horizontal vertex, the preceding holonomy argument then
gives a full fibre at every selected level and every nonzero horizontal
frequency. 
\end{proof}

It remains to reduce the finitely many selected levels to fixed seed
levels one and two by vertical generation. Let $q=(\boldsymbol k,n)$ and
$r=(\boldsymbol l,m)$ with $n>m\ge1$ and
$\boldsymbol K=\boldsymbol k+\boldsymbol l$. Direct calculation gives a sum over upper channel $(\boldsymbol K,n+m)$ and lower channel $(\boldsymbol K,n-m)$, 
\begin{align}\label{eq:two-channel-interaction}
 \mathsf B(\phi_{\boldsymbol k,n,\mathsf a},
 \phi_{\boldsymbol l,m,\mathsf b})
 =\frac{\mathrm{i}}{2}\phi_{\boldsymbol K,n+m,
 \Gamma^+_{q,r}(\mathsf a,\mathsf b)}
 +\frac{\mathrm{i}}{2}\phi_{\boldsymbol K,n-m,
 \Gamma^-_{q,r}(\mathsf a,\mathsf b)},
\end{align}
where 
\begin{align}\label{eq:two-channel-symbols}
 \Gamma^+_{q,r}(\mathsf a,\mathsf b)
 &=\frac{\mathsf a\cdot(n\boldsymbol l-m\boldsymbol k)}{n}\mathsf b
 -\frac{\mathsf b\cdot(n\boldsymbol l-m\boldsymbol k)}{m}\mathsf a,
 \notag\\
 \Gamma^-_{q,r}(\mathsf a,\mathsf b)
 &=\frac{\mathsf a\cdot(n\boldsymbol l+m\boldsymbol k)}{n}\mathsf b
 +\frac{\mathsf b\cdot(n\boldsymbol l+m\boldsymbol k)}{m}\mathsf a.
\end{align}
In addition, the map 
\begin{align*}
 \Sigma_{q,r}:\mathbb C^2\otimes\mathbb C^2
 \longrightarrow\mathbb C^2\oplus\mathbb C^2,\quad \text{by }
 \mathsf a\otimes\mathsf b
 \longmapsto
 \bigl(\Gamma^+_{q,r}(\mathsf a,\mathsf b),
 \Gamma^-_{q,r}(\mathsf a,\mathsf b)\bigr)
\end{align*}
satisfies 
\begin{align}\label{eq:two-channel-determinant}
 \det\Sigma_{q,r}
 =\frac{4(m^2-n^2)(m^2+n^2)}{m^2n^2}
 (\boldsymbol k\wedge\boldsymbol l)^2.
\end{align}
Hence, whenever $\boldsymbol k\wedge\boldsymbol l\ne0$,
$\Sigma_{q,r}$ is invertible. If the full polarization fibres at
$q$ and $r$ have already been generated, then for arbitrary
$\mathsf c_+,\mathsf c_-\in\mathbb C^2$ a finite linear combination of
the interactions in \eqref{eq:two-channel-interaction} can be chosen
with output polarizations $\mathsf c_+$ and $\mathsf c_-$ in the two
channels, respectively. In particular, either channel can be prescribed
arbitrarily while the other is made to vanish.

We shall also use the following uniform version. For any fixed
linearly independent $\boldsymbol h,\boldsymbol t\in\mathbb Z^2$,
direct inversion of \eqref{eq:two-channel-symbols} gives
\begin{align}\label{eq:uniform-channel-inverse}
 \sup_{n\ge2}
 \left\|
 \Sigma_{(\boldsymbol h,n),(\boldsymbol t,1)}^{-1}
 \right\|
 \le C_{\boldsymbol h,\boldsymbol t}.
\end{align}
Indeed, after the fixed change of coordinates
$P=(\boldsymbol h\ \boldsymbol t)$, this reduces to
$(\boldsymbol h,\boldsymbol t)=(\boldsymbol e_1,\boldsymbol e_2)$,
where the inverse coefficients are rational functions of $n$ with
denominator $n^4-1$ and numerators of degree at most four; hence they
are uniformly bounded for $n\ge2$.

The next lemma provides the initial seeds by vertical generation to start the quantitative generation later. 
\begin{lemma}
\label{lem:finite-initialization}
For every $\boldsymbol k\ne0$, we have 
\begin{align}\label{eq:level-1-2}
 \phi_{\boldsymbol k,n,\mathsf a}\in\mathcal E_\infty
 \qquad
 \text{for }n\in\{1,2\},\ \mathsf a\in\mathbb C^2.
\end{align}
Moreover, for any fixed  nonparallel 
$\boldsymbol h,\boldsymbol t\in\mathcal S$, there exists
$j_0<\infty$, depending only on the forcing family, such that $\phi_{\boldsymbol s,0,\boldsymbol s^\perp}
\in E_{j_0}^{\mathbb C}$ for $\boldsymbol s\in\mathcal S$ and for $i=1,2$, 
\begin{align*}
 &\phi_{\boldsymbol h,1,\boldsymbol e_i},\
 \phi_{\boldsymbol h,2,\boldsymbol e_i},\
 \phi_{\boldsymbol t,1,\boldsymbol e_i}
 \in E_{j_0}^{\mathbb C}. 
\end{align*}
\end{lemma}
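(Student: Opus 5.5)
The plan is to run a Euclidean-algorithm descent on the vertical levels, using the two-channel interaction \eqref{eq:two-channel-interaction} with the invertibility \eqref{eq:two-channel-determinant}.

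By \Cref{lem:selected-level-full-fibres}, every selected level $n_r$ has a full polarization fibre at every nonzero horizontal frequency. We also know every barotropic mode lies in $\mathcal E_\infty$.

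\emph{Step 1: the set of full levels is closed under differences.} Let $\mathcal L$ be the set of positive levels $n$ such that $\phi_{\boldsymbol k,n,\mathsf a}\in\mathcal E_\infty$ for all $\boldsymbol k\ne0$ and all $\mathsf a\in\mathbb C^2$. We claim that if $n>m$ both lie in $\mathcal L$, then $n-m\in\mathcal L$ and $n+m\in\mathcal L$. Given a target $\boldsymbol K\ne0$, choose $\boldsymbol k,\boldsymbol l\ne0$ with $\boldsymbol k+\boldsymbol l=\boldsymbol K$ and $\boldsymbol k\wedge\boldsymbol l\ne0$; for instance $\boldsymbol k=\boldsymbol K+\boldsymbol v$ and $\boldsymbol l=-\boldsymbol v$ with $\boldsymbol v$ not parallel to $\boldsymbol K$. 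Then $\Sigma_{q,r}$ is invertible. Since both fibres are full, finite linear combinations of $\mathsf B(\phi_{\boldsymbol k,n,\mathsf a},\phi_{\boldsymbol l,m,\mathsf b})$ realize any prescribed pair of output polarizations. Prescribing the upper channel to vanish gives an arbitrary polarization at $(\boldsymbol K,n-m)$, and prescribing the lower channel to vanish gives one at $(\boldsymbol K,n+m)$. This argument needs $n\ne m$.

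\emph{Step 2: descend to level one.} Subtractive Euclid on the selected levels $n_1,\dots,n_{\ell_*}$, with $\gcd=1$, only uses differences of distinct elements already in $\mathcal L$. It therefore produces $1\in\mathcal L$. The one caveat is when all selected levels coincide, which forces $n_1=1$; this case is already fine.

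\emph{Step 3: reach level two.} This is the expected subtlety, since $1+1$ is not covered by Step 1 ($n\ne m$). I would use \eqref{eq:equal-level-noise-generation} with $n=1$. For $\boldsymbol q=\boldsymbol k+\boldsymbol l$, the level-$2$ polarization is $\frac{\mathrm i}{2}\{(\mathsf a\cdot(\boldsymbol l-\boldsymbol k))\mathsf b-(\mathsf b\cdot(\boldsymbol l-\boldsymbol k))\mathsf a\}$. Since every barotropic mode is already generated, the barotropic part can be subtracted. Take $\boldsymbol d=\boldsymbol l-\boldsymbol k\ne0$. Choosing $\mathsf a\perp\boldsymbol d$ with $\mathsf a\ne0$ and letting $\mathsf b$ range over the vectors with $\mathsf b\cdot\boldsymbol d\ne0$, the output $-(\mathsf b\cdot\boldsymbol d)\mathsf a$ gives the direction $\mathsf a$. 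Choosing instead $\mathsf a$ with $\mathsf a\cdot\boldsymbol d=1$ and $\mathsf b\perp\boldsymbol d$ gives the direction $\mathsf b=\boldsymbol d^\perp$. These two directions span $\mathbb C^2$ unless $\mathsf a\parallel\boldsymbol d^\perp$. To avoid that degeneracy, I would instead take $\mathsf a=\boldsymbol e_1$ and $\mathsf b=\boldsymbol e_2$ with a $\boldsymbol d$ having both components nonzero. This yields $d_1\boldsymbol e_2-d_2\boldsymbol e_1$, and pairing it with the first family spans the fibre. Any $\boldsymbol q\ne0$ can be written as $\boldsymbol k+\boldsymbol l$ with a suitable $\boldsymbol d$, so level two becomes full at every nonzero frequency. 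Alternatively, $2=3-1$ follows from Step 1 once $3=1+2$ is known, but that is circular, so the direct route above is needed.

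\emph{Step 4: finite depth.} Each step above uses finitely many quadratic interactions from finitely many previously generated modes. The specific modes $\phi_{\boldsymbol h,1,\boldsymbol e_i}$, $\phi_{\boldsymbol h,2,\boldsymbol e_i}$, $\phi_{\boldsymbol t,1,\boldsymbol e_i}$ and $\phi_{\boldsymbol s,0,\boldsymbol s^\perp}$ for $\boldsymbol s\in\mathcal S$ therefore form a finite list, each entry lying in some $E_j^{\mathbb C}$. The transport paths in \Cref{lem:barotropic-paths} between fixed vertices are finite, and the finitely many holonomy loops and Euclid steps are fixed by the forcing family. Setting $j_0$ equal to the maximum of the indices over this finite list, and noting $E_j\subset E_{j+1}$, gives the claim.
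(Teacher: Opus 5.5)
Your Steps 1, 2 and 4 are sound and follow the paper. In Step 1 you work directly at every target $\boldsymbol K$ with nonparallel summands, while the paper works at $\boldsymbol h+\boldsymbol t$ and then transports; this is a harmless variation. The gap is in Step 3, the step you flagged as the subtlety. For a fixed pair $(\boldsymbol k,\boldsymbol l)$ with $\boldsymbol d=\boldsymbol l-\boldsymbol k$, the level-two polarization in \eqref{eq:equal-level-noise-generation} is antisymmetric and bilinear in $(\mathsf a,\mathsf b)\in\mathbb C^2\times\mathbb C^2$. In two dimensions this forces the identity
\[
 (\mathsf a\cdot\boldsymbol d)\mathsf b-(\mathsf b\cdot\boldsymbol d)\mathsf a=(\mathsf a\wedge\mathsf b)\,\boldsymbol d^\perp .
\]
So for one decomposition every level-two output lies on the single line $\mathbb C\boldsymbol d^\perp$, whatever polarizations you feed in. All three of your families show this:
\begin{itemize}
\item the condition $\mathsf a\cdot\boldsymbol d=0$ already forces $\mathsf a\in\mathbb C\boldsymbol d^\perp$;
\item the second family returns $\mathsf b\in\mathbb C\boldsymbol d^\perp$;
\item the choice $(\mathsf a,\mathsf b)=(\boldsymbol e_1,\boldsymbol e_2)$ returns $d_1\boldsymbol e_2-d_2\boldsymbol e_1=\boldsymbol d^\perp$.
\end{itemize}
The degeneracy you try to avoid is therefore unavoidable, and the assertion that pairing the last output with the first family spans the fibre is false.

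The missing idea is to vary the decomposition of the target frequency, not the polarizations. Write $\boldsymbol q=\boldsymbol k+\boldsymbol l=\boldsymbol k'+\boldsymbol l'$ with all four summands nonzero and with $\boldsymbol l-\boldsymbol k$ and $\boldsymbol l'-\boldsymbol k'$ nonparallel. After subtracting the already generated barotropic parts, the two level-two outputs are multiples of $(\boldsymbol l-\boldsymbol k)^\perp$ and $(\boldsymbol l'-\boldsymbol k')^\perp$, which span $\mathbb C^2$. This is what the paper does at $\boldsymbol q=\boldsymbol e_1$ with the splittings $(0,1)+(1,-1)$ and $(1,1)+(0,-1)$, giving polarizations $(2,1)$ and $(2,-1)$, followed by horizontal transport. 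You could also do it directly at each $\boldsymbol q\ne0$, since $\boldsymbol d$ may be any element of $\boldsymbol q+2\mathbb Z^2$ other than $\pm\boldsymbol q$. With this correction the rest of your argument goes through.
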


\begin{proof}
Let $\mathbb V$ be the set of positive vertical levels whose
polarization fibres are full at every nonzero horizontal frequency.
By \Cref{lem:selected-level-full-fibres}, $n_1,\ldots,n_{\ell_*}\in\mathbb V$ 
and every barotropic hydrostatic Fourier mode is generated.

Pick any nonparallel
$\boldsymbol h,\boldsymbol t\in\mathcal S$. If
$n,m\in\mathbb V$ with $n\ne m$, then the fibres at
$(\boldsymbol h,n)$ and $(\boldsymbol t,m)$ are both equal to
$\mathbb C^2$. Since
$\boldsymbol h\wedge\boldsymbol t\ne0$,
\eqref{eq:two-channel-determinant} shows that
$\Sigma_{(\boldsymbol h,n),(\boldsymbol t,m)}$ is invertible. Hence,
for every $\mathsf c\in\mathbb C^2$, finite linear combinations of
the interactions in \eqref{eq:two-channel-interaction} may be chosen 
with output polarizations $(\mathsf c,0)$ or $(0,\mathsf c)$
in the $(n+m)$ and $|n-m|$ channels respectively. Therefore
\begin{align*}
 \phi_{\boldsymbol h+\boldsymbol t,n+m,\mathsf c},
 \qquad
 \phi_{\boldsymbol h+\boldsymbol t,|n-m|,\mathsf c}
 \in\mathcal E_\infty
 \qquad
 \text{for every }\mathsf c\in\mathbb C^2.
\end{align*}
By \Cref{lem:barotropic-paths} and invertible admissible transport, $n+m\in\mathbb V,\, |n-m|\in\mathbb V$. 
Since $\mathbb V$ is closed under positive differences, repeated
application of the Euclidean algorithm gives
\begin{align*}
 \gcd(n_1,\ldots,n_{\ell_*})\in\mathbb V.
\end{align*}
By \eqref{eq:select-kna}, one has $1\in\mathbb V$, and therefore $\phi_{\boldsymbol k,1,\mathsf a}\in\mathcal E_\infty$ for any $\boldsymbol k\ne 0$ and $\mathsf a\in\mathbb C^2$. To obtain level two, use \eqref{eq:equal-level-noise-generation} at
$n=1$ with $(\boldsymbol k,\boldsymbol l,\mathsf a,\mathsf b)
=((0,1),(1,-1),(1,0),(-1,1))$ and 
\begin{align*}
 (\boldsymbol k,\boldsymbol l,\mathsf a,\mathsf b)
 =((1,1),(0,-1),(1,1),(-1,0)).
\end{align*}
The level-two components have horizontal frequency
$\boldsymbol e_1$ and polarizations $(2,1)$ and $(2,-1)$ 
respectively. Their barotropic components are already generated and
may be subtracted. Thus level two has a full fibre at
$\boldsymbol e_1$, and admissible horizontal transport gives $2\in\mathbb V$, proving \eqref{eq:level-1-2}. 
In particular, for $i=1,2$, $\phi_{\boldsymbol h,1,\boldsymbol e_i},\,
\phi_{\boldsymbol h,2,\boldsymbol e_i},\,
\phi_{\boldsymbol t,1,\boldsymbol e_i}
\in\mathcal E_\infty$. 
Thus these six modes belong to a common
$E_{j_1}^{\mathbb C}$ for some $j_1<\infty$. On the other hand, by
the definition of $\mathcal S$ in \eqref{eq:mathcal-S}   and the one-dimensionality of each
barotropic fibre, $ \phi_{\boldsymbol s,0,\boldsymbol s^\perp}
\in E_{j_*}^{\mathbb C}$ for $\boldsymbol s\in\mathcal S$. 
Taking $j_0=\max\{j_1,j_*\}$ proves the remaining assertion.
\end{proof}

The preceding argument contains all forcing-dependent finite geometry.
From now on, only two operations accumulate with the target frequency:
successive vertical generation and admissible horizontal transport.
The former is controlled by \eqref{eq:uniform-channel-inverse}. For the
latter, an admissible shift satisfies
\begin{align}\label{eq:uniform-shift-inverse}
 M_{\boldsymbol s}(\boldsymbol k)^{-1}
 =
 \frac{I}{\boldsymbol s\wedge\boldsymbol k}
 -\frac{\boldsymbol s^\perp\otimes\boldsymbol s}
 {(\boldsymbol s\wedge\boldsymbol k)^2},
 \qquad
 \|M_{\boldsymbol s}(\boldsymbol k)^{-1}\|
 \le1+|\boldsymbol s|^2,
\end{align}
because $\boldsymbol s\wedge\boldsymbol k$ is a nonzero integer. Since
$\mathcal S$ is fixed and finite, the inverse cost is uniform over all
admissible horizontal shifts.

In what follows  a Lie monomial is any bracket expression (not necessarily right-nested as in \eqref{eq:iteratedLiebracket}) obtained recursively from
$F,b_1,\ldots,b_m$, with arbitrary bracketing, and its parabolic degree is defined as in \eqref{eq:brownian-parabolic-degree} by
\begin{align}\label{eq:Lie-poly-degree}
 \operatorname{deg}(F)=2,\qquad
 \operatorname{deg}(b_j)=1,\qquad
 \operatorname{deg}([\mathcal L_1,\mathcal L_2])
 =\operatorname{deg}(\mathcal L_1)+\operatorname{deg}(\mathcal L_2).
\end{align}

\begin{proposition}
\label{prop:fourier-generation}
Assume the quantitative quadratic
saturation in the sense of \Cref{def:fourier-certificate}. Then there
is $C<\infty$, depending only on the fixed forcing family, such that
for every $(\boldsymbol k,n)\in\mathbf F$ and every
$\mathsf a\in\mathfrak P_{(\boldsymbol k,n)}$ with $|\mathsf a|=1$,
there exist finitely many coefficients $c_\beta\in\mathbb C$ and Lie
monomials $\mathcal L_\beta$ in $F,b_1,\ldots,b_m$ such that $\phi_{\boldsymbol k,n,\mathsf a}
=\sum_\beta c_\beta\mathcal L_\beta$ with 
\begin{align}\label{eq:direct-fourier-generation}
 \max_{c_\beta\ne0}\operatorname{deg}(\mathcal L_\beta)
 \le C(1+|\boldsymbol k|+n),\qquad
 \sum_\beta|c_\beta|
 \le \exp\{C(1+|\boldsymbol k|+n)\}.
\end{align}
\end{proposition}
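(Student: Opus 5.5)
The plan is to track the qualitative construction of \Cref{lem:selected-level-full-fibres,lem:finite-initialization} quantitatively. The starting point is the finite seed family of \Cref{lem:finite-initialization}: the barotropic modes $\phi_{\boldsymbol s,0,\boldsymbol s^\perp}$ for $\boldsymbol s\in\mathcal S$, and the modes $\phi_{\boldsymbol h,1,\boldsymbol e_i},\phi_{\boldsymbol h,2,\boldsymbol e_i},\phi_{\boldsymbol t,1,\boldsymbol e_i}$, all lying in $E_{j_0}^{\mathbb C}$. Each of them is a fixed finite linear combination of iterated symmetric interactions $\mathsf B$ of elements of $\Ran Q$.

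The first step converts $\mathsf B$ into Lie monomials. Since $F(u)=-\nu Au-B(u,u)$ is quadratic and the $b_j$ (more generally, the elements of $E_j$) are constant fields, we have
\[
[g,[h,F]]=-\mathsf B(g,h)
\]
up to sign, for constant vector fields $g,h$. Hence $\mathsf B(\mathcal L_1,\mathcal L_2)$ is again a Lie monomial, of degree $\deg\mathcal L_1+\deg\mathcal L_2+2$, whenever $\mathcal L_1,\mathcal L_2$ are constant. By induction every element of $E_j$ is a Lie polynomial in $F,b_1,\ldots,b_m$ with constant coefficients; I do not need to control how the degree of these depends on $j$. Consequently all seeds have representations with degree and coefficient cost bounded by a constant $C_0$ depending only on the forcing.

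The second step is vertical generation with fixed horizontal frequencies. By induction on $n\ge2$, I generate the full fibre at $(\boldsymbol h+(n-1)\boldsymbol t,n)$, or at some nearby frequency of size $O(n)$. The operation pairs the full fibre at level $n$ with the seed $\phi_{\boldsymbol t,1,\cdot}$ via \eqref{eq:two-channel-interaction}. The lower channel $n-1$ is cancelled using the invertibility of $\Sigma_{(\cdot,n),(\boldsymbol t,1)}$, uniformly in $n$ by \eqref{eq:uniform-channel-inverse}; this applies once the horizontal frequency is first transported back to $\boldsymbol h$. Each step then adds a bounded amount to the degree and multiplies the coefficient cost by a bounded factor. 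Concretely, if $D_n$ and $A_n$ denote the maximal degree and cost at level $n$, then
\[
D_{n+1}\le D_n+C,\qquad A_{n+1}\le C A_n .
\]
The essential point is linear, not exponential, degree accumulation. It holds because each step brackets the previously built element only against a seed of fixed size, not against itself. The representation cost therefore grows by a factor per step rather than squaring.

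The third step is horizontal transport. Given the full fibre at $(\boldsymbol h,n)$, I reach $(\boldsymbol k,n)$ by an admissible path from \Cref{lem:barotropic-paths} with $O(1+|\boldsymbol k|)$ edges. Each edge is $\mathsf B$ with a barotropic seed, so it adds degree $O(1)$. To hit a prescribed polarization $\mathsf a$, I start from $M^{-1}\mathsf a$, where $M$ is the product of the transport matrices. By \eqref{eq:uniform-shift-inverse}, $\|M^{-1}\|\le(1+\max|\boldsymbol s|^2)^{O(1+|\boldsymbol k|)}$. The forward matrices have norm $O(1+|\boldsymbol k|)$ along the path, but only the inverse enters the coefficients, since I express the target through basis elements at the start. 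This gives cost $e^{C(1+|\boldsymbol k|+n)}$.

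Barotropic targets $(\boldsymbol k,0)$ use the final construction of \Cref{lem:selected-level-full-fibres}. I take $\mathsf B(\phi_{\boldsymbol k',1,\mathsf a'},\phi_{\boldsymbol l,1,\mathsf a'})$ with $\boldsymbol k'+\boldsymbol l=\boldsymbol k$ and $\boldsymbol l$ a fixed element of $\mathcal S$, chosen so that both pieces lie in admissible positions. The barotropic amplitude $|\boldsymbol q|^2$ is at least $1$, so dividing by it costs nothing.

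The main obstacle is the careful bookkeeping that cost multiplies only with one bounded factor per operation. In particular, the two-channel cancellation must be done with linear combinations whose coefficients are bounded uniformly in $n$, via \eqref{eq:uniform-channel-inverse}. Admissibility must also hold at every step: $\boldsymbol h\wedge\boldsymbol t\neq0$ for the vertical moves, and the path lemma for the horizontal ones. Finally, since only finitely many coefficient entries are nonzero, summing $|c_\beta|$ over the at most four polarization basis combinations keeps the bound \eqref{eq:direct-fourier-generation}.
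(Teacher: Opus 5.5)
Your handling of the seeds, the vertical induction at the fixed pair $(\boldsymbol h,\boldsymbol t)$, the horizontal transport with inverse-matrix bookkeeping, and the barotropic targets follows the paper's Steps 1, 2, 3 and 5. However, the proposal never produces the pure vertical modes $\phi_{0,n,\mathsf a}$ with $n\ge1$ and $\mathsf a\in\mathbb C^2$. These frequencies $(\boldsymbol 0,n)$ belong to $\mathbf F$, since only $(\boldsymbol 0,0)$ is excluded, and none of your steps reaches them:
\begin{itemize}
\item the graph of \Cref{lem:barotropic-paths} lives on $\mathbb Z^2\setminus\{0\}$;
\item your barotropic step outputs only level $n=0$;
\item the obvious attempt to land at $\boldsymbol k+\boldsymbol l=0$ with the two-channel interaction fails, because $\boldsymbol k\wedge\boldsymbol l=0$ makes $\Sigma_{q,r}$ singular by \eqref{eq:two-channel-determinant}, so you cannot isolate a channel by inversion.
\end{itemize}
The paper needs a separate argument for exactly this reason (Step~4). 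It pairs $(\boldsymbol h,n)$ with $(-\boldsymbol h,1)$ in two crossed polarization choices $U_n,V_n$. The combinations $U_n\mp nV_n$ in \eqref{eq:UnVn} then isolate the levels $n\pm1$ with uniformly bounded coefficients, and repeating with $\boldsymbol t$ fills the fibre.

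A shorter repair within your own framework is the degenerate case $\boldsymbol k=-\boldsymbol s$ of \eqref{eq:barotropic-gcd-shift}. There $M_{\boldsymbol s}(-\boldsymbol s)=\boldsymbol s^\perp\otimes\boldsymbol s$, so $\mathsf B(\phi_{-\boldsymbol s,n,\mathsf a},\phi_{\boldsymbol s,0,\boldsymbol s^\perp})=\mathrm i(\boldsymbol s\cdot\mathsf a)\phi_{0,n,\boldsymbol s^\perp}$. Apply this with $\boldsymbol s\in\{\boldsymbol h,\boldsymbol t\}$ and $\mathsf a=\boldsymbol s/|\boldsymbol s|^2$, taking the input at the nonzero frequency $-\boldsymbol s$ from your Step 3. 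This gives $\phi_{0,n,\boldsymbol h^\perp}$ and $\phi_{0,n,\boldsymbol t^\perp}$, hence the full fibre, at the cost of one more interaction with a seed: degree $C(1+n)$ and coefficient cost $e^{C(1+n)}$.

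Separately, tighten the vertical step. The bound \eqref{eq:uniform-channel-inverse} is uniform only for the fixed pair $(\boldsymbol h,n),(\boldsymbol t,1)$. So after each step you must return from $\boldsymbol h+\boldsymbol t$ to $\boldsymbol h$ by the single admissible edge $-\boldsymbol t$, prescribing $M_{-\boldsymbol t}(\boldsymbol h+\boldsymbol t)^{-1}\boldsymbol e_i$ in the upper channel, as the paper does. Letting the frequency drift to $\boldsymbol h+(n-1)\boldsymbol t$, as your first sentence suggests, takes you outside that uniform bound.
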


\begin{proof}
We split the proof  into five steps. 

\emph{Step 1: Finite seeds initialization.} Work in the complexification of the real quadratic algebra and fix
nonparallel $\boldsymbol h,\boldsymbol t\in\mathcal S$. Define the set of finite seeds by 
\begin{align*}
\mathbb S: = \{\phi_{\boldsymbol h,1,\boldsymbol e_i}, \phi_{\boldsymbol h,2,\boldsymbol e_i},\phi_{\boldsymbol t,1,\boldsymbol e_i},\phi_{\boldsymbol s,0,\boldsymbol s^{\perp}}:i=1,2,\boldsymbol s\in \mathcal S \}. 
\end{align*}
By \Cref{lem:finite-initialization}, $\mathbb S\subset E_{j_0}^{\mathbb C}$. 
Then the recursive definition
\eqref{eq:quadratic-saturation} expresses each element in  $\mathbb S$ as a finite
linear combination of terms obtained from $b_1,\ldots,b_m$ by
successive applications of $\mathsf B$. For constant vector fields
$f,g$,
\begin{align}\label{eq:quadratic-lie}
 \mathsf B(f,g)=[f,[F,g]].
\end{align}
Replacing each quadratic operation by this double Lie bracket therefore
gives a finite Lie-polynomial representation for each $\phi\in \mathbb S$ as 
\begin{align}\label{eq:bounds-on-seeds}
\phi = \sum_{\beta}c_{\phi,\beta}\mathcal L_{\phi,\beta}, \, \text{ with } \max_{c_{\phi,\beta}\ne0,\phi\in\mathbb S}\operatorname{deg}(\mathcal L_{\phi,\beta})
\le C_*,\, \text{ and }
\max_{\phi\in\mathbb S}\sum_\beta|c_{\phi,\beta}|
\le C_*
\end{align}
for $C_*=C_*(\mathbb S)$. 

Moreover, if $f$ and
$g$ have Lie-polynomial representations of maximal degrees $L_f,L_g$
and coefficient sums bounded by $A_f,A_g$, then $\mathsf B(f,g)$ has a
representation of maximal degree at most
\begin{align}\label{eq:quadratic-degree-cost}
 L_f+L_g+2
\end{align}
and coefficient sum bounded by $A_fA_g$.

\emph{Step 2: Vertical growth estimate.}  
By induction, assume that the Lie-polynomial representations of
$\phi_{\boldsymbol h,n,\boldsymbol e_i}$, $i=1,2$ are already available with uniform bounds $L_n,A_n$ on the maximal parabolic degree and
coefficient sum. Note that \eqref{eq:bounds-on-seeds} gives such representation and fixed bounds
for the base case $n=1,2$. For $n\ge2$ and $i=1,2$, set
\begin{align*}
 \mathsf c_i
 =M_{-\boldsymbol t}(\boldsymbol h+\boldsymbol t)^{-1}\boldsymbol e_i,
 \qquad
 Z_i
 =\Sigma_{(\boldsymbol h,n),(\boldsymbol t,1)}^{-1}(\mathsf c_i,0).
\end{align*}
By \eqref{eq:barotropic-transport-inverse} and
\eqref{eq:uniform-channel-inverse},
$|\mathsf c_i|+\|Z_i\|\le C$ uniformly in $n$. Expanding $Z_i$ in
the basis $\{\boldsymbol e_\alpha\otimes\boldsymbol e_\beta\}_{\alpha,\beta=1}^2$
and using the available representations of
$\phi_{\boldsymbol h,n,\boldsymbol e_\alpha}$ and
$\phi_{\boldsymbol t,1,\boldsymbol e_\beta}$ in
\eqref{eq:two-channel-interaction} generates
$\phi_{\boldsymbol h+\boldsymbol t,n+1,\mathsf c_i}$ with vanishing
lower channel. Combining it with the admissible horizontal shift $-\boldsymbol t$ then gives the Lie-polynomial representation for 
$\phi_{\boldsymbol h,n+1,\boldsymbol e_i}$. By
\eqref{eq:quadratic-degree-cost} we obtain bounds on the maximal parabolic degree and
coefficient sum as 
\begin{align*}
 L_{n+1}\le L_n+C_0,\qquad
 A_{n+1}\le C_1A_n.
\end{align*}
Consequently, by induction, each $\phi_{\boldsymbol h,n,\boldsymbol e_i}$, $i=1,2, n\ge 1$ admits Lie-polynomial representation with maximal parabolic degree and
coefficient sum bounds 
\begin{align}\label{eq:vertical-generation-bound}
 L_n\le Cn,\qquad A_n\le C^n. 
\end{align}

\emph{Step 3: Horizontal propagation.}
Fix $n\ge1$ and $\boldsymbol k\ne0$. By
\Cref{lem:barotropic-paths}, there is an admissible path $\mathscr P$
from $\boldsymbol h$ to $\boldsymbol k$, through quadratic interaction with $\phi_{\boldsymbol s,0,\boldsymbol s^{\perp}}$ for some $\boldsymbol s\in\mathcal S$ at each step, with at most
$C(1+|\boldsymbol k|)$ edges. To generate
$\phi_{\boldsymbol k,n,\boldsymbol e_i}$, $i=1,2$, we transport the
target polarization $\boldsymbol e_i$ backwards along $\mathscr P$ by
successively applying the inverse maps from
\eqref{eq:barotropic-transport-inverse}. By
\eqref{eq:uniform-shift-inverse}, the resulting polarization
$\mathsf a_i$ at $\boldsymbol h$ satisfies 
\begin{align*}
|\mathsf a_i|
\le \sup_{s\in\mathcal S}\left(1+|\boldsymbol s|^2\right)^{C(1+|\boldsymbol k|)}|\boldsymbol e_i| \le \exp\{C(1+|\boldsymbol k|)\}. 
\end{align*} 
Expanding $\mathsf a_i$ in the basis 
$\boldsymbol e_1,\boldsymbol e_2$ to have $\phi_{\boldsymbol h,n,\mathsf a_i}=a_{i,1}\phi_{\boldsymbol h,n,\boldsymbol e_1}+a_{i,2}\phi_{\boldsymbol h,n,\boldsymbol e_2}$ and using the consequence of Step 2 
gives a Lie-polynomial
representation of $\phi_{\boldsymbol h,n,\mathsf a_i}$ with maximal parabolic degree at
most $L_n$ and coefficient sum at most
$(|a_{i,1}|+|a_{i,2}|)A_n$. Following
$\mathscr P$ forward then gives a Lie-polynomial
representation for 
$\phi_{\boldsymbol k,n,\boldsymbol e_i}$. Since the barotropic
interaction $\phi_{\boldsymbol s,0,\boldsymbol s^{\perp}}$ for $\boldsymbol s\in\mathcal S$ along each edge of $\mathscr P$ has fixed parabolic degree and coefficient cost given  by \eqref{eq:bounds-on-seeds}, combined with \eqref{eq:quadratic-degree-cost} and \eqref{eq:vertical-generation-bound},  the
resulting Lie-polynomial representation for $\phi_{\boldsymbol k,n,\boldsymbol e_i}$ has maximal parabolic degree bounded by 
\begin{align*}
 L_n+C_*(1+|\boldsymbol k|)\le  C(1+|\boldsymbol k|+n)
\end{align*}
and coefficient sum at most
\begin{align*}
 (|a_{i,1}|+|a_{i,2}|)A_n C_*^{C(1+|\boldsymbol k|)}\le C|\mathsf a_i| A_n C_*^{C(1+|\boldsymbol k|)} \le \exp\{C(1+|\boldsymbol k|+n)\}.
\end{align*}
By linearity, the same bounds hold for
$\phi_{\boldsymbol k,n,\mathsf a}$ for every unit
$\mathsf a\in\mathbb C^2$.

\emph{Step 4: Pure vertical modes.} To produce zero horizontal frequency we must pair opposite horizontal
frequencies.  In this collinear configuration the two-channel map
\eqref{eq:two-channel-interaction} is degenerate, so we separate the
upper and lower vertical channels by using two crossed choices of
polarization. 
For $n\ge1$, set $ U_n
=\mathsf B(\phi_{\boldsymbol h,n,\boldsymbol h^\perp},
\phi_{-\boldsymbol h,1,\boldsymbol h})$ and 
\begin{align*}
 V_n
 =\mathsf B(\phi_{\boldsymbol h,n,\boldsymbol h},
 \phi_{-\boldsymbol h,1,\boldsymbol h^\perp}).
\end{align*}
The collinear specialization of \eqref{eq:two-channel-symbols} gives
\begin{align}\label{eq:UnVn}
\begin{split}
 U_n-nV_n
 &=\mathrm{i}(n+1)|\boldsymbol h|^2
 \phi_{0,n+1,\boldsymbol h^\perp},
 \qquad n\ge1,\\
 U_n+nV_n
 &=-\mathrm{i}(n-1)|\boldsymbol h|^2
 \phi_{0,n-1,\boldsymbol h^\perp},
 \qquad n\ge2.
\end{split}
\end{align}
The first identity generates the $\boldsymbol h^\perp$ polarization at
every pure vertical level at least two, while the second identity with
$n=2$ gives level one.  Step 3 gives Lie-polynomial
representations for $\phi_{\boldsymbol h,n,\boldsymbol h^\perp}, \, \phi_{-\boldsymbol h,1,\boldsymbol h}, \phi_{\boldsymbol h,n,\boldsymbol h}$ and $\phi_{-\boldsymbol h,1,\boldsymbol h^\perp}$, which combined with \eqref{eq:quadratic-lie} gives Lie-polynomial
representation for $\phi_{0,n,\boldsymbol h^\perp}$, $n\ge 1$. Repeating the construction with
$\boldsymbol t$ gives the Lie-polynomial
representation for $\phi_{0,n,\boldsymbol t^\perp}$. Since $\boldsymbol h^\perp$ and $\boldsymbol t^\perp$ are fixed and
linearly independent, every unit $\mathsf a\in\mathbb C^2$ can be
written as
$\mathsf a=a_1\boldsymbol h^\perp+a_2\boldsymbol t^\perp$
with $|a_1|+|a_2|\le C$, yielding the Lie-polynomial
representation for $\phi_{0,n,\mathsf a}$. 

After dividing \eqref{eq:UnVn} by the displayed factors, the
coefficients multiplying $U_n$ and $V_n$ remain uniformly bounded. Therefore by Step 3, the Lie-polynomial
representation for $\phi_{0,n,\mathsf a}$ has maximal parabolic degree bounded by $ C(1+n)$ and coefficient sums bounded by $\exp\{C(1+n)\}$.

\emph{Step 5: Barotropic modes.}
Let $\boldsymbol k\ne0$. To isolate the barotropic channel, we use two
level-one inputs with the same polarization, which cancels the
level-two component in \eqref{eq:equal-level-noise-generation}. Choose
$\boldsymbol q\in\{\boldsymbol h,\boldsymbol t\}$ so that
$\boldsymbol l=\boldsymbol k-\boldsymbol q\ne0$, and put
\begin{align*}
 \mathsf a
 =\frac{\boldsymbol k+\boldsymbol k^\perp}{|\boldsymbol k|}.
\end{align*}
Since $\boldsymbol q,\boldsymbol l\ne0$, Step~3 gives quantitative
Lie-polynomial representations of
$\phi_{\boldsymbol q,1,\boldsymbol e_i}$ and
$\phi_{\boldsymbol l,1,\boldsymbol e_i}$, $i=1,2$. By linearity in
the polarization variable, the same therefore holds for
$\phi_{\boldsymbol q,1,\mathsf a}$ and
$\phi_{\boldsymbol l,1,\mathsf a}$. It then follows from \eqref{eq:equal-level-noise-generation} that 
\begin{align*}
 \mathsf B(\phi_{\boldsymbol q,1,\mathsf a},
 \phi_{\boldsymbol l,1,\mathsf a})
 =
 \mathrm{i}|\boldsymbol k|
 \phi_{\boldsymbol k,0,\boldsymbol k^\perp/|\boldsymbol k|},
\end{align*}
yielding the Lie-polynomial representation for $\phi_{\boldsymbol k,0,\boldsymbol k^\perp/|\boldsymbol k|}$ by \eqref{eq:quadratic-lie}.  Since $|\boldsymbol l|\le|\boldsymbol k|+C$ and division by
$|\boldsymbol k|\ge1$ does not increase the coefficient cost, the quantitative estimate in Step~3 for $\phi_{\boldsymbol q,1,\mathsf a}$ and
$\phi_{\boldsymbol l,1,\mathsf a}$, combined with \eqref{eq:quadratic-degree-cost}, shows that 
the maximal parabolic degree is bounded by $C(1+|\boldsymbol k|)$ and
coefficient sum by $\exp\{C(1+|\boldsymbol k|)\}$. 

The positive vertical frequency with nonzero horizontal frequency in Step~3, the pure
vertical mode in Step~4, and the barotropic modes in Step~5  exhaust all hydrostatic frequencies in $\mathbf F$, and the corresponding quantitative estimates prove  \eqref{eq:direct-fourier-generation}.
\end{proof}

The following elementary quantitative consequence of the
Dynkin--Specht--Wever identity straightens the Lie polynomials above into
the right-nested form as in \eqref{eq:iteratedLiebracket} required by the observable hierarchy. In addition to the parabolic degree, we temporarily call the number of letters in a Lie monomial its ordinary length. Since every letter has parabolic degree at least one, its ordinary length is bounded by its parabolic degree. By construction in
\Cref{prop:fourier-generation}, every Lie monomial occurring in the
representations below contains at least one noise symbol.

\begin{lemma}
\label{lem:straightening}
Any Lie polynomial representation obtained in \Cref{prop:fourier-generation} with maximal parabolic degree $L$ and coefficient sum $A$, can be written as a linear combination of right-nested words
\begin{align*}
 \operatorname{ad}_{a_1}\cdots\operatorname{ad}_{a_r}b_j,\qquad a_i\in\{F,b_1,\ldots,b_m\},
\end{align*}
of parabolic degree at most $L$, with coefficient sum at most $2^LA$.
\end{lemma}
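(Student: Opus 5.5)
We argue by induction on the ordinary length of a Lie monomial, proving the stronger claim that every Lie monomial $\mathcal L$ of ordinary length $\ell$ containing at least one noise symbol equals a signed sum of at most $2^{\ell-1}$ right-nested words $\operatorname{ad}_{a_1}\cdots\operatorname{ad}_{a_r}b_j$, each with the same multiset of letters as $\mathcal L$. In particular each such word has the same parabolic degree as $\mathcal L$. Applying this monomial by monomial to a representation $\sum_\beta c_\beta\mathcal L_\beta$ from \Cref{prop:fourier-generation}, and using that ordinary length is at most parabolic degree, which is at most $L$, gives coefficient sum at most $\sum_\beta|c_\beta|2^{L}\le 2^LA$, with all degrees at most $L$.

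For the induction, length one means $\mathcal L=b_j$, which is already right-nested. For $\mathcal L=[\mathcal L_1,\mathcal L_2]$, at least one factor contains a noise symbol. Using antisymmetry $[\mathcal L_1,\mathcal L_2]=-[\mathcal L_2,\mathcal L_1]$, which changes only signs, we may assume $\mathcal L_2$ contains a noise symbol. If $\mathcal L_1$ is a single letter, induction on $\mathcal L_2$ and linearity of $\operatorname{ad}_{\mathcal L_1}$ finish this case, with no increase in term count.

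In general we first expand $\mathcal L_2$ by induction into right-nested words $\operatorname{ad}_{c_1}\cdots\operatorname{ad}_{c_q}b_j$. We then prove the auxiliary identity: for any Lie monomial $X$ of length $p$ and right-nested $Y=\operatorname{ad}_{c_1}\cdots\operatorname{ad}_{c_q}b_j$, the bracket $[X,Y]$ is a signed sum of right-nested words with the same letters. The proof proceeds by induction on the structure of $X$ via the Jacobi identity $\operatorname{ad}_{[X',X'']}=\operatorname{ad}_{X'}\operatorname{ad}_{X''}-\operatorname{ad}_{X''}\operatorname{ad}_{X'}$, valid for vector fields with the stated bracket convention. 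Unfolding this recursively writes $\operatorname{ad}_X$ as a signed sum of $2^{p-1}$ compositions $\operatorname{ad}_{x_{\sigma(1)}}\cdots\operatorname{ad}_{x_{\sigma(p)}}$ of single-letter adjoints. This is exactly the Dynkin-type expansion of a bracket monomial into the free associative algebra, and each doubling step contributes one factor of two.

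The main obstacle is bookkeeping the count so that the total is at most $2^{\ell}$ rather than a product of two separate exponentials. For $\ell=p+q+1$, the expansion of $\operatorname{ad}_X$ contributes $2^{p-1}$ terms and the expansion of $\mathcal L_2$ contributes $2^{\ell_2-1}$ terms, so the product $2^{p+\ell_2-2}\le 2^{\ell-1}$ closes the induction. The point to check carefully is that the noise-terminal structure is preserved, since $Y$ ends in $b_j$ and only adjoints are prepended. The conventions $[U,V]=DV\,U-DU\,V$ and $\operatorname{ad}_UV=[U,V]$ make the Jacobi identity hold as an operator identity on smooth vector fields, which suffices because all objects here are polynomial vector fields on smooth functions.
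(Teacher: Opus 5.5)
Your argument is correct, and it reaches the conclusion by a different route from the paper.

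The paper proceeds in three moves. It expands each bracket monomial of length $r$ into at most $2^{r-1}$ associative words in the free associative algebra. It then applies the Dynkin map term by term and divides by $r$, invoking the Dynkin--Specht--Wever theorem that this map acts as multiplication by $r$ on Lie elements of length $r$. Finally, it needs a separate step to move a noise field into the innermost slot, using $[F,F]=0$ and $[b_j,F]=-[F,b_j]$.

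You stay inside the Lie algebra of vector fields and use only antisymmetry and the Jacobi identity in the form $\operatorname{ad}_{[X',X'']}=\operatorname{ad}_{X'}\operatorname{ad}_{X''}-\operatorname{ad}_{X''}\operatorname{ad}_{X'}$. You expand only the left factor into $2^{p-1}$ compositions of single-letter adjoints. Applying these to an already right-nested, noise-terminal expansion of the right factor produces right-nested words directly. Consequently no division by the length is needed, and the noise-terminal form is carried by the induction hypothesis rather than repaired afterwards.

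Each route has its advantage. The paper's is shorter given the cited theorem and gives the marginally sharper per-monomial factor $2^{r-1}/r$. Yours is self-contained, needs no free-algebra machinery, and gives the per-monomial factor $2^{\ell-1}$ (in fact $2^{\ell-2}$ once $\ell\ge2$), which is all the lemma requires. Your reliance on the Jacobi identity for the polynomial fields $F,b_1,\ldots,b_m$ is at the same level of rigor as the paper's, whose passage from the free Lie algebra to vector fields uses the same identity.
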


\begin{proof}
We first straighten the bracketing. Consider a homogeneous component of ordinary length $r$. Expanding each commutator by $[X,Y]=XY-YX$ in the free associative algebra  where $F,b_1,\ldots,b_m$ are treated as noncommuting formal symbols,  a bracket tree with $r$ leaves and hence $r-1$ brackets produces at most $2^{r-1}$ associative words.

For an associative word $a_1\cdots a_r$, by the Dynkin--Specht--Wever identity
\cite{BonfiglioliFulci2012}, the Dynkin map, which sends
\begin{align*}
 a_1\cdots a_r\longmapsto [a_1,[a_2,\ldots,[a_{r-1},a_r]\ldots]],
\end{align*}
acts as multiplication by
$r$ on homogeneous Lie polynomials of ordinary length $r$. Hence, after expanding the component into associative words, applying the Dynkin map term by term and dividing by $r$ expresses it as a linear combination of right-nested brackets. The coefficient sum increases by at most $2^{r-1}/r\le 2^r$.

This operation only changes the placement of brackets and does not change any of the letters, so the parabolic degree is preserved. Since $r\le L$, summing over the homogeneous components gives a right-nested representation of parabolic degree at most $L$ and coefficient sum at most $2^LA$.

It remains to place a noise symbol at the innermost position. A right-nested word ending in $F$ either has innermost bracket $[F,F]=0$, or has innermost bracket $[b_j,F]=-[F,b_j]$, which moves $b_j$ to the terminal position at the cost of only a sign.  Thus every nonzero term can be written in the required form without changing its parabolic degree or coefficient sum.
\end{proof}

\begin{proof}[Proof of \Cref{prop:geometric-generation}]
Assume quantitative quadratic saturation as in \Cref{def:fourier-certificate}. By
\Cref{lem:finite-initialization} and \Cref{prop:fourier-generation}, every
hydrostatic Fourier mode admits a constant Lie-polynomial
representation with linear weight and exponential coefficient cost.
Applying \Cref{lem:straightening} gives the right-nested representation
and the bounds in \eqref{eq:geometric-quantitative-bounds}, and in
particular proves exact generation.

Conversely, suppose every hydrostatic Fourier mode is exactly
generated. In particular, the barotropic modes at
$\pm\boldsymbol e_1,\pm\boldsymbol e_2$ and a nonzero mode at positive
vertical level one are exactly generated. Since only finitely many
such modes are involved, they belong to one common finite quadratic
extension of $E_0$. The exactly generated barotropic frequencies
therefore span $\mathbb Z^2$, while the exactly generated positive
levels have greatest common divisor one. This is precisely
\Cref{def:fourier-certificate}.

\end{proof}

\subsubsection{Sharp rank counts for Fourier forcing}
\label{sec:minimal-fourier-noise}
In this subsection, we prove the sharp rank counts used in \Cref{sec:settings}. The general
spectral-gap theorem itself only requires the phase-complete and
quantitative quadratic saturation assumptions of \Cref{thm:main}. Recall the phase-complete finite Fourier forcing as in \Cref{def:phase-complete}, where 
\begin{align}\label{eq:phase-complete-E0}
 E_0=\operatorname{span}_{\R}\Bigl(\left\{\boldsymbol a_j(z)\cos(\boldsymbol k_j\cdot\boldsymbol x),\,
 \boldsymbol a_j(z)\sin(\boldsymbol k_j\cdot\boldsymbol x):j\in J\right\}\bigcup\left\{\boldsymbol a_\ell^0(z):\ell\in J_0\right\}\Bigr),
\end{align}
for some finite index sets $J_0,J$. We call a phase-complete finite Fourier range
\emph{single-level} if 
$\boldsymbol a_j(z)=\boldsymbol a_j\cos(n_jz)$ and
$\boldsymbol a_\ell^0(z)=\boldsymbol a_\ell^0\cos(n_\ell^0z)$, where
$n_j\in\mathbb N_0$ and $n_\ell^0\ge1$. At $n_j=0$ we require
$\boldsymbol k_j\cdot\boldsymbol a_j=0$ by the hydrostatic incompressibility. The rank-five example is
\begin{align}\label{eq:five-noise-profiles}
 b_1=\sigma_1\boldsymbol e_2\cos x_1,\quad
 b_2=\sigma_2\boldsymbol e_2\sin x_1,\quad 
 b_3=\sigma_3\boldsymbol e_1\cos x_2,\quad 
 b_4=\sigma_4\boldsymbol e_1\sin x_2,\quad
 b_5=\sigma_5\boldsymbol e_1\cos z,
\end{align}
where every $\sigma_j$ is nonzero.

The following proposition provides a sharp count of the rank  of $Q$ in \eqref{eq:noise-operator} in the case  of single-level class that ensures the quantitative quadratic saturation. 
\begin{proposition}
\label{prop:five-noises}
Among single-level phase-complete finite Fourier forcing ranges, the
smallest rank satisfying quantitative quadratic saturation is five. It
is attained by the profiles in \eqref{eq:five-noise-profiles}. 
\end{proposition}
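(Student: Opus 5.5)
The proof has two parts: sufficiency of the profiles \eqref{eq:five-noise-profiles}, and a lower bound showing that rank four is impossible in the single-level class. For sufficiency, I verify \Cref{def:fourier-certificate} directly. The profiles $b_1,b_2$ span the barotropic fibres at $\pm\boldsymbol e_1$. Indeed $\boldsymbol e_2\cos x_1$ and $\boldsymbol e_2\sin x_1$ are real combinations of $\phi_{\pm\boldsymbol e_1,0,\pm\boldsymbol e_1^\perp}$. Likewise $b_3,b_4$ give the barotropic fibres at $\pm\boldsymbol e_2$. Hence the set $\mathcal S$ in \eqref{eq:mathcal-S} already contains $\{\pm\boldsymbol e_1,\pm\boldsymbol e_2\}$ at $j_*=0$, and it spans $\mathbb Z^2$. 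The profile $b_5=\sigma_5\boldsymbol e_1\cos z$ is the pure mode $\phi_{0,1,\boldsymbol e_1}$ at level one, so the gcd condition holds with $n=1$. Therefore \eqref{eq:finite-fourier-certificate} holds with $j_*=0$. This is exactly the exceptional two-direction geometry handled in \Cref{lem:selected-level-full-fibres}, but nothing further needs checking, because \Cref{prop:geometric-generation} applies.

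For the lower bound, take a single-level range $E_0$ of rank at most four. Each horizontal frequency $\boldsymbol k_j\neq0$ contributes the two real directions $\cos$ and $\sin$, and each horizontally constant profile contributes one. The key observation is a parity and level bookkeeping under $\mathsf B$. Assign to each generator its vertical level. By \eqref{eq:two-channel-interaction} and \eqref{eq:equal-level-noise-generation}, $\mathsf B$ maps levels $(n,m)$ into levels $\{n+m,|n-m|\}$ and maps horizontal frequencies $(\boldsymbol k,\boldsymbol l)$ to $\boldsymbol k\pm\boldsymbol l$ combinations. It follows that every generated positive level lies in the additive group $\gcd$-lattice generated by the forcing levels. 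Likewise, every generated horizontal frequency lies in $\operatorname{span}_{\mathbb Z}\{\boldsymbol k_j\}$.

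The certificate therefore needs two things. First, the forcing frequencies must span $\mathbb Z^2$, which requires two nonparallel frequencies and hence at least four real directions. Second, some forcing level $n\ge1$ must occur; otherwise every profile is barotropic, and barotropic modes are closed under $\mathsf B$, since the 2D Euler-type interaction preserves $n=0$.

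It remains to rule out exactly four directions. Case one: all four directions come from two nonparallel frequencies $\boldsymbol k_1,\boldsymbol k_2$, and $\{\boldsymbol k_1,\boldsymbol k_2\}$ must be a basis of $\mathbb Z^2$. If both are barotropic, no positive level is ever generated. If some level $n_j\ge1$ occurs, then no barotropic mode exists at $j_*=0$. One must then show that no exact barotropic pure mode is ever produced whose frequencies span $\mathbb Z^2$, or else that the positive levels fail. Here I would exploit a symmetry: for instance, if both levels are positive, use the invariance $z\mapsto z+\pi$ combined with parity of total level. If exactly one level is positive, use a reflection or sign symmetry that pins the vertical levels reachable to multiples forming a proper sublattice, or that forbids pure barotropic output. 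Case two: the four directions include horizontally constant profiles. Then at most one nonzero frequency pair remains, so $\operatorname{span}_{\mathbb Z}$ of the frequencies is a line, contradicting the first certificate condition via the lattice invariance above.

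The main obstacle is case one with mixed levels, where a single-level profile at $\boldsymbol k_1$ with $n_1\ge1$ sits alongside a barotropic pair at $\boldsymbol k_2$. I would try to find an invariant subspace of $H$ that contains $E_0$, is closed under $\mathsf B$, and is proper. The natural candidates are the space of fields odd under a combined reflection $z\mapsto z+\pi$, $\boldsymbol x\mapsto\boldsymbol x+\pi\boldsymbol k_1^*$, where $\boldsymbol k_1^*$ is the dual vector with $\boldsymbol k_1^*\cdot\boldsymbol k_1=1$ and $\boldsymbol k_1^*\cdot\boldsymbol k_2=0$, or a fixed-polarization invariant. I would then check that $\mathsf B$ respects the induced $\mathbb Z_2$-grading. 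Such a proper invariant subspace contradicts exact generation of all modes, and hence, by the equivalence in \Cref{prop:geometric-generation}, contradicts the certificate.
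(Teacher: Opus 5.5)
Your sufficiency check is correct and matches the paper. So are your preliminary reductions: horizontal frequencies add under $\mathsf B$, so rank $\ge4$; a horizontally constant profile leaves at most one frequency pair; and otherwise $\pm\boldsymbol k_1,\pm\boldsymbol k_2$ must form a $\mathbb Z$-basis.

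The gap is the decisive step: excluding rank four when the levels $n_1,n_2$ are not both even, e.g.\ $(n_1,n_2)=(1,0)$, $(1,1)$ or $(1,2)$. There neither of your invariants sees anything. The frequencies span $\mathbb Z^2$ and the levels have gcd one, because the obstruction couples horizontal and vertical parity. You only list candidate symmetries, and the one you write down fails as stated. The subspace of fields \emph{odd} under $\tau:(\boldsymbol x,z)\mapsto(\boldsymbol x+\pi\boldsymbol k_1^*,z+\pi)$ is not closed under the quadratic map $\mathsf B$. Since $\tau$ commutes with $B$, $\mathsf B$ of two $\tau$-odd fields is $\tau$-even. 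It also does not contain the barotropic forcing at $\pm\boldsymbol k_2$, on which $\tau$ acts by $+1$. Finally, this particular shift is adapted only to $n_1$ odd and $n_2$ even.

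The missing idea, which is the paper's argument, is a single mod-$2$ character on the signed Fourier lattice $\mathbb Z^2\times\mathbb Z$. Writing $\boldsymbol q=a\boldsymbol k_1+b\boldsymbol k_2$, set $\chi(\boldsymbol q,r)=r-an_1-bn_2 \pmod 2$.
\begin{itemize}
\item $\chi$ is a homomorphism vanishing on the whole Fourier support $\{(\pm\boldsymbol k_1,\pm n_1),(\pm\boldsymbol k_2,\pm n_2)\}$ of $E_0$, for any $n_1,n_2\in\mathbb N_0$.
\item Since $\operatorname{supp}_{\mathrm F}\mathsf B(f,g)\subset\operatorname{supp}_{\mathrm F}f+\operatorname{supp}_{\mathrm F}g$, induction gives $\operatorname{supp}_{\mathrm F}E_j\subset\ker\chi$ for every $j$.
\item Because $\chi(0,r)=r$, no pure vertical mode of odd level is ever generated. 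Exact generation therefore fails, and \Cref{prop:geometric-generation} rules out quantitative quadratic saturation.
\end{itemize}
In your language, $\ker\chi$ is the \emph{fixed} (not the odd) subspace of the translation by $(\pi(n_1\boldsymbol k_1^*+n_2\boldsymbol k_2^*),\pi)$. This one invariant handles every level configuration at once (both barotropic, mixed, both positive, any parities), so your case split becomes unnecessary.
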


\begin{proof}
Let $\Lambda_h=\mathrm{span}_{\mathbb Z}\{\boldsymbol k_j:j\in J\}$ be the subgroup of $\mathbb Z^2$ generated by the
nonzero horizontal Fourier frequencies of $E_0$. Then quantitative quadratic saturation requires
$\Lambda_h=\mathbb Z^2$. Therefore, at least two
linearly independent nonzero horizontal frequencies must occur. By phase completeness, $\operatorname{rank}Q\ge4$.

Suppose, for contradiction, that $\operatorname{rank}Q=4$. Then exactly
two nonzero horizontal character pairs occur, say
$\{\pm\boldsymbol k\}$ and $\{\pm\boldsymbol l\}$, each contributing
exactly two real dimensions. In particular, there is no horizontally
constant forcing direction. Since the forcing is single-level, the two
phase pairs have respective vertical levels $n,m\in\mathbb N_0$.
Moreover, $\boldsymbol k,\boldsymbol l$ form a $\mathbb Z$-basis of
$\mathbb Z^2$ as $\Lambda_h=\mathbb Z^2$. Consider the signed Fourier lattice
$\mathbb Z^2\times\mathbb Z$. Using the unique representation
$\boldsymbol q=a\boldsymbol k+b\boldsymbol l$, define
\begin{align*}
 \chi(\boldsymbol q,r)
 =r-an-bm\pmod2.
\end{align*}
Then $\chi:\mathbb Z^2\times\mathbb Z\to\mathbb Z/2\mathbb Z$ is a
group homomorphism. As 
\begin{align*}
 \{(\pm\boldsymbol k,\pm n),(\pm\boldsymbol l,\pm m)\}\subset \ker\chi,
\end{align*}
the full
Fourier support of $E_0$,  $\operatorname{supp}_{\mathrm F}E_0\subset\ker\chi$. It follows from \eqref{eq:two-channel-interaction} and \eqref{eq:equal-level-noise-generation} that $ \operatorname{supp}_{\mathrm F}\mathsf B(f,g)
\subset
\operatorname{supp}_{\mathrm F}f+
\operatorname{supp}_{\mathrm F}g$, induction in $j$
therefore gives
\begin{align}\label{eq:parity-obstruction}
 \operatorname{supp}_{\mathrm F}E_j\subset\ker\chi,
 \qquad j\ge0.
\end{align}
If the horizontal frequency vanishes, then $a=b=0$, and hence $\chi(0,r)=r\pmod2$. 
Thus \eqref{eq:parity-obstruction} excludes every pure vertical mode of
odd level, and hence quantitative quadratic saturation by \Cref{prop:geometric-generation}. Hence
$\operatorname{rank}Q\ge5$.

Note that the first four profiles in
\eqref{eq:five-noise-profiles} give exact barotropic modes at
$\pm\boldsymbol e_1$ and $\pm\boldsymbol e_2$, while $b_5$ gives an
exact pure mode at vertical level one. Thus \Cref{def:fourier-certificate} already holds at depth zero $j_*=0$, proving
quantitative quadratic saturation.
\end{proof}

\begin{corollary}
\label{cor:six-noises}
Among single-level phase-complete finite Fourier forcing ranges with no
horizontally constant profile, the smallest rank satisfying quantitative
quadratic saturation is six. It is attained by
$b_1,\ldots,b_4$ from \eqref{eq:five-noise-profiles}, together with
\begin{align*}
 b_5=\sigma_5\boldsymbol e_1\cos(x_1+x_2)\cos z,\qquad
 b_6=\sigma_6\boldsymbol e_1\sin(x_1+x_2)\cos z,
\end{align*}
where all $\sigma_j$ are nonzero.
\end{corollary}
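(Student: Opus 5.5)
The proof has two halves: sufficiency of the displayed rank-six example, and a lower bound showing rank five is impossible without horizontally constant profiles. For the lower bound I follow the parity argument of \Cref{prop:five-noises}. Since $E_0$ has no horizontally constant profile, every real direction comes from a phase pair $\boldsymbol a_j\cos(n_jz)\{\cos,\sin\}(\boldsymbol k_j\cdot\boldsymbol x)$. Phase completeness therefore makes $\operatorname{rank}Q$ even, and the lattice condition $\Lambda_h=\mathbb Z^2$ already forces rank at least four. It remains to exclude rank four, i.e.\ exactly two character pairs $\{\pm\boldsymbol k\}$ at level $n$ and $\{\pm\boldsymbol l\}$ at level $m$. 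Then $\boldsymbol k,\boldsymbol l$ span $\mathbb Z^2$, so they are a $\mathbb Z$-basis, and the homomorphism $\chi(\boldsymbol q,r)=r-an-bm \pmod 2$ is defined exactly as before. The support inclusion \eqref{eq:parity-obstruction} then excludes every pure vertical mode of odd level. By \Cref{prop:geometric-generation} this contradicts quantitative quadratic saturation, so the rank is at least five. Since the rank is even, it is at least six.

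One point needs checking: in the five-noise proof the rank-four case already had no horizontally constant direction. So the real content here is the parity of the rank, and I should state explicitly that each admissible profile contributes its cosine and sine partner as two independent real directions. These are genuinely independent because $\boldsymbol k_j\ne0$. Distinct pairs at the same $\pm\boldsymbol k$ could in principle overlap in span, but any such pair still contributes an even-dimensional $\cos/\sin$-invariant subspace. The cleanest route is to observe that $E_0$ is invariant under horizontal translations by the phase-complete structure, so it decomposes into real character spaces, each of even dimension.

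For sufficiency, I verify \Cref{def:fourier-certificate} directly. The profiles $b_1,\dots,b_4$ are exact barotropic modes at $\pm\boldsymbol e_1,\pm\boldsymbol e_2$, since $\boldsymbol e_2\perp\boldsymbol e_1$ and $\boldsymbol e_1\perp\boldsymbol e_2$, so the first condition holds at $j_*=0$. The profiles $b_5,b_6$ give the pure modes $\phi_{\pm(1,1),1,\boldsymbol e_1}\in E_0^{\mathbb C}$, which lie in the fibres $\mathcal H_{\pm(1,1),1}$. So level one is generated and the gcd condition holds. Hence $j_*=0$ works.

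The main obstacle is the rank-parity step, not the computation. I would make it rigorous through the translation-invariance decomposition, and also confirm that $\chi$ remains well defined when the levels $n,m$ are arbitrary in $\mathbb N_0$, including the barotropic case $n=0$.
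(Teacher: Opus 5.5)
Your proposal is correct and follows the paper's proof essentially step for step. Both argue that the rank is even for phase-complete ranges with no horizontally constant profile, that rank four is excluded by the parity homomorphism $\chi$ from \Cref{prop:five-noises}, and that the displayed family satisfies \Cref{def:fourier-certificate} directly at depth $j_*=0$. Your justification of evenness fills in a step the paper only asserts: each frequency pair $\pm\boldsymbol k$ contributes a space $V_{\boldsymbol k}\cos(\boldsymbol k\cdot\boldsymbol x)\oplus V_{\boldsymbol k}\sin(\boldsymbol k\cdot\boldsymbol x)$ of even dimension, which is equivalent to your translation-invariance decomposition.
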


\begin{proof}
A phase-complete finite Fourier range with no horizontally constant
profile has even real dimension. By \Cref{prop:five-noises}, rank four
is impossible, so the rank is at least six.

For the displayed family, the first four profiles give the barotropic
frequencies $\pm\boldsymbol e_1,\pm\boldsymbol e_2$, while the last two
give a nonzero pure Fourier mode at vertical level one. Hence the two
conditions in \Cref{def:fourier-certificate} hold already at depth zero. 
\end{proof}

We now allow a horizontal phase pair to carry a vertical profile
containing more than one Fourier level. Define
\begin{align}\label{eq:four-noise-profiles}
 f_1&=\boldsymbol e_2\cos x_1(1+\cos z),&
 f_2&=\boldsymbol e_2\sin x_1(1+\cos z),\notag\\
 f_3&=\boldsymbol e_1\cos x_2,&
 f_4&=\boldsymbol e_1\sin x_2.
\end{align}

The following proposition provides a sharp count of the rank of $Q$ in the case when mixed vertical profiles are allowed. 
\begin{proposition}
\label{prop:four-noises}
Among the phase-complete finite Fourier forcing ranges of
\Cref{def:phase-complete}, the smallest rank satisfying quantitative
quadratic saturation is four. It is attained by
$b_j=\sigma_jf_j$, $1\le j\le4$, with arbitrary fixed nonzero
amplitudes.
\end{proposition}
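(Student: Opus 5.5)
The proof has two parts: a lower bound showing rank four is forced, and a verification that the profiles \eqref{eq:four-noise-profiles} satisfy \Cref{def:fourier-certificate}.

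\emph{Lower bound.} This repeats the first step of the proof of \Cref{prop:five-noises} and does not use the single-level hypothesis. Quantitative quadratic saturation requires the barotropic frequencies in $\mathcal S$ to span $\mathbb Z^2$. The quadratic interaction only adds Fourier supports, so $\operatorname{supp}_{\mathrm F}\mathsf B(f,g)\subset\operatorname{supp}_{\mathrm F}f+\operatorname{supp}_{\mathrm F}g$. By induction, every horizontal frequency in $E_j$ lies in the group $\Lambda_h$ generated by the horizontal frequencies $\boldsymbol k_j$ of $E_0$. Hence $\Lambda_h=\mathbb Z^2$, so at least two non-parallel $\boldsymbol k_j$ occur. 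Phase completeness contributes both $\cos$ and $\sin$ for each of them, which gives $\operatorname{rank}Q\ge4$.

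\emph{Upper bound.} Take $b_j=\sigma_jf_j$.

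First, $b_3,b_4$ are exactly the barotropic modes at $\pm\boldsymbol e_2$, since $\boldsymbol e_1\perp\boldsymbol e_2$ makes them hydrostatic. Second, $b_1,b_2$ are sums of a barotropic part at $\pm\boldsymbol e_1$ and a level-one part $\boldsymbol e_2e^{\pm\mathrm i x_1}\cos z$. They therefore have correct Fourier support, but they are not pure modes. So $j_*=0$ fails, and the task is to isolate pure modes after finitely many interactions.

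The plan is to compute $\mathsf B$ among these generators using \eqref{eq:barotropic-gcd-shift}, \eqref{eq:two-channel-interaction} and \eqref{eq:equal-level-noise-generation}, and to aim for two outputs.

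\begin{enumerate}[label=(\alph*)]
\item A pure level-one mode. Interact the mixed profile at $\boldsymbol e_1$ with the pure barotropic mode $\phi_{\boldsymbol e_2,0,\boldsymbol e_1}$. The barotropic--barotropic part of this interaction is given by \eqref{eq:barotropic-third-direction}. Because $|\boldsymbol e_1|=|\boldsymbol e_2|$, its coefficient $|\boldsymbol t|^2-|\boldsymbol s|^2$ vanishes. The level-one part of $b_1$ is then transported to $(\boldsymbol e_1+\boldsymbol e_2,1)$ with polarization $M_{\boldsymbol e_2}(\boldsymbol e_1)\boldsymbol e_2\neq0$. This output is a pure mode at level one, which already gives $\gcd=1$.
\item A pure barotropic mode at $\pm\boldsymbol e_1$. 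I would take $\mathsf B(b_1+\mathrm ib_2,b_1+\mathrm ib_2)$-type self-interactions of the mixed profile, and also interactions of the level-one modes from (a) with their conjugates, producing barotropic components via \eqref{eq:equal-level-noise-generation} at frequency $\boldsymbol e_1$. Then subtract the level-one part, using the pure level-one modes just generated and further transported by $\pm\boldsymbol e_2$ shifts. The remainder should be a nonzero multiple of $\phi_{\boldsymbol e_1,0,\boldsymbol e_2}$.
\end{enumerate}

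\emph{Alternative route.} If (b) is awkward, a cleaner option is to first produce enough pure level-one modes at several frequencies from (a) by repeated $\pm\boldsymbol e_2$ transport, and then recover the barotropic mode at $\boldsymbol e_1$ as in Step 5 of \Cref{prop:fourier-generation}. The identity $\mathsf B(\phi_{\boldsymbol q,1,\mathsf a},\phi_{\boldsymbol l,1,\mathsf a})=\mathrm i|\boldsymbol k|\phi_{\boldsymbol k,0,\cdot}$ applies once pure level-one modes are available at $\boldsymbol q,\boldsymbol l$ with $\boldsymbol q+\boldsymbol l=\boldsymbol e_1$, both polarizations present. The full fibre can be assembled there from two different transport paths, as in the $(1,1)\to(2,2)$ computation of \Cref{lem:selected-level-full-fibres}.

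Once pure barotropic modes at $\pm\boldsymbol e_1,\pm\boldsymbol e_2$ and one pure level-one mode lie in a common $E_{j_*}^{\mathbb C}$, both conditions of \Cref{def:fourier-certificate} hold.

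\emph{Main obstacle.} The hardest step is obtaining full polarization, or a pure barotropic mode at $\boldsymbol e_1$, in the exceptional two-direction geometry $\mathcal S\subset\{\pm\boldsymbol e_1,\pm\boldsymbol e_2\}$. Here the holonomy is reducible by \Cref{lem:lattice-holonomy}, and the mixed profile couples levels $0$ and $1$. One must check explicitly that the transported level-one polarization does not lie on the invariant line $\mathbb C R\boldsymbol k$ everywhere. Failing that, one must check that the level-two escape in \eqref{eq:equal-level-noise-generation} still works, while the unwanted components from the barotropic part of $b_1$ are cancelled. I would do this by a direct, finite Fourier computation.
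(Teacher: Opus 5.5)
Your lower bound is correct and is the paper's argument. Your step (a) is also correct and matches the paper's first computation. With $g=f_1+\mathrm i f_2=\phi_{\boldsymbol e_1,0,\boldsymbol e_2}+\phi_{\boldsymbol e_1,1,\boldsymbol e_2}$ and $h_\pm=f_3\pm\mathrm i f_4=\phi_{\pm\boldsymbol e_2,0,\boldsymbol e_1}$, the barotropic part of $g$ drops out of $\mathsf B(g,h_+)$ because $|\boldsymbol e_1|=|\boldsymbol e_2|$. Hence $\mathsf B(g,h_+)=\mathrm i\phi_{(1,1),1,(1,1)}$ is a pure level-one mode.

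\textbf{The gap is step (b).} The pure barotropic mode at $\pm\boldsymbol e_1$ is left as a plan, and none of its mechanisms can reach horizontal frequency $\boldsymbol e_1$:
\begin{itemize}
\item Any interaction among modes at horizontal frequencies $\pm\boldsymbol e_1$ lands at $0$ or $\pm2\boldsymbol e_1$.
\item The $(1,1)$ mode interacting with its conjugate lands at horizontal frequency $0$. There the barotropic channel of \eqref{eq:equal-level-noise-generation} vanishes identically.
\item The alternative route fails for the same reason. Pure level-one modes reachable from (a) by $\pm\boldsymbol e_2$ transport and conjugation all have first horizontal coordinate $\pm1$, so no two of them sum to $\boldsymbol e_1$.
\item The two-path fibre computation from $(1,1)$ to $(2,2)$ uses $\boldsymbol e_1$-shifts. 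Those require the very mode $\phi_{\boldsymbol e_1,0,\boldsymbol e_2}$ you are trying to build; with $\pm\boldsymbol e_2$ shifts alone, closed-path transport is scalar.
\end{itemize}
Your ``main obstacle'' is also not an obstacle. \Cref{def:fourier-certificate} only needs one nonzero pure mode in each relevant fibre, and barotropic fibres are one-dimensional. Holonomy reducibility and invariant lines therefore never enter.

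\textbf{The missing idea.} You do not need to \emph{create} a barotropic mode at $\boldsymbol e_1$: it is already the barotropic part of $g$. You only have to remove the level-one part $\phi_{\boldsymbol e_1,1,\boldsymbol e_2}$. To get it, transport the output of (a) back with the pure barotropic mode $h_-$. By \eqref{eq:barotropic-transport-inverse}, $M_{-\boldsymbol e_2}(\boldsymbol e_1+\boldsymbol e_2)M_{\boldsymbol e_2}(\boldsymbol e_1)=-(\boldsymbol e_2\wedge\boldsymbol e_1)^2I=-I$, so the round trip returns exactly the polarization $\boldsymbol e_2$:
\begin{align*}
 \mathsf B(\mathsf B(g,h_+),h_-)=-\phi_{\boldsymbol e_1,1,\boldsymbol e_2},
 \qquad
 g+\mathsf B(\mathsf B(g,h_+),h_-)=\phi_{\boldsymbol e_1,0,\boldsymbol e_2}.
\end{align*}
Both modes lie in $E_2^{\mathbb C}$, and conjugation gives frequency $-\boldsymbol e_1$. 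Together with $h_\pm$ and the pure level-one mode, this verifies \Cref{def:fourier-certificate} with $j_*=2$. The amplitudes $\sigma_j$ play no role, since only the span $E_0$ matters.
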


\begin{proof}
The horizontal-lattice argument in the proof of
\Cref{prop:five-noises} gives $\operatorname{rank}Q\ge4$. It remains to verify the four-direction example in \eqref{eq:four-noise-profiles}. For the verification it is convenient to work in the complexification. Initially,
\begin{align*}
 g=f_1+\mathrm{i}f_2=\phi_{\boldsymbol e_1,0,\boldsymbol e_2}
   +\phi_{\boldsymbol e_1,1,\boldsymbol e_2}\in E_0^{\mathbb C},\quad \text{ and } h_\pm=f_3\pm\mathrm{i}f_4=\phi_{\pm\boldsymbol e_2,0,\boldsymbol e_1}
 \in E_0^{\mathbb C}.
\end{align*}
The barotropic contribution in $\mathsf B(g,h_+)$ vanishes, and
\eqref{eq:barotropic-gcd-shift} gives
\begin{align}\label{eq:four-noise-separation}
 \mathsf B(g,h_+)
 =\mathrm{i}\phi_{(1,1),1,(1,1)},\quad \text{ and }
 \mathsf B(\phi_{(1,1),1,(1,1)},h_-)
 =\mathrm{i}\phi_{\boldsymbol e_1,1,\boldsymbol e_2}.
\end{align}
Consequently,
\begin{align*}
 -\mathsf B(\mathsf B(g,h_+),h_-)
 =\phi_{\boldsymbol e_1,1,\boldsymbol e_2},\quad \text{ and }
 g+\mathsf B(\mathsf B(g,h_+),h_-)
 =\phi_{\boldsymbol e_1,0,\boldsymbol e_2}.
\end{align*}
Both modes therefore belong to $E_2^{\mathbb C}$, and their conjugates
give the corresponding negative horizontal frequencies. Together with
$h_\pm$, the exactly generated barotropic frequencies contain
$\pm\boldsymbol e_1,\pm\boldsymbol e_2$, while the exactly generated
positive vertical levels contain level one. Thus
\Cref{def:fourier-certificate} holds at depth two, proving quantitative
quadratic saturation. 
\end{proof}

\begin{remark}
\label{rem:minimum-scope}
The rank-five and rank-six minima concern the single-level subclass.
Allowing one horizontal phase pair to contain several vertical Fourier
levels lowers the sharp minimum to four. These optimality statements
concern quadratic or exact Fourier generation within the stated forcing
classes; they are not claimed to be necessary conditions for every
possible spectral-gap mechanism.
\end{remark}

\subsection{Uniform irreducibility}\label{sec:exponential-mixing}

We verify \Cref{ass:gap-accessibility} by the scaling--saturation
framework of \cite{GHHM}, rooted in \cite{AgrachevSarychev2005,AgrachevSarychev2006}. Unlike the argument using stability of the origin, this approach is stable under the addition of an extra deterministic forcing in \eqref{eq:pe}.  Consider the controlled primitive equations 
\begin{align}\label{eq:pe-controlled}
 \partial_tu+\nu Au+B(u,u)=\sum_{k=1}^m b_k\eta_k(t),
\end{align}
with control $\eta=(\eta_1,\cdots,\eta_m)\in L^2([0,T];\mathbb R^m)$, whose solution map is denoted by $\Phi_{t}^{\eta}$. We set  $\mathsf B(E,E) = \{\mathsf B(f,g): f,g\in E \}$ and the translation operator $\mathcal T_f$ for $f\in H$  by $\mathcal T_fu=u+f$ for $u\in H$.

The next lemma gives the equation-specific scaling step. The sign
obstruction for even quadratic nonlinearities is standard in the
scaling method; see \cite[Remark~2.22 and Section~2.4.2]{GHHM}. For the
primitive equations it is resolved by the exact cancellation between
the two horizontal phases.

\begin{lemma}
\label{lem:phase-pair-control}
Let $s\ge 6$ and $E\subset H^s$ be a finite-dimensional phase-complete Fourier space as in \Cref{def:phase-complete} and set $E^+=\operatorname{span}(E\cup\mathsf B(E,E))$. Then $E^+$ is again phase-complete.  Moreover, for every smooth $g\in H$, every compact $K\subset H^s$, as $\delta\to0$, 
\begin{align}\label{eq:scaling-generation}
 \sup_{u\in K}\left\|\mathcal T_{-\delta^{-1/2}g}
 \circ\Phi_\delta^0
 \circ\mathcal T_{\delta^{-1/2}g}u-\mathcal T_{-B(g,g)}u\right\|_{H^s}\to 0. 
\end{align}
\end{lemma}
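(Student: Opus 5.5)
The lemma has two parts. The algebraic part will come from horizontal translation invariance. The analytic part will come from a rescaled energy estimate in $H^s$ and a comparison with the explicit limit flow $\tau\mapsto u-\tau B(g,g)$.

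\emph{Phase-completeness of $E^+$.} For $c\in\T^2$ let $\tau_c f(\boldsymbol x,z)=f(\boldsymbol x+c,z)$.
\begin{itemize}
\item A finite-dimensional space of even trigonometric polynomials in $H$ is phase-complete in the sense of \Cref{def:phase-complete} if and only if it is invariant under all $\tau_c$. Indeed, $\tau_c$ rotates each pair $\boldsymbol a(z)\cos(\boldsymbol k\cdot\boldsymbol x),\boldsymbol a(z)\sin(\boldsymbol k\cdot\boldsymbol x)$ within its span. Conversely, averaging $\tau_c$ against $e^{\mp\mathrm i\boldsymbol k\cdot c}\,\dd c$ projects an invariant space onto its $\pm\boldsymbol k$ Fourier components. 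These components stay inside the space, and their real and imaginary parts give exactly such cos/sin pairs with a common profile.
\item $\mathsf P$, $w(\cdot)$ and the products in \eqref{eq:hydrostatic-nonlinearity} commute with $\tau_c$, so $\mathsf B(\tau_cf,\tau_cg)=\tau_c\mathsf B(f,g)$. Hence $E^+$ is $\tau_c$-invariant.
\item $E^+$ is finite-dimensional and consists of trigonometric polynomials, since $\mathsf B$ maps Fourier modes to finitely many modes.
\item $E^+$ is even in $z$: $w(f)$ is odd and $\partial_zg$ is odd, so their product is even.
\item $E^+\subset H$ because of the projection $\mathsf P$.
\end{itemize}
The first bullet then gives phase-completeness of $E^+$.

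\emph{Scaling limit: reduction.} Fix $u\in K$ and write $\Phi^0_t(u+\delta^{-1/2}g)=\delta^{-1/2}g+w_t$. Setting $t=\delta\tau$ and $W_\tau=w_{\delta\tau}$ for $\tau\in[0,1]$, we get
\begin{align*}
\partial_\tau W=-B(g,g)-\delta^{1/2}\bigl(\nu Ag+B(g,W)+B(W,g)\bigr)-\delta\bigl(\nu AW+B(W,W)\bigr),\qquad W_0=u.
\end{align*}
The quantity in \eqref{eq:scaling-generation} is $W_1-(u-B(g,g))$.

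\emph{Uniform $H^s$ bound.} I will prove $\sup_{\tau\le1}\|W_\tau\|_{H^s}\le C_K$ for small $\delta$ by a $\Lambda^s$ energy estimate, using that $s\ge6$ makes $H^{s-1}$ an algebra.
\begin{itemize}
\item The transport parts of $B(g,W)$ and $B(W,W)$ have their top-order terms cancel, because $(g,w(g))$ and $(W,w(W))$ are divergence free. The commutators are bounded by $C(\|g\|_{H^{s+2}})\|W\|_{H^s}^2$ and $C\|W\|_{H^s}^3$ respectively; these are the standard primitive-equation $H^s$ estimates.
\item The dangerous term is $w(W)\partial_zg$ inside $B(W,g)$, where $w(W)$ costs one derivative. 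I will absorb it into the retained dissipation:
\begin{align*}
\delta^{1/2}\bigl|\ip{\Lambda^s(w(W)\partial_zg)}{\Lambda^sW}\bigr|\le \tfrac{\delta\nu}{2}\|W\|_{H^{s+1}}^2+C_g\|W\|_{H^s}^2 .
\end{align*}
\item The terms $\delta^{1/2}\nu Ag$ and $B(g,g)$ are smooth sources.
\end{itemize}
Gronwall, together with $\delta\|W\|_{H^s}^3$ being small while $\|W\|_{H^s}\le 2C_K$, closes a continuity argument on $[0,1]$.

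\emph{Convergence.} Put $e=W_\tau-(u-\tau B(g,g))$. Then $\partial_\tau e$ is $\delta^{1/2}$ times terms bounded in $H^{s-2}$ by the uniform bound. Hence $\sup_\tau\|e\|_{H^{s-2}}\le C\delta^{1/2}$, uniformly on $K$.

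To upgrade to $H^s$ uniformly on $K$, I will use compactness. Approximate $K$ in $H^s$ by $P_Mu$, which stay in a bounded set of $H^{s+2}$. For such data, the same estimate run at level $s+2$ gives $\|e\|_{H^s}\le C_M\delta^{1/2}$. Stability of the rescaled flow in $H^s$ is uniform in $\delta$, by the same energy method applied to differences, which is the Lipschitz version of the bound above. It transfers this to $u$ with an error $\le C\sup_K\|u-P_Mu\|_{H^s}$, and this tends to $0$ as $M\to\infty$ by compactness.

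The main obstacle is the one-derivative loss from $w(W)\partial_zg$ (and $w(W)\partial_zW$). My plan handles it only through the $\delta^{1/2}$ versus $\delta\nu$ balance and the top-order cancellation, so those $H^s$ commutator estimates need to be checked carefully.
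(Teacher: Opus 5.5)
Your analytic argument for the scaling limit works, but the algebraic part has a genuine gap. The first bullet claims that a finite-dimensional, $\tau_c$-invariant space of even trigonometric polynomials in $H$ is phase-complete; only the forward implication is true. Averaging $\tau_c$ against $e^{-\mathrm i\boldsymbol k\cdot c}$ gives a component $(\alpha(z)+\mathrm i\beta(z))e^{\mathrm i\boldsymbol k\cdot\boldsymbol x}$ with a \emph{complex} profile. Its real and imaginary parts, $\alpha\cos(\boldsymbol k\cdot\boldsymbol x)-\beta\sin(\boldsymbol k\cdot\boldsymbol x)$ and $\alpha\sin(\boldsymbol k\cdot\boldsymbol x)+\beta\cos(\boldsymbol k\cdot\boldsymbol x)$, form a cosine/sine pair with a common real profile only if $\alpha$ and $\beta$ are parallel. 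For example, take the real span of the real and imaginary parts of $(\boldsymbol e_1+\mathrm i\boldsymbol e_2)\cos z\,e^{\mathrm i x_1}$. It is a two-dimensional translation-invariant subspace of $H$, but it is not phase-complete, since it contains no nonzero field of the form $\boldsymbol a(z)\cos x_1$. So translation invariance of $E^+$ does not give the conclusion. What is missing is the horizontal reflection $(\mathcal Ru)(\boldsymbol x,z)=u(-\boldsymbol x,z)$ used in the paper. A phase-complete $E$ satisfies $\mathcal RE=E$. Moreover $\mathsf B(\mathcal Rf,\mathcal Rg)=-\mathcal R\mathsf B(f,g)$, because the transport term and $w(\cdot)$ each carry exactly one horizontal derivative while $\mathsf P$ commutes with $\mathcal R$. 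Hence $E^+$ is $\mathcal R$-invariant. Now take a horizontal Fourier component $f=\boldsymbol a(z)\cos(\boldsymbol k\cdot\boldsymbol x)+\boldsymbol c(z)\sin(\boldsymbol k\cdot\boldsymbol x)$ of $E^+$, which lies in $E^+$ by your averaging. Then $\frac12(f\pm\mathcal Rf)$ separates the two phases, and a quarter-period translation supplies $\boldsymbol a\sin(\boldsymbol k\cdot\boldsymbol x)$ and $\boldsymbol c\cos(\boldsymbol k\cdot\boldsymbol x)$. (Equivalently, in complex notation, $\mathsf B$ maps modes with real profiles to $\mathrm i$ times modes with real profiles. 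That reality structure, not translation invariance, is what has to be propagated.)

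For the scaling limit your route differs from the paper's. The paper proves no separate uniform bound. It subtracts $\zeta_\delta(t)=e^{-\nu\delta tA}u-tB(g,g)$ and estimates $e_\delta=v_\delta-\zeta_\delta$ directly in $H^s$, using only the crude bilinear bounds $H^s\times H^s\to H^{s-1}$ and $H^{s+1}\times H^s\to H^s$. The top-order contributions are absorbed into $\nu\delta\|A^{1/2}e_\delta\|_{H^s}^2$, and the estimate closes by Gronwall and a first-exit argument. Compactness of $K$ enters only through $\sup_{u\in K}\delta\int_0^1\|e^{-\nu\delta tA}u\|_{H^{s+1}}^2\dd t\to0$ and $\sup_{u\in K}\|e^{-\nu\delta tA}u-u\|_{H^s}\to0$. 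Your sequence (uniform $H^s$ bound, an $H^{s+2}$ estimate for $P_Mu$, and $\delta$-uniform $H^s$ stability of the rescaled flow) reaches the same conclusion with more work. One claim there is wrong, though harmless: $|\langle B(W,W),W\rangle_{H^s}|\le C\|W\|_{H^s}^3$ fails for the hydrostatic nonlinearity. The commutator contains $(\partial^\alpha w(W))\partial_zW$ with $s+1$ derivatives on $W$, which is the derivative loss of hydrostatic Euler. You do not need that bound. The estimate $\delta\|B(W,W)\|_{H^{s-1}}\|W\|_{H^{s+1}}\le\frac{\delta\nu}{4}\|W\|_{H^{s+1}}^2+C\delta\|W\|_{H^s}^4$ is absorbed exactly as you treat $w(W)\partial_zg$, and the same absorption closes your difference estimate.
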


\begin{proof}
We first verify that $E^+$ is phase-complete. Write
$(\tau_{\boldsymbol y}u)(\boldsymbol x,z)
=u(\boldsymbol x+\boldsymbol y,z)$ and
$(\mathcal Ru)(\boldsymbol x,z)=u(-\boldsymbol x,z)$. Since
\begin{align*}
 B(\tau_{\boldsymbol y}g,\tau_{\boldsymbol y}h)
 =\tau_{\boldsymbol y}B(g,h),\quad 
 B(\mathcal Rg,\mathcal Rh)=-\mathcal RB(g,h),
\end{align*}
and $E$ is invariant under horizontal translations and reflection,
the same is true for $E^+$ which consists of trigonometric
polynomials, hence each of its
horizontal Fourier components then belongs to $E^+$. Consider a nonzero
frequency $\boldsymbol k$ and a corresponding real component
\begin{align*}
 f(\boldsymbol x,z)
 =\boldsymbol a(z)\cos(\boldsymbol k\cdot\boldsymbol x)
 +\boldsymbol c(z)\sin(\boldsymbol k\cdot\boldsymbol x)
 \in E^+.
\end{align*}
Then reflection separates the two phases, yielding $ \frac12(f+\mathcal Rf)
=\boldsymbol a(z)\cos(\boldsymbol k\cdot\boldsymbol x)\in E^+$ and 
\begin{align*}
 \frac12(f-\mathcal Rf)
 =\boldsymbol c(z)\sin(\boldsymbol k\cdot\boldsymbol x)\in E^+.
\end{align*}
Choose $\boldsymbol y\in\mathbb T^2$ such that
$\boldsymbol k\cdot\boldsymbol y=\pi/2$ modulo $2\pi$. Translation by
$\boldsymbol y$ then shows that
\begin{align*}
 \boldsymbol a(z)\sin(\boldsymbol k\cdot\boldsymbol x),\qquad
 \boldsymbol c(z)\cos(\boldsymbol k\cdot\boldsymbol x)
 \in E^+,
\end{align*}
up to irrelevant signs. Hence every vertical profile occurring at a
nonzero horizontal frequency occurs with both horizontal phases.
Horizontally constant profiles are preserved separately. Therefore
$E^+$ is phase-complete.

We next derive the quadratic scaling limit. Fix a smooth $g$, $u\in K$ and consider the auxiliary composition
\begin{align*}
 v_\delta(t)
 =
 \mathcal T_{-\delta^{-1/2}g}
 \circ\Phi_{\delta t}^0
 \circ\mathcal T_{\delta^{-1/2}g}u,
\end{align*}
which solves 
\begin{align*}
 \partial_tv_\delta
 =-B(g,g)-\delta^{1/2}\{\nu Ag+\mathsf B(g,v_\delta)\}-\delta\{\nu Av_\delta+B(v_\delta,v_\delta)\},
 \qquad v_\delta(0)=u.
\end{align*}
Set $\zeta_{\delta}(t)=e^{-\nu\delta tA}u-tB(g,g)$ and $e_\delta=v_\delta-\zeta_{\delta}$. Compactness of $K$, standard heat semigroup energy identity and  Poincar\'e's inequality  yield
\begin{align*}
 \sup_{u\in K}\sup_{0\le t\le1}\|\zeta_{\delta}(t)\|_{H^s}\le C_K,
 \qquad
 \sup_{u\in K}\delta\int_0^1\|\zeta_{\delta}(t)\|_{H^{s+1}}^2\,\dd t\to0.
\end{align*}
Here and below, $C_K$ may depend on $s,\nu,g$ and $\sup_{u\in K}\|u\|_{H^s}$, but not on $\delta\in(0,1]$.
Subtracting the equation for $\zeta_{\delta}$ from that for $v_\delta$, we obtain
\begin{align*}
 \partial_te_\delta+\nu\delta Ae_\delta
 +\delta^{1/2}\mathsf B(g,e_\delta)
 +\delta\mathsf B(\zeta_{\delta},e_\delta)+\delta B(e_\delta,e_\delta)
 &=\nu\delta tAB(g,g)\\
 &\quad-\delta^{1/2}\{\nu Ag+\mathsf B(g,\zeta_{\delta})\}
 -\delta B(\zeta_{\delta},\zeta_{\delta}),
\end{align*}
with $e_\delta(0)=0$. 
For $s\ge6$, the Sobolev product estimates give 
\begin{align*}
 \|B(f,h)\|_{H^{s-1}}&\le C_s\|f\|_{H^s}\|h\|_{H^s},\\
 \|B(f,h)\|_{H^s}&\le C_s\bigl(\|f\|_{H^{s+1}}\|h\|_{H^s}
 +\|f\|_{H^s}\|h\|_{H^{s+1}}\bigr).
\end{align*}
Using the first estimate for the terms containing $e_\delta$, the second for the remaining terms, and Young's inequality in the $H^s$ energy identity, we find
\begin{align*}
 \frac{d}{dt}\|e_\delta\|_{H^s}^2
 +\nu\delta\|A^{1/2}e_\delta\|_{H^s}^2
 \le C_K\|e_\delta\|_{H^s}^2
 +C_K\delta\|e_\delta\|_{H^s}^4
 +C_K\delta\bigl(1+\|\zeta_{\delta}\|_{H^{s+1}}^2\bigr).
\end{align*}
Up to the first time at which $\|e_\delta\|_{H^s}=1$, Gronwall's inequality implies
\begin{align*}
 \sup_{0\le t\le1}\|e_\delta(t)\|_{H^s}^2
 \le C_K\delta\left(1+\int_0^1\|\zeta_{\delta}(t)\|_{H^{s+1}}^2\,\dd t\right).
\end{align*}
The right-hand side tends to zero uniformly for $u\in K$, so this first-exit time exceeds $1$ for all sufficiently small $\delta$. Consequently,
\begin{align*}
 \sup_{u\in K}\sup_{0\le t\le1}
 \|v_\delta(t)-(u-tB(g,g))\|_{H^s}
 \le \sup_{u\in K}\sup_{0\le t\le1}\|e_\delta(t)\|_{H^s}+\sup_{u\in K}\sup_{0\le t\le1}\|e^{-\nu\delta tA}u-u\|_{H^s}
 \to0,
\end{align*}
which proves \eqref{eq:scaling-generation}.
\end{proof}

The following lemma upgrades the quadratic scaling above to translations in
arbitrary $H^s$ directions under the quantitative saturation condition.
\begin{lemma}\label{lem:short-time-translation}
Assume the forcing conditions of \Cref{thm:main} and $s\geq 6$. Then for every $f\in H^s$, compact $K\subset H^s$, and
$\varepsilon,\tau>0$, there are $0<t<\tau$ and $\eta\in L^2([0,t];\mathbb R^m)$ such that 
\begin{align*}
\sup_{u\in K}\left\|\Phi_{t}^{\eta}(u)-\mathcal T_{f}(u)\right\|_{H^s}<\varepsilon. 
\end{align*} 
\end{lemma}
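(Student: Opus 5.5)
The plan follows the scaling--saturation scheme of \cite{GHHM}. Call a direction $f\in H^s$ \emph{asymptotically reachable} if for every compact $K\subset H^s$ and all $\varepsilon,\tau>0$ there are $0<t<\tau$ and a control $\eta$ with $\sup_{u\in K}\|\Phi_t^\eta(u)-\mathcal T_f u\|_{H^s}<\varepsilon$. Let $\mathcal R\subset H^s$ be the set of such directions. The proof will show that $\mathcal R$ is a closed convex cone containing $E_0$, that it is closed under $f\mapsto\pm\mathsf B(f,g)$ on phase-complete spaces, and then conclude by density.

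\emph{Step 1: $E_0\subset\mathcal R$ and the cone structure.} For $f=Q\zeta\in E_0$, take the impulsive control $\eta=\zeta/t$ on $[0,t]$. Well-posedness and continuous dependence in $H^s$, uniform over the compact set $K$, show that $\Phi_t^\eta(u)\to u+f$ as $t\to0$. Closedness under addition comes from concatenating controls: the composition of two approximate translations on the compact set $K$ and its translate $K+f_1$ is an approximate translation by $f_1+f_2$, with total time below $\tau$. Closedness of $\mathcal R$ in $H^s$ follows from uniform continuity of the maps on compacts. In particular, $\mathcal R$ contains the vector space $E_0$.

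\emph{Step 2: quadratic directions.} Let $E\subset\mathcal R$ be a phase-complete vector space. For $g\in E$, approximate the three maps $\mathcal T_{\delta^{-1/2}g}$, $\Phi_\delta^0$ and $\mathcal T_{-\delta^{-1/2}g}$ by controlled flows; the first and last use Step~1, and the middle is the free flow. By \eqref{eq:scaling-generation} of \Cref{lem:phase-pair-control}, this places $-B(g,g)$ in $\mathcal R$, uniformly on compacts. The quantity $\delta^{-1/2}g$ is large but fixed, so compactness is preserved. To pass from $-B(g,g)$ to the symmetric interaction $\pm\mathsf B(f,h)$, apply the argument to $g=f\pm h$ and to the phase-shifted copies of $f,h$. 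Polarization gives $-\mathsf B(f,h)$ from $-B(f+h,f+h)+B(f-h,f-h)$; this needs $+B$, i.e.\ the negative sign.

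\emph{The main obstacle is this sign.} I expect the step supplying $+B(g,g)$ to be the hardest, and I would try first to resolve it through phase completeness, as hinted before \Cref{lem:phase-pair-control}. For $g=\boldsymbol a(z)\cos(\boldsymbol k\cdot\boldsymbol x)$ and its shift $g'=\boldsymbol a(z)\sin(\boldsymbol k\cdot\boldsymbol x)$, the nonzero-frequency parts of $B(g,g)$ and $B(g',g')$ should cancel exactly, while the zero-mode parts coincide. Combining $-B(g,g)$ with $-B(g',g')$ and with cross terms $-B(g\pm g',g\pm g')$ then yields every element of $\mathsf B(E,E)$ with both signs; in particular $\pm\mathsf B(f,h)\in\mathcal R$ for $f,h\in E$. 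Hence $E^+\subset\mathcal R$, and $E^+$ is phase-complete by \Cref{lem:phase-pair-control}, so the step can be iterated.

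\emph{Step 3: conclusion.} Induction on \eqref{eq:quadratic-saturation} gives $E_n\subset\mathcal R$ for all $n$. By \Cref{prop:geometric-generation}, every hydrostatic Fourier mode lies in $\bigcup_n E_n$, and finite Fourier sums are dense in $H^s$. Closedness of $\mathcal R$ in $H^s$ then gives $\mathcal R=H^s$, which is the statement.
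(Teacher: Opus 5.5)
Your proposal is correct and follows essentially the paper's proof (large constant controls for $E_0$, the scaling limit of \Cref{lem:phase-pair-control} composed with approximate translations to reach $-B(g,g)$, the phase-pair identity to recover the opposite sign, polarization, induction on $E_n$ using phase completeness, and density via \Cref{prop:geometric-generation}); the only difference is that you package it through the closed cone $\mathcal R$ rather than a direct induction.

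The sign step needs two corrections. First, the phase-pair identity is exact, $B(g',g')=-B(g,g)$, because both terms are supported at horizontal frequencies $\pm2\boldsymbol k$ and have no horizontal-mean part; if nonzero mean parts merely coincided, the opposite sign would not follow. Second, the polarization should be $\mp\mathsf B(e,f)=-B(e\pm f,e\pm f)+B(e,e)+B(f,f)$ for members $e,f$ of a phase-complete spanning family, with $B(e,e)=0$ for horizontally constant members. This version uses $+B$ only on single members. Your identity $-B(f+h,f+h)+B(f-h,f-h)$ would require $+B$ on the general element $f-h$, which the phase trick does not supply when $f,h$ come from different phase pairs.
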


\begin{proof}
Recall $E_0=\Ran Q$ and the quadratic extensions $E_n$ from
\eqref{eq:quadratic-saturation}. We first prove by induction that, for
every $n\ge0$, $f\in E_n$, and  $\varepsilon,\tau>0$, there are $0<t<\tau$ and
$\eta\in L^2([0,t];\mathbb R^m)$ such that
\begin{align}\label{eq:small-time-En-translation}
 \sup_{u\in K}
 \|\Phi_t^\eta(u)-\mathcal T_f(u)\|_{H^s}<\varepsilon.
\end{align}
For $n=0$, write $f=Q\alpha$. Given $\delta>0$, take the constant
control $\eta_\delta=\delta^{-1}\alpha$ on $[0,\delta]$ and set
$v_\delta(r)=\Phi_{\delta r}^{\eta_\delta}(u)$, $0\le r\le1$. Then
\begin{align*}
 \partial_rv_\delta
 =f-\delta\{\nu Av_\delta+B(v_\delta,v_\delta)\},
 \qquad v_\delta(0)=u.
\end{align*}
The usual short-time estimate gives
$v_\delta(1)\to u+f$ in $H^s$, uniformly for $u\in K$. Taking
$\delta<\tau$ sufficiently small proves
\eqref{eq:small-time-En-translation} for $E_0$.

Assume now that \eqref{eq:small-time-En-translation} holds for $E_n$.
By repeated application of \Cref{lem:phase-pair-control}, $E_n$ is
phase-complete. For $g\in E_n$, the scaling limit
\eqref{eq:scaling-generation} gives
\begin{align*}
 \mathcal T_{-\delta^{-1/2}g}\circ\Phi_\delta^0
 \circ\mathcal T_{\delta^{-1/2}g}
 \longrightarrow\mathcal T_{-B(g,g)}
\end{align*}
uniformly on compact subsets of $H^s$. For each fixed $\delta$, the induction
hypothesis realizes the two translations
$\mathcal T_{\pm\delta^{-1/2}g}$ by genuine controlled endpoints in
arbitrarily small time. The compact-uniform transitivity of the
controlled flows \cite[Section~3]{GHHM} therefore shows that the
translation by $f=-B(g,g)$ satisfies
\eqref{eq:small-time-En-translation}.

Choose a phase-complete spanning family of $E_n$. For every horizontal
phase pair
\begin{align*}
 e^c(\boldsymbol x,z)
 =\boldsymbol a(z)\cos(\boldsymbol k\cdot\boldsymbol x),\qquad
 e^s(\boldsymbol x,z)
 =\boldsymbol a(z)\sin(\boldsymbol k\cdot\boldsymbol x),
\end{align*}
direct computation gives
$B(e^s,e^s)=-B(e^c,e^c)$. Hence both signs of $B(e,e)$ are available
for every member $e$ of the spanning family; for horizontally constant
members one has $B(e,e)=0$. For two members $e,f$, polarization gives
\begin{align*}
 -\mathsf B(e,f)
 &=-B(e+f,e+f)+B(e,e)+B(f,f),\\
 \mathsf B(e,f)
 &=-B(e-f,e-f)+B(e,e)+B(f,f).
\end{align*}
Thus both signs of every direction in $\mathsf B(E_n,E_n)$ satisfy the
same small-time translation property \eqref{eq:small-time-En-translation}. Finite composition and linearity
then give \eqref{eq:small-time-En-translation} for
\begin{align*}
 E_{n+1}=\operatorname{span}(E_n\cup\mathsf B(E_n,E_n)),
\end{align*}
which completes the induction.

By \Cref{prop:geometric-generation}, quantitative quadratic saturation
implies that $\bigcup_{n\ge0}E_n$ is dense in
$H^s$. Given $f\in H^s$, choose $f_N\in E_n$ for some $n$ such that
$\|f-f_N\|_{H^s}<\varepsilon/2$. Applying
\eqref{eq:small-time-En-translation} to $f_N$ with accuracy
$\varepsilon/2$ yields
\begin{align*}
 \sup_{u\in K}
 \|\Phi_t^\eta(u)-\mathcal T_f(u)\|_{H^s}
 \le
 \sup_{u\in K}
 \|\Phi_t^\eta(u)-\mathcal T_{f_N}(u)\|_{H^s}
 +\|f_N-f\|_{H^s}
 <\varepsilon.
\end{align*}
\end{proof}

Recall the  regular phase space $\mathscr E$ in \eqref{eq:regular-phase-space}. We are now ready to prove the uniform irreducibility required by \Cref{ass:gap-accessibility}. 
\begin{proposition}
\label{prop:mixing-accessibility}
Keep the forcing conditions of \Cref{thm:main}. Then, for every
$R<\infty$, $v_*\in\mathscr E$, and every neighborhood
$\mathcal O$ of $v_*$ in $\mathscr E$, there is $p>0$ such that
\begin{align}\label{eq:pe-accessibility}
 \inf_{U(u)\le R}P_2(u,\mathcal O)\ge p.
\end{align}
\end{proposition}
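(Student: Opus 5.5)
The plan is the standard three-stage route: regularize the initial data, steer them near $v_*$ with a deterministic control, and pass from control to Brownian paths by support and continuity arguments. \emph{Stage 1 (coming down and compactification):} use the time interval $[0,1]$. By \eqref{eq:mixing-coming-down} with $p=1$, $t=1$, one has $P_1U(u)\le Ce^{\mathsf k\|u\|_H^2}\le C_R$ on $\{U\le R\}$, since $U(u)\ge\exp(C_\nu\eta\|u\|_H^2)$ bounds $\|u\|_H$. Parabolic smoothing of strong solutions should then give, for a large $M$ depending on $R$, a compact set $K\subset H^s$ ($s\ge6$) with $P_1(u,K)\ge 1/2$ uniformly on $\{U\le R\}$. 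One natural choice is the image of $\{U\le C_R'\}\times\{\text{bounded drivers}\}$ under the time-$1/2$ map composed with a further time-$1/2$ smoothing. To keep this concrete, I would first restrict to the event that the noise on $[0,1]$ is small in $C^{\mathsf p}$. On that event, the strong solution started from $H^1$ data with bounded $U$ is bounded in $H^{s+1}$ at time $1$, by standard analytic/Gevrey regularization (the mechanism behind \Cref{lem:analytic-core}). Compactness in $H^s$ then follows from the $H^{s+1}$ bound.

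\emph{Stage 2 (deterministic control to $v_*$):} since $\mathscr E\supset H^s$ densely and the $\mathscr E$-norm is dominated by the $H^s$ norm, shrink $\mathcal O$ to an $\mathscr E$-ball $B_{\mathscr E}(v_*,2\varepsilon)$. Pick a smooth $\tilde v\in H^s$ with $\|\tilde v-v_*\|_{\mathscr E}<\varepsilon$. For each $u\in K$ we want a control on $[0,1]$ steering $u$ near $\tilde v$. First run the free flow for a time $t_0$, then translate by $\tilde v-\Phi^0_{t_0}(u)$. This translation depends on $u$, so I instead proceed as follows. Lemma~\ref{lem:short-time-translation} gives uniformity only for a fixed translation $f$. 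Cover $K$ by finitely many $H^s$-balls of radius $r$ centered at $u_i$. For each $i$, apply the lemma to $f_i=\tilde v-u_i$, the compact set $K$ (restricted to the ball), and accuracy $\varepsilon/2$. This gives $t_i<\tau$ and controls $\eta_i$ with $\Phi_{t_i}^{\eta_i}(u)$ within $\varepsilon/2+r$ of $\tilde v$ for $u$ in the $i$-th ball. Then pad with a short zero-control segment $[t_i,1]$: the solution stays near $\tilde v$ after a slight modification, namely using the lemma in reverse ordering by first waiting and then translating, with $\tau$ small. Continuity of $\Phi$ in the data and the control, uniform on compacts of $H^s$, yields an open neighborhood $\mathcal U_i$ of $\eta_i$ in $C_0([0,1];\R^m)$ (lifted as rough paths, as in the paper's rough-path framework) such that every driver in $\mathcal U_i$ maps the $i$-th ball into $B_{\mathscr E}(v_*,2\varepsilon)$.

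\emph{Stage 3 (support theorem and conclusion):} by the Stroock--Varadhan support theorem for Brownian motion, or simply full support of Wiener measure in $C_0$ combined with pathwise continuity of the additive-noise solution map in the driver, each $\mathcal U_i$ has Wiener measure $p_i>0$. The Markov property at time $1$ then gives
\begin{align*}
P_2(u,\mathcal O)\ge\int_K P_1(u,\dd w)\,P_1(w,\mathcal O)\ge \tfrac12\min_i p_i=:p
\end{align*}
uniformly over $\{U\le R\}$. The main obstacle is Stage 1: uniform positive-probability entry into a fixed $H^s$-compact set from a merely $U$-bounded family in $H^1$. I would handle it by combining \eqref{eq:maximal-U}, so that $\sup_{t\le1}U$ is bounded with probability $\ge3/4$ by Chebyshev, with a small-noise event and higher Sobolev energy estimates on $[1/2,1]$.
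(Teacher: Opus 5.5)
Your proposal follows essentially the paper's argument. A bounded-driver prefix on $[0,1]$ sends $\{U\le R\}$ into a compact analytic subset of $H^s$ with positive probability; this is Lemma~\ref{lem:bounded-driver-regularization}, which is pathwise, so the Chebyshev detour through \eqref{eq:maximal-U} is unnecessary. This is followed by a wait-then-translate control from Lemma~\ref{lem:short-time-translation}, full support of Wiener measure together with continuity in the driver and compactness to get a uniform lower bound, and Chapman--Kolmogorov at time $1$.

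The one step to tighten is Stage 2. The zero-control segment $[t_i,1]$ is long, not short, so only the reversed order (wait, then translate) works. In that order you must translate by $\tilde v-\Phi^0_1(u_i)$ (not $\tilde v-u_i$), uniformly over the compact free-flow tube $\{\Phi^0_\sigma(u):u\in K\cap\overline{B_i},\ 1-\tau\le\sigma\le1\}$, with $\tau$ fixed before invoking the lemma. This is exactly the paper's construction; the paper does it pointwise in $v$ and recovers uniformity from the Feller property on the compact set, rather than from your finite cover by balls.
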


\begin{proof}
Let $s\ge6$. Using the continuous embedding
$H^s\hookrightarrow\mathscr E$, choose a smooth state $\hat v$ so close
to $v_*$ in $\mathscr E$ that
$B_{H^s}(\hat v,3\varepsilon)\subset\mathcal O$ for some
$\varepsilon>0$. Choose a compact set
$\Gamma_0\subset C_0([0,1];\mathbb R^m)$ of positive Wiener measure. Since $U(u)\le R$ bounds
$K(u)$ by \eqref{eq:mixing-lyapunov} and \eqref{eq:K}, \Cref{lem:bounded-driver-regularization} places all prefix
endpoints $\Phi_1(u,\omega_0)$ for $U(u)\le R$ and $\omega_0\in\Gamma_0$
in a bounded analytic set. After decreasing the analytic radius and
taking the closure in $H^s$, we obtain a compact set
$\mathscr D_R\subset H^s$ containing all such endpoints.

We next construct, for each $v\in\mathscr D_R$, a control on the fixed
time interval $[0,1]$ which steers $v$ into
$B_{H^s}(\hat v,\varepsilon)$. Let
$v_r=\Phi_r^0(v)$ and set
$f=\hat v-v_1$. By continuity of the uncontrolled trajectory, there is 
$\tau\in(0,1)$ such that $\|v_r-v_1\|_{H^s}<\varepsilon/2$ for  $1-\tau\le r\le1$. 
The set
\begin{align*}
 K_v=\{v_r:1-\tau\le r\le1\}
\end{align*}
is compact in $H^s$. By
\Cref{lem:short-time-translation}  we obtain
$0<t<\tau$ and a control
$\eta_0\in L^2([0,t];\mathbb R^m)$ such that
\begin{align*}
 \sup_{u\in K_v}
 \|\Phi_{t}^{\eta_0}(u)-\mathcal T_{f}u\|_{H^s}
 <\frac{\varepsilon}{2}.
\end{align*}
Since $v_{1-t}\in K_v$, one has 
\begin{align*}
 \|\Phi_{t}^{\eta_0}(v_{1-t})-\hat v\|_{H^s}
 \le
 \|\Phi_{t}^{\eta_0}(v_{1-t})
      -(v_{1-t}+f)\|_{H^s}
 +\|v_{1-t}-v_1\|_{H^s}<\varepsilon.
\end{align*}
Defining the control $\bar\eta$ on $[0,1]$ by $0$ on $(0,1-t)$ and $\eta_0(r-1+t)$ on $(1-t,1)$, then 
\begin{align}\label{eq:controlled-target}
 \|\Phi_1^{\bar\eta}(v)-\hat v\|_{H^s}<\varepsilon.
\end{align}
Since the Wiener measure has full support and the Markov semigroup is Feller on $H^s$, there is $p_0>0$ such that 
\begin{align*}
\inf_{v\in \mathscr D_R} P_{1}(v,B_{H^s}(\hat v, \varepsilon))\geq p_0.
\end{align*}
By the Chapman--Kolmogorov equation, we have 
\begin{align*}
\inf_{U(u)\le R}P_2(u,\mathcal O) = \inf_{U(u)\le R}\int_{H^s}P_1(v,\mathcal O)P_1(u,dv)\geq p_0 \inf_{U(u)\le R} P_1(u,\mathscr D_R)\geq \gamma_1(\Gamma_0)p_0>0,
\end{align*}
where $\gamma_1$ is the Wiener measure on $C_0\left([0,1];\mathbb R^m\right)$. 
\end{proof}

\subsection{Proof of the main results}\label{sec:assembly}

We now combine the abstract criterion with the established estimates  to complete the proof of \Cref{thm:main} and \Cref{thm:exponential-mixing}.

\subsubsection{Proof of \Cref{thm:main}}
\begin{proof}
It suffices to verify the hypotheses of \Cref{thm:structural-gap}. 
Fix $\eta\in(0,\eta_0]$, with $\eta_0$ from
\Cref{lem:admissibility}, set $\kappa=C_\nu\eta$, and work on the
Polish state space $\mathcal X=\mathscr E$ as in \eqref{eq:regular-phase-space}. By
\Cref{lem:mixing-foster}, $P_t(u,\mathscr E)=1$ for
$u\in H^1$, which together with \Cref{prop:pe-basics} provides an invariant
probability on $H^1$ concentrated on $\mathscr E$. Thus $(P_t)$
restricts to a Markov semigroup on $\mathscr E$ with an invariant
probability. Take
\begin{align*}
 d(u,v)=1\wedge\|u-v\|_H
\end{align*}
as comparison metric and use the block kernel $P=P_2$. Then
for the Lyapunov function 
\begin{align*}
U(u)=\left(1+\alpha\|u\|_8^8+\|\partial_zu\|_4^4+\|\nabla u\|_2^2\right)^\eta\exp\left(C_{\nu}\eta\|u\|_H^2\right)
\end{align*}
as in \eqref{eq:mixing-lyapunov} with $\alpha$ determined by \Cref{lem:cascade-direct}, 
\Cref{ass:gap-drift} follows from \Cref{lem:mixing-foster}, and \Cref{ass:gap-amplification} follows from \Cref{lem:one-sided-growth} and \Cref{lem:mixing-conditional-clock}, particularly \eqref{eq:eht}, with 
\begin{align*}
H_T=C_\nu\int_0^TK(v_s)\,\dd s,\quad K(v) = 1+\alpha\|v\|_8^8+\|\partial_zv\|_4^4+\|\nabla v\|_2^2, 
\end{align*}
while  \Cref{prop:mixing-accessibility} gives
\Cref{ass:gap-accessibility}.

It remains to verify \Cref{ass:gap-local-repair} by combining the regularization property of the primitive equations with \Cref{prop:compact-source-repair}. Fix
\begin{align*}
 R>\frac{4b}{1-\lambda}.
\end{align*}
Choose a compact set
$\Gamma_0\subset C_0([0,1];\R^m)$ of positive Wiener measure. Since $U(u)\le R$ bounds $K(u)$,
\Cref{lem:bounded-driver-regularization} implies that
\begin{align*}
 \Phi_1(u,\omega_0),\qquad
 U(u)\le R,\quad \omega_0\in\Gamma_0,
\end{align*}
belong to a compact analytic set $\mathscr D_R\subset H^s$ for a fixed $s\ge6$, upon decreasing the analytic
radius and taking the closure in $H^s$.  Moreover,
\Cref{lem:one-sided-growth} and \Cref{lem:bounded-driver-regularization} give $L_R<\infty$ such that
\begin{align}\label{eq:pe-prefix-interface}
 \|\Phi_1(v,\omega_0)-\Phi_1(u,\omega_0)\|_H
 \le L_R\|v-u\|_H
\end{align}
whenever $U(u)\le R$, $\omega_0\in\Gamma_0$, and $v$ is sufficiently
close to $u$. We next apply \Cref{prop:polynomial-source} on the independent unit
suffix with initial set $\mathscr D_R$. It gives a compact set
$\Gamma_1\subset C_0([0,1];\R^m)$ of positive Wiener measure and constants
$C,b_*,\delta_0>0$ such that, uniformly on
$\mathscr D_R\times\Gamma_1$,
\begin{align}\label{eq:pe-source-interface}
 JJ^*\le C_\delta^2AA^*+\delta^2I,
 \qquad
 C_\delta\le C\delta^{-b_*},
 \qquad 0<\delta<\delta_0.
\end{align}
The endpoint remainder and continuous-driver estimates in
\Cref{lem:endpoint-remainder,lem:driver-chart} verify the regular
dependence required in \Cref{prop:compact-source-repair} on the same
fixed neighborhood of $\mathscr D_R\times\Gamma_1$. Hence
\Cref{prop:compact-source-repair} applies to the unit suffix.

We now compose this compact-core repair with the prefix regularization.
Let $u,v\in\mathscr E$ satisfy
\begin{align*}
 \min\{U(u),U(v)\}\le R,\qquad
 r=d(u,v)\in(0,r_*],
\end{align*}
and choose the coordinate with smaller $U$ as the reference, say $u_t=\Phi_{t}(u,\omega)$. Use the
same Brownian increment on the first unit interval. On the event
$\{\omega_0\in\Gamma_0\}$, write
\begin{align*}
 u_1=\Phi_1(u,\omega_0),\qquad
 v_1=\Phi_1(v,\omega_0).
\end{align*}
Then $u_1\in \mathscr D_R$ and, by \eqref{eq:pe-prefix-interface},
\begin{align}\label{eq:pe-prepared-separation}
 r_1:=\|u_1-v_1\|_H\le L_Rr.
\end{align}
Conditionally on the prefix, apply the exact suffix coupling supplied
by \Cref{prop:compact-source-repair}. Outside
$\{\omega_0\in\Gamma_0\}$ use the synchronous suffix coupling. Since
both suffix constructions have the correct conditional marginals, their
composition gives a jointly Borel exact coupling $(X',Y')$ of
$P_2(u,\cdot)$ and $P_2(v,\cdot)$. For $b_*$ from \eqref{eq:pe-source-interface}, fix
\begin{align*}
 0<a<(1+2b_*)^{-1},\qquad
 \varkappa=1-a(1+2b_*)>0.
\end{align*}
By \Cref{prop:compact-source-repair} and
\eqref{eq:pe-prepared-separation}, with $p_0=\gamma_1(\Gamma_0)\gamma_1(\Gamma_1)>0$,  where $\gamma_1$ is the Wiener measure on $C_0([0,1];\mathbb R^m)$, one has 
\begin{align*}
 \PP\{d(X',Y')\le C(L_Rr)^{1+a}\}
 \ge p_0-C(L_Rr)^\varkappa. 
\end{align*}
After changing constants and decreasing $r_*$, one has 
\begin{align*}
 C(L_Rr)^{1+a}\le r^{1+a/2},
 \qquad
 C(L_Rr)^\varkappa\le p_0/2,
\end{align*}
and therefore
\begin{align}\label{eq:pe-local-superlinear-repair}
 \PP\{d(X',Y')\le r^{1+a/2}\}\ge p_0/2.
\end{align}

It remains to verify the second estimate \eqref{eq:gap-repair-input-2} in
\Cref{ass:gap-local-repair}. On $\{\omega_0\in\Gamma_0\}$, conditionally
on the prefix, the second estimate in \Cref{prop:compact-source-repair}
gives
\begin{align*}
 \PP\left\{d(X',Y')>
 e^{C+C_\nu\int_1^2K(u_s)\dd s}r_1\,\middle|\,\mathscr F_1\right\}
 \le Cr_1^\varkappa,
\end{align*}
where $u_s$ denotes the reference trajectory. By the synchronous
prefix estimate \eqref{eq:global-difference},
\begin{align*}
 r_1\le e^{C_\nu\int_0^1K(u_s)\dd s}r,
\end{align*}
and hence, with $H_2=C_\nu\int_0^2K(u_s)\dd s$ and
\eqref{eq:pe-prepared-separation},
\begin{align*}
 \PP\left\{d(X',Y')>e^{C+H_2}r\,\middle|\,\mathscr F_1\right\}
 \le Cr_1^\varkappa\le CL_R^\varkappa r^\varkappa.
\end{align*}
On $\{\omega_0\notin\Gamma_0\}$ the suffix coupling is synchronous, so
$d(X',Y')\le e^{H_2}r$. Therefore, after changing constants,
\begin{align*}
 \PP\{d(X',Y')>e^{C_*+H_2}r\}\le C_*r^\varkappa.
\end{align*}
Together with \eqref{eq:pe-local-superlinear-repair}, this verifies
\Cref{ass:gap-local-repair} for $P_2$, with $a_*=a/2$.

Therefore
\Cref{thm:structural-gap} applies. Its explicit cost is exactly
\eqref{eq:main-explicit-cost}, and we obtain
\eqref{eq:main-explicit-gap}, uniqueness on $\mathscr E$, and
$\mu(U)<\infty$. By
\Cref{lem:mixing-foster} and invariance, every invariant  measure on $H^1$ is
concentrated on $\mathscr E$, so uniqueness holds on the full strong phase
space $H^1$. To prove that $\mu$ has full support in $H^1$, let
$\mathcal O\subset H^1$ be nonempty and open. Choose
$v_*\in\mathscr E\cap\mathcal O$ and an $\mathscr E$-neighborhood
$\mathcal O_*\subset\mathcal O$ of $v_*$. Since $\mu(U)<\infty$, there
is $R_0<\infty$ such that
\begin{align*}
 \mu\{U\le R_0\}>0.
\end{align*}
By invariance and \Cref{prop:mixing-accessibility},
\begin{align*}
 \mu(\mathcal O)
 \ge\int_{\{U\le R_0\}}P_2(u,\mathcal O_*)\,\mu(\dd u)
 \ge p\,\mu\{U\le R_0\}>0.
\end{align*}
Thus $\operatorname{supp}\mu=H^1$.
\end{proof}

\subsubsection{Proof of \Cref{thm:exponential-mixing}}

\begin{proof}
Fix $\eta\in(0,\eta_0]$ and set $\vartheta=\eta/(2+\eta)$. Fixing $q\in(0,\vartheta)$, since $q<\eta=\theta$,
\Cref{thm:structural-gap} gives \eqref{eq:main-explicit-gap}. Since $d\le d_1$, one has  $d_q\le d_{q,1}$.

\emph{Step 1: smoothing and fractional moments.}
Let $u,v\in\mathscr E$ and consider the solutions $u_t=\Phi_t(u,W)$ and $v_t=\Phi_t(v,W)$ on $[0,1]$. Set $\zeta_t=u_t-v_t$ and $a_t=(u_t+v_t)/2$. Then
\begin{align*}
 \partial_t\zeta+\nu A\zeta+B(a,\zeta)+B(\zeta,a)=0.
\end{align*}
Write $M_u=\sup_{0\le s\le1}K(u_s)$ and $R_u=\int_0^1D(u_s)/K(u_s)\,\dd s$, and define $M_v,R_v$ analogously. Testing with $\zeta$, using the cancellation of $B(a,\zeta)$ and \Cref{lem:one-sided-growth} gives
\begin{align*}
 \sup_{0\le s\le1}\|\zeta_s\|_H^2+\int_0^1\|\zeta_s\|_{H^1}^2\,\dd s
 \le Ce^{C(M_u+M_v)}\|u-v\|_H^2,
\end{align*}
where we used $K(a_s)\le C\{K(u_s)+K(v_s)\}$. For the $H^1$ estimate, the spatial bilinear estimate
\begin{align*}
 \|B(a,\zeta)+B(\zeta,a)\|_H\le C\|a\|_{H^{3/2}}\|\zeta\|_{H^{3/2}}
\end{align*}
with interpolation  and Young's inequality yield
\begin{align*}
 \bigl|\langle B(a,\zeta)+B(\zeta,a),A\zeta\rangle_H\bigr|
 \le C\|a\|_{H^{3/2}}\|\zeta\|_{H^1}^{1/2}\|\zeta\|_{H^2}^{3/2}\le \frac{\nu}{2}\|A\zeta\|_H^2+C\|a\|_{H^{3/2}}^4\|\zeta\|_{H^1}^2.
\end{align*}
Testing the difference equation with $A\zeta$ therefore gives
\begin{align*}
 \frac{d}{dt}\|\nabla\zeta\|_2^2+\nu\|A\zeta\|_H^2
 \le C\|a\|_{H^{3/2}}^4\|\zeta\|_{H^1}^2.
\end{align*}
By Poincar\'e's and Gronwall's inequalities,
\begin{align*}
 \|\zeta_t\|_{H^1}^2
 \le C\exp\left\{C\int_s^t\|a_r\|_{H^{3/2}}^4\,\dd r\right\}\|\zeta_s\|_{H^1}^2,
 \qquad 0\le s\le t\le1.
\end{align*}
Taking $t=1$, integrating in $s$, and using the preceding $H$ energy estimate bounds $\|\zeta_1\|_{H^1}$ in terms of $\|u-v\|_H$. Taking $s=0$ instead gives a uniform bound in terms of $\|u-v\|_{H^1}$. Moreover, interpolation and the definitions of $K,D$ in \eqref{eq:K} and \eqref{eq:D} imply
\begin{align*}
 \int_0^1\|u_s\|_{H^{3/2}}^4\,\dd s
 \le C\sup_{s\le1}\|u_s\|_{H^1}^2\int_0^1\|u_s\|_{H^2}^2\,\dd s
 \le CM_u^2R_u,
\end{align*}
and likewise for $v$. Since $\|a_s\|_{H^{3/2}}^4\le C(\|u_s\|_{H^{3/2}}^4+\|v_s\|_{H^{3/2}}^4)$, we obtain
\begin{align}
 \|u_1-v_1\|_{H^1}&\le Ce^G\|u-v\|_H,\label{eq:H-H1-one-step}\\
 \sup_{0\le t\le1}\|u_t-v_t\|_{H^1}&\le Ce^G\|u-v\|_{H^1},\label{eq:H1-short-time-amplification}
\end{align}
where $G=C(M_u+M_v+M_u^2R_u+M_v^2R_v)$.

Since $K^\eta\le U$ and $U\ge1$, \eqref{eq:maximal-U} and the $p=1$ case of \eqref{eq:mixing-foster} give
\begin{align*}
 \mathbb E M_u^\eta+\mathbb E R_u
 \le \mathbb E\sup_{s\le1}U(u_s)
 +\mathbb E\int_0^1U(u_s)\frac{D(u_s)}{K(u_s)}\,\dd s
 \le CU(u).
\end{align*}
Weighted Young's inequality yields
\begin{align*}
 (M_u^2R_u)^\vartheta
 =(M_u^\eta)^{2/(2+\eta)}R_u^{\eta/(2+\eta)}
 \le \frac{2}{2+\eta}M_u^\eta+\frac{\eta}{2+\eta}R_u.
\end{align*}
Since $\vartheta<\min\{\eta,1\}$ and $M_u,M_v\ge1$, the corresponding estimates for $v$, together with \eqref{eq:maximal-U}, imply
\begin{align}\label{eq:H1-smoothing-clock}
 \mathbb E G^\vartheta+\mathbb E\sup_{0\le t\le1}V_U(u_t,v_t)\le CV_U(u,v).
\end{align}

\emph{Step 2: lifting the weighted cost and proving the spectral gap.}
Suppose $r=d(u,v)>0$ and put $s=\log(1/r)$. By \eqref{eq:H-H1-one-step}, $d_1(u_1,v_1)\le1\wedge Ce^Gr$. For all sufficiently large $s$, on $\{G\le s/4\}$ we have $d_1(u_1,v_1)\le e^{-s/2}$, and hence
\begin{align*}
 d_{q,1}(u_1,v_1)\le \frac{CV_U(u_1,v_1)}{(1+s)^{q/2}}.
\end{align*}
On the complementary event, use $d_{q,1}(u_1,v_1)\le\sqrt{V_U(u_1,v_1)}$. Splitting the expectation and applying Cauchy--Schwarz, Markov's inequality, and \eqref{eq:H1-smoothing-clock}, we obtain
\begin{align*}
 \mathbb E d_{q,1}(u_1,v_1)
 &\le \frac{C\mathbb E V_U(u_1,v_1)}{(1+s)^{q/2}}
 +\bigl(\mathbb E V_U(u_1,v_1)\bigr)^{1/2}\mathbb P(G>s/4)^{1/2}\\
 &\le CV_U(u,v)\bigl((1+s)^{-q/2}+s^{-\vartheta/2}\bigr)
 \le \frac{CV_U(u,v)}{(1+s)^{q/2}},
\end{align*}
where the last inequality uses $q<\vartheta$. On the other hand, Jensen's inequality and \eqref{eq:H1-smoothing-clock} give
\begin{align*}
 \mathbb E d_{q,1}(u_1,v_1)
 \le \bigl(\mathbb E V_U(u_1,v_1)\bigr)^{1/2}
 \le C\sqrt{V_U(u,v)}.
\end{align*}
Taking the minimum of these bounds proves $\mathbb E d_{q,1}(u_1,v_1)\le Cd_q(u,v)$ for sufficiently large $s$. For bounded $s$, the definition gives $d_q(u,v)\ge c\sqrt{V_U(u,v)}$, so the same conclusion follows from the last estimate. If $r=0$, the synchronous solutions coincide. Thus
\begin{align*}
 \mathbb E d_{q,1}(u_1,v_1)\le Cd_q(u,v),\qquad u,v\in\mathscr E.
\end{align*}
To control short times, use \eqref{eq:H1-short-time-amplification} and repeat the same argument with $r=d_1(u,v)$ and $u_t,v_t$ in place of $u_1,v_1$. The moment bounds in \eqref{eq:H1-smoothing-clock} are uniform for $0\le t\le1$, so
\begin{align*}
 \sup_{0\le t\le1}\mathbb E d_{q,1}(u_t,v_t)\le Cd_{q,1}(u,v).
\end{align*}
Integrating these synchronous couplings against any coupling of $\mu_1,\mu_2\in\mathcal P(\mathscr E)$ and taking the infimum gives
\begin{align}
 \mathcal W_{d_{q,1}}(\mu_1P_1,\mu_2P_1)
 &\le C\mathcal W_{d_q}(\mu_1,\mu_2),\label{eq:H1-measure-lift}\\
 \sup_{0\le t\le1}\mathcal W_{d_{q,1}}(\mu_1P_t,\mu_2P_t)
 &\le C\mathcal W_{d_{q,1}}(\mu_1,\mu_2).\label{eq:H1-short-time-bound}
\end{align}

For $t\ge1$, \eqref{eq:H1-measure-lift}, \eqref{eq:main-explicit-gap}, and $d_q\le d_{q,1}$ yield
\begin{align*}
 \mathcal W_{d_{q,1}}(\mu_1P_t,\mu_2P_t)
 &\le C\mathcal W_{d_q}(\mu_1P_{t-1},\mu_2P_{t-1})\\
 &\le Ce^{-\gamma(t-1)}\mathcal W_{d_q}(\mu_1,\mu_2)
 \le Ce^{-\gamma(t-1)}\mathcal W_{d_{q,1}}(\mu_1,\mu_2).
\end{align*}
Together with \eqref{eq:H1-short-time-bound}, this proves \eqref{eq:main-explicit-ga-H1} after adjusting the constants.

\emph{Step 3: exponential mixing in $H^1$.}
Since $0<q<1$ and $r[1+\log(1/r)]^{q/2}\le1$ for $0<r\le1$, one has  $d_1\le d_{q,1}$ on $\mathscr E$. Fix $u\in H^1$. By \eqref{eq:mixing-coming-down}, $P_1(u,\mathscr E)=1$ and $P_1U(u)\le Ce^{\mathsf k\|u\|_H^2}$, while $\mu(\mathscr E)=1$ and $\mu(U)<\infty$. Since $d_{q,1}(v,w)\le\sqrt{V_U(v,w)}$, the product coupling and Jensen's inequality give
\begin{align*}
 \mathcal W_{d_{q,1}}(P_1(u,\cdot),\mu)
 \le \bigl(1+P_1U(u)+\mu(U)\bigr)^{1/2}
 \le C\exp\left(\kappa \|u\|_H^2\right)
\end{align*}
with $\kappa =\mathsf k/2$. 
Consequently, invariance of $\mu$ and \eqref{eq:main-explicit-ga-H1} imply, for $t\ge1$,
\begin{align*}
 \mathcal W_{d_1}(P_t(u,\cdot),\mu)
 \le \mathcal W_{d_{q,1}}\bigl(P_1(u,\cdot)P_{t-1},\mu P_{t-1}\bigr)\le \widetilde C e^{-\widetilde\gamma t}
 \mathcal W_{d_{q,1}}(P_1(u,\cdot),\mu)
 \le C\exp\{\kappa\|u\|_H^2-ct\}.
\end{align*}
Since $d_1\le1$, increasing $C$ covers $0\le t\le1$ and proves \eqref{eq:exponential-mixing-wp}.
\end{proof}

\appendix

\section{Estimates for the primitive equation}\label{sec:appendix}

This appendix collects the supporting analytic estimates used in \Cref{sec:pe-gap}. We first recall the global well-posedness and the existence of invariant measures. Recall that $\Omega_T=C_0([0,T];\mathbb R^m)$. 

\begin{proposition}\label{prop:pe-basics}
For every smooth finite-rank noise operator $Q$ as in \eqref{eq:noise-operator} and every initial state in $H^1$, system \eqref{eq:pe} has a unique global strong solution and defines a Feller Markov semigroup on $H^1$ possessing at least one invariant measure.  For each $t\ge0$, the solution map $(u,\omega)\mapsto\Phi_t(u,\omega)$ from $H^1\times \Omega_T$ to $H^1$ is Borel measurable.
\end{proposition}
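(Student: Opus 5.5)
The plan follows the classical route for the three-dimensional primitive equations with additive noise, in four steps: reduction to a random PDE, deterministic well-posedness, the Markov/Feller structure, and existence of an invariant measure.

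\emph{Step 1 (reduction).} Since the noise is additive and finite-rank with smooth coefficients, I will write $v=z+y$, where $z_t=\int_0^te^{-\nu A(t-s)}Q\,\dd W_s$ is the Ornstein--Uhlenbeck process. Because $b_k$ are smooth and finitely many, $z$ has continuous paths in every $H^s$. Equivalently, $z$ is a continuous functional of the driver $\omega$ in $C_0([0,T];\R^m)$ after integrating by parts in time. The remainder $y$ then solves pathwise the deterministic equation
\begin{align*}
\partial_ty+\nu Ay+B(y+z,y+z)=0,\qquad y(0)=u.
\end{align*}

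\emph{Step 2 (pathwise global well-posedness).} For $u\in H^1$ I will follow the Cao--Titi scheme, as adapted in \cite{GHKVZ,DGTZ,HK}. First come local existence and uniqueness of strong solutions by a Galerkin/contraction argument. Global existence then rests on a priori estimates performed in a definite order: the $L^2$ bound on $y$, then $\|y\|_{L^6}$ (or $L^8$), then $\|\partial_zy\|_{L^2}$ (or $L^4$) through the barotropic--baroclinic splitting, and finally $\|\nabla y\|_2$. The extra terms involving $z$ are all lower order because $z$ is smooth in space and continuous in time. The same structure is already visible in the It\^o estimates of \Cref{lem:cascade-direct}. Uniqueness and continuous dependence on $(u,\omega)$ in $H^1$ follow from an $H^1$ energy estimate for the difference of two solutions, and this is where I expect the main technical effort. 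That estimate uses the $L^\infty_tH^1\cap L^2_tH^2$ bounds together with the anisotropic inequalities from \Cref{lem:one-sided-growth}, and it shows that $(u,\omega)\mapsto\Phi_t(u,\omega)$ is continuous from $H^1\times\Omega_T$ to $H^1$. In particular the map is Borel measurable.

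\emph{Step 3 (Markov and Feller properties).} The Markov property follows from pathwise uniqueness together with the flow property $\Phi_{t+s}(u,\omega)=\Phi_t(\Phi_s(u,\omega),\theta_s\omega)$, using independence of Brownian increments. The Feller property comes from continuity in $u$ combined with dominated convergence.

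\emph{Step 4 (invariant measure).} I will apply Krylov--Bogoliubov to the time averages $\frac1T\int_0^TP_t(0,\cdot)\dd t$. Tightness in $H^1$ needs a time-averaged bound on a compactly embedded norm. The energy identity \eqref{eq:energy-balance} already controls $\E\int_0^T\|\nabla v\|_2^2$ linearly in $T$, which gives tightness in $H$ only. For $H^1$ I will use the averaged $H^2$ dissipation, which comes from \Cref{lem:mixing-foster}: it yields $\E\int_0^TU(v_s)(1+D/K)\dd s\le C(U(0)+T)$, and $D\ge\|\Delta v\|_2^2$. Combined with Chebyshev, this bounds the averaged measure of $\{\|v\|_{H^2}^2>M\}$ uniformly in $T$, which gives tightness in $H^1$. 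Any limit point is then invariant by the Feller property.
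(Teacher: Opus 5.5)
Your argument is correct, but it rebuilds what the paper simply imports. The paper's proof cites \cite[Theorems~1.3, 1.5, 1.6]{GHKVZ} for global well-posedness in $H^1$, the Feller property and existence of an invariant measure. The only thing it argues itself is Borel measurability, via the shifted deterministic equation for $v^\flat=v-Q\omega$ as in \eqref{eq:vflatPDE}.

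\emph{Steps~1--2.} Your reduction plays exactly the role of the paper's shift: the integration-by-parts form of the Ornstein--Uhlenbeck subtraction differs from $Q\omega$ only by a smooth functional of $\omega$. Your joint continuity of $(u,\omega)\mapsto\Phi_t(u,\omega)$ into $H^1$ is slightly stronger than what is stated and gives measurability at once.

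\emph{Step~4.} Beyond supplying proofs where the paper cites, this is where you genuinely differ. You obtain tightness from the paper's own Lyapunov structure in \Cref{lem:mixing-foster} rather than from \cite{GHKVZ}. This is not circular, since that lemma needs only well-posedness, continuity in the data and It\^o calculus. It keeps the invariant-measure step inside the paper's estimates, at the cost of length.

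Two small points to tighten:
\begin{enumerate}
\item For data merely in $H^1$, the $L^8$/$\|\partial_z\cdot\|_4$ cascade of \Cref{lem:cascade-direct} is not available at $t=0$, because $H^1\not\hookrightarrow L^8$ in three dimensions and $\partial_zu$ is only in $L^2$. You should commit to the Cao--Titi version with $L^6$ and $\|\partial_z\cdot\|_2$. Alternatively, first use the $L^2_tH^2$ smoothing of the local strong solution to enter $\mathscr E$ at a small positive time.
\item \Cref{lem:mixing-foster} bounds the time average of $D/K$ (using $U\ge1$), not of $D$. So $D\ge\|\Delta v\|_2^2$ alone does not give the Chebyshev bound you state. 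Write $\|\Delta v\|_2^2\le K\cdot(D/K)$ and estimate $\PP(\|\Delta v\|_2^2>M)\le\PP(D/K>\sqrt M)+\PP(K>\sqrt M)$. The second term is controlled by the averaged moment of $K^\eta\le U$, which the same lemma provides. With this split the tightness of the Krylov--Bogoliubov averages in $H^1$ follows as you claim.
\end{enumerate}
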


The global well-posedness, Feller property, and existence of invariant measures follow from \cite[Theorems 1.3, 1.5, and 1.6]{GHKVZ}.  The Borel realization follows from the shifted deterministic formulation obtained after subtracting the additive noise path.

\subsection{Analyticity and uniform estimates}\label{app:analytic-core}

Recall the analytic Sobolev norm  $\|v\|_{\rho,s}=\|e^{\rho\Lambda}v\|_{H^s}$ with $\Lambda=(-\Delta)^{1/2}$ as in \eqref{eq:analytic-space}. The following spatial regularization fact will be used both in the observability estimate \Cref{prop:polynomial-source} and in the preparation of its reference set when combined with \Cref{prop:compact-source-repair} in verifying \Cref{ass:gap-local-repair} in \Cref{sec:assembly}. Recall the functions $K,D$   defined in \eqref{eq:K} and \eqref{eq:D}. For  $v_t=\Phi_t(u,\omega)$ we consider the shifted process
\begin{align}\label{eq:vflat}
v^{\flat}=v-\zeta, \quad \text{ where } \zeta=Q\omega.
\end{align}

\begin{lemma}\label{lem:bounded-driver-regularization}
Assume that the noise profiles $\{b_j\}$ in \eqref{eq:pe} are analytic. Given $R,M,T,s<\infty$ and $0<\tau<T$, there are $C,\rho>0$ such that, for every $u\in H$ with $K(u)\le R$ and every $\omega\in \Omega_T$ with $\|\omega\|_{\Omega_T}\le M$,  one has
\begin{align*}
 \sup_{t\le T}K(v^{\flat}_t)+\int_0^TD(v^{\flat}_t)\,\dd t\le C,
 \qquad \text{and }
 \sup_{\tau\le t\le T}\|v_t\|_{\rho,s}\le C. 
\end{align*}
\end{lemma}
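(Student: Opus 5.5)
We use the shifted formulation \eqref{eq:vflat}. Since $\zeta_t=Q\omega_t$ is a finite combination of the analytic profiles $b_j$ with coefficients bounded by $M$, we have $\sup_{t\le T}\|\zeta_t\|_{\rho_0,s'}\le C_M$ for every $s'$ and some fixed $\rho_0>0$. The shifted process solves the random PDE
\begin{align*}
 \partial_tv^\flat+\nu Av^\flat+B(v^\flat+\zeta,v^\flat+\zeta)+\nu A\zeta=0,
\end{align*}
which is pathwise deterministic with a bounded, smooth, but merely continuous-in-time forcing. The proof has two stages: a deterministic version of the triangular cascade of \Cref{lem:cascade-direct} giving the $K$ and $D$ bounds, then Foias--Temam type Gevrey regularization giving the analytic bound for positive times.

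\emph{Stage 1: the $K$-$D$ bound.} First we do the $L^2$ energy estimate for $v^\flat$. The cross terms $\langle B(v^\flat,\zeta),v^\flat\rangle$ and $\langle B(\zeta,\zeta),v^\flat\rangle$ are bounded by $C_M(\|v^\flat\|_2^2+1)$, since $\zeta$ is smooth. Gronwall then bounds $\sup_t\|v^\flat\|_2^2+\int_0^T\|\nabla v^\flat\|_2^2$ by a constant depending on $R,M,T$. Next we repeat Steps 1--3 of \Cref{lem:cascade-direct} for $\|v^\flat\|_8^8$, $\|\partial_zv^\flat\|_4^4$ and $\|\nabla v^\flat\|_2^2$, with $\alpha$ chosen as there. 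The Itô correction terms disappear. Every new term contains at least one factor of $\zeta$, and can be bounded through Hölder and Young by $\varepsilon D(v^\flat)+C_M K(v^\flat)$, or $\varepsilon D+C_M K(1+\|\nabla v^\flat\|_2^2)$ when a derivative falls on $v^\flat$. The pressure term is handled as in \eqref{eq:pressure-L8} with $v$ replaced by $v^\flat+\zeta$. This yields
\begin{align*}
\frac{d}{dt}K(v^\flat)+cD(v^\flat)\le C_M(1+\|\nabla v^\flat\|_2^2)K(v^\flat).
\end{align*}
Since $\int_0^T\|\nabla v^\flat\|_2^2\le C$ and $K(u)\le R$, Gronwall gives $\sup_tK(v^\flat_t)+\int_0^TD(v^\flat_t)\,dt\le C$.

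\emph{Stage 2: the analytic bound.} Stage 1 gives a uniform $H^1$ bound with $\int_0^T\|\Delta v^\flat\|_2^2\le C$, so $v^\flat\in L^\infty H^1\cap L^2H^2$ uniformly. We first do a short parabolic bootstrap to reach $H^{s}$ at time $\tau/2$. Then we run the Gevrey estimate for $\|e^{\phi(t)\Lambda}v^\flat\|_{H^{s}}$ with $\phi(t)=\min\{t-\tau/2,\rho_1\}_+$ on $[\tau/2,T]$. The standard analytic bilinear estimate
\begin{align*}
|\langle e^{\phi\Lambda}\Lambda^sB(f,g),e^{\phi\Lambda}\Lambda^sh\rangle|\le C\|f\|_{\phi,s+1}\|g\|_{\phi,s+1}\|h\|_{\phi,s}
\end{align*}
holds for $s>3/2$; it applies to the primitive nonlinearity because $w(f)$ is a first-order multiplier, as in \eqref{eq:analytic-radius-estimates}. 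Combined with the $\dot\phi\|\Lambda^{1/2}\cdot\|$ term, which is absorbed by dissipation, it gives a Riccati-type inequality. We close it on a short time interval whose length depends only on the Stage 1 bound, and iterate over $[\tau,T]$, which is possible because the constants are uniform. Adding back $\zeta$, which is analytic with radius $\rho_0\ge\rho$, yields $\sup_{\tau\le t\le T}\|v_t\|_{\rho,s}\le C$.

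The main obstacle is Stage 1. The cascade cross terms involving $\zeta$ inside the pressure and the vertical advection $w(\zeta)\partial_zv^\flat$, $w(v^\flat)\partial_z\zeta$ must be absorbed into the specific dissipation $D$ without using higher norms of $v^\flat$. We expect this to work because $\zeta$ is smooth and every such term is at most quadratic in $v^\flat$ with lower weight than the corresponding $D$-component. The one delicate spot is the $\|\partial_zv^\flat\|_4^4$ equation, where $w(v^\flat)\partial_z^2\zeta$ has to be controlled by $D_8$.
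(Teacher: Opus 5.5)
Your Stage 1 is essentially the paper's Step 1: the same energy estimate for $v^\flat$, the same three cascade tests leading to \eqref{eq:Kvsharp}, then Gronwall. Two small corrections there. First, the diagnostic cross term $\int w(v^\flat)\partial_z\zeta\cdot v^\flat$ is only bounded by $C_M\|v^\flat\|_{H^1}\|v^\flat\|_2$, so it must be absorbed into the dissipation. Second, the paper closes the term $w(v^\flat)\partial_z^2\zeta$ in the vertical $L^4$ test against $D_{z,4}=\int|\partial_zv^\flat|^2|\nabla\partial_zv^\flat|^2$, not $D_8$. It does this by bounding the term by $\|v^\flat\|_{H^1}\|\int|\partial_zv^\flat|^3\dd z\|_{L^2_h}$ and using $W^{1,1}(\mathbb T^2)\hookrightarrow L^2(\mathbb T^2)$.

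Your Stage 2 takes a genuinely different route. The paper does no Sobolev bootstrap. From $L^\infty H^1\cap L^2H^2$ it gets $L^4H^{3/2}$, and uses maximal $L^2$ regularity on finitely many short intervals to show the linearization $\mathbb A$ is uniformly invertible. It then applies the analytic implicit function theorem to the problem translated by $(t\boldsymbol y_h,ty_3)$ with complex $\boldsymbol y$. Cauchy estimates for $\partial_{\boldsymbol y}^\beta\widehat v_{\boldsymbol y}|_{\boldsymbol y=0}=t^{|\beta|}\partial^\beta v^\flat$ give all derivatives at once, with radius proportional to $t$.

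Your Foias--Temam Gevrey argument is more elementary and purely Hilbertian. It needs no complexification and no implicit function theorem. It also never leaves the even-in-$z$ space, which the paper must do because vertical translations break the symmetry; $e^{\phi\Lambda}$ commutes with $\mathsf P$, $A$ and the symmetry. Like the paper, you use only spatial norms of $\zeta$, so no time regularity of $\omega$ is needed. The price is the extra bootstrap from $H^1$ to $H^s$: since $w(v)\partial_zv$ costs a derivative on both factors, no Gevrey estimate closes at the $H^1$ level where your a priori bounds live.

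As written, however, two steps of Stage 2 fail and must be repaired.

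\emph{The trilinear bound.} Your displayed bound, although true, gives $C\|v\|_{\phi,s}\|v\|_{\phi,s+1}^2$ when $f=g=h=v$. This is of the same order as the dissipation $\nu\|v\|_{\phi,s+1}^2$ and closes only if $\|v\|_{\phi,s}$ is small, so it does not yield a Riccati inequality. Shift one derivative onto the test function instead: $|\langle e^{\phi\Lambda}\Lambda^sB(v,v),e^{\phi\Lambda}\Lambda^sv\rangle|\le\|B(v,v)\|_{\phi,s-1}\|v\|_{\phi,s+1}\le C\|v\|_{\phi,s}^2\|v\|_{\phi,s+1}$ for $s>5/2$. This is the estimate the paper uses in \Cref{lem:analytic-core}. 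Young then gives $y'\le C(1+y^2)$ for $y=\|v^\flat\|_{\phi,s}^2$, and the $\zeta$ terms are handled the same way.

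\emph{The iteration.} The closing time of this inequality is governed by $\|v^\flat\|_{H^s}$ at the starting time, not by the Stage 1 bound. It cannot be iterated with the single weight $\phi=\min\{t-\tau/2,\rho_1\}_+$. Restarting at fixed radius $\rho_1$ from the bound just obtained lets the bounds compound while the admissible step lengths shrink geometrically, so $[\tau,T]$ need not be covered. What you need instead:
\begin{itemize}
\item a bootstrap giving $\sup_{\tau/2\le t\le T}\|v^\flat_t\|_{H^s}\le C$ uniformly, which successive time-weighted $H^k$ estimates provide since they close linearly given $L^\infty H^1\cap L^2H^2$;
\item a restart with $\phi(t)=t-t_0$ from every $t_0\in[\tau/2,T]$.
\end{itemize}
A uniform step $\delta$ then gives $\|v^\flat_t\|_{t-t_0,s}\le C$ on $[t_0,t_0+\delta]$, hence a uniform analytic radius on $[\tau,T]$.
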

\begin{proof}
Every prescribed spatial norm of $\zeta$ is bounded in terms of $M$ and $\{b_j\}$. Note $v^\flat$ satisfies
\begin{align}\label{eq:vflatPDE}
 \partial_t v^\flat+\nu A v^\flat+B(v^\flat+\zeta,v^\flat+\zeta)
 =-\nu A\zeta,\qquad v^\flat(0)=u.
\end{align}
The proof is given in two steps.

\emph{Step 1: Energy and cascade bounds.}
The cancellations in $B(v^\flat,v^\flat)$ and $B(\zeta,v^\flat)$ give
\begin{align*}
 \frac{d}{dt}\|v^\flat\|_H^2+\nu\|v^\flat\|_{H^1}^2
 \le C_M(1+\|v^\flat\|_H^2).
\end{align*}
For the diagnostic cross term use $\|w(v^\flat)\|_2\le C\|v^\flat\|_{H^1}$ and $\sup_{t\le T}\|\partial_z\zeta(t)\|_\infty\le C_M$. Hence
\begin{align}\label{eq:energy-vflat}
 \sup_{t\le T}\|v_t^\flat\|_H^2+\int_0^T\|v_t^\flat\|_{H^1}^2\,\dd t
 \le C_{R,M,T}.
\end{align}
The same three tests as in \Cref{lem:cascade-direct} give
\begin{align}\label{eq:Kvsharp}
 \frac{d}{dt}K(v^\flat)+cD(v^\flat)
 \le C_M(1+\|v^\flat\|_{H^1}^2)K(v^\flat).
\end{align}
We estimate the additional terms introduced by $\zeta$. The transport $B(\zeta,v^\flat)$ still cancels in the $L^8$ test. Using the zero-mean vertical primitive $\mathscr I_z$, integration by parts gives
\begin{align*}
 \left|\int w(v^\flat)\partial_z\zeta\cdot|v^\flat|^6v^\flat\,\dd \boldsymbol x\,\dd z\right|
 &\le C_M\left\{\|v^\flat\|_8^8
 +\|v^\flat\|_8^4
 \left(\int|v^\flat|^6|\nabla v^\flat|^2\,\dd \boldsymbol x\,\dd z\right)^{1/2}\right\}\\
 &\le\varepsilon\int|v^\flat|^6|\nabla v^\flat|^2\,\dd \boldsymbol x\,\dd z
 +C_{M,\varepsilon}\|v^\flat\|_8^8.
\end{align*}
Indeed $w(v^\flat)=-\operatorname{div}_h\mathscr I_zv^\flat$ and $\|\mathscr I_zv^\flat\|_8\le C\|v^\flat\|_8$. Writing $p=p^\flat+p^{\rm rem}$ according to
$v=v^\flat+\zeta$, the pressure generated by $v^\flat$ is estimated
exactly as in \eqref{eq:pressure-L8}. The remaining pressure satisfies
\begin{align*}
 -2\pi\Delta_hp^{\rm rem}
 =
 \nabla_h\cdot\nabla_h\cdot
 \int_{\mathbb T}
 \bigl(v^\flat\otimes\zeta+\zeta\otimes v^\flat
       +\zeta\otimes\zeta\bigr)\,\dd z.
\end{align*}
Since the prescribed spatial norms of $\zeta$ are bounded by $C_M$,
the same Calder\'on--Zygmund argument as in
\eqref{eq:pressure-L8} gives
\begin{align*}
 \|\nabla_hp^{\rm rem}\|_{L_h^{8/5}}
 \le C_M(1+\|v^\flat\|_{H^1}).
\end{align*}
Pairing this with
$\int_{\mathbb T}|v^\flat|^6v^\flat\,\dd z$ and using
\eqref{eq:pressure-L8-2} gives
\begin{align*}
 \left|\int \nabla_hp^{\rm rem}\cdot |v^\flat|^6v^\flat\dd \boldsymbol x\,\dd z\right|\le \varepsilon\int|v^\flat|^6|\nabla v^\flat|^2\,\dd \boldsymbol x\,\dd z
 +C_{M,\varepsilon}(1+\|v^\flat\|_{H^1}^2)
 (1+\|v^\flat\|_8^8).
\end{align*}

For the vertical $L^4$ test, the new local terms involving $\zeta$ and its spatial derivatives are bounded by constants times integrals of $|\partial_zv^\flat|^4$, $|v^\flat||\partial_zv^\flat|^3$, or $|\nabla v^\flat||\partial_zv^\flat|^3$. For the last one,
\begin{align*}
 \int|\nabla v^\flat||\partial_zv^\flat|^3\,\dd \boldsymbol x\,\dd z
 &\le\|v^\flat\|_{H^1}\|\partial_zv^\flat\|_6^3\\
 &\le C\|v^\flat\|_{H^1}\|\partial_zv^\flat\|_4^{3/2}
 \left(\|\partial_zv^\flat\|_4^4
 +\int|\partial_zv^\flat|^2|\nabla\partial_zv^\flat|^2\,\dd \boldsymbol x\,\dd z\right)^{3/8}\\
 &\le\varepsilon\int|\partial_zv^\flat|^2|\nabla\partial_zv^\flat|^2\,\dd \boldsymbol x\,\dd z
 +C_\varepsilon(1+\|v^\flat\|_{H^1}^2)(1+\|\partial_zv^\flat\|_4^4).
\end{align*}
The remaining nonlocal term arises from the factor
$w(v^\flat)\partial_z^2\zeta$, which is controlled by
\begin{align*}
 \|v^\flat\|_{H^1}
 \left\|\int|\partial_zv^\flat|^3\,\dd z\right\|_{L_h^2}
 \le C\|v^\flat\|_{H^1}\left\{
 \|\partial_zv^\flat\|_4^3
 +\|\partial_zv^\flat\|_4
 \left(\int|\partial_zv^\flat|^2|\nabla\partial_zv^\flat|^2\,\dd \boldsymbol x\,\dd z\right)^{1/2}\right\},
\end{align*}
and has the same upper bound. The horizontal Sobolev estimate follows by differentiating $|\partial_zv^\flat|^3$ and applying the two-dimensional embedding to its vertical integral. The pure $v^\flat$ terms are exactly those in the original vertical cascade.

In the $H^1$ test, the additional terms $B(v^\flat,\zeta)$, $B(\zeta,v^\flat)$ and $\nu A\zeta+B(\zeta,\zeta)$ paired with $Av^\flat$ are bounded by
\begin{align*}
 \varepsilon\|\Delta v^\flat\|_2^2
 +C_{M,\varepsilon}(1+\|v^\flat\|_{H^1}^2).
\end{align*}
Choosing the weights and the small absorption constants as in \Cref{lem:cascade-direct} proves \eqref{eq:Kvsharp}. This combined with \eqref{eq:energy-vflat} and Gronwall's inequality gives the desired estimate for $K,D$.

\emph{Step 2: Uniform spatial analyticity.}
By Step~1 and the smoothness of $\zeta$,
\begin{align*}
 \sup_{t\le T}\|v_t\|_{H^1}^2
 +\int_0^T\|v_t\|_{H^2}^2\,\dd t
 \le C_{R,M,T},
\end{align*}
and hence
\begin{align}\label{eq:bounded-driver-L4H32}
 \int_0^T\|v_t\|_{H^{3/2}}^4\,\dd t
 \le
 \sup_{t\le T}\|v_t\|_{H^1}^2
 \int_0^T\|v_t\|_{H^2}^2\,\dd t
 \le C_{R,M,T}.
\end{align}
Moreover, the shifted equation for $v^\flat$ and
\eqref{eq:bounded-driver-L4H32} give
$v^\flat\in H^1(0,T;H)\cap L^2(0,T;H^2)$.

The argument of
\cite[Lemma~6.4]{GGHHK}, specialized to the present $L^2$ setting,
applies using \eqref{eq:bounded-driver-L4H32}: partition $[0,T]$ into
finitely many intervals on which $\|v\|_{L^4H^{3/2}}$ is sufficiently
small and use
\begin{align*}
 \|B(v,h)+B(h,v)\|_{L^2H}
 \le C\|v\|_{L^4H^{3/2}}\|h\|_{L^4H^{3/2}}.
\end{align*}
Maximal $L^2$ regularity for $\partial_t+\nu A$, followed by iteration
over this uniformly bounded number of intervals, shows that
\begin{align*}
 \mathbb A : h\longmapsto\partial_th+\nu Ah+B(v,h)+B(h,v)
\end{align*}
is an isomorphism from
$\{h\in H^1(0,T;H)\cap L^2(0,T;H^2):h(0)=0\}$ onto $L^2(0,T;H)$, with uniformly
bounded inverse. We then use the spatial part of the parameter argument in
\cite[proof of Theorem~3.3(c)]{GGHHK}. Temporarily work on the full periodic hydrostatic space, extending the
diagnostic operator by the zero-mean vertical primitive so that it
commutes with spatial translations.
For $\boldsymbol y\in\mathbb R^3$, set
\begin{align*}
 \zeta_{\boldsymbol y}(t,\boldsymbol x,z)
 =\zeta(t,\boldsymbol x+t\boldsymbol y_h,z+t y_3).
\end{align*}
Since the forcing profiles are analytic and
$\|\omega\|_{\Omega_T}\le M$, the map
$\boldsymbol y\mapsto\zeta_{\boldsymbol y}$ is analytic into $C([0,T];H^k)$ on a common
complex neighborhood, uniformly over the driver family. Consider
\begin{align*}
 \mathcal F(\boldsymbol y,\widehat v)
 =
 \partial_t\widehat v+\nu A\widehat v
 +B(\widehat v+\zeta_{\boldsymbol y},\widehat v+\zeta_{\boldsymbol y})
 +\nu A\zeta_{\boldsymbol y}-\boldsymbol y\cdot\nabla\widehat v.
\end{align*}

At $(0,v^\flat)$ one has $\mathcal F(0,v^\flat)=0$, and by regarding $\mathcal F(\boldsymbol y,\cdot)$ on the affine space $v^\flat+
\{h\in H^1(0,T;H)\cap L^2(0,T;H^2):h(0)=0\}$, the derivative
with respect to $\widehat v$ is the uniformly invertible operator $\mathbb A$ above.
The analytic implicit function theorem therefore gives a common
parameter radius. For real $\boldsymbol y$, uniqueness identifies the resulting
solution with
\begin{align*}
 \widehat v_{\boldsymbol y}(t,\boldsymbol x,z)
 =v^\flat(t,\boldsymbol x+t\boldsymbol y_h,z+t y_3).
\end{align*}
Unlike the full space--time parameter argument of \cite{GGHHK}, no
time-rescaling parameter is introduced, so no time derivative of
$\omega$ is required. Cauchy's estimates and
\begin{align*}
 \left.\partial_{\boldsymbol y}^\beta\widehat v_{\boldsymbol y}
 (t,\boldsymbol x,z)\right|_{\boldsymbol y=0}
 =t^{|\beta|}\partial_{(\boldsymbol x,z)}^\beta
 v^\flat(t,\boldsymbol x,z)
\end{align*}
give, for every multi-index $\beta$,
\begin{align*}
 \sup_{\tau\le t\le T}\|\partial^\beta v_t^\flat\|_{H^1}
 \le C\,\beta!\,(C_\tau)^{|\beta|}.
\end{align*}
Thus, by analyticity of the noise profiles, after decreasing the analytic radius once, for every fixed
$s<\infty$ there are $\rho>0$ and $C<\infty$ such that
\begin{align*}
 \sup_{\tau\le t\le T}\|v_t\|_{\rho,s}\le C.
\end{align*}

\end{proof}

For $\rho>0$, $s\ge0$, and $R,T>0$, define the analytic reference class
\begin{align}\label{eq:analytic-reference-core}
 \mathcal C_{\rho,s}(R,T)
 =\Bigl\{(u,\omega)\in H^s\times \Omega_T:
 \sup_{0\le t\le T}\|\Phi_t(u,\omega)\|_{\rho,s}
 +\sum_{k=1}^m\|b_k\|_{\rho,s}\le R\Bigr\}.
\end{align}
The next lemma shows that a compact family of uniformly analytic
initial states, driven by a uniformly bounded family of continuous
paths, is contained in a single analytic reference class after
decreasing the analytic radius. We keep the notation $\zeta$ and $v^\flat$ from \eqref{eq:vflat}. 

\begin{lemma}\label{lem:analytic-core}
Let $s\ge6$ and suppose that the forcing profiles $\{b_j\}$ are analytic. Let $\mathscr D\subset H^s$ be compact and suppose that, for some $\rho_{\mathscr D}>0$ and $R_{\mathscr D}<\infty$,
\begin{align}\label{eq:analytic-initial-core}
 \sup_{u\in\mathscr D}\|u\|_{\rho_{\mathscr D},s}\le R_{\mathscr D}.
\end{align}
Then, for every $M,T<\infty$, there are $\rho\in(0,\rho_{\mathscr D})$ and $R<\infty$ such that
\begin{align}\label{eq:uniform-analytic-reference-core}
 \mathscr D\times\left\{\omega\in \Omega_T:
 \|\omega\|_{\Omega_T}\le M\right\}
 \subset\mathcal C_{\rho,s}(R,T).
\end{align}
\end{lemma}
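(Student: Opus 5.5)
The plan is to combine a short-time analytic propagation estimate near $t=0$ with \Cref{lem:bounded-driver-regularization}, which already gives uniform analyticity on $[\tau,T]$ for any $\tau>0$. Since $\mathscr D$ is compact in $H^s$ and \eqref{eq:analytic-initial-core} bounds $\|u\|_{\rho_{\mathscr D},s}$, the quantity $K(u)$ is bounded uniformly on $\mathscr D$ by Sobolev embedding ($s\ge6$). Hence \Cref{lem:bounded-driver-regularization} applies with some $R'$ in place of $R$, and yields $\rho_1>0$ and $C_1$ with $\sup_{\tau\le t\le T}\|\Phi_t(u,\omega)\|_{\rho_1,s}\le C_1$ uniformly over $u\in\mathscr D$ and $\|\omega\|_{\Omega_T}\le M$, for a $\tau$ to be chosen below.

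For the initial layer $[0,\tau]$ I would work with the shifted unknown $v^\flat=v-\zeta$ from \eqref{eq:vflat}, which solves the random PDE \eqref{eq:vflatPDE}. There $\zeta=Q\omega$ is bounded in every analytic norm by $C_M$, because the profiles $b_k$ are analytic (indeed trigonometric polynomials under the standing forcing hypothesis). I would use a decreasing analytic radius $\rho(t)=\rho_{\mathscr D}/2-\beta t$ and compute
\begin{align*}
 \frac{d}{dt}\|v^\flat\|_{\rho(t),s}^2+2\nu\|v^\flat\|_{\rho(t),s+1}^2+2\beta\|v^\flat\|_{\rho(t),s+1/2}^2
 =-2\ip{e^{\rho\Lambda}\Lambda^sB(v^\flat+\zeta,v^\flat+\zeta)}{e^{\rho\Lambda}\Lambda^sv^\flat}-2\nu\ip{e^{\rho\Lambda}\Lambda^s A\zeta}{e^{\rho\Lambda}\Lambda^s v^\flat}.
\end{align*}
On the Fourier side, $e^{\rho|\boldsymbol k|}\le e^{\rho|\boldsymbol k-\boldsymbol j|}e^{\rho|\boldsymbol j|}$ gives submultiplicativity of the analytic norms. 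The $\zeta$-terms are then bounded by $C_M(1+\|v^\flat\|_{\rho,s}^2)+\varepsilon\|v^\flat\|_{\rho,s+1}^2$.

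The main obstacle is the trilinear term $\ip{B(v^\flat,v^\flat)}{v^\flat}_{\rho,s}$, because $w(v^\flat)\partial_zv^\flat$ loses a full horizontal derivative. I would first try the standard Kukavica--Vicol/Levermore--Oliver commutator estimate. The top-order transport part cancels by incompressibility exactly as in the unweighted case. The remainder is bounded by $C\|v^\flat\|_{\rho,s}\|v^\flat\|_{\rho,s+1/2}^2$, possibly with an additional piece controlled by $\|v^\flat\|_{\rho,s}\|v^\flat\|_{\rho,s+1}\|v^\flat\|_{\rho,s+1/2}$, which Young's inequality absorbs into $\nu\|v^\flat\|_{\rho,s+1}^2$. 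If the $\|\cdot\|_{\rho,s+1/2}^2$ terms carry a factor $C(1+y)$, where $y=\|v^\flat\|_{\rho,s}^2$, then choosing $\beta\ge C(1+2R_{\mathscr D}^2+C_M)$ absorbs them into the $\beta$-dissipation for as long as $y\le 2R_{\mathscr D}^2+C_M$. A continuity argument then yields $y(t)\le 2R_{\mathscr D}^2+C_M$ on $[0,\tau]$ with $\tau=\min\{\rho_{\mathscr D}/(4\beta),T\}$, and this $\tau$ depends only on $R_{\mathscr D},M,s,\nu$. Should the cancellation fail in this hydrostatic setting, the fallback is to exploit the full dissipation instead: the viscous term $\nu\|\cdot\|_{\rho,s+1}^2$ controls a one-derivative loss once $y$ is small, which can be arranged by splitting off a finite Fourier block of the initial data.

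Finally, I would run the second step with this $\tau$, set $\rho=\min\{\rho_1,\rho_{\mathscr D}/4\}$, and add back $\zeta$ using $\|\zeta_t\|_{\rho,s}\le C_M$. Adding $\sum_k\|b_k\|_{\rho,s}<\infty$ to the resulting sum of bounds defines $R$, and this gives \eqref{eq:uniform-analytic-reference-core}. Every constant depends only on $R_{\mathscr D},M,T,s$ and the forcing, so the inclusion holds uniformly over $\mathscr D$ and the driver ball. Compactness of $\mathscr D$ is used only to bound $K$ uniformly on $\mathscr D$; the analytic bound itself comes from \eqref{eq:analytic-initial-core}.
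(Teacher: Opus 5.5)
Your proposal is correct and has the same two-step structure as the paper. First you get a uniform analytic bound for $v^\flat$ on a short initial layer. Then you apply \Cref{lem:bounded-driver-regularization} on the rest of $[0,T]$, using that $K$ is bounded on $\mathscr D$.

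The difference is the initial-layer estimate. The paper keeps the radius \emph{fixed} at $r_0=\rho_{\mathscr D}/2$ and uses only the analytic algebra bound $\|B(f,g)\|_{r,s-1}\le C_s\|f\|_{r,s}\|g\|_{r,s}$. The one-derivative loss is then absorbed by the full viscous dissipation:
\begin{align*}
 \bigl|\langle B(v^\flat,v^\flat),v^\flat\rangle_{r_0,s}\bigr|
 \le C\|v^\flat\|_{r_0,s}^2\|v^\flat\|_{r_0,s+1}
 \le\frac{\nu}{2}\|v^\flat\|_{r_0,s+1}^2+C\|v^\flat\|_{r_0,s}^4 .
\end{align*}
This gives $\frac{d}{dt}\|v^\flat\|_{r_0,s}^2+\nu\|v^\flat\|_{r_0,s+1}^2\le C(1+\|v^\flat\|_{r_0,s}^4)$, which closes by a first-exit argument.

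Your shrinking radius $\rho(t)=\rho_{\mathscr D}/2-\beta t$ is a Cauchy--Kovalevskaya device that would work even without viscosity. Splitting the lost derivative into two half-derivatives against the $\beta\|\cdot\|_{\rho,s+1/2}^2$ term gives the bound $C\|v^\flat\|_{\rho,s}\|v^\flat\|_{\rho,s+1/2}^2$ you want for $s\ge6$, with or without the transport cancellation. So your route is valid, but it uses more machinery than the parabolic setting requires and spends analytic radius linearly in time.

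Your fallback also misjudges the parabolic route. It needs no smallness of $y$ and no Fourier splitting of the data. Young's inequality only produces the superlinear term $Cy^2$, and $y'\le C(1+y^2)$ stays bounded on an interval whose length depends only on $R_{\mathscr D}$ and $M$.

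Two small repairs are needed. First, $\tau$ must also be small compared with the growth $y'\le C_M(1+y)$ coming from the $\zeta$-terms; the condition $\tau\le\rho_{\mathscr D}/(4\beta)$ alone does not guarantee this. Second, the lemma assumes only analytic profiles, not trigonometric polynomials. You should therefore first shrink $\rho_{\mathscr D}$ below the analyticity radius of the $b_j$, as the paper does with $\rho_Q$. This is what ensures $\zeta$ is bounded in the norms you use and that $\sum_k\|b_k\|_{\rho,s}<\infty$.
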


\begin{proof}
Choose $\rho_Q>0$ such that
$\sum_{j=1}^m\|b_j\|_{\rho_Q,s+1}<\infty$, and decrease $\rho_{\mathscr D}$ if
necessary so that $\rho_{\mathscr D}<\rho_Q$. 
Fix $r_0=\rho_{\mathscr D}/2$. Since the profiles are analytic,
\begin{align*}
 \sup_{\|\omega\|_{\Omega_T}\le M}\sup_{t\le T}
 \|\zeta(t)\|_{r_0,s+1}<\infty.
\end{align*}
Using $s-1>3/2$ and the analytic algebra estimate $\|B(f,g)\|_{r,s-1}\le C_s\|f\|_{r,s}\|g\|_{r,s}$, 
the standard analytic energy estimate for
\eqref{eq:vflatPDE} gives
\begin{align*}
 \frac{d}{dt}\|v^\flat\|_{r_0,s}^2
 +\nu\|v^\flat\|_{r_0,s+1}^2
 \le C\left(1+\|v^\flat\|_{r_0,s}^4\right),
\end{align*}
with $C$ uniform for $\|\omega\|_{\Omega_T}\le M$. Since
\eqref{eq:analytic-initial-core} is uniform on $\mathscr D$, a first-exit
argument yields $t_*>0$ and $R_*<\infty$ such that
\begin{align}\label{eq:short-analytic-core}
 \sup_{u\in\mathscr D}\sup_{\|\omega\|_{\Omega_T}\le M}
 \sup_{0\le t\le t_*\wedge T}
 \|\Phi_t(u,\omega)\|_{r_0,s}\le R_*.
\end{align}
Since $\mathscr D$ is compact in $H^s$ and $s\ge6$, $K$ is bounded by some constant $R_K$ uniformly on $\mathscr D$. 
If $T>t_*/2$, \Cref{lem:bounded-driver-regularization}, applied with
initial bound $R_K$, driver bound $M$, and positive time $t_*/2$,
gives $\rho_1>0$ and
\begin{align*}
 \sup_{u\in\mathscr D}\sup_{\|\omega\|_{\Omega_T}\le M}
 \sup_{t_*/2\le t\le T}
 \|\Phi_t(u,\omega)\|_{\rho_1,s}<\infty.
\end{align*}
Together with \eqref{eq:short-analytic-core}, this gives
\eqref{eq:uniform-analytic-reference-core} after choosing
$0<\rho<\min\{r_0,\rho_1,\rho_Q\}$ 
and increasing $R$ to include
$\sum_{j=1}^m\|b_j\|_{\rho,s}$. If $T\le t_*/2$, the same conclusion
follows directly from \eqref{eq:short-analytic-core}.
\end{proof}

\subsection{Bounds on  linearizations and remainders}\label{app:endpoint-charts}
In this subsection, we derive estimates on linearizations
and remainders to justify the regular dependence required in \Cref{prop:compact-source-repair}.  

\begin{lemma}\label{lem:tangent-adjoint-core}
Fix $T<\infty$ and $s\ge6$. Let $v$ satisfy $\sup_{t\in[0,T]}\|v_t\|_{H^s}\le R$. Then there is a constant $C>0$ such that for every $\xi\in H$, the tangent flow $J_{r,t}$ satisfies
\begin{align}\label{eq:tangent-core-bound}
 \sup_{0\le r\le T}
 \left(
 \|J_{r,\cdot}\xi\|_{L^\infty(r,T;H)}
 +\|J_{r,\cdot}\xi\|_{L^2(r,T;H^1)}
 \right)
 \le C\|\xi\|_H.
\end{align}
The terminal adjoint $p_t=J_{t,T}^*\xi$ belongs to
$C([0,T];H)\cap L^2(0,T;H^1)\cap W^{1,\infty}(0,T;H^{-2})$, and
\begin{align}\label{eq:adjoint-core-bounds}
 \|p\|_{L^\infty([0,T];H)}
 +\|\partial_t p\|_{L^\infty([0,T];H^{-2})}
 \le C\|\xi\|_H.
\end{align}
Moreover, for almost every $t\in(0,T)$,
\begin{align}\label{eq:adjoint-core-energy}
 \frac{d}{dt}\|p_t\|_H^2
 \ge\nu\|\nabla p_t\|_2^2-C\|p_t\|_H^2.
\end{align}
\end{lemma}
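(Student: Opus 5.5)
The plan is to work directly with the linearized equation along the fixed reference trajectory $v$ and its adjoint, using only energy estimates. The key point is that $\sup_t\|v_t\|_{H^s}\le R$ with $s\ge6$ makes every coefficient bounded in $W^{1,\infty}$.

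\emph{Step 1: the forward tangent bound \eqref{eq:tangent-core-bound}.} For $r\le t\le T$ the tangent $h_t=J_{r,t}\xi$ solves
\begin{align*}
 \partial_th+\nu Ah+B(v,h)+B(h,v)=0,\qquad h_r=\xi.
\end{align*}
Testing with $h$, the term $\ip{B(v,h)}h$ vanishes by incompressibility. The remaining term satisfies
\begin{align*}
 |\ip{B(h,v)}h|\le\tfrac\nu2\|\nabla h\|_2^2+C\|h\|_H^2,
\end{align*}
by \Cref{lem:one-sided-growth}, since $K(v)\le C_R$ by Sobolev embedding. Gronwall's inequality then gives
\begin{align*}
 \|h\|_{L^\infty H}+\|h\|_{L^2H^1}\le C\|\xi\|_H,
\end{align*}
uniformly in $r$. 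Existence follows by Galerkin approximation.

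\emph{Step 2: the adjoint equation and its regularity.} The adjoint $p_t=J_{t,T}^*\xi$ is characterized by duality: $\ip{p_t}{J_{r,t}\eta}$ is constant in $t$. Formally it solves the backward equation
\begin{align*}
 -\partial_tp+\nu Ap-\mathcal B_v^*p=0,\qquad p_T=\xi,
\end{align*}
where $\mathcal B_v h=-(B(v,h)+B(h,v))$ is the linearized nonlinearity. I will construct $p$ as the unique weak solution of this linear backward problem by Galerkin approximation, and identify it with $J_{t,T}^*\xi$ via the duality identity. The adjoint satisfies
\begin{align*}
 \ip{\mathcal B_v^*p}p=-\ip p{B(p,v)},
\end{align*}
because $\ip{B(v,p)}p=0$. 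Hence the same estimate as in Step 1 bounds $\|p\|_{L^\infty H}+\|p\|_{L^2H^1}$ by $C\|\xi\|_H$. For the time derivative, I will show that $A:H\to H^{-2}$ is bounded and that $\|\mathcal B_v^*p\|_{H^{-2}}\le C\|v\|_{H^s}\|p\|_H$. The latter holds since
\begin{align*}
 |\ip{\mathcal B_v^*p}\phi|=|\ip p{\mathcal B_v\phi}|\le\|p\|_H\|\mathcal B_v\phi\|_H\le C_R\|p\|_H\|\phi\|_{H^2},
\end{align*}
using boundedness of $\mathsf P$ and $\|w(\phi)\|_{L^2}\le C\|\phi\|_{H^1}$. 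This yields $\partial_tp\in L^\infty H^{-2}$. Continuity in $H$ follows from the Lions--Magenes lemma, because $p\in L^2H^1$ and $\partial_tp\in L^2H^{-1}$; the latter again comes from testing against $\phi\in H^1$ and bounding $\mathcal B_v\phi$ in $H$.

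\emph{Step 3: the energy inequality \eqref{eq:adjoint-core-energy}.} Testing the backward equation with $p$ gives
\begin{align*}
 \tfrac12\tfrac{d}{dt}\|p\|_H^2=\nu\|\nabla p\|_2^2+\ip p{B(p,v)}.
\end{align*}
This uses $\ip{Ap}p=\|\nabla p\|_2^2$ on the hydrostatic space with Poincaré normalization. Bounding
\begin{align*}
 |\ip p{B(p,v)}|\le\tfrac\nu2\|\nabla p\|_2^2+C_R\|p\|_H^2
\end{align*}
by \Cref{lem:one-sided-growth} (or directly, using $\|\nabla v\|_\infty\le C_R$ and integration by parts on the $w(p)\partial_zv$ term) gives the claimed lower bound, after adjusting constants.

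\emph{Main obstacle.} The delicate point is the justification in Step 2 of the duality identity and of the absolutely continuous energy identity for the weak backward solution. I will handle it by proving everything at the Galerkin level, where the projections commute with $A$, passing to the limit with the uniform bounds, and using Lions--Magenes for the energy identity.
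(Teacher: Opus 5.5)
Your proposal is correct and follows essentially the same route as the paper: the same cancellation together with \Cref{lem:one-sided-growth} and Gronwall for the tangent flow, boundedness of $DF(v_t):H^2\to H$ to control $\partial_t p$ in $H^{-2}$, and testing the backward equation with $p$ for the energy inequality. The only difference is technical. You build $p$ as a Galerkin weak solution and use Lions--Magenes for continuity and the a.e.\ energy identity, whereas the paper gets the $L^\infty H$ bound by duality from the forward estimate, proves the inequality for smooth terminal data, and then approximates $\xi$ in $H$.
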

\begin{proof}
For $h\in H$, the tangent flow satisfies
\begin{align*}
 \partial_t(J_{r,t}h)+\nu A(J_{r,t}h)
 +B(v_t,J_{r,t}h)+B(J_{r,t}h,v_t)=0,
 \qquad J_{r,r}h=h.
\end{align*}
Pairing with $J_{r,t}h$, the transport term
$B(v_t,J_{r,t}h)$ cancels, while \eqref{eq:one-sided-K} gives
\begin{align*}
 \bigl|\langle B(J_{r,t}h,v_t),J_{r,t}h\rangle\bigr|
 \le\frac{\nu}{2}\|\nabla J_{r,t}h\|_2^2
 +CK(v_t)\|J_{r,t}h\|_2^2.
\end{align*}
Since the regular reference bound controls $\sup_{t\le T}K(v_t)$,
Gronwall's inequality gives \eqref{eq:tangent-core-bound}.

By duality,
\begin{align*}
 \|p_t\|_H=\|J_{t,T}^*\xi\|_H\le C\|\xi\|_H.
\end{align*}
The adjoint satisfies
\begin{align}\label{eq:backward-adjoint-equation}
 \partial_t p_t=-DF(v_t)^*p_t.
\end{align}
As $\|v_t\|_{H^s}\le R$ uniformly on $[0,T]$, $DF(v_t):H^2\to H$ is uniformly
bounded. Hence
\begin{align*}
 \|\partial_t p_t\|_{H^{-2}}
 \le C\|p_t\|_H
 \le C\|\xi\|_H,
\end{align*}
which proves \eqref{eq:adjoint-core-bounds}. Reversing time in
\eqref{eq:backward-adjoint-equation} gives a forward parabolic equation,
so the standard energy approximation also yields
$p\in L^2(0,T;H^1)$ and continuity in $H$.

For smooth terminal data, using
$DF(v)h=-\nu Ah-B(v,h)-B(h,v)$ and the hydrostatic transport
cancellation gives
\begin{align*}
 \frac{d}{dt}\|p_t\|_H^2
 =-2\langle p_t,DF(v_t)p_t\rangle_H=2\nu\|\nabla p_t\|_2^2
 +2\langle B(p_t,v_t),p_t\rangle_H.
\end{align*}
Applying \eqref{eq:one-sided-K} once more,
\begin{align*}
 |\langle B(p_t,v_t),p_t\rangle_H|
 \le\frac{\nu}{2}\|\nabla p_t\|_2^2
 +CK(v_t)\|p_t\|_H^2,
\end{align*}
and \eqref{eq:adjoint-core-energy} follows. Approximation of the
terminal datum in $H$ yields the same inequality for arbitrary
$\xi\in H$. 
\end{proof}

Recall the Malliavin derivative $\mathcal A_T$ in \eqref{eq:control-operator}, the Cameron--Martin space $\mathsf H_T=L^2(0,T;\mathbb R^m)$, and  the Cameron--Martin embedding $\iota$ from \eqref{eq:iota}. The following lemma gives the remainder estimates for the first order approximation $J_{0,T}$ and $\mathcal A_T$ in the state and driver directions. 
\begin{lemma}
\label{lem:endpoint-remainder}
Fix $T>0$, $s\ge6$, and $R<\infty$. There exist
$\varepsilon_*>0$ and $C<\infty$, depending only on
$T,R,\nu$, and $Q$, such that the following holds.
Let $(u,W)$ be a reference pair whose trajectory
$v_t=\Phi_t(u,W)$ satisfies $\sup_{0\le t\le T}\|v_t\|_{H^s}\le R$. 
For every $\widehat u\in H^1$ and every
$h\in \mathsf H_T$, put $\varepsilon
=\|\widehat u-u\|_H+\|h\|_{\mathsf H_T}$. 
If $\varepsilon\le\varepsilon_*$, then
\begin{align}\label{eq:remainder}
 \left\|
 \Phi_T(\widehat u,W+\iota h)-\Phi_T(u,W)
 -J_{0,T}(\widehat u-u)-\mathcal A_Th
 \right\|_H
 \le C\varepsilon^2.
\end{align}
\end{lemma}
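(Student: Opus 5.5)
Because the noise is additive, I would work with the shifted variables $v^\flat=v-Q\omega$ from \eqref{eq:vflat}. In these variables the driver enters the evolution only as a smooth deterministic forcing. Write $\widehat v_t=\Phi_t(\widehat u,W+\iota h)$ and $v_t=\Phi_t(u,W)$, and set $\zeta_t=\widehat v_t-v_t$. The Cameron--Martin shift contributes the extra forcing $Q\,h(t)$, so
\begin{align*}
\partial_t\zeta+\nu A\zeta+B(v,\zeta)+B(\zeta,v)+B(\zeta,\zeta)=Qh(t),\qquad \zeta_0=\widehat u-u .
\end{align*}
The linear part $\mathcal L_t\zeta=\nu A\zeta+B(v_t,\zeta)+B(\zeta,v_t)$ is exactly the tangent operator. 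Duhamel's formula for the propagator $J_{r,t}$ of \Cref{lem:tangent-adjoint-core} therefore gives
\begin{align*}
\zeta_T=J_{0,T}(\widehat u-u)+\int_0^TJ_{r,T}Qh(r)\,\dd r-\int_0^TJ_{r,T}B(\zeta_r,\zeta_r)\,\dd r .
\end{align*}
By \eqref{eq:control-operator}, the middle term is $\mathcal A_Th$. The quantity to estimate is then only the last integral.

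The first step is an a priori bound $\sup_{t\le T}\|\zeta_t\|_H^2+\int_0^T\|\zeta_t\|_{H^1}^2\,\dd t\le C\varepsilon^2$. To get it I would test the equation with $\zeta$. The terms $B(v,\zeta)$ and $B(\zeta,\zeta)$ cancel. The term $B(\zeta,v)$ is handled by \eqref{eq:one-sided-K}, with $K(v_t)$ bounded uniformly because $\|v_t\|_{H^s}\le R$ and $s\ge6$. The forcing is bounded by $\|Q\|\,|h(t)|\,\|\zeta\|_H$. Gronwall's inequality and $\|h\|_{L^1}\le\sqrt T\|h\|_{\mathsf H_T}$ then give the claim. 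This bound needs no smallness and no regularity of $\widehat v$ beyond the strong-solution level guaranteed by \Cref{prop:pe-basics}.

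The main obstacle is the quadratic remainder. The quantity $B(\zeta,\zeta)$ carries a derivative, including the vertical velocity $w(\zeta)\partial_z\zeta$, which loses a full derivative. With only $\zeta\in L^2H^1$ available, $B(\zeta,\zeta)$ lies in $L^1(0,T;H^{-1})$ at best, or worse. I would exploit parabolic smoothing of the propagator in duality. For $\psi\in H$, write
\begin{align*}
\langle \psi, J_{r,T}B(\zeta_r,\zeta_r)\rangle=\langle q_r,B(\zeta_r,\zeta_r)\rangle,\qquad q_r=J_{r,T}^*\psi .
\end{align*}
Here $q$ solves the backward adjoint equation with smooth coefficients. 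Testing the adjoint equation with $Aq$, using $\|v\|_{H^s}\le R$, should give $q\in L^2(0,T;H^2)\cap L^\infty(0,T;H^1)$ with a bound $(T-r)^{-1/2}$ near the terminal time, and $L^2H^2$ after time weighting. The nonlinearity is then moved onto $q$ by integrating by parts: using $\langle q,B(\zeta,\zeta)\rangle=-\langle B(\zeta,q),\zeta\rangle$ by the transport cancellation, one needs $|\langle B(\zeta,q),\zeta\rangle|\lesssim\|\zeta\|_{H^1}^{3/2}\|\zeta\|_H^{1/2}\|q\|_{H^2}$ or a similar anisotropic estimate. The hydrostatic trick of \Cref{lem:one-sided-growth}, with $w(\zeta)$ bounded in $L^\infty_z$ by $\nabla_h\zeta$ in $L^2_z$ and $\nabla q$ placed in $L^\infty_zL^4_h$, seems the right route.

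Integrating in $r$ with Hölder's inequality, the singular weight $(T-r)^{-1/2}$ is integrable against $\|\zeta_r\|_{H^1}^{3/2}\in L^{4/3}$ only marginally. If the weight does not close, I would fall back on splitting off $[T-\tau,T]$. On that interval I would use the $H^1$ smoothing estimate for $\zeta$, obtained by testing with $A\zeta$ as in Step 1 of the proof of \Cref{thm:exponential-mixing}. That estimate is linear in $\varepsilon$ at the level $\|\zeta\|_{L^\infty H^1}$ after a short time. Since the initial state $\widehat u$ is only in $H^1$ with an $H$-small difference, this requires a smallness-based first-exit argument, and it is where $\varepsilon\le\varepsilon_*$ enters. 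Combining the two parts yields $\|\int_0^TJ_{r,T}B(\zeta_r,\zeta_r)\,\dd r\|_H\le C\varepsilon^2$, which gives \eqref{eq:remainder}.
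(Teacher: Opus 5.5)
Your argument in its split form is correct and is essentially the paper's proof. It uses the same Duhamel representation and $L^\infty H\cap L^2H^1$ energy bound. Away from the terminal time it uses a duality estimate: your H\"older splitting $\|w(\zeta)\|_{L^2_hL^\infty_z}\|\zeta\|_{L^4_hL^2_z}\|\partial_z q\|_{L^4_hL^2_z}$ is exactly how the paper proves $\|B(a,a)\|_{H^{-3/2}}\le C\|a\|_{H^1}^2$, which it pairs with the smoothing $J_{t,T}^*:H\to H^{3/2}$ on $[0,T/2]$. Near $T$ it uses an $H^1$--$H^2$ bound for $\zeta$, obtained by testing with $A\zeta$ from a time $t_*$ where $\|\zeta_{t_*}\|_{H^1}\le C\varepsilon$ (supplied by the $L^2H^1$ bound) plus a first-exit argument. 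After that, $\|B(\zeta,\zeta)\|_H\le C\|\zeta\|_{H^1}\|\zeta\|_{H^2}$ and boundedness of $J_{t,T}$ on $H$ finish.

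The split is not optional, however. With only $\zeta\in L^2H^1$, pairing $\|\zeta_r\|_{H^1}^{3/2}\in L^{4/3}$ against the terminal weight $\|q_r\|_{H^k}\sim(T-r)^{-k/2}$ requires that weight to be in $L^4$. This already fails at $k=1/2$ (it is not marginal), and a fortiori for the $k\ge 3/2$ your trilinear bound needs. So you should present the splitting as the argument itself, not as a fallback.
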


\begin{proof}
Write $\xi_t=\Phi_t(\widehat u,W+\iota h)-\Phi_t(u,W)$. 
Then
\begin{align*}
 \partial_t\xi+\nu A\xi+B(v,\xi)+B(\xi,v)+B(\xi,\xi)=Qh,
 \qquad
 \xi_0=\widehat u-u.
\end{align*}
Note that 
\begin{align*}
 \eta_t = J_{0,t}(\widehat u-u)+\int_0^tJ_{r,t}Qh(r)\,\dd r
\end{align*}
solves 
\begin{align*}
\partial_t \eta+\nu A \eta+B(v, \eta)+B(\eta, v)=Q h, \quad \eta_0=\widehat u-u. 
\end{align*}
Hence $\varrho=\xi-\eta$ satisfies
\begin{align*}
 \partial_t\varrho+\nu A\varrho+B(v,\varrho)+B(\varrho,v)
 =-B(\xi,\xi),\qquad \varrho_0=0.
\end{align*}
Variation of constants then gives at time $T$
\begin{align}\label{eq:first-order-remainder}
 \Phi_T(\widehat u,W+\iota h)-\Phi_T(u,W)
 -J_{0,T}(\widehat u-u)-\mathcal A_Th
 =
 -\int_0^TJ_{t,T}B(\xi_t,\xi_t)\,\dd t.
\end{align}
It therefore remains to estimate the integral on the right-hand side. 

Since
$\langle B(v,\xi),\xi\rangle=\langle B(\xi,\xi),\xi\rangle=0$,
\Cref{lem:one-sided-growth}, Young's inequality, and the uniform
$H^s$ bound on the reference trajectory give
\begin{align}\label{eq:remainder-energy}
 \sup_{0\le t\le T}\|\xi_t\|_H^2
 +\int_0^T\|\xi_t\|_{H^1}^2\,\dd t
 \le C\varepsilon^2.
\end{align}
Hence there exists $t_*\in[T/4,T/3]$ such that $\|\xi_{t_*}\|_{H^1}\le C\varepsilon$. 
For an admissible field $a$,
\begin{align}\label{eq:strong-bilinear}
 \|B(a,a)\|_H\le C\|a\|_{H^1}\|a\|_{H^2}.
\end{align}
Indeed,
\begin{align*}
 \|w(a)\partial_za\|_2
 \le C\|\nabla_ha\|_{L_h^4L_z^2}
       \|\partial_za\|_{L_h^4L_z^2}
 \le C\|a\|_{H^1}\|a\|_{H^2},
\end{align*}
while the horizontal term follows from
$\|a\|_6\|\nabla_ha\|_3
 \le C\|a\|_{H^1}^{3/2}\|a\|_{H^2}^{1/2}$.
The regular reference bound also gives
\begin{align*}
 \|B(v,a)+B(a,v)\|_H\le C\|a\|_{H^1}.
\end{align*}
Testing the difference equation with $A\xi$ therefore yields
\begin{align*}
 \frac{d}{dt}\|\xi\|_{H^1}^2+\nu\|\xi\|_{H^2}^2
 \le C\|\xi\|_{H^1}\|\xi\|_{H^2}^2
 +C\|\xi\|_{H^1}^2+C_Q|h(t)|^2.
\end{align*}
As long as $\|\xi\|_{H^1}\le\nu/(2C)$, the first term on the
right is absorbed by the dissipation. As $\|h\|_{\mathsf H_T}\leq \varepsilon$, starting from $t_*$ and using
Gronwall's inequality, followed by a first-exit argument and a decrease
of $\varepsilon_*$, gives
\begin{align}\label{eq:remainder-strong}
 \sup_{t_*\le t\le T}\|\xi_t\|_{H^1}^2
 +\int_{t_*}^T\|\xi_t\|_{H^2}^2\,\dd t
 \le C\varepsilon^2.
\end{align}
We also use the weak bilinear bound
\begin{align}\label{eq:weak-bilinear}
 \|B(a,a)\|_{H^{-3/2}}\le C\|a\|_{H^1}^2.
\end{align}
Indeed, for the vertical term, pairing against $\varphi\in H^{3/2}$ and
integrating by parts gives
\begin{align*}
 \left|\int w(a)\partial_za\cdot\varphi\,\dd x\right|
 \le
 C\|w(a)\|_{L_h^2L_z^\infty}
 \|a\|_{L_h^4L_z^2}
 \|\partial_z\varphi\|_{L_h^4L_z^2}
 \le C\|a\|_{H^1}^2\|\varphi\|_{H^{3/2}},
\end{align*}
while the horizontal term is estimated in divergence form in the same
way. 
On $[0,T/2]$, the positive terminal gap and the uniform regularity of
the reference trajectory give
\begin{align*}
 \sup_{0\le t\le T/2}
 \|J_{t,T}\|_{\mathcal L(H^{-3/2},H)}=\sup_{0\le t\le T/2}
 \|J_{t,T}^*\|_{\mathcal L(H,H^{3/2})}\le C.
\end{align*}
Hence \eqref{eq:weak-bilinear} and \eqref{eq:remainder-energy} imply
\begin{align*}
 \left\|
 \int_0^{T/2}J_{t,T}B(\xi_t,\xi_t)\,\dd t
 \right\|_H
 \le C\int_0^{T/2}\|\xi_t\|_{H^1}^2\,\dd t
 \le C\varepsilon^2.
\end{align*}
On $[T/2,T]$, \Cref{lem:tangent-adjoint-core},
\eqref{eq:strong-bilinear}, and \eqref{eq:remainder-strong} give
\begin{align*}
 \left\|
 \int_{T/2}^TJ_{t,T}B(\xi_t,\xi_t)\,\dd t
 \right\|_H
 &\le
 C\int_{T/2}^T\|\xi_t\|_{H^1}\|\xi_t\|_{H^2}\,\dd t\\
 &\le
 C\left(\int_{t_*}^T\|\xi_t\|_{H^1}^2\,\dd t\right)^{1/2}
 \left(\int_{t_*}^T\|\xi_t\|_{H^2}^2\,\dd t\right)^{1/2}\le C\varepsilon^2.
\end{align*}
Combining the two time intervals with \eqref{eq:first-order-remainder} proves \eqref{eq:remainder}. 
\end{proof}

We also need the following differentiability of the solution map and the linearization on the driver variable, which follows from considering the shifted formulation $v^\flat=v-Q\omega$ as in \eqref{eq:vflat}. 

\begin{lemma}\label{lem:driver-chart}
Let $s\ge6$ and assume that the noise profiles of $Q$ belong to
$H^{s+2}$. Fix $T,R,M<\infty$. There exist $\delta>0$ and $C<\infty$,
depending only on $T,R,M,\nu$, and $Q$, such that the following holds. Let $(u,\omega)\in H^s\times\Omega_T$ satisfy
\begin{align}\label{eq:state-driver-bounds}
 \|\omega\|_{\Omega_T}\le M,
 \qquad
 \sup_{0\le t\le T}\|\Phi_t(u,\omega)\|_{H^s}\le R.
\end{align}
Then on the neighborhood $\mathcal O_{\delta}:=\left\{(\widehat u,\widehat\omega):
\|\widehat u-u\|_{H^s}
+\|\widehat\omega-\omega\|_{\Omega_T}<\delta\right\},$ 
the driver map
$\widehat\omega\mapsto\Phi_T(\widehat u,\widehat\omega)$ is twice
continuously differentiable and
$\widehat\omega\mapsto J_{0,T}(\widehat u,\widehat\omega)$ is
continuously differentiable, with
\begin{align*}
 \|D_\omega\Phi_T\|_{\mathcal L(\Omega_T,H^s)}
 +\|D_\omega^2\Phi_T\|_{\mathcal L^2(\Omega_T,H^s)}
 +\|D_\omega J_{0,T}\|_{\mathcal L(\Omega_T,\mathcal L(H,H))}
 \le C.
\end{align*}
\end{lemma}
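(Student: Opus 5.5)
The natural route is the shifted formulation $v^\flat=v-Q\omega$ from \eqref{eq:vflat}. Writing $\zeta=Q\widehat\omega$, the map $\widehat\omega\mapsto\Phi_T(\widehat u,\widehat\omega)=v^\flat_T+Q\widehat\omega_T$ factors through the deterministic PDE \eqref{eq:vflatPDE}. The driver enters \eqref{eq:vflatPDE} only through the path $\zeta\in C([0,T];H^{s+2})$, and it does so linearly in $\nu A\zeta$ and at most quadratically in $B(v^\flat+\zeta,v^\flat+\zeta)$. Since $b_j\in H^{s+2}$, the map $\widehat\omega\mapsto\zeta$ is bounded linear from $\Omega_T$ into $C([0,T];H^{s+2})$, and $A\zeta\in C([0,T];H^s)$. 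So it suffices to show that the solution map $(\widehat u,\zeta)\mapsto v^\flat$ of \eqref{eq:vflatPDE} is $C^2$ from a neighborhood in $H^s\times C([0,T];H^{s+2})$ into $C([0,T];H^s)\cap L^2(0,T;H^{s+1})$, with uniform bounds. The $J_{0,T}$ statement follows from the same analysis applied to the linearized equation.

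First I would establish local persistence. For $(\widehat u,\widehat\omega)\in\mathcal O_\delta$, the difference $e=\widehat v^\flat-v^\flat$ satisfies a parabolic equation whose coefficients involve $v^\flat$, which is bounded in $C([0,T];H^s)$ by \eqref{eq:state-driver-bounds} and $\|\omega\|\le M$, and whose forcing terms are of size $\|\zeta-\widehat\zeta\|$. The $H^s$ energy estimate uses $\|B(f,g)\|_{H^{s-1}}\le C\|f\|_{H^s}\|g\|_{H^s}$ for the terms containing $e$, with one derivative absorbed by dissipation. Combined with a first-exit argument, it gives $\sup_t\|e_t\|_{H^s}+\|e\|_{L^2H^{s+1}}\le C\delta$, and hence $\sup_t\|\widehat v_t\|_{H^s}\le R+1$ on $\mathcal O_\delta$ for small $\delta$.

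Next I would treat differentiability through the implicit function theorem. Consider the map
\[
\mathcal G(\zeta,w)=\bigl(\partial_tw+\nu Aw+B(w+\zeta,w+\zeta)+\nu A\zeta,\;w(0)\bigr),
\]
defined on $C([0,T];H^{s+2})\times X$ with values in $L^2(0,T;H^{s-1})\times H^s$. Here $X=\{w\in L^2(0,T;H^{s+1}):\partial_tw\in L^2(0,T;H^{s-1})\}\subset C([0,T];H^s)$. The map $\mathcal G$ is a polynomial of degree two, hence $C^\infty$, with bounded derivatives on bounded sets. Its $w$-derivative at any point of the persistence neighborhood is $h\mapsto(\partial_th+\nu Ah+B(\widehat v,h)+B(h,\widehat v),h(0))$. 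This operator is invertible with uniformly bounded inverse by maximal $L^2$ regularity at level $H^s$. The perturbation $B(\widehat v,h)+B(h,\widehat v)$ loses one derivative and has coefficients in $C([0,T];H^s)$, so it is handled by a Gronwall/energy estimate in $H^s$, or by iterating over short subintervals as in the proof of \Cref{lem:bounded-driver-regularization}. The implicit function theorem then gives that $\widehat\omega\mapsto\Phi_T$ is $C^2$. The bounds on $D_\omega\Phi_T$ and $D_\omega^2\Phi_T$ come from the explicit formulas: $D_\omega\Phi_T[\varpi]=h_T+Q\varpi_T$, where $h$ solves the linearized equation forced by $-\nu AQ\varpi-\mathsf B(\widehat v,Q\varpi)$, and the second derivative solves the same linear equation with a quadratic forcing.

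For $J_{0,T}$, I would differentiate in $\widehat\omega$ the tangent equation $\partial_tJ\xi+\nu AJ\xi+B(\widehat v,J\xi)+B(J\xi,\widehat v)=0$. The derivative $D_\omega J[\varpi]\xi$ solves the same equation with forcing $-\mathsf B(D_\omega\widehat v[\varpi],J\xi)$, and it must be estimated in $H$ for $\xi\in H$ only. This is the delicate point, since $J\xi$ is merely in $L^\infty H\cap L^2H^1$. I would use the energy estimate at the $H$ level, with the one-sided bound \eqref{eq:one-sided-K} for $B(\cdot,\widehat v)$, and control the forcing by
\[
\bigl|\langle \mathsf B(g,J\xi),y\rangle\bigr|\le C\|g\|_{H^s}\bigl(\|J\xi\|_H\|y\|_{H^1}+\|J\xi\|_{H^1}\|y\|_H\bigr).
\]
The term $\langle B(g,J\xi),y\rangle$ is moved onto $y$ by the transport antisymmetry, and this is where $g=D_\omega\widehat v[\varpi]\in C([0,T];H^s)$, with $s\ge6$, is used. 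Gronwall together with \eqref{eq:tangent-core-bound} then bounds $\|D_\omega J_{0,T}\|_{\mathcal L(\Omega_T,\mathcal L(H,H))}$. Continuity of $D_\omega J$ follows by the same difference estimates, which completes the plan.
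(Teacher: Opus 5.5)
Your proposal is correct. For the tangent flow it is essentially the paper's argument. The paper also differentiates the tangent equation to get $Y$ with forcing $-\mathcal T_ZJ\xi$, where $\mathcal T_a\xi=B(a,\xi)+B(\xi,a)$, and closes an $H$-level energy estimate. Its forcing bound is $\|\mathcal T_a\xi\|_{H^{-1}}\le C\|a\|_{H^s}\|\xi\|_H$, obtained through the vertical primitive $\mathscr I_z$, so it needs only $\|J\xi\|_H$; your bound also uses $\|J\xi\|_{L^2H^1}$ from the tangent estimate, and both close.

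The genuine difference is how you get $C^2$ dependence of $\Phi_T$ on the driver.
\begin{itemize}
\item The paper uses the Duhamel form of the shifted equation on short intervals $I=[a,a+\tau]$ in $C(I;H^s)$. The smoothing bound $\|e^{-\nu At}\|_{H^{s-1}\to H^s}\le Ct^{-1/2}$ makes the fixed-point map a $\tfrac14$-contraction for small $\tau$. Hence $I-D_{\mathsf v}\mathcal M$ is invertible by smallness, the implicit function theorem applies on each interval, and finitely many local charts are composed, shrinking the radius each time.
\item You apply the implicit function theorem once on all of $[0,T]$, in the maximal-regularity space $L^2H^{s+1}\cap H^1H^{s-1}$. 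There, invertibility of the linearization comes from an $H^s$ energy estimate and Gronwall, not from smallness.
\end{itemize}
Your route avoids concatenation and gives $L^2H^{s+1}$ control of the derivatives at no extra cost. The price is Lions-type maximal regularity and an a priori energy estimate; the paper needs only semigroup bounds and the contraction principle.

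Two minor remarks. First, your separate persistence step is not needed: a quantitative implicit function theorem at the base point $(u,\omega)$ already gives a neighborhood of uniform size. Second, for $J_{0,T}$, ``differentiating the tangent equation'' has to be backed up to conclude Fr\'echet differentiability in $\mathcal L(H,H)$. One way is a remainder estimate $\|\mathsf J-J-Y\|\le C\|\vartheta\|_{\Omega_T}^2\|\xi\|_H$, which is what the paper does. Another is an implicit-function argument for the bilinear map $(a,\mathcal J)\mapsto(\partial_t+\nu A+\mathcal T_a)\mathcal J$. Both are routine with the bilinear bound you wrote down.
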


\begin{proof}
For $s\ge6$, the hydrostatic nonlinearity is a bounded bilinear map
$H^s\times H^s\to H^{s-1}$ since $H^{s-1}$ is an algebra. Set
$R_0=R+C_QM$, 
so that $\sup_{t\le T}\|v_t^\flat\|_{H^s}\le R_0$. Fix an interval
$I=[a,a+\tau]\subset[0,T]$  and define $\mathcal M=\mathcal M_{\mathsf v_a,\widehat\omega}$ by 
\begin{align*}
 \mathcal M(\mathsf v)(t)
 :=
 e^{-\nu A(t-a)}\mathsf v_a
 -\int_a^t e^{-\nu A(t-r)}
 \Bigl\{\nu AQ\widehat\omega(r)
 +B\bigl(\mathsf v(r)+Q\widehat\omega(r),
 \mathsf v(r)+Q\widehat\omega(r)\bigr)\Bigr\}\,\dd r .
\end{align*}
The heat estimate $\|e^{-\nu At}\|_{H^{s-1}\to H^s}\le Ct^{-1/2}$ shows that  
the map $\mathcal M$ acts on
$X_I=C(I;H^s)$. Consider the reference tube $\mathbb B_I
=
\left\{\mathsf v\in X_I:
\|\mathsf v-v^\flat\|_{X_I}\le1\right\}.$
If $\|\widehat\omega-\omega\|_{C(I)}\le1$, all arguments of $B$
in this tube are bounded in $H^s$ by a constant depending only on
$R,M$, and $Q$. Hence
\begin{align}\label{eq:driver-chart-contraction}
 \|\mathcal M(\mathsf u)
       -\mathcal M(\mathsf v)\|_{X_I}
 \le C\tau^{1/2}\|\mathsf u - \mathsf v\|_{X_I},
\end{align}
and choosing $\tau=\tau(R,M,\nu,Q)>0$ so that  $C\tau^{1/2}\leq 1/4$ makes $\mathcal M$ a contraction. Moreover, viewed as a map of $(\mathsf v,\mathsf v_a,\widehat\omega)$,
$\mathcal M$ is twice continuously
differentiable as its dependence on $\mathsf v_a$ is linear and on $(\mathsf v,\widehat\omega)$ is quadratic.  The same heat estimate and
bilinear bound show that these derivatives are uniformly bounded on
$\mathbb B_I$. At  $(v_a^{\flat},\omega)$ one has
$\mathcal M (v^\flat)=v^\flat$, while by \eqref{eq:driver-chart-contraction} and the same estimates,
\begin{align*}
 \|\mathcal M_{\mathsf v_a,\widehat\omega}(\mathsf v)-v^\flat\|_{X_I}
 \le \frac14\|\mathsf v-v^\flat\|_{X_I}
 +C\left(
 \|\mathsf v_a-v_a^\flat\|_{H^s}
 +\|\widehat\omega-\omega\|_{C(I)}
 \right).
\end{align*}
Therefore, after choosing
$\delta_0>0$ sufficiently small, uniformly for $(v_a^{\flat},\omega)$ satisfying \eqref{eq:state-driver-bounds}, 
\begin{align*}
 \|\mathsf v_a-v_a^{\flat}\|_{H^s}
 +\|\widehat\omega-\omega\|_{C(I)}
 <\delta_0
\end{align*}
implies that $\mathcal M$ is a contraction mapping $\mathbb B_I$
into itself. 
Thus $I-D_{\mathsf v}\mathcal M$
is uniformly invertible, and the Banach-space implicit function theorem shows that the
local solution map $(\mathsf v_a,\widehat\omega)\mapsto \mathsf v$ is $C^2$, with
uniform first and second derivative bounds.

Cover $[0,T]$ by finitely many intervals of the above length. Composing
the corresponding local solution maps and decreasing the parameter
radius finitely many times gives constants $\delta>0$ and $C<\infty$,
depending only on $T,R,M,\nu$, and $Q$, such that the asserted $C^2$
driver estimate holds on $\mathcal O_\delta$. In particular, for
$\vartheta\in\Omega_T$ and
$Z=D_\omega\Phi_\cdot(u,\omega)[\vartheta]$,
\begin{align}\label{eq:driver-variation}
 \|Z\|_{C([0,T];H^s)}
 &\le C\|\vartheta\|_{\Omega_T},\notag\\
 \|\Phi_\cdot(u,\omega+\vartheta)-\Phi_\cdot(u,\omega)-Z\|_{C([0,T];H^s)}
 &\le C\|\vartheta\|_{\Omega_T}^2.
\end{align}

It remains to differentiate the tangent flow with respect to the
driver while retaining the incoming tangent space $H$. For a smooth
field $a$, set $\mathcal T_a\xi=B(a,\xi)+B(\xi,a)$. 
Then
\begin{align}\label{eq:smooth-coefficients}
 \|\mathcal T_a\xi\|_{H^{-1}}
 \le C\|a\|_{H^s}\|\xi\|_H.
\end{align}
Indeed, for the vertical part of $B(\xi,a)$, let $\mathscr I_z$ be the
zero-mean vertical primitive. Since
\begin{align*}
 w(\xi)=-\operatorname{div}_h\mathscr I_z\xi,
 \qquad
 \|\mathscr I_z\xi\|_2\le\|\xi\|_2,
\end{align*}
integration by parts in the horizontal variables gives, for
$\varphi\in H^1$,
\begin{align*}
 \left|\int w(\xi)\partial_za\cdot\varphi\,\dd x\right|
 \le C\|\xi\|_2
 \left(
 \|\nabla_h\partial_za\|_\infty\|\varphi\|_2
 +\|\partial_za\|_\infty\|\nabla_h\varphi\|_2
 \right).
\end{align*}
The remaining terms are estimated in divergence form in the same way,
and Sobolev embedding proves \eqref{eq:smooth-coefficients}.
For $Z=D_\omega\Phi_\cdot(u,\omega)[\vartheta]$, and $J_t=J_{0,t}$, let $Y$ be the solution to 
\begin{align*}
 \partial_tY+\nu AY+\mathcal T_vY=-\mathcal T_ZJ_{t}\xi,
 \qquad Y_0=0.
\end{align*}
Then by the inhomogeneous tangent energy estimate, \eqref{eq:smooth-coefficients}, \eqref{eq:driver-variation} and
\Cref{lem:tangent-adjoint-core}, we have  
\begin{align}\label{eq:driver-J-linear}
 \|Y\|_{L^\infty(0,T;H)}
 +\|Y\|_{L^2(0,T;H^1)}
 \le C\|\mathcal T_ZJ\xi\|_{L^2(0,T;H^{-1})}\le  C\|\vartheta\|_{\Omega_T}\|\xi\|_H.
\end{align}
Set $\mathcal Z
=\Phi_\cdot(u,\omega+\vartheta)-\Phi_\cdot(u,\omega),$
and let $\mathsf J_t=J_{0,t}(u,\omega+\vartheta)\xi$.
Subtracting the two tangent equations for $J_t$ and $\mathsf J_t$ gives
\begin{align*}
 (\partial_t+\nu A+\mathcal T_v)(\mathsf J-J)
 =-\mathcal T_{\mathcal Z}\mathsf J,
\end{align*}
with zero initial data. 
The same inhomogeneous energy estimate together with \eqref{eq:smooth-coefficients}, \eqref{eq:driver-variation} and
\Cref{lem:tangent-adjoint-core} yields
\begin{align*}
 \|\mathsf J-J\|_{L^\infty(0,T;H)} + \|\mathsf J-J\|_{L^2(0,T;H^1)}
 \le C\|\vartheta\|_{\Omega_T}\|\xi\|_H.
\end{align*}
Subtracting also the equation for $Y$ yields
\begin{align*}
 (\partial_t+\nu A+\mathcal T_v)(\mathsf J-J-Y)
 =
 -\mathcal T_{\mathcal Z}(\mathsf J-J)
 -\mathcal T_{\mathcal Z-Z}J.
\end{align*}
Using \eqref{eq:driver-variation} once more,
\begin{align*}
 \|\mathsf J-J-Y\|_{L^\infty(0,T;H)}+ \|\mathsf J-J-Y\|_{L^2(0,T;H^1)}
 \le C\|\vartheta\|_{\Omega_T}^2\|\xi\|_H.
\end{align*}
Taking the endpoint and then the supremum over $\|\xi\|_H\le1$ proves
that $\omega\mapsto J_{0,T}(u,\omega)$ is differentiable in
$\mathcal L(H,H)$, with
\begin{align*}
 D_\omega J_{0,T}(u,\omega)[\vartheta]\xi=Y_T,
 \qquad
 \|D_\omega J_{0,T}(u,\omega)\|_
 {\mathcal L(\Omega_T,\mathcal L(H,H))}\le C.
\end{align*}
The same estimates for two nearby reference pairs give continuity of
this derivative and complete the proof.
\end{proof}

\section{Quantitative nondegeneracy of Brownian signatures}\label{app:probability}

This appendix establishes the quantitative nondegeneracy of Brownian signatures needed in \Cref{sec:quantitative-Malliavin-observability}. The underlying mechanism belongs to the pathwise H\"ormander theory for degenerate SPDEs. In the additive-noise setting, \cite{MattinglyPardoux,HM06,HM} replace the classical Norris lemma by quantitative nondegeneracy estimates for Wiener polynomials with possibly non-adapted coefficients, while  \cite{GerasimovicsHairer} use rough-path methods to obtain a genuinely pathwise H\"ormander argument for semilinear SPDEs.  Our terminal adjoint produces the anticipative quantities $\langle J_{t,T}^*\xi,g(v_t)\rangle$, so we follow the same pathwise principle.

The probabilistic input required here is, however, different from a fixed-order Norris estimate. The Lie-word depth grows linearly with the Fourier frequency resolution, and we therefore need a nondegeneracy estimate uniform in the parabolic degree cutoff. Iterated It\^o and Stratonovich integrals naturally form the Brownian signature, as in the stochastic Taylor and cubature constructions of \cite{BenArous1989,Kusuoka2001,LyonsVictoir,BayerTeichmann}. We exploit this algebraic structure together with It\^o orthogonality, independent Brownian increments and Gaussian hypercontractivity to prove a quantitative frame estimate with explicit dependence on the word parabolic degree. The resulting bound holds simultaneously for all coefficient vectors, including coefficients selected after observing the Brownian path, and can be imposed on exponentially many shrinking time intervals. Combined with the controlled Taylor remainder, this yields the $e^{CN^2}$ low-mode observation cost and, ultimately, the polynomial accuracy--cost bound $C_\delta\lesssim\delta^{-b}$ in \Cref{prop:polynomial-source}.

For a word $\mathtt w=(i_1,\cdots,i_k)$, denote the
corresponding iterated It\^o integral on $[s,t]$ as 
\begin{align}\label{eq:ito-iteration}
I_{\mathtt w}(s,t)=\int_{s<r_1<\cdots<r_k<t} 1 d Z_{r_1}^{i_1} \cdots d Z_{r_k}^{i_k}, \, \quad  I_{\varnothing}(s,t)=1, 
\end{align}
where 
$dZ^0=dt$, \, $dZ^i=dW^i$ for $1\le i\le m$ and $\varnothing$ denotes the empty word. The corresponding iterated Stratonovich integral is denoted as 
\begin{align}\label{eq:stratonovich-iteration}
 S_{\mathtt w}(s,t)
 =
 \int_{s<r_1<\cdots<r_k<t}
 1\,
 \circ dZ_{r_1}^{i_1}\cdots\circ dZ_{r_k}^{i_k}, \, \quad S_{\varnothing}(s,t)=1.
\end{align}

The following Gramian type estimate is a quantitative growing-order refinement of the classical nondegeneracy furnished by Wiener-It\^o orthogonality. Its proof combines the chaos decomposition with a uniform coefficient estimate for polynomials on a simplex.
\begin{lemma}
\label{lem:mean-frame}
There is
$C=C(m)$ such that, for every integer $K\ge1$ and every family of real
coefficients $c_{\mathtt w}\in \mathbb R$,
\begin{align}\label{eq:mean-frame}
 \mathbb E\int_0^1
 \left|
 \sum_{|\mathtt w|_{\rm deg}\le K}
 c_{\mathtt w}I_{\mathtt w}(0,t)
 \right|^2dt
 \ge
 \exp\{-CK\log(K+2)\}
 \sum_{|\mathtt w|_{\rm deg}\le K}|c_{\mathtt w}|^2.
\end{align}
\end{lemma}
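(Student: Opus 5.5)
The plan is to reduce \eqref{eq:mean-frame} to a deterministic coefficient estimate for polynomials on a simplex, using Wiener--It\^o orthogonality. A word $\mathtt w$ is determined by two pieces of data. The first is its Brownian skeleton $\sigma(\mathtt w)=(i_{1},\dots,i_{j})\in\{1,\dots,m\}^j$, the ordered sequence of its nonzero letters. The second is the vector $\mathsf a=(a_0,\dots,a_j)\in\mathbb N_0^{j+1}$ of numbers of time letters inserted before the first Brownian letter, between consecutive Brownian letters, and after the last one. Then $|\mathtt w|_{\rm deg}=j+2|\mathsf a|$.

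Iterating the definition \eqref{eq:ito-iteration}, deterministic $dt$-integration of a $j$-th chaos element stays in the $j$-th chaos. I would first verify the explicit representation
\begin{align*}
 I_{\mathtt w}(0,t)=\int_{0<r_1<\cdots<r_j<t}\frac{r_1^{a_0}}{a_0!}\prod_{l=1}^{j-1}\frac{(r_{l+1}-r_l)^{a_l}}{a_l!}\,\frac{(t-r_j)^{a_j}}{a_j!}\,dW^{i_1}_{r_1}\cdots dW^{i_j}_{r_j}.
\end{align*}
It\^o isometry for ordered multiple integrals then shows that $\mathbb E[I_{\mathtt w}(0,t)I_{\mathtt w'}(0,t)]=0$ unless $\sigma(\mathtt w)=\sigma(\mathtt w')$. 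When the skeletons agree, this expectation equals the $L^2$ pairing of the two kernels on the simplex $\{0<r_1<\cdots<r_j<t\}$.

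Hence the left side of \eqref{eq:mean-frame} splits into a sum over skeletons $\sigma$ with $j\le K$. Changing to gap variables $u_0=r_1$, $u_l=r_{l+1}-r_l$, $u_j=t-r_j$ (unit Jacobian, with $t$ absorbed), each block equals
\begin{align*}
 \int_{\Delta_{j}}\Bigl|\sum_{\mathsf a}c_{\sigma,\mathsf a}\frac{u^{\mathsf a}}{\mathsf a!}\Bigr|^2du,\qquad \Delta_j=\Bigl\{u\in[0,\infty)^{j+1}:\textstyle\sum u_l\le1\Bigr\}.
\end{align*}
The polynomial has total degree at most $K/2$. It therefore suffices to prove, uniformly in $j\le K$, the coefficient bound
\begin{align*}
 \int_{\Delta_j}|P|^2\ge e^{-CK\log(K+2)}\sum_{\mathsf a}|c_{\mathsf a}|^2\quad\text{for } P=\sum_{\mathsf a}c_{\mathsf a}u^{\mathsf a}/\mathsf a!,
\end{align*}
since $\sum_{\mathsf a}1/\mathsf a!^{\,2}$-type losses from the factorials are at most $(K!)^{2}\le e^{CK\log(K+2)}$.

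For the coefficient bound, I would restrict to the cube $Q_j=[0,(j+1)^{-1}]^{j+1}\subset\Delta_j$ and rescale to $[0,1]^{j+1}$. Rescaling multiplies each coefficient by $(j+1)^{-|\mathsf a|}\ge e^{-CK\log(K+2)}$ and the measure by $(j+1)^{-(j+1)}\ge e^{-CK\log(K+2)}$. On the unit cube, I would expand in tensorized shifted Legendre polynomials $\prod_l L_{a_l}(u_l)$, which are orthonormal. The one-dimensional change of basis from monomials of degree $\le d$ to Legendre polynomials has an inverse with entries bounded by $e^{Cd}$, since $x^a$ expands in Legendre polynomials with coefficients of size $\le 2^{O(a)}$. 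The matrix is triangular in each variable, so its inverse on the tensor product has $\ell^2$ operator norm at most $\prod_l e^{C(a_l+1)}\cdot\#\{\mathsf a\}^{1/2}$. Because $\sum_l(a_l+1)\le K/2+j+1\le 2K$ and the number of multi-indices is $\le\binom{K+j+1}{j+1}\le e^{CK}$, this yields a loss of $e^{CK}$.

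Combining the factorial, scaling, measure and basis-change losses gives $e^{-CK\log(K+2)}$ for each block. Summing over skeletons recovers \eqref{eq:mean-frame}.

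I expect the main obstacle to be the cube/Legendre step, that is, controlling the inverse Gram matrix of the monomial basis uniformly in the dimension $j+1\le K+1$. The naive one-dimensional Hilbert-matrix bound is $e^{Cd}$ per variable, and it must be used multiplicatively over the degree split rather than raised to the power of the dimension. Using triangularity variable by variable, as above, should keep the dependence on dimension at the level of $\prod_l e^{C(a_l+1)}$, so that the total loss stays within $e^{CK\log(K+2)}$.
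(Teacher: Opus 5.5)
Your proposal is correct and follows essentially the paper's own route: orthogonality across Brownian skeletons and chaoses, the stochastic Fubini kernel, gap variables mapping the $(s,t)$-domain onto the simplex $\Delta_j$, an inscribed cube of side $\asymp(j+1)^{-1}$, a tensorized shifted Legendre basis combined with Cauchy--Schwarz over $e^{O(K)}$ basis elements, and the factorial bound $\mathsf a!\le D!$. One correction concerns your justification of the one-dimensional input. What you need is that the \emph{monomial coefficients of the normalized shifted Legendre polynomials} are $e^{O(j)}$, explicitly $\sqrt{2j+1}\binom{j}{q}\binom{j+q}{q}$ up to sign, as in \eqref{eq:Legendre-polynomial}; this is because the Legendre-to-monomial map is the one that converts $L^2$ control into coefficient control. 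The Legendre coefficients of $x^a$ that you cite are trivially bounded by $1$, but they control only the opposite direction, and bounded entries do not by themselves bound the inverse of a triangular matrix whose diagonal is exponentially small.
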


\begin{proof}
Every word $\mathtt w$ has a unique Brownian-letter subsequence
$i=(i_1,\ldots,i_\ell)$ and a tuple
$a=(a_0,\ldots,a_\ell)$ recording the numbers of zero letters before
$i_1$, between consecutive Brownian letters, and after $i_\ell$.
Its parabolic degree is $\ell+2|a|$ with $|a|=a_0+\cdots+a_\ell$. 
Stochastic Fubini gives
\begin{align}\label{eq:ito-gap-kernel}
 I_{\mathtt w}(0,t)
 =
 \int_{0<s_1<\cdots<s_\ell<t}
 \prod_{j=0}^{\ell}
 \frac{(s_{j+1}-s_j)^{a_j}}{a_j!}\,
 dW^{i_1}_{s_1}\cdots dW^{i_\ell}_{s_\ell},
 \qquad
 s_0=0,\quad s_{\ell+1}=t.
\end{align}
For $\ell=0$, this reduces to $I_{\mathtt w}(0,t)=t^{a_0}/a_0!$. Different values of $\ell$ belong to different Wiener chaoses and are
orthogonal. For fixed $\ell$, iterated It\^o isometry also makes the
terms corresponding to different ordered Brownian subsequences
$i\in\{1,\ldots,m\}^\ell$ orthogonal. Hence, after integrating in
$t$ and introducing the gap variables
\begin{align*}
 r_0=s_1,\qquad
 r_j=s_{j+1}-s_j,\quad 1\le j\le\ell-1,\qquad
 r_\ell=t-s_\ell,
\end{align*}
and denoting $\Delta_{\ell} =\{r_j\ge 0:r_0+\cdots+r_{\ell}<1\}$, 
the left-hand side of \eqref{eq:mean-frame} equals
\begin{align}\label{eq:mean-frame-blocks}
 \sum_{\ell=0}^K
 \sum_{i\in\{1,\ldots,m\}^{\ell}}
 \int_{\Delta_{\ell}}
 \left|
 \sum_{|a|\le (K-\ell)/2}
 c_{\mathtt w(i,a)}
 \prod_{j=0}^{\ell}\frac{r_j^{a_j}}{a_j!}
 \right|^2dr.
\end{align}
Fix $\ell$ and $i$, and put
\begin{align}\label{eq:hdp}
 h=2(\ell+1),\qquad
 D=\left\lfloor\frac{K-\ell}{2}\right\rfloor,\qquad
 P_i(r)
 =
 \sum_{|a|\le D}
 c_{\mathtt w(i,a)}\prod_{j=0}^{\ell}\frac{r_j^{a_j}}{a_j!}.
\end{align}
The cube $[0,h^{-1}]^{\ell+1}$ is contained in the simplex
$\Delta_{\ell}$. Let $\phi_j$ be the $L^2(0,1)$-normalized shifted Legendre polynomial
of degree $j$, so that
\begin{align}\label{eq:Legendre-polynomial}
 \phi_j(x)=\sum_{q=0}^j b_{j,q}x^q,
 \qquad
 \sum_{q=0}^j|b_{j,q}|
 \le C^j\sqrt{2j+1}.
\end{align}
The corresponding normalized basis on $[0,h^{-1}]$ is
\begin{align*}
 \phi_{j,h}(x)
 =
 h^{1/2}\phi_j(hx)
 =
 \sum_{q=0}^j b_{j,q}h^{q+1/2}x^q. 
\end{align*}
For a tensor basis polynomial
$\Phi_a(r)=\prod_{q=0}^{\ell}\phi_{a_q,h}(r_q)$ with $a=(a_0,\ldots,a_\ell)$  and 
$|a|\le D$, \eqref{eq:Legendre-polynomial} implies that the coefficient vector satisfies
\begin{align*}
 \|\operatorname{coeff}(\Phi_a)\|_{\ell^2}
 &\le
 \|\operatorname{coeff}(\Phi_a)\|_{\ell^1}\le
 h^{|a|+(\ell+1)/2}
 C^{|a|}
 \prod_{q=0}^{\ell}\sqrt{2a_q+1}\\
 &\le
 h^{D+(\ell+1)/2}
 \exp\{C(D+\ell+1)\log(D+\ell+3)\}.
\end{align*}
There are at most 
$2^{D+\ell+1}$ such basis polynomials. Since
$D+\ell+1\le K+1$ by \eqref{eq:hdp}, Cauchy--Schwarz and orthonormality therefore give
\begin{align}\label{eq:monomial-coefficient-bound}
 \sum_{|a|\le D}
 \left|
 \frac{c_{\mathtt w(i,a)}}{\prod_{j=0}^{\ell}a_j!}
 \right|^2
 \le
 \exp\{CK\log(K+2)\}
 \int_{[0,h^{-1}]^{\ell+1}}|P_i(r)|^2\,dr.
\end{align}
Moreover,
\begin{align*}
 \prod_{j=0}^{\ell}a_j!
 \le |a|!
 \le D!
 \le \exp\{CK\log(K+2)\}.
\end{align*}
Thus, after enlarging $C$,
\begin{align}\label{eq:simplex-coefficient-bound}
 \sum_{|a|\le D}|c_{\mathtt w(i,a)}|^2
 \le
 \exp\{CK\log(K+2)\}
 \int_{\Delta_{\ell}}
 |P_i(r)|^2\,dr.
\end{align}
Summing \eqref{eq:simplex-coefficient-bound} over $\ell$ and $i$ and using \eqref{eq:mean-frame-blocks} 
proves \eqref{eq:mean-frame}.
\end{proof}

The following lemma gives standard signature conventions and It\^o--Stratonovich conversion that will be used later. Recall the iterated integrals $I_{\mathtt w}(s,t)$ and $S_{\mathtt w}(s,t)$ in \eqref{eq:ito-iteration} and \eqref{eq:stratonovich-iteration}. The It\^o signature is denoted as  $\mathbf I(s,t) :=  (I_{\mathtt w}(s,t))_{\mathtt w}$. Let $\mathcal M(\mathtt w)$ denote the collections of pairwise
disjoint adjacent equal nonzero pairs in $\mathtt w$, including the empty collection. For
$M\in\mathcal M(\mathtt w)$, let $\mathtt w/M$ denote the word
obtained by contracting each selected pair $ii$ to the time letter
$0$.
\begin{lemma}
\label{lem:signature-conventions}
Let $s\le a\le t$. 
\begin{enumerate}
\item One has 
\begin{align}\label{eq:ito-chen}
 I_{\mathtt w}(s,t)
 =
 \sum_{\mathtt w=\mathtt u\mathtt v}
 I_{\mathtt u}(s,a)I_{\mathtt v}(a,t),
\end{align}
where the sum runs over all decompositions of $\mathtt w$ into a
prefix $\mathtt u$ and a suffix $\mathtt v$, including the empty word. Particularly, in every finite tensor truncation,
\begin{align*}
 \mathbf I(s,t)=\mathbf I(s,a)\mathbf I(a,t),
\end{align*}
where the product is the tensor-algebra concatenation product.
\item The following conversion rules hold: 
\begin{align*}
 S_{\mathtt w}(s,t)
 =
 \sum_{M\in\mathcal M(\mathtt w)}
 2^{-|M|}I_{\mathtt w/M}(s,t),\quad 
 I_{\mathtt w}(s,t)
=
 \sum_{M\in\mathcal M(\mathtt w)}
 (-2)^{-|M|}S_{\mathtt w/M}(s,t).
\end{align*}
Both transformations preserve parabolic degree. Restricted to the
coordinates of parabolic degree at most $K$, their Euclidean operator
norms are bounded by $\exp(C_mK)$.
\end{enumerate}
\end{lemma}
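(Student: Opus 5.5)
The plan is to derive both parts from two facts: Chen's identity for iterated integrals with absolutely continuous or semimartingale drivers, and the iterated Itô--Stratonovich correction rule. For (1), I will argue by induction on the length of $\mathtt w$. Write $\mathtt w=\mathtt w'i_k$. Then $I_{\mathtt w}(s,t)=\int_s^t I_{\mathtt w'}(s,r)\,dZ^{i_k}_r$, and I split the outer integral at $a$. On $[a,t]$, the induction hypothesis gives $I_{\mathtt w'}(s,r)=\sum_{\mathtt w'=\mathtt u\mathtt v'}I_{\mathtt u}(s,a)I_{\mathtt v'}(a,r)$. Each $I_{\mathtt u}(s,a)$ is $\mathscr F_a$-measurable, so it can be pulled out of the Itô integral over $[a,t]$. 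This produces the terms $I_{\mathtt u}(s,a)I_{\mathtt v'i_k}(a,t)$. The piece of the outer integral over $[s,a]$ gives the term with empty suffix, namely $I_{\mathtt w}(s,a)I_\varnothing$. Together these are all the prefix/suffix decompositions, which is \eqref{eq:ito-chen}. The tensor statement is just a restatement, since the concatenation product on truncated tensor series has exactly these coefficients.

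For (2), I will again induct on the length of $\mathtt w$, using $S_{\mathtt w}(s,t)=\int_s^t S_{\mathtt w'}(s,r)\circ dZ^{i_k}_r$. If $i_k=0$, the Stratonovich and Itô integrals coincide. If $i_k\ne0$, then $\int X\circ dW^{i}=\int X\,dW^{i}+\tfrac12\langle X,W^{i}\rangle$. The martingale part of $S_{\mathtt w'}(s,\cdot)$ is $S_{\mathtt w''}(s,\cdot)\circ dW^{i_{k-1}}$, where $\mathtt w'=\mathtt w''i_{k-1}$. So the bracket equals $\delta_{i_{k-1}i_k}\int S_{\mathtt w''}\,dr$. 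This is precisely the contraction of the final pair $i_{k-1}i_k$ to $0$ with weight $1/2$. The recursion therefore yields the sum over collections $M$ in which either the last pair is contracted (and then the rest is handled recursively on $\mathtt w''$) or the last letter is left alone (and then recursion is on $\mathtt w'$). Because of this, the selected pairs are automatically disjoint and adjacent, and the first formula follows. For the inverse formula I will either run the same recursion with the opposite sign, $\int X\,dW=\int X\circ dW-\tfrac12\langle X,W\rangle$, or verify directly that the two triangular transformations compose to the identity; the latter reduces to the identity $\sum_{M\subset M'}(-1)^{|M|}=\delta_{M',\varnothing}$.

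Degree preservation is immediate, since replacing $ii$ (degree $2$) by $0$ (degree $2$) does not change the parabolic degree. For the norm bound, each transformation is a matrix indexed by words of degree at most $K$. The number of words of degree at most $K$ is at most $(m+2)^{K+1}$. Each row and column has at most as many nonzero entries as there are subsets of disjoint adjacent pairs, which is at most a Fibonacci number $\le 2^K$, and every entry has modulus at most $1$. The Schur test then gives an operator norm at most $2^K\le e^{C_mK}$. The only delicate step is the recursive bookkeeping in (2) showing that the contraction sets are exactly the disjoint adjacent equal nonzero pairs; I expect it to be routine once the recursion is organized by the last letter.
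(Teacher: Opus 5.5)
Your proofs of the two identities are correct. For (2) you follow the paper's argument: induct on the last letter using the one-step correction $\int S_{\mathtt v i}\circ dW^i=\int S_{\mathtt v i}\,dW^i+\tfrac12 S_{\mathtt v0}$, then reverse the sign to get the inverse. Your split (last pair contracted or not) spells out what the paper leaves as ``inductively''. For (1), the paper factorizes the simplex according to how many integration times precede $a$. Your induction, which pulls the $\mathscr F_a$-measurable factor $I_{\mathtt u}(s,a)$ out of the It\^o integral on $[a,t]$, is the rigorous form of that factorization.

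The one step that is wrong as stated is the column count in your Schur test. Your row count is fine, but columns behave differently:
\begin{itemize}
\item A column indexed by $\mathtt v$ has nonzero entries at every $\mathtt w$ obtained by expanding some of the time letters $0$ of $\mathtt v$ into a pair $ii$ with $i\in\{1,\dots,m\}$.
\item If $\mathtt v$ has $z\le K/2$ zeros, there are $(m+1)^z$ such rows, and the absolute column sum is $(1+m/2)^z$.
\item This is not controlled by the number of contraction sets of $\mathtt v$: for example $\mathtt v=(0,\dots,0)$ has none.
\item The bound $\|T_K\|\le 2^K$ itself fails for large $m$, since that column has $\ell^2$ norm $(1+m/4)^{K/4}$, which exceeds $2^K$ once $m>60$.
\end{itemize}
The conclusion survives. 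The Schur test with row sums at most $2^K$ and column sums at most $(1+m/2)^{K/2}$ gives $\|T_K\|\le 2^{K/2}(1+m/2)^{K/4}\le e^{C_mK}$. Alternatively, bound the Frobenius norm as the paper does, $\|T_K\|_{\mathrm F}^2\le N_K\,2^K$, which needs only the row count.
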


\begin{proof}
For $\mathtt w=(i_1,\ldots,i_r)$, decompose the simplex $\{s<r_1<\cdots<r_r<t\}$ into subregions 
according to the unique $q\in\{0,\ldots,r\}$ for which
\begin{align*}
 s<r_1<\cdots<r_q<a<r_{q+1}<\cdots<r_r<t.
\end{align*}
The corresponding integral factors as $ I_{(i_1,\ldots,i_q)}(s,a)
I_{(i_{q+1},\ldots,i_r)}(a,t)$, 
which gives \eqref{eq:ito-chen}. For the change of coordinates, using 
\begin{align}\label{eq:basic-ito-strat}
 S_{\mathtt w i}(s,t)
 =
 \int_s^t S_{\mathtt w}(s,\tau)\circ dW_\tau^i =
 \begin{cases}
 \displaystyle
 \int_s^t S_{\mathtt v i}(s,\tau)\,dW_\tau^i
 +\frac12 S_{\mathtt v0}(s,t),
 & \mathtt w=\mathtt v i,\\[2mm]
 \displaystyle
 \int_s^t S_{\mathtt v j}(s,\tau)\,dW_\tau^i,
 & \mathtt w=\mathtt v j,\quad j\ne i
 \end{cases}
\end{align}
inductively shows the first sum. Reversing the sign of the correction gives the second sum. 

Each contraction replaces two Brownian letters of total parabolic
degree two by one time letter of degree two, so both transformations
preserve parabolic degree. Note that $\#\mathcal M(\mathtt w)\le 2^K$  for $|\mathtt w|_{\rm deg}\le K$
and the 
number $N_K$ of words of parabolic degree at most $K$ satisfies
\begin{align*}
 N_K\le (m+2)^{K+1}.
\end{align*}
Hence the Frobenius norms of the two truncated change-of-coordinate
matrices satisfy
\begin{align}\label{eq:ito-Stratonovich-transfer}
 \|T_K\|_{\mathrm F}^2+\|T_K^{-1}\|_{\mathrm F}^2
 \le
 2N_K2^K
 \le
 \exp(C_mK),
\end{align}
after enlarging $C_m$. Since the Euclidean operator norm is bounded by
the Frobenius norm, the claimed estimate follows.
\end{proof}

Next we pass from the averaged estimate of \Cref{lem:mean-frame} to a
pathwise nondegeneracy estimate on a subset of Wiener space of positive
measure.

\begin{proposition}
\label{prop:pathwise-frame}
There are a compact set $G$ of positive Wiener probability and a
deterministic constant $C<\infty$ such that, simultaneously for every
$K\ge1$ and every family of real coefficients
$(c_{\mathtt w})_{|\mathtt w|_{\rm deg}\le K}$,
\begin{align}\label{eq:pathwise-frame}
 \int_0^1
 \left|
 \sum_{|\mathtt w|_{\rm deg}\le K}
 c_{\mathtt w}I_{\mathtt w}(0,t)
 \right|^2dt
 \ge
 e^{-CK^2}
 \sum_{|\mathtt w|_{\rm deg}\le K}|c_{\mathtt w}|^2,
 \qquad \omega\in G.
\end{align}
In particular, the coefficients may be chosen after the Brownian path
has been observed. 
\end{proposition}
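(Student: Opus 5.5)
We pass from the mean estimate of \Cref{lem:mean-frame} to a pathwise one by a quantitative concentration argument, carried out separately at each level $K$ and then summed over $K$. For fixed $K$, let $N_K\le(m+2)^{K+1}$ be the number of words of degree at most $K$. Consider the random Gram matrix
\begin{align*}
 \mathbb G_K(\omega)=\left(\int_0^1 I_{\mathtt w}(0,t)I_{\mathtt v}(0,t)\,dt\right)_{|\mathtt w|_{\rm deg},|\mathtt v|_{\rm deg}\le K}.
\end{align*}
The claim \eqref{eq:pathwise-frame} says exactly that $\lambda_{\min}(\mathbb G_K(\omega))\ge e^{-CK^2}$. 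This formulation is uniform in the coefficients, so it allows them to be chosen after the path is observed. \Cref{lem:mean-frame} gives $\lambda_{\min}(\mathbb E\mathbb G_K)\ge e^{-CK\log(K+2)}$. It remains to show that $\mathbb G_K$ is close to its mean outside a small set, or more robustly that its smallest eigenvalue is not too small.

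The step I expect to be the main obstacle is the lower tail of $\lambda_{\min}$, since concentration of $\mathbb G_K$ near its mean is far too optimistic. I will use a net argument together with a small-ball (anti-concentration) estimate for Wiener polynomials. Fix a unit vector $c\in\mathbb R^{N_K}$. The quadratic form $Q_c=\int_0^1|\sum c_{\mathtt w}I_{\mathtt w}(0,t)|^2dt$ is a nonnegative element of the Wiener chaos of order at most $2K$, and its mean is at least $\mu_K:=e^{-CK\log K}$. Gaussian hypercontractivity gives $\mathbb E Q_c^2\le 3^{2K}(\mathbb EQ_c)^2$, and Paley--Zygmund then yields only a constant-order lower bound on the probability, which is insufficient for a union bound. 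Instead I will use the Carbery--Wright inequality for polynomials of degree $\le 2K$ in Gaussian variables, after finite-dimensional approximation of the Brownian path (or in its Wiener-chaos form). This gives
\begin{align*}
 \mathbb P\{Q_c\le \varepsilon\,\mathbb EQ_c\}\le C K\varepsilon^{1/(2K)}.
\end{align*}
Next take an $e^{-C'K^2}$-net of the unit sphere in $\mathbb R^{N_K}$; its cardinality is $\exp\{C N_K K^2\}$. That is exponential in $K$ rather than in $K^2$, so the union bound fails. The fix is to choose $\varepsilon=e^{-C''K^2}$ hmm... this gives a probability bound only of order $e^{-C''K}$, which still cannot beat the net.

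I therefore change strategy to control $\lambda_{\min}$ via the determinant. For $\lambda_{\min}$ I will use
\begin{align*}
 \lambda_{\min}\ge \det\mathbb G_K/\|\mathbb G_K\|^{N_K-1}.
\end{align*}
The upper bound is the easy half: hypercontractivity and Borel--Cantelli give $\|\mathbb G_K\|\le e^{CK}$ for all $K$ on a set of probability at least $1-\epsilon$. For the determinant, $\det\mathbb G_K$ is a polynomial of degree $\le 2KN_K$ in the Gaussian variables, with mean at least $(\det\text{-type lower bound})$ obtained from \Cref{lem:mean-frame} via Cauchy--Binet.

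A cleaner option is to exploit the Chen relation \eqref{eq:ito-chen} and independent increments. Split $[0,1]$ into $n$ subintervals. Then $\lambda_{\min}(\mathbb G_K)\ge$ a quantity controlled by $\max$ over subintervals of rescaled copies; the rescaling of a time interval of length $\tau$ costs a factor $\tau^{K}$. The events on different subintervals are independent, each with probability $p_0>0$ coming from the fixed-$c$ Paley--Zygmund bound, so failure on all of them has probability $(1-p_0)^n$. Taking $n\sim K$ yields the $e^{-CK^2}$ loss and failure probability $e^{-cK}$. The uniformity in $c$ still needs a net, and the net cardinality is again the crux. I would try to handle it by proving the rescaled Gram bound with a \emph{deterministic} operator-norm lower estimate on a fixed compact event for the first few levels, and then extending it to the higher levels inductively via the Chen product, where the triangular structure (prefix $\times$ suffix) reduces the degree-$K$ frame to a degree-$(K-1)$ frame on a subinterval times a fixed-size new block.

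Finally, sum the failure probabilities over $K\ge K_0$ to get total failure at most $\epsilon$. The finitely many remaining $K<K_0$ are handled by almost-sure positivity of the Gram matrix: the $I_{\mathtt w}$ are linearly independent as functions of $t$, almost surely. Together with inner regularity, this yields the required compact set $G$ of positive measure.
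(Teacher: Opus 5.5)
There is a genuine gap. None of the routes you sketch gives a bound that is uniform in the coefficient vector. The cell-splitting route is the right one, but you run it with the wrong number of cells.

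First, Paley--Zygmund combined with your own bound $\E Q_c^2\le 3^{2K}(\E Q_c)^2$ gives a per-cell success probability $p_K\approx 3^{-2K}$, not a constant $p_0$. With $n\sim K$ cells the probability $(1-p_K)^n$ that every cell fails is therefore close to one, not $e^{-cK}$. Second, $n\sim K$ cells cost only $h^{K}=K^{-K}$ under Brownian scaling, which leaves most of the $e^{-CK^2}$ budget unused.

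The missing idea is to spend that budget: take $M_K=\lceil e^{a(K+1)}\rceil$ cells of length $h=M_K^{-1}$. A single successful cell already gives $\|F_c\|_{L^2(0,1)}^2\ge\frac12\lambda_K h^{K+1}R^{-2}$, whose logarithm is only $-O(K^2)$. The probability that no cell succeeds is at most $(1-p_K)^{M_K}\le\exp\{-e^{(a-2\log 3)(K+1)}\}$ (the paper instead counts $M_Kp_K/2$ successes via Chernoff, to the same effect). This fixed-$c$ failure probability is double-exponentially small. It therefore beats the union bound over a net of the unit sphere with mesh comparable to the square root of that lower bound, whose log-cardinality is $O(K^2(m+2)^{K+1})$, as soon as $a>2\log3+\log(m+2)$.

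The resulting failure probabilities are summable in $K$, so one event serves all $K$ simultaneously. Neither your unspecified induction over degrees nor a separate treatment of small $K$ is then needed. The small-$K$ step would in any case need almost sure linear independence of $t\mapsto I_{\mathtt w}(0,t)$. That does not follow from $\lambda_{\min}(\E\mathbb G_K)>0$: a random positive semidefinite matrix with positive definite mean can be almost surely singular, as $vv^{T}$ with $v$ uniform on the sphere shows.

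The cell argument also needs two ingredients that your proposal skips.

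\emph{Prefix control.} By \eqref{eq:ito-chen}, on the $j$-th cell $F_c$ is a rescaled signature polynomial with transformed coefficients $c^{(j)}=L^*_{\mathbf I(0,t_j)}c$, which depend on the path before $t_j$. To keep $\|c^{(j)}\|_{\ell^2}\ge\|c\|_{\ell^2}/R$ you need control of the inverse signature. A suitable bound is
$\E\sup_{t\le1}\bigl(\|\mathbf I(0,t)\|_{\mathcal T_1}+\|\mathbf I(0,t)^{-1}\|_{\mathcal T_1}\bigr)<\infty$
in the $\ell^1$ tensor norm, obtained from the SDE for $\mathbf I^{-1}$ and a Picard expansion. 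The same bound also supplies the Lipschitz estimate $\|F_c-F_{\tilde c}\|_{L^2}\le R\|c-\tilde c\|_{\ell^2}$ that the net step requires.

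\emph{Dependence between cells.} Because $c^{(j)}$ is $\mathcal F_{t_j}$-measurable, the cells are not independent as you assume. Paley--Zygmund must be applied conditionally on $\mathcal F_{t_j}$, on the $\mathcal F_{t_j}$-measurable event where the inverse signature is bounded. The product bound then follows by iterated conditioning, or by a conditional Chernoff bound.

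Finally, the determinant route cannot reach $e^{-CK^2}$. The bound $\lambda_{\min}\ge\det\mathbb G_K/\|\mathbb G_K\|^{N_K-1}$ is only effective when all other eigenvalues are comparable to $\|\mathbb G_K\|$. Here the exponentially many top-degree words satisfy $\E\int_0^1|I_{\mathtt w}|^2\,dt\le 4^K/\lceil K/2\rceil!$, so that quotient loses a factor exponential in the number of such words.
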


\begin{proof}
We divide the proof into three steps. 

\emph{Step 1: bound for the signature and its inverse.}
Let $\mathcal T_1$ be the completed tensor algebra over
$\{0,1,\ldots,m\}$ with norm given by the sum of the absolute values of
all word coefficients and multiplication given by concatenation. We first claim the existence of $C_m>0$ such that 
\begin{align}\label{eq:signature-integrability}
 \mathbb E\sup_{0\le t\le1}
 \left(
 \|\mathbf I(0,t)\|_{\mathcal T_1}
 +
 \|\mathbf I(0,t)^{-1}\|_{\mathcal T_1}
 \right)\leq C_m.
\end{align}
Indeed, by induction and the martingale maximal inequality or Cauchy--Schwarz, for every word $\mathtt w$ of ordinary length $n$,
\begin{align}\label{eq:ito-factorial-bound}
 \mathbb E\sup_{0\le u\le t}
 |I_{\mathtt w}(0,u)|^2
 \le
 \frac{4^nt^n}{n!},
 \qquad 0\le t\le1,
\end{align}
which implies
\begin{align*}
 \mathbb E\sup_{t\le1}\|\mathbf I(0,t)\|_{\mathcal T_1}
 = \mathbb E\sup_{t\le1} \sum_{\mathtt w}|I_{\mathtt w}(0,t)|
 \le\sum_{n\ge0}
 \frac{(2m+2)^n}{\sqrt{n!}}
 <\infty.
\end{align*}
For the inverse, work first in each finite tensor truncation. Since
\begin{align*}
 d\mathbf I
 =
 \mathbf I e_0\,dt
 +
 \sum_{i=1}^m\mathbf I e_i\,dW^i,
\end{align*}
the It\^o product rule shows that $\mathbf V=\mathbf I^{-1}$ solves
\begin{align}\label{eq:inverse-signature-equation}
 d\mathbf V
 =
 -\sum_{i=1}^m e_i\mathbf V\,dW^i
 +
 \left(
 \sum_{i=1}^m e_ie_i-e_0
 \right)\mathbf V\,dt,
 \qquad
 \mathbf V(0)=1.
\end{align}
The Picard expansion of \eqref{eq:inverse-signature-equation}, together
with \eqref{eq:ito-factorial-bound}, is bounded in
$L^1(\Omega;C([0,1];\mathcal T_1))$ by a convergent series of the form
$\sum_{n\ge0}C_m^n/\sqrt{n!}$, uniformly in the tensor truncation.
Passing to the limit gives the algebra inverse and proves
\eqref{eq:signature-integrability}.

\emph{Step 2: bound for fixed coefficient and word cutoff.}
Choose $R<\infty$ such that
\begin{align}\label{eq:prefix-event}
 G_R
 =
 \left\{
 \sup_{t\le1}
 \left(
 \|\mathbf I(0,t)\|_{\mathcal T_1}
 +
 \|\mathbf I(0,t)^{-1}\|_{\mathcal T_1}
 \right)
 \le R
 \right\},
 \qquad
 \mathbb P(G_R)\ge\frac34.
\end{align}
For $K\ge1$, let $D_K$ be the number of words of parabolic degree at most $K$ and set $\lambda_K=e^{-C_mK\log(K+2)}$. Then $D_K\le(m+2)^{K+1}$ 
and \Cref{lem:mean-frame} gives the mean lower bound with constant
$\lambda_K$. Fix a deterministic unit vector $c\in\mathbb R^{D_K}$, and an
integer $M\ge1$. Put $h=M^{-1}$ and $t_j=jh$. On the $j$-th interval
define
\begin{align*}
 \widetilde W_s^{(j)}
 =
 h^{-1/2}\bigl(W_{t_j+hs}-W_{t_j}\bigr),
 \qquad 0\le s\le1,
\end{align*}
and let $I_{\mathtt w}^{(j)}(0,s)$ be the corresponding iterated
It\^o integrals. Brownian scaling gives
\begin{align*}
 I_{\mathtt w}(t_j,t_j+hs)
 =
 h^{|\mathtt w|_{\rm deg}/2}
 I_{\mathtt w}^{(j)}(0,s).
\end{align*}
For
\begin{align*}
 F_c(t)
 =
 \sum_{|\mathtt w|_{\rm deg}\le K}
 c_{\mathtt w}I_{\mathtt w}(0,t),
\end{align*}
\Cref{lem:signature-conventions} therefore yields
\begin{align}\label{eq:cell-chen-expansion}
 F_c(t_j+hs)
 =\sum_{\mathtt v}\left(\sum_{\mathtt u}I_{\mathtt u}(0,t_j)c_{\mathtt{uv}}\right)I_{\mathtt v}(t_j,t_j+hs)
 =\sum_{|\mathtt w|_{\rm deg}\le K}
 c_{\mathtt w}^{(j)}
 h^{|\mathtt w|_{\rm deg}/2}
 I_{\mathtt w}^{(j)}(0,s),
\end{align}
by relabeling the suffix $\mathtt v$ as $\mathtt w$, 
where $c^{(j)}=L_{\mathbf I(0,t_j)}^*c$ with 
\[(L_{\mathbf I(0,t_j)}^*c)_{\mathtt w} = \sum_{\mathtt u, \, |\mathtt u\mathtt w|_{\rm deg}\leq K}I_{\mathtt u}(0,t_j)c_{\mathtt{uw}}. \]
The convolution inequality gives $\|L_{\mathbf I}\|_{\ell^2\to\ell^2}\le \|\mathbf I\|_{\mathcal T_1}$. 
Hence, 
\begin{align}\label{eq:prefix-coefficient-lower}
 1=\|c\|_{\ell^2}
 \le
 \|L_{\mathbf I(0,t_j)^{-1}}^*\|\,\|c^{(j)}\|_{\ell^2}
 \le
 R\|c^{(j)}\|_{\ell^2}
\end{align}
on $A_j:= \{\|\mathbf I(0,t_j)^{-1}\|_{\mathcal T_1}\leq R\}\in \mathcal F_{t_j}$. Set
\begin{align*}
 Z_j
 =
 \int_{t_j}^{t_j+h}|F_c(t)|^2dt.
\end{align*}
Conditioned on $\mathcal F_{t_j}$, the process
$\widetilde W^{(j)}$ is a fresh Brownian motion. Thus
\eqref{eq:cell-chen-expansion}, \Cref{lem:mean-frame}, and
\eqref{eq:prefix-coefficient-lower} give
\begin{align*}
 \mathbb E[Z_j\mid\mathcal F_{t_j}]=
 h\,
 \mathbb E\int_0^1
 \left|
 \sum_{|\mathtt w|_{\rm deg}\le K}
 c_{\mathtt w}^{(j)}
 h^{|\mathtt w|_{\rm deg}/2}
 I_{\mathtt w}^{(j)}(0,s)
 \right|^2ds\ge
 \lambda_K h
 \sum_{|\mathtt w|_{\rm deg}\le K}
 |c_{\mathtt w}^{(j)}|^2h^{|\mathtt w|_{\rm deg}}\ge
 \frac{\lambda_Kh^{K+1}}{R^{2}}
\end{align*}
over $A_j$. On the other hand,  Gaussian hypercontractivity \cite{Bogachev1998}
gives
\begin{align*}
 \mathbb E[|F_c(t_j+hs)|^4\mid\mathcal F_{t_j}]
 \le
 3^{2K}
 \left(\mathbb E[|F_c(t_j+hs)|^2\mid\mathcal F_{t_j}]\right)^2;
\end{align*}
Minkowski's inequality then yields
\begin{align*}
 \left(\mathbb E[|Z_j|^2\mid\mathcal F_{t_j}]\right)^{1/2}
 &\le
 h\int_0^1
 \mathbb E[|F_c(t_j+hs)|^4\mid\mathcal F_{t_j}]^{1/2}ds\le
 3^K\,
 \mathbb E[Z_j\mid\mathcal F_{t_j}],
\end{align*}
and consequently
\begin{align}\label{eq:cell-second-moment}
 \mathbb E[Z_j^2\mid\mathcal F_{t_j}]
 \le
 3^{2K}
 \bigl(\mathbb E[Z_j\mid\mathcal F_{t_j}]\bigr)^2.
\end{align}
Denoting $Y_j=\mathbf{1}_{A_j^c} + \mathbf{1}_{A_j}\mathbf{1}_{\{Z_j\ge
\frac12\lambda_Kh^{K+1}R^{-2}\}}$, then by Paley--Zygmund, one has 
\begin{align}\label{eq:cell-success}
\begin{split}
 \mathbb E[Y_j\mid\mathcal F_{t_j}] &= \mathbf{1}_{A_j^c} + \mathbf{1}_{A_j}\mathbb P
 \left\{
 Z_j\ge
 \frac12\lambda_Kh^{K+1}R^{-2}
 \,\middle|\,
 \mathcal F_{t_j}
 \right\}\\
&\ge \mathbf{1}_{A_j^c} + \mathbf{1}_{A_j}\frac14\frac{\left(\mathbb E[Z_j\mid\mathcal F_{t_j}]\right)^2}{\mathbb E[Z_j^2\mid\mathcal F_{t_j}]}
 \ge
 p_K:=3^{-2(K+1)}.
\end{split}
\end{align}
The conditional Chernoff bound then gives
\begin{align}\label{eq:cell-chernoff}
 \mathbb P\left\{\sum_{j=0}^{M-1}Y_j<\frac12Mp_K\right\}
 \le \exp\{-Mp_K/8\}.
\end{align}
Since $G_R\subset A_j$ for every $j$, no artificial value $Y_j=1$
occurs on $G_R$.
Hence, on $G_R\setminus E_{c,K}$, with $\mathbb P(E_{c,K})\leq \exp\{-Mp_K/8\}$, at least $Mp_K/2$ cells satisfy
\begin{align*}
 Z_j\ge \frac12\lambda_Kh^{K+1}R^{-2}.
\end{align*}
Since $h=M^{-1}$ and $\|F_c\|_{L^2(0,1)}^2=\sum_{j=0}^{M-1}Z_j$, it follows that
\begin{align}\label{eq:fixed-vector-frame}
 G_R\setminus E_{c,K}\quad\Longrightarrow\quad
 \|F_c\|_{L^2(0,1)}^2\ge b_K^2,
 \qquad
 b_K^2:=\frac{p_K\lambda_KM^{-K}}{4R^2}.
\end{align}

\emph{Step 3: uniform bound for all coefficients and word cutoffs.}
We next make the estimate uniform over the coefficient vector. On $G_R$,
\begin{align*}
 \|F_c\|_{L^2(0,1)}\le R\|c\|_{\ell^2},
 \qquad
 \|F_c-F_{\tilde c}\|_{L^2(0,1)}\le R\|c-\tilde c\|_{\ell^2}.
\end{align*}
Let $\mathcal N_K=\{c_i\}$ be a deterministic $b_K/(2R)$-net of the Euclidean unit sphere in $\mathbb R^{D_K}$ with
\begin{align*}
 \#\mathcal N_K\le \left(1+\frac{4R}{b_K}\right)^{D_K}.
\end{align*}
Applying \eqref{eq:fixed-vector-frame} to every $c_i\in\mathcal N_K$, the probability that some net vector fails is 
\begin{align}\label{eq:frame-net-failure}
 \mathbb P\left(\bigcup_i E_{c_i,K} \right)\leq \exp\left\{
 D_K\log\left(1+\frac{4R}{b_K}\right)-\frac18Mp_K
 \right\}.
\end{align}
Outside this event, $\|F_{c_i}\|_{L^2}\ge b_K$ for every $c_i\in\mathcal N_K$. Hence, for any unit vector $c$, choosing $c_i\in\mathcal N_K$ with $\|c-c_i\|_{\ell^2}\le b_K/(2R)$ gives
\begin{align*}
 \|F_c\|_{L^2}\ge \|F_{c_i}\|_{L^2}-\|F_c-F_{c_i}\|_{L^2}
 \ge b_K-R\|c-c_i\|_{\ell^2}\ge \frac12b_K.
\end{align*}
Choose $M=M_K=\left\lceil e^{a(K+1)}\right\rceil$ for $a>0$ that will be determined below. 
Since $p_K=3^{-2(K+1)}$ and $\lambda_K=e^{-C_mK\log(K+2)}$ as in the beginning of Step 2, the definition of $b_K$ in \eqref{eq:fixed-vector-frame} gives
\begin{align}\label{eq:bK-log-bound}
 \log b_K^{-1}
 \le C_{m,R}K\log(K+2)+\frac12K\log M_K
 \le C_{m,R,a}K^2.
\end{align}
Together with $D_K\le(m+2)^{K+1}$, this yields
\begin{align*}
 D_K\log\left(1+\frac{4R}{b_K}\right)
 \le C_{m,R,a}K^2(m+2)^{K+1},
\end{align*}
whereas $M_Kp_K\ge C\,\exp\{(a-2\log3)K\}$. 
Choose $a>2\log3+\log(m+2)$. Then the negative term in \eqref{eq:frame-net-failure} dominates the positive one as $K\to\infty$. Increasing $a$ if necessary to cover the finitely many remaining values of $K$, we obtain from \eqref{eq:frame-net-failure} that 
\begin{align*}
 \mathbb P(E_K)\le 2^{-K-4},\quad \text{ where } E_K= \bigcup_i E_{c_i,K} , \,  K\ge1.
\end{align*}
Since $\mathbb P(G_R)\ge3/4$ and $\sum_{K\ge1}2^{-K-4}<1/4$, one has
\begin{align*}
\mathbb P(G_*)>1/2, \quad \text{where }  G_*=G_R\setminus\bigcup_{K\ge1}E_K.
\end{align*}
On $G_*$, for every $K\ge1$ and every coefficient vector $c$, one has $\|F_c\|_{L^2(0,1)}\ge \frac12b_K\|c\|_{\ell^2}$. 
By \eqref{eq:bK-log-bound}, we obtain 
\begin{align*}
 \int_0^1\left|
 \sum_{|\mathtt w|_{\rm deg}\le K}c_{\mathtt w}I_{\mathtt w}(0,t)
 \right|^2dt
 \ge e^{-C_{m,R}K^2}\sum_{|\mathtt w|_{\rm deg}\le K}|c_{\mathtt w}|^2
\end{align*}
simultaneously for every $K\ge1$ and every coefficient vector $c$ on $G_*$. Finally, inner regularity of Wiener measure provides the desired compact subset
$G\subset G_*$ with $\mathbb P(G)>1/4$. 
\end{proof}

From the proof we see that the failure probability in \eqref{eq:frame-net-failure} is actually double exponentially small with respect to the word cutoff $K$. This allows us to obtain the following nondegeneracy estimates simultaneously on exponentially small time intervals, which is crucial in deriving the polynomial cost $C_{\delta}\lesssim \delta^{-b}$  in quantitative Malliavin estimates. 

\begin{proposition}
\label{prop:interval-frames}
Fix $n_0\geq 1$, a time interval $I=[0,T_0]$ and integers $L_N$ satisfying $N\le L_N\le n_0N$. 
For $a>0$, partition $I$ into $n_N=\lceil T_0e^{aL_N}\rceil$
equal subintervals $I_{N,j}=[s_{N,j},s_{N,j}+\tau_N]$. 
Then, for every $\varepsilon>0$, there exist $N_0\ge1$ and an event
$\Omega_{0}$ with $\mathbb P(\Omega_{0})\ge1-\varepsilon$ 
such that, simultaneously for every $N\ge N_0$, every interval
$I_{N,j}$, and every family of real coefficients
$(c_{\mathtt w})_{|\mathtt w|_{\rm deg}\le4L_N}$,
\begin{align}\label{eq:cluster-frame}
 \int_0^1
 \left|
 \sum_{|\mathtt w|_{\rm deg}\le4L_N}
 c_{\mathtt w}S_{\mathtt w}^{N,j}(t)
 \right|^2dt
 \ge
 \exp\{-C_*L_N^2-C_aL_N\}
 \sum_{|\mathtt w|_{\rm deg}\le4L_N}|c_{\mathtt w}|^2.
\end{align}
Here $S^{N,j}$ denotes the Stratonovich signature of the Brownian
increment on $I_{N,j}$ after rescaling the interval to unit time and
the Brownian increment to unit variance. The constant $C_*$ depends
only on $m$ and $n_0$, whereas $C_a$ and $N_0$ may depend on
$a,T_0,\varepsilon$. 
\end{proposition}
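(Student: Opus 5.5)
The plan is to rerun the proof of \Cref{prop:pathwise-frame} on each rescaled subinterval $I_{N,j}$. The total failure probability is then controlled by a union bound over all $N\ge N_0$ and all $j\le n_N$. This works because the failure probability for a single interval is double exponentially small in the cutoff $K=4L_N$, while the number of intervals $n_N\asymp e^{aL_N}$ is only exponential. Finally we pass from It\^o to Stratonovich coordinates by \Cref{lem:signature-conventions}.

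Step 1 sets up a single interval. Fix $N$ and $j$, put $K=4L_N$, and let $\widetilde W^{N,j}$ be the rescaled Brownian increment on $I_{N,j}$. By Brownian scaling it is a standard Brownian motion on $[0,1]$, and its It\^o signature $\mathbf I^{N,j}$ has the law of $\mathbf I$. We repeat Steps 2--3 of \Cref{prop:pathwise-frame} for this fixed Brownian motion, with one change. There, $M_K=\lceil e^{a'(K+1)}\rceil$ was chosen so that the net failure probability is at most $2^{-K-4}$. Here we enlarge the exponent $a'$ so that the bound \eqref{eq:frame-net-failure} becomes
\begin{align*}
\exp\Bigl\{C_{m,R,a'}K^2(m+2)^{K+1}-\tfrac{C}{8}e^{(a'-2\log3)K}\Bigr\}\le \exp\{-e^{cK}\}
\end{align*}
for some $c>0$, once $a'>2\log3+\log(m+2)+1$ and $K$ is large.

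The prefix event $G_R$ is the delicate point, because its failure probability does not decay in $K$. We therefore replace the fixed $R$ by an $N$-dependent radius $R_N=e^{L_N}$. The chaos bound \eqref{eq:ito-factorial-bound} and the Picard series give moments of every order for $\sup_t\|\mathbf I(0,t)\|_{\mathcal T_1}+\|\mathbf I(0,t)^{-1}\|_{\mathcal T_1}$. This will be checked; it should follow from hypercontractivity applied to each chaos term, since the series $\sum C^n/\sqrt{n!}$ still converges after multiplying by the $p^{n/2}$ hypercontractive factors. Markov's inequality with a high moment $p$ then gives $\mathbb P(G_{R_N}^c)\le C_pe^{-pL_N}$. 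The extra $\log R_N=L_N$ enters $\log b_K^{-1}$ only linearly and is absorbed into $C_*L_N^2$. This gives a per-interval lower bound $\exp\{-C_*L_N^2\}$ for It\^o coefficients, with $C_*$ depending only on $m,n_0$ through $K\le4n_0N$, and an exceptional set $E_{N,j}$ with $\mathbb P(E_{N,j})\le C_pe^{-pL_N}+\exp\{-e^{cL_N}\}$.

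Step 2 is the union bound. Summing over $j\le n_N\le T_0e^{aL_N}+1$ and over $N\ge N_0$ gives
\begin{align*}
\sum_{N\ge N_0}\bigl(T_0e^{aL_N}+1\bigr)\bigl(C_pe^{-pL_N}+e^{-e^{cL_N}}\bigr).
\end{align*}
We choose $p=a+2$. Since $L_N\ge N$, the sum is at most $\varepsilon$ for $N_0$ large. We set $\Omega_0$ to be the complement of the union of all $E_{N,j}$. The dependence of $p$, and hence of $C_p$, on $a$ is what produces the term $C_aL_N$.

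Step 3 converts to Stratonovich coordinates. We write $\sum c_{\mathtt w}S_{\mathtt w}^{N,j}=\sum (T_K^*c)_{\mathtt w}I^{N,j}_{\mathtt w}$ using \Cref{lem:signature-conventions}(2), which preserves parabolic degree. We then use $\|T_K^*c\|\ge\|T_K^{-1}\|^{-1}\|c\|\ge e^{-C_mK}\|c\|$, and this loss is absorbed into $C_*L_N^2$. This yields \eqref{eq:cluster-frame}.

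The main obstacle is the prefix bound $G_R$. In \Cref{prop:pathwise-frame} it was a single event of probability $3/4$; here it must hold on exponentially many intervals simultaneously, which forces the growing radius and the high-moment tail estimate described in Step 1.
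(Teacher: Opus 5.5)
Your proposal is correct and follows the paper's proof. You rerun Steps 2--3 of \Cref{prop:pathwise-frame} on each rescaled interval with the single cutoff $K=4L_N$ and a growing prefix radius, sum the double-exponentially small net failures against the $e^{aL_N}$ intervals, and convert to Stratonovich coordinates at cost $e^{-C_mK}$.

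The only real variation is the prefix radius, and there you misattribute the source of $C_aL_N$.
\begin{itemize}
\item The paper takes $R_N=R_*n_N(N+1)^2$. Then the first moment \eqref{eq:signature-integrability} already gives $\sum_{N,j}\mathbb P(G_{N,j}^c)\le\varepsilon/2$. Since $\log R_N\approx aL_N$, this is where the term $C_aL_N$ comes from.
\item Your choice $R_N=e^{L_N}$ needs all moments of $\sup_t(\|\mathbf I(0,t)\|_{\mathcal T_1}+\|\mathbf I(0,t)^{-1}\|_{\mathcal T_1})$. This is true, by hypercontractivity applied termwise plus a maximal inequality.
\item In your scheme the exponent is independent of $a$. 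The constant $C_p$ with $p=a+2$ enters only $N_0$, not the term $C_aL_N$ as you state.
\end{itemize}
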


\begin{proof}
For each interval $I_{N,j}$, rescale its Brownian increment to $[0,1]$ with unit variance, and denote the corresponding It\^o signature by $\mathbf I^{N,j}$. By Brownian scaling, $\mathbf I^{N,j}$ has the same law as the unit time signature in \Cref{prop:pathwise-frame}. Set $R_N=R_*n_N(N+1)^2$ and define
\begin{align*}
G_{N,j}=\left\{\sup_{t\le1}\left(\|\mathbf I^{N,j}(0,t)\|_{\mathcal T_1}+\|\mathbf I^{N,j}(0,t)^{-1}\|_{\mathcal T_1}\right)\le R_N\right\}.
\end{align*}
By \eqref{eq:signature-integrability} and Markov's inequality, $\mathbb P(G_{N,j}^c)\le\frac{C_m}{R_N}$ and therefore 
\begin{align}\label{eq:GNjc}
\sum_{N\ge1}\sum_{j=0}^{n_N-1}\mathbb P(G_{N,j}^c)
\le\frac{C_m}{R_*}\sum_{N\ge1}(N+1)^{-2}<\frac{\varepsilon}{2}
\end{align}
by choosing $R_*$ large. Put $K=4L_N$ and  $M_N=\lceil e^{bL_N}\rceil$, where $b=b(m)$ is fixed so that 
\begin{align*}
b>8\log3+4\log(m+2)+2. 
\end{align*}
For such $K$, setting the corresponding $p_K=3^{-2(K+1)}$ and $\lambda_K=e^{-C_mK\log(K+2)}$, 
and repeating the same argument as in Step 2--Step 3 of the proof of \Cref{prop:pathwise-frame}, gives an event $E_{N,j}$ such that, on $G_{N,j}\setminus E_{N,j}$,
\begin{align}\label{eq:interval-ito-frame}
\int_0^1\left|\sum_{|\mathtt w|_{\rm deg}\le K}c_{\mathtt w}I_{\mathtt w}^{N,j}(0,t)\right|^2dt
\ge\frac{b_K^2}{4}
\sum_{|\mathtt w|_{\rm deg}\le K}|c_{\mathtt w}|^2, \qquad
b_K^2=\frac{p_K\lambda_KM_N^{-K}}{4R_N^2}.
\end{align}
for every coefficient vector $c$, while
\begin{align*}
\mathbb P(E_{N,j})
\le\exp\left\{D_K\log\left(1+\frac{4R_N}{b_K}\right)-\frac18M_Np_K\right\}.
\end{align*}
It follows from \eqref{eq:interval-ito-frame} and the dependence of the parameters on $L_N$ that 
\begin{align*}
\log\left(4 b_K^{-2}\right) 
\le C_*L_N^2+C_aL_N,
\end{align*}
with $C_*$ independent of $a$,  whereas 
\begin{align*}
D_K\log\left(1+\frac{4R_N}{b_K}\right)
\le
C_{a,T_0,R_*}L_N^2e^{4\log(m+2)L_N}, \text{ and } M_Np_K\ge C\exp\{(b-8\log3)L_N\}. 
\end{align*}
Hence, for some $C,C'>0$ and all large enough $N$,
\begin{align*}
\mathbb P(E_{N,j})\le\exp\{-Ce^{C'L_N}\}.
\end{align*}
Using $N\le L_N\le n_0N$ and $n_N\le C_{T_0}e^{aL_N}$, and increasing $N_0$ yields 
\begin{align}\label{eq:ENj}
\sum_{N\ge N_0}\sum_{j=0}^{n_N-1}\mathbb P(E_{N,j})
\le C_{T_0}\sum_{N\ge N_0}\exp\{an_0N-ce^{c'N}\}\leq \varepsilon/2.
\end{align}
Finally, let $T_K$ be the It\^o--Stratonovich coordinate map from the proof of \Cref{lem:signature-conventions}. Then
\begin{align*}
\mathbf S_K^{N,j}=T_K\mathbf I_K^{N,j},
\qquad
\|T_K\|+\|T_K^{-1}\|\le e^{C_mK}.
\end{align*}
Thus, for every coefficient vector $c$,
\begin{align*}
\sum_{\mathtt w}c_{\mathtt w}S_{\mathtt w}^{N,j}
=\sum_{\mathtt w}(T_K^*c)_{\mathtt w}I_{\mathtt w}^{N,j},
\qquad
\|T_K^*c\|_{\ell^2}\ge e^{-C_mK}\|c\|_{\ell^2}.
\end{align*}
Applying \eqref{eq:interval-ito-frame} to $T_K^*c$ and enlarging $C_a$ gives \eqref{eq:cluster-frame}. Setting 
\begin{align*}
\Omega_0 = \bigcap_{N\ge N_0,j}\left(G_{N,j}\setminus E_{N,j}\right)
\end{align*}
completes the proof by 
\eqref{eq:GNjc} and \eqref{eq:ENj}. 
\end{proof}

\medskip

\textbf{Competing interests}.
The authors declare that they have no competing interests.

\medskip

\textbf{Use of generative artificial intelligence}.
OpenAI ChatGPT and Anthropic Claude were used to assist with proof auditing and manuscript preparation. The authors take full responsibility for the mathematical content.

\end{document}